\documentclass[10pt]{amsart}
\usepackage[a4paper,margin=22.7mm]{geometry}
\usepackage[T1]{fontenc}
\usepackage[utf8]{inputenc}
\usepackage{lmodern}
\usepackage{amsmath,amssymb,amsthm,mathtools}
\usepackage{enumitem}
\usepackage{xcolor}
\usepackage[colorlinks=true,linkcolor=blue,citecolor=blue,urlcolor=blue]{hyperref}
\usepackage{aliascnt}
\usepackage[nameinlink,capitalise,noabbrev]{cleveref}

\setlist[itemize]{leftmargin=2em}
\setlist[enumerate]{leftmargin=2em}

\newtheorem{theorem}{Theorem}[section]
\newaliascnt{proposition}{theorem}
\newtheorem{proposition}[proposition]{Proposition}
\aliascntresetthe{proposition}
\newaliascnt{lemma}{theorem}
\newtheorem{lemma}[lemma]{Lemma}
\aliascntresetthe{lemma}
\newaliascnt{corollary}{theorem}
\newtheorem{corollary}[corollary]{Corollary}
\aliascntresetthe{corollary}
\newaliascnt{claim}{theorem}

\aliascntresetthe{claim}
\newaliascnt{conjecture}{theorem}

\aliascntresetthe{conjecture}
\theoremstyle{definition}
\newaliascnt{definition}{theorem}
\newtheorem{definition}[definition]{Definition}
\aliascntresetthe{definition}
\newaliascnt{example}{theorem}
\newtheorem{example}[example]{{Example}}
\aliascntresetthe{example}
\newaliascnt{remark}{theorem}
\newtheorem{remark}[remark]{Remark}
\aliascntresetthe{remark}
\newtheorem*{unnumberedremark}{Remark}
\newtheorem*{unnumbereddefinition}{Definition}
\newaliascnt{assumption}{theorem}

\aliascntresetthe{assumption}

\crefname{theorem}{Theorem}{Theorems}
\crefname{proposition}{Proposition}{Propositions}
\crefname{lemma}{Lemma}{Lemmas}
\crefname{corollary}{Corollary}{Corollaries}
\crefname{claim}{Claim}{Claims}
\crefname{conjecture}{Conjecture}{Conjectures}
\crefname{definition}{Definition}{Definitions}
\crefname{example}{Example}{Examples}
\crefname{remark}{Remark}{Remarks}
\crefname{assumption}{Assumption}{Assumptions}
\Crefname{theorem}{Theorem}{Theorems}
\Crefname{proposition}{Proposition}{Propositions}
\Crefname{lemma}{Lemma}{Lemmas}
\Crefname{corollary}{Corollary}{Corollaries}
\Crefname{claim}{Claim}{Claims}
\Crefname{conjecture}{Conjecture}{Conjectures}
\Crefname{definition}{Definition}{Definitions}
\Crefname{remark}{Remark}{Remarks}
\Crefname{assumption}{Assumption}{Assumptions}

\newcommand{\Z}{\mathbb Z}

\newcommand{\id}{\mathrm{id}}
\newcommand{\Stab}{\mathrm{Stab}}
\newcommand{\ord}{\mathrm{ord}}

\newcommand{\supp}{\mathrm{supp}}

\newcommand{\latestchg}[1]{{#1}}
\newcommand{\journalchg}[1]{{#1}}
\newcommand{\PathVar}{{\mathcal P}}
\newcommand{\PathExtVar}{{\widehat{\mathcal P}}}
\newcommand{\Wnorm}{{\lVert\mathbf W\rVert_1}}
\newcommand{\SumC}{{\Sigma_C}}
\newcommand{\SumR}{{\Sigma_R}}
\newcommand{\BarSum}{{\overline\Sigma}}

\newcommand{\PoolBad}{{\mathcal E_{\rm pool}}}
\newcommand{\BlockDatum}{{\mathfrak B}}
\newcommand{\RepairPos}{{z}}

\title[Kneserized Anticoncentration and Reverse Absorption]{Kneserized Anticoncentration and Reverse Absorption
for Graham's Rearrangement Conjecture}

\author{Simone Costa}
\address{\latestchg{DICATAM, Universit\`a degli Studi di Brescia, Via Branze 43, 25123 Brescia, Italy}}

\author{Stefano Della Fiore}
\address{\latestchg{DII, Universit\`a degli Studi di Brescia, Via Branze 38, 25123 Brescia, Italy}}

\author{Tao Feng}
\address{\latestchg{School of Mathematics and Statistics, Beijing Jiaotong University, Beijing, 100044, PR China}}

\author{Hengrui Liu}
\address{\latestchg{School of Mathematics and Statistics, Beijing Jiaotong University, Beijing, 100044, PR China}}

\date{}

\begin{document}

{
\begin{abstract}
{
We establish a Kneser-based anticoncentration estimate for uniform
subset sums in composite cyclic groups.  The estimate contains a
periodic loss and is weaker than its prime-modulus counterpart.
Nevertheless, together with known small- and large-set results, it
proves that, for every fixed $t\geq2$ such that $\mathbb Z_t$ is
strongly sequenceable and every sufficiently large prime $p$, every
subset of $\mathbb Z_{tp}\setminus\{0\}$ has a valid ordering, thus
establishing the analogue of Graham's rearrangement conjecture for
this family of composite cyclic groups.

We then identify the structural source of this loss.  An inverse
theorem shows that failure of the stabilizer-free growth underlying
prime-type anticoncentration forces almost all of the set into a
proper subgroup or one of its cosets.  We exploit this structure by
\emph{reverse absorption}.  Iterating the resulting dichotomy between
non-periodic anticoncentration and structured concentration proves
that every subset of
\[
        \mathbb Z_k\setminus\{0\},
        \qquad
        k=\prod_{i=1}^{s}p_i^{e_i},
        \qquad
        \sum_{i=1}^{s}e_i\leq L,
        \qquad
        p_1<\cdots<p_s\leq\gamma p_1,
\]
admits a valid ordering whenever $L$ and $\gamma$ are fixed and the
primes $p_i$ are sufficiently large.
}
\end{abstract}}

\maketitle

\medskip
\noindent
\textbf{Keywords.}
Graham's rearrangement conjecture, distinct partial sums,
\latestchg{anticoncentration}, Kneser's theorem, sequenceability,
local repair, reverse absorption.
\par\medskip
\noindent
\textbf{Mathematics Subject Classification (2020).}
{
Primary 11B75; Secondary 05D40, 20K01.
}

\section{Introduction}

Let $G$ be a finite group. A sequence $s_1,\ldots,s_n$ of distinct elements of $G\setminus\{\id\}$ is called a \emph{valid ordering} of the set $S=\{s_1,\ldots,s_n\}$ if the partial products
\[
s_1,\quad s_1s_2,\quad \ldots,\quad s_1s_2\cdots s_n
\]
are pairwise distinct. In additive notation, this means that the partial sums
\[
s_1,\quad s_1+s_2,\quad \ldots,\quad s_1+\cdots+s_n
\]
are pairwise distinct.

Graham's rearrangement conjecture, posed by Graham \cite{Graham} and later reiterated by Erd\H{o}s and Graham \cite{ErdosGraham}, asserts that, for every prime $p$, every subset of $\mathbb Z_p\setminus\{0\}$ admits a valid ordering.

A broader and stronger conjecture concerns sequencings in arbitrary abelian groups. A valid ordering $s_1,\ldots,s_n$ is called a \emph{sequencing} if, after adjoining the initial value $0$ to the list of partial sums
\[
0,\quad s_1,\quad s_1+s_2,\quad\ldots,\quad s_1+\cdots+s_n,
\]
the only possible repetition is between the first and the last terms. An abelian group $G$ is \emph{strongly sequenceable} if every subset of $G\setminus\{0\}$ admits a sequencing. A long-standing conjecture of Alspach asserts that every abelian group is strongly sequenceable. A published formulation appears in \cite{AlspachLiversidge}; before that, the conjecture had circulated for many years in unpublished form and was discussed, for instance, in \cite{CostaMoriniPasottiPellegrini}.

{
This subset-sequencing problem is related to the classical sequencing
problems for whole groups. A finite group $G$ of order $n$ is called \emph{sequenceable} if
its elements can be ordered $g_0,g_1,\ldots,g_{n-1}$ so that the
successive partial products
\[
g_0,\quad
g_0g_1,\quad
\ldots,\quad
g_0g_1\cdots g_{n-1}
\]
are pairwise distinct. For finite { nontrivial} abelian groups, Gordon's theorem \cite{Gordon}
characterizes the sequenceable groups as precisely those having a
unique involution.

The related notion of an $R$-sequencing can be expressed in the same
partial-product language: the non-identity elements can be ordered
$a_1,\ldots,a_{n-1}$ so that
\[
\mathrm{id},\quad
a_1,\quad
a_1a_2,\quad
\ldots,\quad
a_1a_2\cdots a_{n-2}
\]
are pairwise distinct, while
\[
a_1a_2\cdots a_{n-1}=\mathrm{id}.
\]
The Friedlander--Gordon--Miller conjecture~\cite{FriedlanderGordonMiller} predicted that
every finite abelian group not covered by Gordon's theorem admits an
$R$-sequencing, and was proved by Alspach, Kreher, and Pastine~\cite{AlspachKreherPastine}.
A related complete mapping problem is the
Friedlander--Gordon--Tannenbaum conjecture~\cite{FriedlanderGordonTannenbaum}, recently solved,
for sufficiently large abelian groups, by Müyesser~\cite{MuyesserCycleType}.
We refer to Ollis's survey~\cite{OllisSurvey} for the classical group-sequencing
and $R$-sequencing literature.}

The present paper studies Graham's rearrangement problem for cyclic groups of composite order. More precisely, we ask whether every subset of $\mathbb Z_k\setminus\{0\}$ admits a valid ordering when $k$ is composite. This is the cyclic-group case of the broader problem suggested by Alspach's conjecture.

We first recall the relevant progress on these problems. For arbitrary finite abelian groups, with no restriction on the order of the group, Alspach's conjecture was previously proven to be true for every subset
\[
S\subseteq G\setminus\{0\}
\qquad\text{with}\qquad
|S|\leq9.
\]
Recent work improves this to $|S|\leq20$, and to $|S|\leq22$ when $S$ has sum $0$ \cite{CostaDellaFioreFontanaVena}.

For cyclic groups, substantially stronger bounds for $|S|$ are known when the least prime divisor of the group order is large. For a positive integer $m$, write
\[
P^-(m)=\min\{\ell:\ell\text{ is prime and }\ell\mid m\},
\]
with the convention $P^-(1)=\infty$; accordingly, $(P^-(1))^{-\delta}=0$ for every $\delta>0$.

For prime modulus $p$, Kravitz \cite{KravitzSmall} proved Graham's conjecture in the range
\[
|S|\leq \frac{\log p}{\log\log p},
\]
{where, here and throughout the paper, $\log$ denotes the natural logarithm unless a base is explicitly indicated.} Bedert and Kravitz \cite{BedertKravitz} subsequently extended it, for sufficiently large $p$, to
\[
|S|\leq \exp\!\bigl(c(\log p)^{1/4}\bigr)
\]
for some absolute constant $c>0$. Their rectification framework extends to general cyclic moduli, with $p$ replaced by the least prime divisor of the modulus. More recently, Costa and Della Fiore obtained directly for $\mathbb Z_k$ the stronger bound
\[
|S|\leq \exp\!\bigl(c(\log P^-(k))^{1/3}\bigr)
\]
for some absolute constant $c>0$ \cite{CostaDellaFioreOneShot}.

At the opposite end of the size spectrum, Bedert--Buci\'c--Kravitz--Montgomery--M\"uyesser proved a large-set theorem that holds for every finite group $G$, with no largeness assumption on $|G|$ \cite{BBKMM}.

\begin{theorem}[Large sets, Bedert--Buci\'c--Kravitz--Montgomery--M\"uyesser \cite{BBKMM}]
\label{thm:large-known}
There exists an absolute constant $c>0$ such that, for every finite group $G$, every subset
\[
S\subseteq G\setminus\{\id\}
\qquad\text{with}\qquad
|S|\geq |G|^{1-c}
\]
has a valid ordering.
\end{theorem}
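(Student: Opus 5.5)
This statement is the large-set theorem of Bedert, Buci\'c, Kravitz, Montgomery and M\"uyesser, quoted from \cite{BBKMM}. In the present paper it is used as a black box, so no new proof is needed. Still, here is how I would organise a proof of a statement of this shape. The guiding principle is \emph{absorption}: a random-like ordering handles most of $S$, and a reserved structured gadget fixes collisions at the end.

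First, I would set aside a small random reservoir $R\subseteq S$ of size $|G|^{1-c'}$ with $c'$ slightly smaller than $c$. I would then order $S\setminus R$ by a randomized greedy or semi-random process (for instance, a random ordering followed by local swaps). The partial products should be distinct except for a small set of defects. For each next element, the number of choices creating a repeated partial product is at most the number of earlier partial products lying in a fixed translate. Since $|S|\ge|G|^{1-c}$ exceeds every such obstruction on average, a nibble or Lov\'asz local lemma argument should leave only $o(|R|)$ conflicts.

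Second, I would design absorbers built from $R$. These are short sequences of reservoir elements whose partial products can be rerouted, by swapping adjacent pairs $ab\leftrightarrow ba$, to avoid any prescribed small set of forbidden values. In a nonabelian group, swapping $a,b$ changes a single partial product from $xa$ to $xb$. Having many disjoint such swappable pairs in random positions therefore gives independent binary choices, and by counting these suffice to dodge the $o(|R|)$ collisions.

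Third, I would splice everything together and show that the final ordering of all of $S$ has distinct partial products. The cases where $S$ is concentrated on a coset of a subgroup need separate handling: a structural dichotomy either gives expansion or places $S$ largely inside a subgroup $H$, and in the latter case I would induct on $H$.

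The main obstacle I expect is the absorber step uniformly over \emph{all} finite groups, where no Fourier tools are available. I would attack it first through the swap gadgets, using only the combinatorial counting of products.
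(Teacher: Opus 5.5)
Treating the statement as an imported result is exactly what the paper does. Theorem~\ref{thm:large-known} is quoted from \cite{BBKMM} without proof, and it is used only to cover the top range $|S|\geq|G|^{1-c}$ (for instance in Corollary~\ref{cor:prime-cases}, Section~\ref{subsec:covered-moduli}, and the proof of Theorem~\ref{thm:comparable-prime-power}). For the paper's purposes your first sentence is the whole proof. The appended sketch, however, fails as written and should not be presented as a route to the theorem.

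Two of its steps break down.

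First, the swap gadget is local only in abelian groups. Let $x$ be the product before the pair and $w$ any later block. The order $a,b$ gives the partial products $xa$, $xab$, $xabw$, whereas $b,a$ gives $xb$, $xba$, $xbaw$. Unless $ab=ba$, \emph{every} partial product from the pair onward changes. So the swaps are not independent binary choices each affecting one value.

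Second, a random ordering repaired by local transpositions is precisely the Pham--Sauermann mechanism that the paper uses for the \emph{polynomial} range. That mechanism needs $|S|/|G|$ to be polynomially small:
\begin{itemize}
\item a candidate swap is blocked whenever its new partial product meets one of the roughly $|S|$ others, which happens with probability of order $|S|/|G|$;
\item correspondingly, the parameter \eqref{eq:Theta-cyclic} fed into Lemma~\ref{lem:cyclic-local-repair} contains the term $n/N$ and must be at most $n^{-\beta}$.
\end{itemize}
Now take the top of the range, for example $|S|=|G|-1$. The partial products of a valid ordering must then fill all of $G$ but one element. Any rerouted partial product must land in the set of currently unoccupied elements, whose size is one more than the current number of collisions. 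A swap offers only $O(1)$ alternative values, and nothing in your sketch makes them hit so sparse a set, so ``by counting these suffice'' has no support.

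Note also that $|S|\geq|G|^{1-c}$ is a \emph{lower} bound, so it cannot make collisions rarer, contrary to the justification in your first step. At best the sketch describes the abelian, polynomially sparse regime. The real content of \cite{BBKMM} is the dense end of the range for arbitrary finite groups, which is exactly why the paper imports the theorem rather than proving it.
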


For prime cyclic groups, Pham and Sauermann recently bridged the remaining intermediate range \cite{PhamSauermann}.

\begin{theorem}[Pham--Sauermann \cite{PhamSauermann}]
\label{thm:pham-sauermann-known}
For every $0<\alpha<1$, there exists a constant $C_\alpha>0$ such that the following holds. Let $p$ be a prime and let
\[
S\subseteq\mathbb Z_p\setminus\{0\}
\qquad\text{satisfy}\qquad
C_\alpha\leq |S|\leq p^{1-\alpha}.
\]
Then $S$ has a valid ordering.
\end{theorem}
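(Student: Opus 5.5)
The plan follows the Pham--Sauermann strategy, which has three components. First, handle the small and large ranges by citation: for $|S|$ below a constant $C_\alpha$, apply Kravitz's small-set bound; for $|S|\geq p^{1-c}$, apply \cref{thm:large-known}. This leaves the intermediate window $C_\alpha\leq |S|\leq p^{1-\alpha}$. There, build the ordering in two stages: a \emph{random/greedy} stage that places most of $S$, followed by a \emph{structured endgame}.

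The first stage starts by splitting $S$. We set aside a small reserve $R\subseteq S$ of size about $\varepsilon|S|$, chosen to be "well spread", for example containing many pairs $x,y$ whose difference is generic. We then order $S\setminus R$ by a random process in which, at each step, the next element is a uniformly random remaining element, conditioned to avoid the few forbidden values $-(\text{current partial sum})+(\text{earlier partial sums})$. The key input is an anticoncentration estimate for uniform subset sums in $\mathbb Z_p$: for a random subset $T$ of fixed size $m$ drawn from a set $A$, one wants $\Pr[\sum T=x]\ll 1/(m\sqrt{m})$ or a similar bound that decays well below $1/|A|$, uniformly in $x$. Proving this is where Kneser (or, for prime modulus, Cauchy--Davenport/Pollard-type growth of iterated sumsets) enters. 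With this estimate, a union bound or an alteration argument shows that, with positive probability, the number of collisions among partial sums is small and the collisions are not clustered.

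The endgame uses the reserve $R$ to repair the remaining collisions by local swaps. A collision between partial sums $\sigma_i$ and $\sigma_j$ is broken by inserting a reserve element near position $i$, or by transposing two adjacent elements. This changes exactly one partial sum, by the difference of the two elements. Because $R$ is spread out and the collisions are sparse and separated, each repair can be chosen so that it creates no new collision; the number of forbidden choices at each repair is bounded by $O(\#\text{collisions nearby})$, which is much smaller than $|R|$. Finally, the leftover reserve elements are appended at the end, placed greedily. This works because appending an element $r$ only needs to avoid roughly $n$ values, and $n\le p^{1-\alpha}\ll p$, so for $n$ in this range there are enough admissible placements. If necessary, we reorder $R$ internally using Kravitz's small-set result applied to a suitable shift.

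The main obstacle is the anticoncentration step in the intermediate range. The sums must be spread out at the scale $1/|S|^{1+\delta}$ rather than merely $1/p$, and this requires iterating sumset growth over many random pieces while controlling dependence on the conditioning. I would first try to prove $\max_x\Pr[\sum T=x]\le |S|^{-1-\delta}$ by splitting $T$ into independent blocks and applying Cauchy--Davenport-style growth to the supports of the block sums, falling back on a Halász-type Fourier bound when $|S|$ is too small for combinatorial growth to give the needed exponent.
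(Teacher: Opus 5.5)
\paragraph{The gap: the repair and endgame stages.} This is where the argument has to do its real work. Write $n=|S|$ and $\Theta:=n/p+\sqrt{\log n/n}$. Up to constants, $\Theta$ is the bound slice anticoncentration gives for the probability that a fixed position of a random ordering is the right end of a zero-sum interval. Neither of your repair moves is local in the sense your counting requires.
\begin{itemize}
\item Inserting a reserve element $r$ before position $i$ translates every later partial sum by $r$. It therefore creates a repeated partial sum whenever $-r$ is the sum of one of the roughly $i(n-i)$ intervals straddling position $i$. These forbidden values come from global interval sums, not from collisions near $i$. Controlling them needs exactly the estimates over remote endpoints that your plan never supplies.
\item Swapping two adjacent entries does change only one partial sum. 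But it yields a single forced new value, which must avoid all $n$ partial sums, remote ones included. The available bound on its failure probability is $O(\Theta)$. Over the expected $O(n\Theta)$ bad endpoints this gives at best $O(n\Theta^2)$, and $n\Theta^2\geq\log n$, so this is not small.
\end{itemize}
The endgame has the same defect. A leftover reserve element must avoid about $n$ earlier partial sums, while at most $\varepsilon n$ reserve elements remain; the comparison $n\ll p$ is irrelevant, and the final partial sum is forced to be $\Sigma(S)$. Kravitz's theorem cannot order $R$ in general, since $|R|\approx\varepsilon n$ may be far beyond $\log p/\log\log p$. Even a valid internal ordering of $R$ would not keep its shifted partial sums away from the earlier ones. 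Your greedy first stage is also not amenable to slice anticoncentration. At step $i$ there are up to $i$ forbidden values, not a few, and conditioning on avoiding them destroys the uniformity of the random subsets whose sums you want to control.

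\paragraph{The missing idea.} The paper cites this theorem and reproduces its argument in Sections~\ref{sec:kneser-anticoncentration} and~\ref{sec:polynomial-applications} (\cref{thm:cyclic-small} with $t=1$, $q=p$). There is no reserve and no greedy stage. Instead, every bad endpoint gets many repair options, and all of them are paid for jointly.
\begin{itemize}
\item Take a uniformly random ordering. Then $\Theta\leq n^{-\beta}$ for some $\beta=\beta(\alpha)>0$ once $n\leq p^{1-\alpha}$ is large.
\item Repair bad endpoints from right to left by swapping $b$ with some $z\in(b,b+5D]$. This shifts only the partial sums at positions $b,\ldots,z-1$. A candidate $z$ is blocked if the swap creates a zero-sum interval starting in $(b,z]$ or ending in $[b,z)$; the other endpoint of such an interval is typically remote.
\item The engine is the joint nested-chain estimate (\cref{prop:ordinary-chain}, \cref{cor:ordinary-interval-chain}). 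The probability that $D$ nested interval sums hit prescribed targets, summed over all remote cut points, is $O(\Theta^D)$.
\item Injectivity of subset sums inside a window of length $O(D)$ is the complement of the event $\mathcal B_0$ in \cref{lem:cyclic-local-repair}. It forces the witnesses of $D$ blocked candidates at one $b$ to have distinct remote endpoints, so they form such a chain. A union bound over $b$ then gives $O(n\Theta^D)=o(1)$ once $D>3/\beta$.
\item The same chain estimate rules out clusters of more than $D$ bad endpoints. A one-equation bound rules out bad endpoints among the last $O(D)$ positions, and this is what handles the forced final partial sum.
\end{itemize}

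\paragraph{The anticoncentration bound itself.} The correct shape is $\Pr[\Sigma(R)=z]\leq C/p+C\sqrt{\log n}/(n\sqrt m)$, with $n$ the size of the pool. No bound below $1/n$ can hold for all $m$: for $m=1$, every point of the pool has probability exactly $1/n$. Growth of the supports of block sums does not by itself bound point probabilities. In the paper, Cauchy--Davenport (Kneser in general) enters on the Fourier side. In \cref{lem:kneser-low-energy} it gives a lower bound for the iterated sumset $rQ$ of a dual Bohr-type set $Q$. This limits the number of low-energy characters, and that count is fed into the Fourier reduction of \cref{lem:boolean-slice-fourier-reduction}.
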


Let $c>0$ be the constant in Theorem~\ref{thm:large-known}. Choosing $0<\alpha<c$, the Pham--Sauermann range overlaps with the large-set range. Moreover, for all sufficiently large $p$, the small-set results discussed above cover the bounded range $|S|<C_\alpha$. Consequently, Graham's rearrangement conjecture holds for all sufficiently large primes.

For a general cyclic group $\mathbb Z_k$, however, the analogous problem remains open. The known small- and large-set results leave a substantial intermediate gap, and the Pham--Sauermann argument does not transfer formally: whereas Cauchy--Davenport supplies uniform growth in $\mathbb Z_p$, Kneser's theorem permits a non-trivial stabilizer. The objective of this paper is to push the analogue of Graham's rearrangement conjecture as far as possible for composite cyclic moduli. Our principal result closes the gap when the modulus has a bounded number of sufficiently large comparable prime factors, counted with multiplicity.

\begin{theorem}[Comparable bounded prime-power moduli]
\label{thm:comparable-prime-power}
Fix $L\geq2$ and $\gamma\geq1$. There exists $p_0=p_0(L,\gamma)$ such that the following holds. Let
\[
k=\prod_{i=1}^s p_i^{e_i},
\qquad
e_1+\cdots+e_s\leq L,
\qquad
p_0\leq p_1<\cdots<p_s\leq\gamma p_1,
\]
where the $p_i$ are distinct primes and the exponents $e_i$ are positive integers. Then every subset $S\subseteq \mathbb Z_k\setminus\{0\}$ has a valid ordering.
\end{theorem}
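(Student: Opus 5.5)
The plan is to split by the size of $S$ and, for the intermediate range, run a Pham--Sauermann type argument in which the missing Cauchy--Davenport growth is replaced by a dichotomy. Put $n=|S|$ and $q=p_1=P^-(k)$. Since $q\leq p_i\leq\gamma q$ for every $i$ and $\sum e_i\leq L$, we have $q\leq k\leq(\gamma q)^L$, so $\log k\asymp_{L,\gamma}\log q$. The key consequence is that every nontrivial subgroup $H\leq\mathbb Z_k$ satisfies $|H|\geq q\geq k^{1/L}/\gamma$, and every proper subgroup has index at least $q$. Three ranges need to be handled.
\begin{itemize}
\item If $n\leq\exp\bigl(c(\log q)^{1/3}\bigr)$, the small-set result of Costa--Della Fiore \cite{CostaDellaFioreOneShot} applies directly.
\item If $n\geq k^{1-c}$, \cref{thm:large-known} applies.
\item The middle range $C\leq n\leq k^{1-\alpha}$, with $\alpha<c$ fixed, needs a new argument. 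For $q\geq p_0$ the three ranges overlap, because every bounded $C$ lies below the small-set threshold.
\end{itemize}

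In the middle range I follow the Pham--Sauermann template. Order most of $S$ randomly, build the ordering in blocks, and fix the few coincident partial sums by local repair using a reserved pool of elements. The probabilistic input is an anticoncentration bound of the form $\Pr[\sum_{x\in T}x=g]\lesssim 1/|{\rm range}|$ for a uniformly random subset $T$ of prescribed size of a large piece $A\subseteq S$. In $\mathbb Z_p$ this comes from iterated Cauchy--Davenport growth of restricted sumsets. Here I will use Kneser's theorem. At each growth step, either the sumset grows by essentially $|A'|$ with trivial stabilizer, or its stabilizer $H$ is a nontrivial proper subgroup. I then need an inverse statement: if stabilizer-free growth fails robustly, then all but $o(n)$ elements of $A$ lie in a single coset $x+H$ of a proper subgroup $H$.

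The structured alternative is handled by \emph{reverse absorption}. Suppose almost all of $S$ lies in $H\cup(x+H)$ with $H$ proper, and let $E$ be the small exceptional set. I pass to $H\cong\mathbb Z_{k/\ell}$, whose order $k/\ell$ still has fewer prime factors (with multiplicity) and still has comparable primes. If the bulk lies in $H$, I induct on $\sum e_i$ to find a valid ordering of $S\cap H$ inside $H$. The base case $\sum e_i=1$ is the prime case, handled by \cref{thm:pham-sauermann-known} together with the small and large cases. Then I insert the elements of $E$, and the few elements in other cosets, at positions chosen so that they move the partial sums into fresh cosets of $H$. Once a partial sum has left $H$ it cannot collide with the in-$H$ partial sums, and the $\geq q$ available cosets leave plenty of room because $|E|=o(n)\ll q$. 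If the bulk lies in a nonzero coset $x+H$, I look at the partial sums modulo $H$: they advance by $x$ each step and cycle through $\langle x+H\rangle/H$. Within each residue class I then apply the inductive structure for $H$ to the subsequence. The comparability condition $p_s\leq\gamma p_1$ is used throughout to keep the quantitative losses bounded, since every proper subgroup is at least $k^{1/L}/\gamma$ in size. The bound $\sum e_i\leq L$ caps the depth of the iteration, which keeps all constants uniform.

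The step I expect to be the main obstacle is the inverse theorem together with its quantitative coupling to the repair step. The Kneser stabilizer can change from step to step, so the dichotomy has to be stable: concentration needs to be detected at a threshold of $1-o(1)$ of the set, and the $o(1)$ must be small enough that the exceptional elements can be absorbed. To get this, I would first apply Kneser to the whole-set sumset $\bigwedge^m A$ and use Pl\"unnecke/Kneser to show that a large stabilizer forces $|A+H|\leq(1+\epsilon)|H|$ for some $H$. Since only $\leq 2^L$ subgroups are relevant up to the chain structure, the failure probabilities can then be union-bounded at a cost of $O_L(1)$ factors.
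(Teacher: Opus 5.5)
Your architecture matches the paper's: three size ranges, Kneser in place of Cauchy--Davenport, an inverse theorem, and induction on the number of prime factors with reverse absorption. However, the quantitative coupling is set up incorrectly, and the structured branch, where most of the work lies, does not go through as you describe it.

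\emph{The exceptional set and the non-periodic branch.} In the middle range $n$ can be as large as $k^{1-\alpha}$, which for composite $k$ is far larger than $q=p_1$. So $|E|=o(n)$ does not give $|E|\ll q$. The paper declares $S$ concentrated only when all but $n^{a}$ elements lie in one coset of a proper subgroup $K$, with a polynomial exponent $a<1/\Omega(K)$ (it uses $a_r=4^{-r}$). Together with $|K|\leq(\gamma p_1)^{\Omega(K)}$, this forces $|E|<p_1\leq P^-(|G/K|)$. That is what allows $E$ to be ordered in $G/K$ so that no proper partial sum equals the total. The price is that the non-periodic branch must then work when only $n^{a}$ elements lie outside each coset. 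This needs three things: the quantitative one-slice bound with $\eta=n^{a-1}$, giving $\vartheta\approx n^{-a/2}\sqrt{\log n}$; a stopping-time argument keeping the residual pools non-periodic along a chain; and a hierarchy $a_0>a_1>\cdots>a_L$ with slack, so that the head accumulated over at most $L$ descents stays below the next level's allowance.

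Note also that a large sumset does not give pointwise anticoncentration. For an arithmetic progression, the $m$-subset sums have range $\asymp mn$ but point masses $\asymp 1/(n\sqrt m)$. In the paper, Kneser is used inside the Fourier argument: it is applied to $rQ$, for a Bohr-type set $Q$ attached to the low-energy characters, to bound the number of such characters. It is not applied to the restricted sumset of $A$.

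\emph{Non-zero coset $x+H$.} Let $d=\mathrm{ord}_{G/H}(x+H)$ and write $V_b$ for the partial sum after $b$ terms. The vertices in clock layer $t$ are $V_t,V_{t+d},V_{t+2d},\ldots$, and $V_{b+d}-V_b$ is the sum of the $d$ \emph{consecutive} terms in positions $b+1,\ldots,b+d$. These are not partial sums of any subsequence, and all layers are coupled through the same elements. Moreover $m$ may be much larger than $d$ (e.g.\ $G=\mathbb Z_{p^3}$, $|H|=p^2$, $d=p$). So the induction hypothesis for $H$ cannot be applied layer by layer. The paper needs a genuinely new tool here, the layered local-repair lemma:
\begin{itemize}
\item tail collisions become interval equations whose lengths are divisible by $d$;
\item anticoncentration summed over a single residue class gains the factor $1/d$ in $\vartheta_{d,a}$;
\item the vertices of the head already built enter as layer-dependent forbidden targets $W_t$;
\item if the tail is itself concentrated, one descends again while keeping the head.
\end{itemize}

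\emph{Subgroup case.} A black-box valid ordering of $S\cap H$ gives no control over whether its partial sums hit $0$ or a prescribed point. This is exactly what goes wrong when $\Sigma(E)\in H$, because the walk must then end in $H$. Either the bulk is an initial block in $H$, and the final return to $H$ must avoid all its partial sums; or the bulk is entered through an exceptional element, and it must have pairwise distinct \emph{non-zero} partial sums, which is impossible when it sums to $0$. The quotient walk of $E$ may also revisit cosets, so helpers from $H$ have to be interleaved.

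The paper resolves this with the cycle trick: a valid ordering of $C\cup\{-\Sigma_C\}$, supplied by induction, yields an ordering of $C$ with distinct non-zero partial sums when $-\Sigma_C\notin C\cup\{0\}$. This is preceded by a terminal correction that moves at most three bulk elements before the bridge, proved via Huicochea's restricted-sumset bound and Kneser. When the correction fails, the bulk is concentrated in a coset of a smaller subgroup, and the argument descends again, carrying the path already built.
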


\subsection{Intermediate results for the main theorem}

We next collect the intermediate results that lead to Theorem~\ref{thm:comparable-prime-power}. Stating these results separately allows us to isolate the main mechanisms of the proof before assembling the recursive argument for the full theorem. They fall naturally into two groups, corresponding to Kneserized anticoncentration and inverse structure. The first concerns a Kneserized version of the Pham--Sauermann anticoncentration and local-repair argument and yields a polynomial-range theorem for cyclic groups of composite order. The second describes the structural obstruction created by non-trivial stabilizers and leads to the reverse-absorption mechanism used to overcome it. \journalchg{The full theorem is obtained by iterating the resulting dichotomy through the bounded number of prime-power layers allowed in its statement.}

We begin with the anticoncentration component. The analytic starting point is an anticoncentration estimate in $\mathbb Z_{tq}$, where $t$ is bounded, $(t,q)=1$, and the least prime divisor of $q$ is large. We replace the Cauchy--Davenport growth step in the Boolean-slice argument of Pham and Sauermann by Kneser's theorem. A non-trivial stabilizer produces an additional periodic loss and hence a weaker polynomial range. This gives the following composite-modulus analogue of the Pham--Sauermann polynomial-range theorem.

\begin{theorem}[Cyclic polynomial range]
\label{thm:cyclic-small}
Fix $T\geq1$ and $0<\alpha<1$. There exists a constant
\[
C=C(\alpha,T)
\]
such that the following holds. Let
\[
k=tq,
\qquad
1\leq t\leq T,
\qquad
(t,q)=1,
\]
and put
\[
\lambda=P^-(q).
\]
If
\[
S\subseteq\mathbb Z_k\setminus\{0\},
\qquad
C\leq |S|\leq\lambda^{1-\alpha},
\]
then $S$ has a valid ordering.
\end{theorem}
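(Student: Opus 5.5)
We follow the Pham--Sauermann scheme in $\mathbb Z_k$, $k=tq$, and change only the additive-combinatorial growth step, which in $\mathbb Z_p$ comes from Cauchy--Davenport and here must come from Kneser's theorem.

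\emph{Reduction.} Only $k$ and $q$ vary, so we write $\mathbb Z_k\cong\mathbb Z_t\times\mathbb Z_q$. Every subgroup of $\mathbb Z_k$ has the form $H=H_t\times H_q$. Since all prime divisors of $q$ are at least $\lambda$, a subgroup with $H_q\neq 0$ has order at least $\lambda$. Because $|S|\leq\lambda^{1-\alpha}$, sumsets of subsets of $S$ of size $m\le |S|$ can then have Kneser stabilizer only of two kinds: either $H\le\mathbb Z_t\times\{0\}$, of order at most $T$, or a subgroup of order at least $\lambda$. The second kind cannot occur while the sumset has size $\ll\lambda$. This is where we use $|S|\leq\lambda^{1-\alpha}$. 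Our target anticoncentration statement is: for a uniformly random $m$-subset $X$ of a set $A\subseteq\mathbb Z_k\setminus\{0\}$ with $|A|\geq C$ and $m\le |A|/2$, and any $x\in\mathbb Z_k$,
\[
\Pr\Bigl(\textstyle\sum X=x\Bigr)\le \frac{C_T}{m^{c}}+\lambda^{-c'}.
\]
This is the prime-case bound weakened by a factor depending only on $T$. The loss is the ``periodic loss'' mentioned in the abstract.

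\emph{Anticoncentration via the Boolean slice.} Pham--Sauermann bound the probability by a Littlewood--Offord-type argument on the Boolean slice. They pair elements, swap within pairs, and show that the set of attainable sums grows by Cauchy--Davenport at each step. We run the same iteration. At each step we apply Kneser: $|B+C|\ge|B+H|+|C+H|-|H|$ with $H$ the stabilizer. If $H$ is trivial we get the prime-type growth. If $H$ is nontrivial, then by the reduction $H\le \mathbb Z_t$ with $|H|\le T$. We then project to $\mathbb Z_k/H$ and continue there. Growth in the quotient can fail only if the relevant differences lie in $H$. Since $H$ has at most $T$ elements and $|\mathbb Z_t|$ has boundedly many subgroups, this costs at most a constant factor, or forces a bounded fraction of the pair-differences into a subgroup of order at most $T$. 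Restricting the swaps to the remaining pairs, and pigeonholing over the $O_T(1)$ cosets of $\mathbb Z_t\times\{0\}$, recovers polynomial growth with a $T$-dependent constant.

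\emph{Local repair and assembly.} With the anticoncentration in hand, we copy the Pham--Sauermann construction. Take a random ordering of most of $S$, built in blocks. The anticoncentration bound, summed over pairs of prefixes, shows that the expected number of coinciding partial sums is small; here we need $m^{-c}$ to beat polynomially many collision events in the regime $|S|\ge C$. A bounded number of collisions is then fixed by local swaps: the leftover reserved elements are inserted at positions where they break collisions without creating new ones, again controlled by the anticoncentration bound. The reserve elements must avoid order-$\le T$ pathologies. If many elements of $S$ lie in the small subgroup $\mathbb Z_t\times\{0\}$, there are at most $T$ of them, so they can be placed first or handled separately.

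\emph{Main obstacle.} We expect the hardest step to be the Kneser iteration, in the case where the stabilizer is a nontrivial subgroup of $\mathbb Z_t$ at many steps. The periodic loss can then compound. Our first attempt would bound the number of steps with nontrivial stabilizer by $O(\log T)$, using that the stabilizers form a chain under passing to quotients. Only boundedly many quotient passes are possible, so the total loss is a $T$-dependent constant.
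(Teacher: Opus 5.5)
There is a genuine gap, and it starts with your anticoncentration target. A bound of the form $C_Tm^{-c}+\lambda^{-c'}$ has no factor $1/n$, so it cannot be summed over cut points. Whether a fixed position $b$ is a bad endpoint is a union over $\asymp n$ interval lengths, and the length-one term alone contributes $C_T$. So neither your first-moment count nor any repair scheme can start. It is also not the prime-case bound weakened by a constant: Pham--Sauermann's bound is $\frac1p+O\bigl(\frac{\sqrt{\log n}}{n\sqrt m}\bigr)$. What is needed, and what Proposition~\ref{prop:one-colour} proves, is
\[
\Pr[\Sigma(R)=z]\leq\frac1N+C_{\varepsilon,T}\left(\frac{\sqrt{\log(2+n)}}{n\sqrt m}+\frac1\lambda\right).
\]
Its sum over $m\leq n$ is $\vartheta(n;N,\lambda)=\frac nN+\Gamma_T\sqrt{\log(2+n)/n}+\Gamma_T\frac n\lambda$.

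Your proposed mechanism cannot give this, because growth of the set of attainable sums is a lower bound on a support and says nothing about point probabilities. In the paper, Kneser is never applied to sumsets of subsets of $S$. After the Fourier reduction of Lemma~\ref{lem:boolean-slice-fourier-reduction}, one must bound the number $|B_s|$ of low-energy characters. Kneser is applied to $rQ$, where $Q$ is the set of approximate annihilators of $B_s$ and contains nine tenths of a translate of $A$. Parseval then gives $|rQ|\leq 5N/(4|B_s|)$ (Lemma~\ref{lem:kneser-low-energy}).

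As a result, your account of the periodic loss is reversed. Your dichotomy of subgroups (order dividing $t$, or order at least $\lambda$) is the right one. But $Q$ and $rQ$ are not small, so stabilizers of order $\geq\lambda$ do occur. The case $Q\subseteq K$ with $K$ proper and $q$-large is exactly what yields $|B_s|-1\lesssim N/\lambda$, i.e.\ the additive $1/\lambda$. This loss is unavoidable: a set inside a subgroup of order $\lambda$ has some slice sum with probability at least $1/\lambda$.

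Stabilizers of order at most $T$, by contrast, are harmless. If $Q+K$ meets two $K$-cosets, Kneser gives $|rQ|\geq r|Q|/2$ whatever $K$ is. If $Q\subseteq K$ with $|K|\leq T$, then $n\leq 10T/9$. So no quotient iteration is needed. The hypothesis $|S|\leq\lambda^{1-\alpha}$ is used not to exclude large stabilizers but to give $n/\lambda\leq\lambda^{-\alpha}\leq n^{-\alpha/(1-\alpha)}$, hence $\vartheta\leq n^{-\beta}$ for any $\beta<\min\{1/2,\alpha/(1-\alpha)\}$.

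Finally, the assembly is not a repair of boundedly many collisions. The estimates only bound the expected number of bad endpoints by $n\vartheta\geq\Gamma_T\sqrt{n\log(2+n)}$, which is unbounded. Lemma~\ref{lem:cyclic-local-repair} removes all bad endpoints by short transpositions. To bound the clustering and blocking events $\mathcal B_2,\mathcal B_3$ by $C_D(\Theta+n\Theta^D)$, it needs joint estimates for $D$ simultaneous nested-interval equations under bounded local conditioning (Proposition~\ref{prop:ordinary-chain} and Corollary~\ref{cor:ordinary-interval-chain}). A one-slice bound does not provide these.
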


In Theorem~\ref{thm:cyclic-small}, when $t=1$, it specializes to the Kneserized Boolean-slice argument. For bounded $t$, the only additional observation is that a periodic stabilizer whose order does not contain a prime factor of $q$ has order at most $t$.

When $q=p$ is prime and $t$ is fixed, Theorem~\ref{thm:cyclic-small} can be combined with Theorem~\ref{thm:large-known} and a direct-product form of Kravitz's logarithmic result. We record the latter in the form needed here.

\begin{theorem}[Logarithmic direct products, Kravitz {\cite[Remarks~3--5]{KravitzSmall}}]
\label{thm:kravitz-direct}
Let $H$ be a fixed strongly sequenceable abelian group. Then, as
$p\to\infty$ over the primes, every subset
\[
        S\subseteq(H\times\mathbb Z_p)\setminus\{0\}
\]
with
\[
        |S|
        \leq
        (1+o_{p\to\infty}(1))
        \frac{\log p}{\log\log p}
\]
has a valid ordering. 
{More precisely, writing
$\pi_{\mathbb Z_p}\colon H\times\mathbb Z_p\to\mathbb Z_p$
for the canonical projection onto the second factor, if
\[
\bigl|\pi_{\mathbb Z_p}(S)\bigr|
\leq
\frac{\log p}{\log\log p},
\]
then $S$ has a sequencing.}
\end{theorem}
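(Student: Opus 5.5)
The statement is quoted from Kravitz \cite[Remarks~3--5]{KravitzSmall}, so the plan is to redo his rectification argument with the $H$-coordinate carried along. Write $\pi=\pi_{\mathbb Z_p}$, $X=\pi(S)\setminus\{0\}$, $m=|X|$, and for each $x\in\pi(S)$ let $S_x=S\cap(H\times\{x\})$. Each fiber has $|S_x|\leq|H|$. So if $|S|\leq(1+o(1))\log p/\log\log p$ then $m$ obeys the same bound, and it is enough to prove the more precise second form of the statement. In fact I would prove the weaker conclusion of a valid ordering under the hypothesis $m\leq(1+o(1))\log p/\log\log p$.

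\textbf{Step 1 (the zero fiber).} Regard $S_0$ as a subset of $H\setminus\{0\}$. Since $H$ is strongly sequenceable, $S_0$ has a sequencing, and I place these elements first. Their partial sums are pairwise distinct: under a sequencing the only permitted coincidence is between the initial $0$ and the final sum, and $0$ is not one of the partial sums of a valid ordering. Every later partial sum will have nonzero projection, so it cannot collide with this block.

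\textbf{Step 2 (rectification).} Apply Dirichlet's simultaneous approximation theorem to the $m$ residues $x/p$, $x\in X$. This gives $1\leq\mu<p$ with $\|\mu x/p\|\leq p^{-1/m}$ for every $x\in X$. Dilating $\mathbb Z_p$ by the unit $\mu$ preserves distinctness of partial sums, so we may assume each $x\in X$ is represented by an integer $\tilde x$ with $0<|\tilde x|\leq p^{1-1/m}$. The total mass is then at most $|H|\,m\,p^{1-1/m}$. I want this to be less than $p/2$, which needs $p^{1/m}>2|H|m$, i.e.\ $\log p>m\log(2|H|m)$. This holds for $m\leq\log p/\log\log p$ and $p$ large, because $|H|$ is fixed. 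Consequently every integer partial sum of the projections lies in $(-p/2,p/2)$, and distinct integers there are distinct mod $p$. After the rectification, a collision between two later partial sums forces equal integer projections.

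\textbf{Step 3 (ordering the nonzero fibers).} I would list the positive values of $X$ and then the negative ones, each fiber kept as a contiguous block. Inside a block with value $\tilde x\neq0$ the integer projection moves strictly monotonically, so there are no collisions within a block, and the same holds across blocks of the same sign. The main obstacle is the interaction between the ascending phase and the descending phase: an integer value reached on the way up may be revisited on the way down. Several tools are available to fix this. One can choose which phase comes first according to the sign of the total sum, reorder the values inside each phase, for instance putting the largest-modulus negatives first so the descent overshoots the positions visited during the ascent, or interleave single elements as in Kravitz's argument for $\mathbb Z_p$. When integer projections still coincide, the $H$-coordinates of the elements within a fiber can be permuted to separate the two sums. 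I would first try Kravitz's original $\mathbb Z_p$ ordering of the multiset of projections with multiplicities $|S_x|$, checking that his proof uses only the integer structure and tolerates bounded multiplicities. The $H$-coordinates would then be used only as a tie-breaker. Because $|H|$ is fixed, it costs only the constant in the bound $p^{1/m}>2|H|m$, which the $(1+o(1))$ factor absorbs.
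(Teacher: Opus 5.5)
The paper gives no proof of this statement; it quotes it from Kravitz's remarks \cite{KravitzSmall}, so your proposal has to stand on its own, and as written it has genuine gaps. Your Step~2 is the standard rectification and is fine, up to the harmless constant in Dirichlet's theorem: after dilating by $\mu$, every partial sum has second coordinate in $(-p/2,p/2)$, so it suffices to order the lifted set in $H\times\mathbb Z$.

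\textbf{Step~1 is wrong.} The claim that every partial sum after the $S_0$-block has nonzero projection fails whenever the projections of $S\setminus S_0$ sum to $0$ modulo $p$, because the final partial sum then lies in $H\times\{0\}$. Concretely, take $H=\mathbb Z_3$ and $S=\{(1,0),(1,1),(2,-1)\}$. Putting $S_0=\{(1,0)\}$ first gives the partial sum $(1,0)$. The final partial sum is $(4,0)=(1,0)$ however you order the other two elements, yet $(1,1),(1,0),(2,-1)$ is valid. Worse, if $\Sigma(S_0)=0$ and $\Sigma(S\setminus S_0)=0$, as for $S=\{(1,0),(2,0),(1,1),(2,-1)\}$, then no valid ordering keeps $S_0$ contiguous. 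At the start, the block's last partial sum equals the final total. After a nonempty prefix, the block returns to its entry vertex. (The split ordering $(1,0),(1,1),(2,0),(2,-1)$ is valid.) So the placement of the zero fibre must depend on these sums and may require splitting it. This is where the hypothesis on $H$ genuinely enters: a sequencing of $S_0$ with nonzero sum is what stops the block from revisiting its entry vertex. You invoke strong sequenceability but only ever use a valid ordering of $S_0$.

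\textbf{Step~3 is the whole content of the theorem and is not carried out.} Via your Step~2 in one direction and reduction modulo a large prime in the other, the statement is equivalent to this: every finite subset of $(H\times\mathbb Z)\setminus\{0\}$ has a valid ordering (a sequencing, for the second sentence). For $H=\{0\}$ this says every finite set of nonzero integers has a valid ordering, and you give no argument for it. You name the ascent/descent collision as the main obstacle and then list devices to try.

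The natural monotone choices already fail for $\{1,2,3,-1,-2\}$. The positives in increasing order give $1,3,6$, and the negatives in either order then revisit $3$. So a specific rule must be stated and proved.

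The proposed tie-break cannot close the gap either. Permuting $H$-coordinates inside a fibre block leaves the partial sums at the ends of the blocks unchanged. With singleton fibres, in particular $H=\{0\}$, there is nothing to permute. What is missing is an actual lemma in $H\times\mathbb Z$, for instance proved by induction on $|S|$, that rules out these coincidences.

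Finally, you explicitly settle for a valid ordering. The sequencing asserted in the second sentence is therefore not addressed; it additionally requires that no nonempty proper prefix sums to $0$.
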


The result of \cite{CostaDellaFioreFontanaVena} also gives an explicit family of strongly sequenceable groups. Indeed, if $2\leq t\leq21$, then every subset of $\mathbb Z_t\setminus\{0\}$ has size at most $20$ and hence admits a sequencing by \cite{CostaDellaFioreFontanaVena}. Thus $\mathbb Z_t$ is strongly sequenceable for every $2\leq t\leq21$. Combining this observation with the results above gives the following consequence.

\begin{corollary}[Almost-prime consequences]
\label{cor:prime-cases}
There exists $p_0$ such that, for every integer $2\leq t\leq21$ and every prime $p\geq p_0$ with $p\nmid t$, every subset of $\mathbb Z_{tp}\setminus\{0\}$ has a valid ordering. More generally, if $t\geq2$ is fixed and $\mathbb Z_t$ is strongly sequenceable, then the same conclusion holds for every sufficiently large prime $p\nmid t$.
\end{corollary}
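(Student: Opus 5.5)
The plan is to split by the size of $S$ into three overlapping ranges and invoke, in each range, one of the results already stated. Fix $t\geq2$ with $\mathbb Z_t$ strongly sequenceable, and let $p\nmid t$ be a large prime. Since $(t,p)=1$, the Chinese remainder theorem gives $\mathbb Z_{tp}\cong\mathbb Z_t\times\mathbb Z_p$. Valid orderings are preserved under group isomorphism, so it suffices to treat $H\times\mathbb Z_p$ with $H=\mathbb Z_t$. Let $c>0$ be the absolute constant of \cref{thm:large-known}, and choose $\alpha$ with $0<\alpha<c/2$; the reason for this choice appears below. Let $C=C(\alpha,t)$ be the constant of \cref{thm:cyclic-small}, applied with $T=t$ and $q=p$, so that $\lambda=P^-(p)=p$.

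The three ranges are as follows.
\begin{enumerate}
\item \emph{Small range, $|S|<C$.} Here $C$ depends only on $t$ and $\alpha$, hence only on $t$. For $p$ large enough we have $C\leq\frac{\log p}{\log\log p}$. Then $|\pi_{\mathbb Z_p}(S)|\leq|S|\leq\frac{\log p}{\log\log p}$, and the precise form of \cref{thm:kravitz-direct}, applied to the fixed strongly sequenceable group $H=\mathbb Z_t$, yields a sequencing of $S$. A sequencing is in particular a valid ordering.
\item \emph{Intermediate range, $C\leq|S|\leq p^{1-\alpha}$.} \Cref{thm:cyclic-small} applies directly with $k=tp$ and $1\leq t\leq T$.
\item \emph{Large range, $|S|>p^{1-\alpha}$.} We want $p^{1-\alpha}\geq(tp)^{1-c}$, so that \cref{thm:large-known} covers this range. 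This inequality is equivalent to $p^{c-\alpha}\geq t^{1-c}$. It holds once $p$ is large in terms of $t$, because $\alpha<c$.
\end{enumerate}
The margin $\alpha<c/2$ is more than needed, but it makes the overlap of the intermediate and large ranges transparent.

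Taking $p_0(t)$ to be the maximum of the thresholds from the small and large ranges proves the general statement. For the first statement, note that $\mathbb Z_t$ is strongly sequenceable for every $2\leq t\leq21$ by \cite{CostaDellaFioreFontanaVena}, as observed above. Since there are only finitely many such $t$, the uniform threshold $p_0=\max_{2\leq t\leq21}p_0(t)$ works. The condition $p\nmid t$ is automatic once $p>21$.

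The only delicate point is the bookkeeping of thresholds. The constant $C$ from \cref{thm:cyclic-small} must be independent of $p$, which holds because it depends only on $(\alpha,T)$. In the small range, the condition in \cref{thm:kravitz-direct} involves the projection, and this is harmless since $|\pi_{\mathbb Z_p}(S)|\leq|S|$. No new argument is needed beyond checking that the three ranges overlap for all sufficiently large $p$.
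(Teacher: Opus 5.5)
Your proposal is correct and follows essentially the same route as the paper. The paper also covers bounded cardinalities by \cref{thm:kravitz-direct}, the range $C\leq|S|\leq p^{1-\alpha}$ by \cref{thm:cyclic-small} with $q=p$, and large sets by \cref{thm:large-known} via the overlap condition $p^{1-\alpha}\geq(tp)^{1-c}$ for large $p$. It likewise takes a maximum over the finitely many $2\leq t\leq21$. Your explicit use of $\mathbb Z_{tp}\cong\mathbb Z_t\times\mathbb Z_p$ just spells out an identification the paper leaves implicit.
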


Indeed, let $c$ be the constant in Theorem~\ref{thm:large-known}. For $0<\alpha<c$, the polynomial range and the large-set range overlap whenever
\begin{equation}
\label{eq:overlap}
(P^-(q))^{1-\alpha}\geq (tq)^{1-c}.
\end{equation}
When $q=p$ is prime and $t$ is fixed, this inequality holds for all sufficiently large $p$. The bounded cardinalities below the threshold in Theorem~\ref{thm:cyclic-small} are covered by Theorem~\ref{thm:kravitz-direct}, provided that $\mathbb Z_t$ is strongly sequenceable.

We now turn to the second group of intermediate results, which addresses the periodic obstruction responsible for the loss in the Kneserized anticoncentration argument. An inverse theorem shows that failure of the stabilizer-free growth needed for prime-type anticoncentration forces almost all of the set into a proper subgroup or one of its cosets. We treat this structured alternative by \emph{reverse absorption}: the exceptional elements are consumed first, while the regular bulk is preserved for the final completion. This reverse-absorption viewpoint is related to reservoir and greedy-completion methods in weak sequenceability \cite{CostaWeakMultiset,CD1} and, more broadly, to absorption-based constructions in group-theoretic combinatorics \cite{BBKMM,MuyesserCycleType,MuyesserPokrovskiy}. The inverse theorem is the conceptual form of the obstruction; the applications use the quantitative, conditioned chain version developed in Section~\ref{sec:kneser-anticoncentration}.

In the structured branches of the argument, we will also use the following bounded-cardinality input.

\begin{theorem}[Small sets by weak Freiman rectification, Costa--Della Fiore \cite{CostaDellaFioreOneShot}]
\label{thm:cdf-known}
Let $s\geq1$. If every prime factor of $m$ is larger than $s!/2$, then every $s$-element subset of $\mathbb Z_m\setminus\{0\}$ has a valid ordering.
\end{theorem}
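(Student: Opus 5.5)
The approach is a weak rectification. The statement only involves finitely many additive relations among the $s$ elements of $S=\{x_1,\ldots,x_s\}$, namely which subsets have sum zero. I would first transfer these relations to a torsion-free setting where valid orderings are easier to build, and then pull the ordering back. The one constraint the transfer must respect is that every subset $T\subseteq S$ with $\sum_{x\in T}x=0$ in $\mathbb Z_m$ must become a zero-sum subset over $\mathbb Q$. Granting such a transfer, suppose an ordering has distinct partial sums over $\mathbb Q$. A coincidence of two partial sums in $\mathbb Z_m$ means that a consecutive block sums to zero in $\mathbb Z_m$. Its lifted block would then sum to zero over $\mathbb Q$, which is a contradiction.

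\textbf{Step 1 (lifting the relation module).} Consider the homomorphism $\Phi\colon\mathbb Z^s\to\mathbb Z_m$, $e_i\mapsto x_i$, and the finite set $\mathcal R=\{\mathbf 1_T: T\neq\emptyset,\ \sum_{T}x_i=0\}$ of $0/1$ vectors in its kernel. I would look for a vector $y\in\mathbb Q^s$ with $y_i\neq0$ for all $i$ and $\langle \mathbf 1_T,y\rangle=0$ for every $\mathbf 1_T\in\mathcal R$, and with as few extra vanishing subset sums as possible. Let $V=\operatorname{span}_{\mathbb Q}\mathcal R$. We need $V^\perp$ to avoid every coordinate hyperplane, that is, $e_i\notin V$ for each $i$. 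This follows if every rational relation generated by $\mathcal R$ already holds in $\mathbb Z_m$. Suppose $e_i=\sum_j c_j\mathbf 1_{T_j}$ with $c_j\in\mathbb Q$, the sum running over a linearly independent subfamily of $\mathcal R$. By Cramer's rule, $D e_i$ is an integer combination of the $\mathbf 1_{T_j}$, where $D$ is a nonzero minor of a $0/1$ matrix of size at most $s$. Applying $\Phi$ gives $D x_i=0$ in $\mathbb Z_m$. If every prime factor of $m$ exceeds $|D|$, then $D$ is invertible mod $m$, so $x_i=0$, contradicting $x_i\neq0$. The determinant of a $0/1$ matrix of order $s$ is trivially at most $s!$, and the hypothesis $P^-(m)>s!/2$ is presumably the sharp form of this bound for the specific determinants that arise. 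A generic $y\in V^\perp$ then has all coordinates nonzero. It also has no zero subset sums beyond those forced by $V$. Moreover, for every subset $T$, $\mathbf 1_T\in V$ forces $\sum_T x=0$, by the same Cramer argument.

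\textbf{Step 2 (ordering in the torsion-free setting).} We now have a multiset $\{y_1,\ldots,y_s\}$ of nonzero rationals whose zero-sum subsets are exactly those of $S$. We need an ordering in which no consecutive block sums to zero. Distinctness of the $x_i$ gives $x_i-x_j\neq0$, and hence $y_i\neq y_j$ by the same argument. So we have a set of nonzero rationals, and we can use the known fact that such a set in $\mathbb Z$ has a valid ordering. For instance, one can take positives in increasing order interleaved with negatives by a greedy rule that keeps the partial sums strictly monotone in absolute value, or simply invoke the torsion-free case of the conjecture.

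\textbf{Main obstacle.} The delicate point is Step 1's determinant bound. Namely, the relevant minors must be controlled by $s!/2$ rather than, say, the Hadamard bound or $s!$. I would first try to show that only relations coming from consecutive blocks of a fixed ordering matter, so that the matrices are interval (consecutive-ones) matrices or differences thereof. The remaining factor of $2$ would then come from the reversal symmetry of orderings. If that fails, I would fall back on the cruder hypothesis $P^-(m)>s!$ and then tighten it.
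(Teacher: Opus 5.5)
The paper gives no proof of this statement; it imports it from \cite{CostaDellaFioreOneShot}, so your proposal has to stand on its own. Its architecture is sound. The lift $y\in V^\perp$ annihilates every zero-sum subset. The adjugate argument shows that $De_i$ and $D(e_i-e_j)$ are integer combinations of the $\mathbf 1_{T_j}$, so $Dx_i=0$ and $D(x_i-x_j)=0$ in $\mathbb Z_m$. Pulling back a valid ordering from $\mathbb Q$ is also correct.

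\textbf{The gap in Step 1.} The gap is exactly the step you flag. With $|D|\le s!$ you only prove the result under $P^-(m)>s!$, and the repair you sketch would not work. The lift is built before any ordering exists, so $y$ must kill \emph{every} zero-sum subset of $S$, not only intervals of some fixed ordering. The matrices involved are therefore arbitrary $0/1$ matrices, and restricting to consecutive-ones matrices is circular. The factor $2$ also has nothing to do with reversal symmetry.

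The missing observation is elementary. Let $A$ be an $r\times r$ matrix with entries in $\{0,1\}$, $r\geq 2$, and put $\tau=(1\,2)$. Pairing each even permutation $\pi$ with $\pi\tau$ in the Leibniz expansion writes $\det A$ as a sum of $r!/2$ terms
$\prod_i A_{i,\pi(i)}-\prod_i A_{i,\pi\tau(i)}\in\{-1,0,1\}$. Hence $|\det A|\leq r!/2$. When $r=1$ one has $|D|=1$, so in all cases $0<|D|\leq\max\{1,s!/2\}$. Every prime $p\mid m$ satisfies $p\geq 2$ and $p>s!/2$, hence $p\nmid D$. With this, Step~1 goes through under exactly the stated hypothesis.

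\textbf{The gap in Step 2.} The explicit rule you propose cannot prove the integer statement. Take $\{1,-1,2,-2\}\subseteq\mathbb Z$: the total sum is $0$, so strictly monotone $|t_j|$ would force $|t_1|>|t_2|>|t_3|>0$ with $|t_1|\leq 2$. That is impossible in $\mathbb Z$.

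The ordering must therefore come from the torsion-free theorem that every finite subset of $\mathbb Z\setminus\{0\}$ has a valid ordering. This is the integer input on which rectification arguments rest, and you must cite it or prove it. It is the only place where an ordering is actually produced; scale the $y_i$ to integers before applying it.

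Also, what you need over $\mathbb Q$ is a valid ordering, i.e.\ no zero-sum block $[a,b]$ with $a\geq 2$, not the absence of all zero-sum blocks. When $\sum_{x\in S}x=0$ one has $\mathbf 1_S\in V$, so the full sequence sums to zero.

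With these two repairs your argument proves the statement.
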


\journalchg{This theorem covers the bounded-cardinality cases that arise in the structured argument of Section~\ref{sec:bounded-prime-power}.}

\journalchg{We now carry out the recursive argument for the general moduli covered by Theorem~\ref{thm:comparable-prime-power}, iterating the dichotomy between non-periodic anticoncentration and structured concentration through the prime-power layers of the modulus.}

\subsection{Organization of the paper}

\journalchg{Section~2 collects the additive preliminaries. Section~3 develops the one-slice and interval-chain estimates, first in their ordinary form and then in the quantitative non-periodic form, and isolates the corresponding inverse theorem. Section~4 abstracts the Pham--Sauermann local-repair argument and proves the polynomial-range theorem. Section~5 iterates the dichotomy between non-periodic anticoncentration and structured concentration for moduli with a bounded number of prime factors, counted with multiplicity, and proves Theorem~\ref{thm:comparable-prime-power}. Section~6 contains the proof of the layered local-repair lemma, and Section~7 concludes with a possible extension of the method beyond cyclic groups.}

\section{Preliminaries}

We record the notation for Boolean-slice sums. If $A$ is a finite subset of an abelian group, and $R\subseteq A$, write
\[
        \Sigma(R)=\sum_{x\in R}x.
\]
For a positive integer $m$, a \emph{uniform $m$-slice} of $A$ is a uniformly random subset $R\subseteq A$ of size $m$.

For every positive integer $M$, we use characters of $\Z_M$ in the
form
\[
        e_M(x)=\exp(2\pi i x/M).
\]
For $x\in\Z_M$ define
\[
        \|x\|_M=\min_{a\in\Z}|x/M-a|.
\]
The elementary inequalities { (cf. \cite[Section 2]{PhamSauermann}) }
\[
        \mathrm{Re}\, e_M(x)\le 1-2\|x\|_M^2,
        \qquad
        \|x_1+\cdots+x_r\|_M^2
        \le r\sum_{i=1}^r\|x_i\|_M^2
\]
are used throughout.

{
We shall also use the standard lower-tail Chernoff bound for
hypergeometric random variables. If $X$ counts the marked elements in
a uniformly random $k$-subset of a finite set and $\mu=\mathbb E [X]$,
then, for every $0<\delta<1$,
\[
        \Pr\bigl[X\leq(1-\delta)\mu\bigr]
        \leq
        \exp\left(-\frac{\delta^2\mu}{2}\right).
\]
In particular,
\[
        \Pr[X\leq\mu/2]\leq e^{-\mu/8}.
\]
}

{
If $\mathcal G\subseteq\mathcal F$ are $\sigma$-algebras and $Y$ is
integrable, then
\[
 \mathbb E\!\left[\mathbb E[Y\mid\mathcal F]\mid\mathcal G\right]
 =\mathbb E[Y\mid\mathcal G].
\]
We refer to this identity, and to its event-probability specialization,
as the \emph{tower property}.  We also write $X\asymp Y$ when
$cY\leq X\leq CY$ for positive constants $c,C$ independent of the
parameters tending to infinity; a subscript indicates the parameters on
which these constants are allowed to depend.
}

We shall use Kneser's theorem { (see \cite[Theorem 5.5]{TaoVu})} in the following standard form.

\begin{theorem}[Kneser]\label{thm:kneser}
Let $A_1,\ldots,A_r$ be nonempty subsets of a finite abelian group $G$, and let
\[
        H=\Stab(A_1+\cdots +A_r)
        =\{h\in G:A_1+\cdots +A_r+h=A_1+\cdots +A_r\}.
\]
Then
\[
        |A_1+\cdots +A_r|
        \ge
        |A_1+H|+\cdots+|A_r+H|-(r-1)|H|.
\]
\end{theorem}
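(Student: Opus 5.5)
The plan is to derive the $r$-set inequality from the classical two-set form of Kneser's theorem. The two-set form says that for nonempty $A,B\subseteq G$ with $K=\Stab(A+B)$ we have $|A+B|\ge|A+K|+|B+K|-|K|$. I would not induct on $r$ directly in $G$: the stabilizers of the intermediate partial sums can differ from $H$, and comparing the losses $|A_i+H'|-|H'|$ with $|A_i+H|-|H|$ for nested subgroups $H\le H'$ goes in the wrong direction. Instead, I would first pass to the quotient by $H=\Stab(A_1+\cdots+A_r)$, where every stabilizer becomes trivial.

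Let $\pi\colon G\to G/H$ be the quotient map, put $\bar A_i=\pi(A_i)$ and $\bar S=\bar A_1+\cdots+\bar A_r=\pi(A_1+\cdots+A_r)$. Since $S=A_1+\cdots+A_r$ is a union of $H$-cosets, $|S|=|\bar S|\,|H|$. Also $|A_i+H|=|\bar A_i|\,|H|$ for each $i$. Moreover $\Stab(\bar S)$ is trivial: if $\bar g+\bar S=\bar S$, then $g+S+H=S+H=S$, so $g\in H$ and $\bar g=0$. Hence it suffices to prove, in the abelian group $\bar G=G/H$, that
\[
|\bar S|\ \ge\ |\bar A_1|+\cdots+|\bar A_r|-(r-1).
\]
Multiplying this by $|H|$ then gives the statement.

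The key observation is that every partial sum $P_j=\bar A_1+\cdots+\bar A_j$, for $1\le j\le r$, also has trivial stabilizer. Indeed, if $K=\Stab(P_j)$, then
\[
\bar S=P_j+\bar A_{j+1}+\cdots+\bar A_r
\]
satisfies $K+\bar S=\bar S$, so $K\le\Stab(\bar S)=\{0\}$.

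Now I would apply the two-set Kneser theorem to $P_{j-1}$ and $\bar A_j$ for $j=2,\dots,r$. The relevant stabilizer is $\Stab(P_j)=\{0\}$, so each step gives $|P_j|\ge|P_{j-1}|+|\bar A_j|-1$. Telescoping from $|P_1|=|\bar A_1|$ yields the displayed inequality with loss exactly $r-1$.

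The only genuinely nontrivial input is the two-set Kneser theorem itself, which I would quote from \cite[Theorem 5.5]{TaoVu} rather than reprove. The step I expect to need the most care is the reduction: the monotonicity argument that stabilizers of partial sums are contained in the stabilizer of the full sum, and the bookkeeping identities $|S|=|\bar S||H|$ and $|A_i+H|=|\bar A_i||H|$. Once these are in place, the argument is a routine telescoping.
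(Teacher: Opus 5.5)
Your proof is correct. The paper gives no argument for this statement: it quotes the $r$-summand form as standard, with a pointer to \cite[Theorem 5.5]{TaoVu}. What you add is the usual reduction from the two-summand theorem, and each step checks out. Since $S=A_1+\cdots+A_r$ is a union of $H$-cosets, $|S|=|H|\,|\bar S|$ and $|A_i+H|=|H|\,|\bar A_i|$. The stabilizer of $\bar S$ in $G/H$ is trivial. Because $\Stab(P_j)+\bar S=\bar S$, every partial sum $P_j$ also has trivial stabilizer, so each of the $r-1$ applications of the two-set inequality to $P_{j-1}$ and $\bar A_j$ loses exactly $1$.

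Compared with the bare citation, your route shows that only the two-set theorem is needed, and it explains why the induction is lossless. Quotienting by the final stabilizer $H$ removes the periodic structure once and for all. As a result, the generally different stabilizers of the intermediate sums in $G$ never have to be compared with $H$.
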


{
For finite subsets $A,B$ of an abelian group and a relation
$\mathcal S\subseteq A\times B$, write
\[
        A\stackrel{\mathcal S}{+}B
        :=\{a+b:(a,b)\in\mathcal S\}.
\]
For a set $A$, put
\[
        \widehat 2A:=\{a+b:a,b\in A,\ a\neq b\}.
\]
The only consequence of Huicochea's Kneser-type theorem needed below
is the following specialization.
}

\begin{theorem}[Restricted double-sum estimate]
\label{thm:huicochea-restricted}
Let $A$ be an $m$-element subset of a finite abelian group of odd order,
where $m\geq2$.  If
\[
        \widehat 2A\neq A+A,
\]
then
\begin{equation}
\label{eq:huicochea-specialized}
        |\widehat 2A|
        \geq 2m-2\sqrt{2m}-3.
\end{equation}
\end{theorem}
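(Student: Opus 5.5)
The plan is to obtain \eqref{eq:huicochea-specialized} by applying Huicochea's Kneser-type theorem for restricted sumsets $A\stackrel{\mathcal S}{+}B$ with $B=A$ and $\mathcal S=\{(a,b)\in A\times A: a\neq b\}$, so that $A\stackrel{\mathcal S}{+}A=\widehat 2A$. The omitted relation is the diagonal, so exactly $|A\times A\setminus\mathcal S|=m$ pairs are missing. Huicochea's theorem, in the form I would quote it, gives a Kneser-type lower bound with an error term of order the square root of the number of missing pairs:
\[
|\widehat 2A|\ \geq\ 2|A+H|-|H|-E(m),
\]
where $H=\Stab(\widehat 2A)$ and $E(m)$ is an explicit error of size roughly $2\sqrt{2m}+O(1)$. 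Its precise value comes from the statement of Huicochea's theorem.

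Since $|A+H|\geq m$, the only issue is a nontrivial $H$. If $H=\{0\}$, the bound is already $2m-1-E(m)$. If $H\neq\{0\}$, I use the hypothesis $\widehat 2A\neq A+A$. Every element of $A+A$ is either in $\widehat 2A$ or of the form $2a$, so there is some $a\in A$ with $2a\notin\widehat 2A$.

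I claim that $(a+H)\cap A=\{a\}$. Suppose $a+h\in A$ with $h\in H\setminus\{0\}$. Then $2a+h=a+(a+h)$ is a sum of two distinct elements of $A$, so it lies in $\widehat 2A$. Since $\widehat 2A$ is $H$-periodic, this gives $2a\in\widehat 2A$, a contradiction.

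Hence the coset $a+H$ contributes one element of $A$ but $|H|$ elements of $A+H$. The other elements of $A$ fill at least $m-1$ further positions of $A+H$ outside this coset, so $|A+H|\geq m-1+|H|$. Substituting,
\[
|\widehat 2A|\ \geq\ 2(m-1+|H|)-|H|-E(m)\ =\ 2m-2+|H|-E(m),
\]
which is even stronger than the case $H=\{0\}$.

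Odd order is where the diagonal elements $2a$ are handled. Since $x\mapsto 2x$ is injective, the $m$ excluded pairs produce $m$ distinct values $2a$. This is the hypothesis under which Huicochea's error term takes the $\sqrt{2m}$ shape rather than degrading, and I expect it to enter only through the verification of that theorem's hypotheses.

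The main obstacle is bookkeeping rather than ideas: I need to match the exact form of Huicochea's inequality (its error term and any additional hypotheses, such as lower bounds on $|\mathcal S|$ or the group having odd order) so that the constants come out as $2\sqrt{2m}+3$. I would first write $E(m)$ explicitly from the theorem with $|A|=|B|=m$ and $m$ missing pairs, and then check that $2m-1-E(m)\geq 2m-2\sqrt{2m}-3$. Small $m$ (for example $m=2,3$), where the right-hand side is negative or tiny, can be dispatched trivially.
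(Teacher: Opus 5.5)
There is a genuine gap, and it is in the first step. You attribute to Huicochea the inequality $|\widehat 2A|\geq 2|A+H|-|H|-E(m)$ with $H=\Stab(\widehat 2A)$ and $E(m)=O(\sqrt m)$. That inequality is false, so no version of his theorem can supply it.

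Take $G=\mathbb Z_{3n}$ with $n\geq3$ odd, $H=3\mathbb Z_{3n}$ (of order $n$), and $A=H\cup\{1\}$, so that $m=n+1$. Then:
\begin{itemize}
\item $\widehat 2A=H\cup(1+H)$, while $2\in(A+A)\setminus\widehat 2A$;
\item the stabilizer of $\widehat 2A$ is exactly $H$, and $|A+H|=2n$.
\end{itemize}
Your inequality would give $2n\geq 3n-E(m)$, i.e.\ $E(m)\geq m-1$. This example satisfies $\widehat 2A\neq A+A$ and attains $|A+H|=m-1+|H|$ exactly. It is therefore precisely your nontrivial-stabilizer branch, and the ``even stronger'' bound $|\widehat 2A|\geq 2m-2+|H|-E(m)$ from that branch is contradicted by the actual value $2m-2$.

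Your whole argument rests on this inequality, including the only place where you use $\widehat 2A\neq A+A$. So the proof cannot be completed as planned. Calling the remaining work ``bookkeeping'' hides that the guessed form of the cited theorem is wrong, not merely imprecise in its constants.

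What is needed, and what the paper uses, is the non-full-sumset alternative of Huicochea's theorem. Let $\mathcal S=\{(a,b)\in A\times A:a\neq b\}$.
\begin{itemize}
\item Exactly one pair is excluded from each row and each column of $A\times A$.
\item The popular-sum hypothesis holds: if $c\in A+A$ has at least two ordered representations, at most one is diagonal, because $2a=2a'$ forces $a=a'$ in a group of odd order. This is the role of odd order you correctly anticipated.
\end{itemize}
The theorem then gives a dichotomy: either $A\stackrel{\mathcal S}{+}A=A+A$, or $|A\stackrel{\mathcal S}{+}A|>(1-\sqrt{2/m})\,2m-3=2m-2\sqrt{2m}-3$. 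The hypothesis $\widehat 2A\neq A+A$ rules out the first alternative, and the bound follows at once. No stabilizer or case analysis is involved.
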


\begin{proof}
{
Let
\[
        \mathcal S:=\{(a,b)\in A\times A:a\neq b\}.
\]
Then $A\stackrel{\mathcal S}{+}A=\widehat 2A$, and exactly one pair is
excluded from each row and column of $A\times A$.  Moreover, if
$c\in A+A$ has at least two ordered representations, at most one can
be diagonal, because multiplication by $2$ is injective in a group of
odd order.  Thus the popular-sum hypothesis in Huicochea's theorem is
satisfied.  Since $\widehat 2A\neq A+A$, its quantitative
non-full-sumset alternative~\cite{Huicochea} gives
\[
        |\widehat 2A|>
        \left(1-\sqrt{\frac2m}\right)2m-3
        =2m-2\sqrt{2m}-3,
\]
which implies \eqref{eq:huicochea-specialized}.
}
\end{proof}

In the prime case $G=\Z_p$, the only stabilizers are trivial or all of
$G$, and this gives the Cauchy--Davenport growth used in
\cite{PhamSauermann}. In composite cyclic groups, the stabilizer may be
nontrivial. {This is the source of the additional
$1/P^-(q)$ term in the anticoncentration estimates of
Section~\ref{sec:kneser-anticoncentration}.}

\section{Kneserized anticoncentration and interval chains}
\label{sec:kneser-anticoncentration}

{
In this section we work directly in the cyclic group $\Z_{tq}$. We
establish the Kneserized one-slice estimate, the ordinary
interval-chain estimates used in the cyclic application, and the
quantitative non-periodic variants needed in the recursive argument.
}

Throughout this section, we write
\[
        N=tq,
        \qquad
        1\leq t\leq T,
        \qquad
        (t,q)=1,
        \qquad
        \lambda=P^-(q).
\]
{Here $P^-(q)$ denotes the least prime divisor of $q$, with the
convention $P^-(1)=\infty$; accordingly, $\lambda^{-\delta}=0$
when $q=1$ and $\delta>0$.}
The constants are allowed to depend on the fixed bound $T$ and on the
fixed number of equations under consideration.

{\subsection{One-slice anticoncentration in composite cyclic groups}

For a character $\chi\in\widehat{\mathbb Z_N}$, we use the same symbol $\chi\in\mathbb Z_N$ for the corresponding frequency, so that $\chi$ acts on $x\in\mathbb Z_N$ as $x\mapsto e_N(\chi x)$. Let $A\subseteq\mathbb Z_N$ have size $n$, and let $1\leq m\leq n$. Define
\begin{equation}
	\label{eq:deterministic-energy}
	\Psi_{A,m}(\chi):=\frac{m}{n^2}\sum_{x,y\in A}\|\chi(x-y)\|_N^2
\end{equation}
and, for $s>0$,
\begin{equation}
	\label{eq:low-energy-set}
	B_s(A,m):=\{\chi\in\mathbb Z_N:\Psi_{A,m}(\chi)\leq s\}.
\end{equation}
When $A$ and $m$ are fixed, we abbreviate $B_s(A,m)$ to $B_s$.

\begin{lemma}[Fourier reduction for a Boolean slice]
	\label{lem:boolean-slice-fourier-reduction}
	There exist absolute constants $C_0,C_{\mathrm E},C_{\mathrm F}\geq1$ and $c_0,c_{\mathrm F}\in(0,1)$ such that the following holds. Let $A\subseteq\mathbb Z_N$ have size $n\geq2$, let
	\[
	C_0\log(2+n)\leq m\leq c_0\frac{n}{\log(2+n)},
	\]
	and let $R$ be a uniformly random $m$-subset of $A$. Then, for every $z\in\mathbb Z_N$,
	\begin{equation}
		\label{eq:boolean-slice-fourier-reduction}
		\Pr[\Sigma(R)=z]\leq\frac1N+\frac{C_{\mathrm F}}{N}\sum_{\ell=0}^{\left\lfloor\log_2(m/C_{\mathrm E})\right\rfloor}e^{-c_{\mathrm F}2^\ell}\bigl(|B_{C_{\mathrm E}2^\ell}|-1\bigr)+3n^{-9}.
	\end{equation}
\end{lemma}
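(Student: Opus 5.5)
We start from Fourier inversion on $\mathbb Z_N$:
\[
\Pr[\Sigma(R)=z]=\frac1N\sum_{\chi\in\mathbb Z_N}e_N(-\chi z)\,\mathbb E\bigl[e_N(\chi\Sigma(R))\bigr]\leq\frac1N+\frac1N\sum_{\chi\neq0}\bigl|\mathbb E\,e_N(\chi\Sigma(R))\bigr|.
\]
The task is then to bound the slice characteristic function $\phi(\chi):=|\mathbb E\,e_N(\chi\Sigma(R))|$ by something like $\exp(-c\Psi_{A,m}(\chi))$, up to a negligible error $O(n^{-9}N^{-1})$ per frequency. This gives $3n^{-9}$ after summing over at most $N$ frequencies.

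\textbf{Step 1: decouple the uniform slice into independent pairs.} We follow Pham--Sauermann. Realize $R$ by taking a uniformly random sequence of distinct elements $x_1,y_1,\ldots,x_m,y_m$ of $A$ (this needs $2m\leq n$, which the upper bound on $m$ guarantees). Then choose independent fair coins $\epsilon_j$ and put into $R$ either $x_j$ or $y_j$. By exchangeability, the resulting $m$-set is uniform. Conditionally on the pairs,
\[
\bigl|\mathbb E[e_N(\chi\Sigma(R))\mid\text{pairs}]\bigr|=\prod_{j=1}^m|\cos(\pi\chi(x_j-y_j)/N)|\leq\exp\Bigl(-c\sum_j\|\chi(x_j-y_j)\|_N^2\Bigr),
\]
using $\mathrm{Re}\,e_N(x)\leq1-2\|x\|_N^2$ in the form $|\cos\pi\theta|\leq 1-c\|\theta\|^2$. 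The random variable $Y=\sum_j\|\chi(x_j-y_j)\|_N^2$ has mean $\asymp\frac{m}{n^2}\sum_{x,y}\|\chi(x-y)\|_N^2=\Psi_{A,m}(\chi)$, up to the diagonal correction, which only helps.

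\textbf{Step 2: concentration of $Y$.} This is the main obstacle. We need $\Pr[Y\leq c\Psi]$ to be at most $e^{-c'\Psi}$, or at most $n^{-10}$ once $\Psi\gtrsim\log n$. The summands lie in $[0,1/4]$ and are nearly independent: sampling without replacement is negatively associated, or one can compare with $m$ independent pairs at total-variation cost $O(m^2/n)$, which is too crude. So the plan is a martingale/Freedman lower-tail bound for the sum of bounded, sequentially revealed terms. The conditional mean of the $j$-th term is within $O(j/n)$ of $\Psi/m$, because at most $2j$ elements have been removed. A lower-tail inequality of Chernoff type then gives
\[
\Pr[Y\leq\Psi/4]\leq\exp(-c\Psi)\qquad\text{whenever }\Psi\geq C\log(2+n)\text{-scale terms are controlled.}
\]
Averaging then yields $\phi(\chi)\leq e^{-c\Psi(\chi)}+e^{-c\Psi(\chi)}$. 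For frequencies with $\Psi(\chi)>m$, we have $\Psi\leq m/4\cdot$const always (since $\|\cdot\|^2\leq1/4$), so the dyadic range stops at $\log_2(m/C_{\mathrm E})$. Frequencies with $\Psi\geq C_{\mathrm E}m/2$ are in fact all of the rest, and there $\phi\leq e^{-cm}\leq n^{-10}$ by $m\geq C_0\log(2+n)$.

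\textbf{Step 3: dyadic bookkeeping.} Partition the nonzero $\chi$ by $\Psi(\chi)\in(C_{\mathrm E}2^{\ell-1},C_{\mathrm E}2^\ell]$ for $0\leq\ell\leq\lfloor\log_2(m/C_{\mathrm E})\rfloor$, plus the class $\Psi\leq C_{\mathrm E}$ and the top class. Each nonzero $\chi$ in the $\ell$-th class lies in $B_{C_{\mathrm E}2^\ell}\setminus\{0\}$ and contributes at most $e^{-c_{\mathrm F}2^\ell}$, after absorbing $C_{\mathrm E}$ into $c_{\mathrm F}$ and $C_{\mathrm F}$. The trivial bound $\phi\leq1$ on the lowest class is covered by the $\ell=0$ term, since $C_{\mathrm F}e^{-c_{\mathrm F}}\geq1$ for a suitable choice. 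The top class contributes at most $N\cdot n^{-10}/N$. Summing gives \eqref{eq:boolean-slice-fourier-reduction}.

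The delicate points are making the lower-tail estimate in Step 2 uniform in $\chi$, and checking that the $2m\leq n$ sampling error and the diagonal terms fit inside the $3n^{-9}$ slack.
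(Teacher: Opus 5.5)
Your approach is different from the paper's. You split the slice into $m$ disjoint random pairs and aim for a bound $\phi(\chi)\lesssim e^{-c\Psi_{A,m}(\chi)}$ stated directly in terms of the deterministic energy. That route can work, but Step~2, which carries the whole argument, is not established, and the mechanism you propose for it fails.

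A Freedman- or Chernoff-type inequality controls $Y$ relative to the sum $\sum_j\mu_j$ of its conditional means, so everything depends on a lower bound for $\sum_j\mu_j$. Your additive estimate $|\mu_j-\Psi/m|=O(j/n)$ only gives $\sum_j\mu_j\geq\Psi-O(m^2/n)$. In the allowed range, $m^2/n$ can be as large as $c_0^2n/\log^2(2+n)$. Meanwhile the shells in the lemma have $\Psi$ anywhere between $O(1)$ and $m/4$, and each shell needs the decay $e^{-c_{\mathrm F}2^\ell}$. So a tail bound valid only for $\Psi\gtrsim\log(2+n)$ would not suffice either.

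The additive control is not merely lossy. Suppose all but $k$ elements of $A$ have $\chi x$ in a very short arc and the other $k$ lie far from it. Then $\Psi\asymp mk/n$, and each $\mu_j$ is typically $\asymp k/n$. But after $2j\geq k$ removals, a history that happened to remove the far elements has all later $\mu_i\approx0$. The argument must therefore use the randomness of the removed set. Negative association is not something you can quote for these pair functionals.

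The missing step can be supplied as follows.
\begin{itemize}
\item Put $f(x,y)=\|\chi(x-y)\|_N^2$, $g(x)=\frac1n\sum_{y\in A}f(x,y)\in[0,1/4]$ and $F=\sum_{x,y\in A}f(x,y)$, so that $\Psi_{A,m}(\chi)=mF/n^2$.
\item Let $D$ be the set of the first $2(m-1)$ sampled elements. The pair-energy remaining at any step is at least $F-2n\sum_{x\in D}g(x)$. This bound is monotone in the removed set, so only the final $D$ matters.
\item $D$ is a uniform $2(m-1)$-subset. Hoeffding's comparison and the Chernoff upper tail give $\Pr\bigl[\sum_{x\in D}g(x)>F/(4n)\bigr]\leq(8em/n)^{F/n}\leq e^{-\Psi}$ once $n\geq8e^2m$, because $F/n=(n/m)\Psi$.
\item Off this event, $\mu_j\geq\Psi/(2m)$ for all $j$. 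Since $e^{-2f}\leq1-\kappa f$ on $[0,1/4]$ with $\kappa=4(1-e^{-1/2})>1$, the process $\exp\bigl(-2\sum_{i\leq j}f(x_i,y_i)+\kappa\sum_{i\leq j}\mu_i\bigr)$ is a supermartingale.
\item Hence $\phi(\chi)\leq\mathbb E e^{-2Y}\leq e^{-\kappa\Psi/2}+e^{-\Psi}\leq2e^{-\Psi/2}$ for every $\chi$.
\end{itemize}

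Your Step~3 then becomes clean. Since $\Psi_{A,m}\leq m/4<C_{\mathrm E}2^{\lfloor\log_2(m/C_{\mathrm E})\rfloor}$, every nonzero $\chi$ lies in some shell, there is no top class, and no $n^{-9}$ term arises. Separately, a per-frequency additive error $\epsilon$ in $\phi$ costs only $\epsilon$ after the factor $1/N$. So you need $\epsilon\leq n^{-9}$, not $n^{-9}/N$.

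The paper takes a different route and never bounds a Laplace transform in terms of $\Psi$. It uses a random partition into $m$ parts of size $\asymp n/m\gg\log n$. It then shows that $\psi(\chi)<2^{\ell+1}$ forces $\chi\in B_{C_{\mathrm E}2^\ell}$ except with probability $n^{-9}$. The tools are hypergeometric Chernoff inside the parts, a union bound over elements, and a concentrated/non-concentrated case split. That is where $m\leq c_0n/\log(2+n)$ and the $3n^{-9}$ term are used. Your repaired route needs only $n/m$ larger than an absolute constant.
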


{The point of the proof is to verify that the Fourier reduction
of~\cite{PhamSauermann} is independent of the primality of the
modulus.  The genuinely additive input enters only afterward, through
the Kneser bound for the low-energy set.}

\begin{proof}
Choose $c_0>0$ so that
\[
c_0\leq\frac1{480}.
\]
Fix $C_{\mathrm E}:=2000$ and set
\[
c_{\mathrm F}:=\frac12\min\left\{1,\frac1{C_{\mathrm E}}\right\}.
\]
Choose
\[
C_{\mathrm F}\geq\max\left\{1,(1+e^{-1})e^{c_{\mathrm F}},\frac1{1-e^{-1}}\right\},
\]
and then choose $C_0$ sufficiently large that
\[
C_0\log4\geq C_{\mathrm E}
\qquad\text{and}\qquad
c_{\mathrm F}C_0\geq10+\log_2 C_{\mathrm F}.
\]
The lower bound $m\geq C_0\log(2+n)$ then implies $m\geq C_{\mathrm E}$. Since $n\geq2$ and $\log(2+n)\geq\log4$, the upper bound $m\leq c_0n/\log(2+n)$ also gives
\[
m\leq\frac{c_0n}{\log4}<\frac n4.
\]
Moreover,
\[
\frac{n}{48m}\geq\frac{\log(2+n)}{48c_0},
\]
and hence
\begin{equation}
\label{eq:partition-concentration-range}
n\exp\left(-\frac{n}{48m}\right)
\leq n(2+n)^{-1/(48c_0)}
\leq n^{1-1/(48c_0)}
\leq n^{-9},
\end{equation}
where the last inequality follows from $1/(48c_0)\geq10$.

	Choose a uniformly random ordered partition $\mathcal S=(A_1,\ldots,A_m)$ of $A$ whose part sizes differ by at most one, and then, conditional on $\mathcal S$, choose independent uniformly random elements $X_i\in A_i$. By permutation-invariance, $\{X_1,\ldots,X_m\}$ is a uniformly random $m$-subset of $A$.

We write $\Pr_{\mathcal S}$ and $\mathbb E_{\mathcal S}$ for probability and expectation with respect to the random partition $\mathcal S$. For a fixed realization of $\mathcal S$, we write $\Pr_X$ and $\mathbb E_X$ for probability and expectation with respect to the independent choices $X_1,\ldots,X_m$, conditional on $\mathcal S$. Unsubscripted $\Pr$ and $\mathbb E$ refer to the full two-stage random experiment.

For a fixed partition $\mathcal S$, Fourier inversion gives
\[
{\Pr}_X[X_1+\cdots+X_m=z]\leq\frac1N\sum_{\chi\in\mathbb Z_N}\prod_{i=1}^m\left|\mathbb E_X\!\left[e_N(\chi X_i)\right]\right|.
\]
For each $i$,
\[
\left|\mathbb E_X\!\left[e_N(\chi X_i)\right]\right|^2
=\frac1{|A_i|^2}\sum_{x,y\in A_i}\operatorname{Re}\!\left(e_N(\chi(x-y))\right)
\leq1-\frac2{|A_i|^2}\sum_{x,y\in A_i}\|\chi(x-y)\|_N^2.
\]
	Since the left-hand side is nonnegative and $(1-u)^{1/2}\leq e^{-u/2}$ for $0\leq u\leq1$, it follows that
	\begin{equation}
		\label{eq:conditional-fourier-bound}
		{\Pr}_X[X_1+\cdots+X_m=z]\leq\frac1N\sum_{\chi\in\mathbb Z_N}e^{-\psi(\chi)},
	\end{equation}
	where
	\begin{equation}
		\label{eq:random-energy}
		\psi(\chi):=\sum_{i=1}^m\frac1{|A_i|^2}\sum_{x,y\in A_i}\|\chi(x-y)\|_N^2.
	\end{equation}
	
	We next compare the random energy $\psi$ with the deterministic energy $\Psi_{A,m}$. Put
	\[
	L_{\mathrm E}:=\left\lfloor\log_2\left(\frac{m}{C_{\mathrm E}}\right)\right\rfloor.
	\]
	Since $m\geq C_{\mathrm E}$, one has $L_{\mathrm E}\geq0$. We claim that, for every integer $\ell$ with $0\leq\ell\leq L_{\mathrm E}$,
	\begin{equation}
\label{eq:energy-comparison}
\mathbb E_{\mathcal S}\!\left[\left|\{\chi\neq0:\psi(\chi)<2^{\ell+1}\}\right|\right]
\leq Nn^{-9}+|B_{C_{\mathrm E}2^\ell}|-1.
\end{equation}
	The argument is the one used in \cite[Lemmas~3.1--3.3]{PhamSauermann}; we give the details needed to verify that it remains valid for an arbitrary modulus $N$.
	
	Fix $0\leq\ell\leq L_{\mathrm E}$ and put $u:=2^\ell$. Let $\mathcal C_\ell$ be the set of nonzero $\chi\in\mathbb Z_N$ for which there exists $y_\chi\in\mathbb Z_N$ such that
	\[
	\left|\left\{x\in A:\|\chi x-y_\chi\|_N\leq8\sqrt{\frac{u}{m}}\right\}\right|\geq\frac{3n}{4}.
	\]
	Because $m\leq n/4$, every part satisfies $|A_i|\leq\sqrt2\,n/m$, and hence
	\begin{equation}
		\label{eq:random-energy-lower-bound}
		\psi(\chi)\geq\frac{m^2}{2n^2}\sum_{i=1}^m\sum_{x,y\in A_i}\|\chi(x-y)\|_N^2.
	\end{equation}
	Suppose first that $\chi\notin\mathcal C_\ell$. Fix $x'\in A$ and condition on the index of the part containing $x'$. The set of other elements in that part is a uniformly random $k$-subset of $A\setminus\{x'\}$, where
\[
k\geq\left\lfloor\frac{n}{m}\right\rfloor-1\geq\frac{2n}{3m};
\]
the last inequality follows from $n/m\geq\log4/c_0$. At least $n/4$ elements $x\in A$ satisfy $\|\chi(x-x')\|_N>8\sqrt{u/m}$. The hypergeometric Chernoff bound therefore shows that, except with probability at most
\[
e^{-k/32}\leq e^{-n/(48m)},
\]
at least
\[
\frac{k}{8}\geq\frac{n}{12m}>\frac{n}{16m}
\]
such elements lie in the same part as $x'$. By a union bound over $x'\in A$ and \eqref{eq:partition-concentration-range}, with probability at least $1-n^{-9}$ this holds simultaneously for every $x'$. On that event, \eqref{eq:random-energy-lower-bound} gives
	\[
	\psi(\chi)\geq\frac{m^2}{2n^2}\cdot n\cdot\frac{n}{16m}\cdot\frac{64u}{m}=2u=2^{\ell+1}.
	\]
	Thus
	\begin{equation}
		\label{eq:nonconcentrated-energy-tail}
		{\Pr}_{\mathcal S}[\psi(\chi)<2^{\ell+1}]\leq n^{-9}\qquad(\chi\notin\mathcal C_\ell).
	\end{equation}
	
	Now suppose that $\chi\in\mathcal C_\ell\setminus B_{C_{\mathrm E}u}$, and fix a corresponding center $y_\chi$. Put
	\[
	J_{\chi,\ell}:=\left\{x\in A:\|\chi x-y_\chi\|_N\leq16\sqrt{\frac{u}{m}}\right\}.
	\]
	Writing $d_x:=\|\chi x-y_\chi\|_N$, the inequality $\|\chi(x-x')\|_N^2\leq2d_x^2+2d_{x'}^2$ and the assumption $\Psi_{A,m}(\chi)>C_{\mathrm E}u=2000u$ imply
	\[
	\frac{2000u}{m}<\frac1{n^2}\sum_{x,x'\in A}\|\chi(x-x')\|_N^2\leq\frac4n\sum_{x\in A}d_x^2.
	\]
	Hence $\sum_{x\in A}d_x^2>500un/m$. Since $d_x^2\leq256u/m$ for $x\in J_{\chi,\ell}$,
	\begin{equation}
		\label{eq:outside-arc-energy}
		\sum_{x\in A\setminus J_{\chi,\ell}}d_x^2>\frac{244un}{m}>\frac{200un}{m}.
	\end{equation}
	Fix $x\in A\setminus J_{\chi,\ell}$ and condition on the index of its part. At least $3n/4$ elements $x'\in A$ satisfy $d_{x'}\leq8\sqrt{u/m}\leq d_x/2$. The hypergeometric Chernoff bound shows that, except with probability at most $e^{-n/(48m)}$, at least $n/(4m)$ such elements lie in the same part as $x$. For each of them,
	\[
	\|\chi(x-x')\|_N\geq d_x-d_{x'}\geq\frac{d_x}{2}.
	\]
	A union bound over $x\in A\setminus J_{\chi,\ell}$ and \eqref{eq:partition-concentration-range} shows that, with probability at least $1-n^{-9}$, these conclusions hold simultaneously for all such $x$. On this event, \eqref{eq:random-energy-lower-bound} and \eqref{eq:outside-arc-energy} give
	\[
	\psi(\chi)\geq\frac{m^2}{2n^2}\sum_{x\in A\setminus J_{\chi,\ell}}\frac{n}{4m}\left(\frac{d_x}{2}\right)^2=\frac{m}{32n}\sum_{x\in A\setminus J_{\chi,\ell}}d_x^2>\frac{25}{4}u>2u=2^{\ell+1}.
	\]
	Therefore
	\begin{equation}
		\label{eq:concentrated-energy-tail}
		{\Pr}_{\mathcal S}[\psi(\chi)<2^{\ell+1}]\leq n^{-9}\qquad(\chi\in\mathcal C_\ell\setminus B_{C_{\mathrm E}u}).
	\end{equation}
	The proofs of \eqref{eq:nonconcentrated-energy-tail} and \eqref{eq:concentrated-energy-tail} use only sampling without replacement and the metric properties of $\|\cdot\|_N$; no invertibility of $\chi$ modulo $N$ is used. Summing these bounds over all nonzero characters outside $B_{C_{\mathrm E}u}$ and counting the remaining nonzero characters trivially proves \eqref{eq:energy-comparison}.
	
	{We now decompose the characters into dyadic energy levels according to the size of $\psi(\chi)$.  {For this dyadic decomposition, put}}
	\[
	{\ell_{\max}:=\lfloor\log_2m\rfloor},
	\qquad
	\mathcal A_{<1}:=\{\chi\in\mathbb Z_N:0\leq\psi(\chi)<1\},
	\qquad
	\mathcal A_{<1}^{\ast}:=\mathcal A_{<1}\setminus\{0\},
	\]
	and, for $0\leq\ell\leq {\ell_{\max}}$,
	\[
	\mathcal A_\ell:=\{\chi\in\mathbb Z_N:2^\ell\leq\psi(\chi)<2^{\ell+1}\}.
	\]
	Since $\psi(0)=0$, the trivial character contributes exactly $1/N$. Moreover, $\psi(\chi)\leq m$ for every $\chi$, so $\mathcal A_{<1},\mathcal A_0,\ldots,\mathcal A_{\ell_{\max}}$ cover all characters. For $\chi\in\mathcal A_\ell$, one has $e^{-\psi(\chi)}\leq e^{-2^\ell}$. Taking expectations in \eqref{eq:conditional-fourier-bound} therefore gives
\begin{equation}
\label{eq:dyadic-fourier-decomposition}
\Pr[\Sigma(R)=z]
\leq\frac1N
+\frac1N\mathbb E_{\mathcal S}\!\left[|\mathcal A_{<1}^{\ast}|\right]
+\frac1N\sum_{\ell=0}^{{\ell_{\max}}}e^{-2^\ell}
\mathbb E_{\mathcal S}\!\left[|\mathcal A_\ell|\right].
\end{equation}
	Since
	\[
	\mathcal A_{<1}^{\ast}\subseteq\{\chi\neq0:\psi(\chi)<2\},
	\]
	the energy comparison \eqref{eq:energy-comparison} with $\ell=0$ gives
\begin{equation}
\label{eq:small-energy-shell}
\mathbb E_{\mathcal S}\!\left[|\mathcal A_{<1}^{\ast}|\right]
\leq Nn^{-9}+|B_{C_{\mathrm E}}|-1.
\end{equation}
	For every $0\leq\ell\leq L_{\mathrm E}$,
	\[
	\mathcal A_\ell\subseteq\{\chi\neq0:\psi(\chi)<2^{\ell+1}\},
	\]
	and hence
\begin{equation}
\label{eq:dyadic-energy-shell}
\mathbb E_{\mathcal S}\!\left[|\mathcal A_\ell|\right]
\leq Nn^{-9}+|B_{C_{\mathrm E}2^\ell}|-1.
\end{equation}
	For $L_{\mathrm E}<\ell\leq {\ell_{\max}}$, we use the trivial estimate $|\mathcal A_\ell|\leq N$. Substituting \eqref{eq:small-energy-shell} and \eqref{eq:dyadic-energy-shell} into \eqref{eq:dyadic-fourier-decomposition}, and writing
	\[
	b_\ell:=|B_{C_{\mathrm E}2^\ell}|-1\geq0,
	\]
	we obtain
	\begin{align}
		\Pr[\Sigma(R)=z]
		&\leq\frac1N+n^{-9}+\frac{b_0}{N}+n^{-9}\sum_{\ell=0}^{L_{\mathrm E}}e^{-2^\ell}+\frac1N\sum_{\ell=0}^{L_{\mathrm E}}e^{-2^\ell}b_\ell+\sum_{\ell=L_{\mathrm E}+1}^{{\ell_{\max}}}e^{-2^\ell}\notag\\
		&=\frac1N+n^{-9}\left(1+\sum_{\ell=0}^{L_{\mathrm E}}e^{-2^\ell}\right)+\frac1N\left((1+e^{-1})b_0+\sum_{\ell=1}^{L_{\mathrm E}}e^{-2^\ell}b_\ell\right)+\sum_{\ell=L_{\mathrm E}+1}^{{\ell_{\max}}}e^{-2^\ell}.
		\label{eq:dyadic-expanded-bound}
	\end{align}
	We estimate the three nontrivial terms in \eqref{eq:dyadic-expanded-bound} separately.
	
	First, since $2^\ell\geq\ell+1$ for every $\ell\geq0$,
	\[
	\sum_{\ell=0}^{L_{\mathrm E}}e^{-2^\ell}\leq\sum_{\ell=0}^{\infty}e^{-2^\ell}\leq\sum_{\ell=0}^{\infty}e^{-(\ell+1)}=\frac1{e-1}<1.
	\]
	Therefore
	\[
	n^{-9}\left(1+\sum_{\ell=0}^{L_{\mathrm E}}e^{-2^\ell}\right)<2n^{-9}.
	\]
	
	Second, by the choice of $c_{\mathrm F}$ and $C_{\mathrm F}$ at the beginning of the proof,
	\[
	0<c_{\mathrm F}\leq1,\qquad c_{\mathrm F}\leq\frac1{C_{\mathrm E}},\qquad C_{\mathrm F}e^{-c_{\mathrm F}}\geq1+e^{-1},\qquad C_{\mathrm F}\geq1.
	\]
	For $\ell=0$, this gives
	\[
	(1+e^{-1})b_0\leq C_{\mathrm F}e^{-c_{\mathrm F}}b_0.
	\]
	For every $\ell\geq1$, the inequality $c_{\mathrm F}\leq1$ gives
	\[
	e^{-2^\ell}b_\ell\leq e^{-c_{\mathrm F}2^\ell}b_\ell\leq C_{\mathrm F}e^{-c_{\mathrm F}2^\ell}b_\ell.
	\]
	Consequently,
	\[
	(1+e^{-1})b_0+\sum_{\ell=1}^{L_{\mathrm E}}e^{-2^\ell}b_\ell\leq C_{\mathrm F}\sum_{\ell=0}^{L_{\mathrm E}}e^{-c_{\mathrm F}2^\ell}b_\ell.
	\]
	
	Third, let
	\[
	u_\ast:=2^{L_{\mathrm E}+1}.
	\]
	By the definition of $L_{\mathrm E}$,
	\[
	u_\ast>\frac{m}{C_{\mathrm E}},
	\]
	and, since $m\geq C_{\mathrm E}$, one has $u_\ast\geq1$. Extending the finite tail to an infinite one and writing the successive dyadic values as $2^ju_\ast$, we obtain
	\[
	\sum_{\ell=L_{\mathrm E}+1}^{{\ell_{\max}}}e^{-2^\ell}\leq\sum_{j=0}^{\infty}e^{-2^ju_\ast}.
	\]
	Since $2^j\geq j+1$,
	\[
	\sum_{j=0}^{\infty}e^{-2^ju_\ast}\leq\sum_{j=0}^{\infty}e^{-(j+1)u_\ast}=\frac{e^{-u_\ast}}{1-e^{-u_\ast}}\leq\frac{e^{-m/C_{\mathrm E}}}{1-e^{-1}}.
	\]
	Because $c_{\mathrm F}\leq1/C_{\mathrm E}$,
	\[
	e^{-m/C_{\mathrm E}}\leq e^{-c_{\mathrm F}m}.
	\]
	By the choices of $C_{\mathrm F}$ and $C_0$,
\[
\sum_{\ell=L_{\mathrm E}+1}^{{\ell_{\max}}}e^{-2^\ell}\leq C_{\mathrm F}e^{-c_{\mathrm F}m}\leq n^{-9}.
\]
Indeed, since $m\geq C_0\log(2+n)$,
\[
C_{\mathrm F}e^{-c_{\mathrm F}m}
\leq C_{\mathrm F}(2+n)^{-c_{\mathrm F}C_0}
\leq C_{\mathrm F}n^{-c_{\mathrm F}C_0}
\leq n^{-9},
\]
where the first inequality uses $2+n\geq n$, and the last inequality follows from $c_{\mathrm F}C_0\geq10+\log_2 C_{\mathrm F}$ and $n\geq2$.
Applying the preceding three estimates to \eqref{eq:dyadic-expanded-bound} gives
	\[
	\Pr[\Sigma(R)=z]\leq\frac1N+\frac{C_{\mathrm F}}{N}\sum_{\ell=0}^{L_{\mathrm E}}e^{-c_{\mathrm F}2^\ell}\bigl(|B_{C_{\mathrm E}2^\ell}|-1\bigr)+3n^{-9},
	\]
	which is \eqref{eq:boolean-slice-fourier-reduction}.
\end{proof}

{
A subgroup $H<\mathbb Z_N$ will be called \emph{$q$-large} if $|H|$ is
divisible by a prime factor of $q$. If $H$ is not $q$-large, then
$|H|\mid t$ and hence $|H|\leq T$.

\begin{lemma}[Kneser bound for low-energy characters]
	\label{lem:kneser-low-energy}
	Fix $T\geq1$. There exist an absolute constant $c_{\mathrm K}>0$ and a constant $C_{\mathrm K}=C_{\mathrm K}(T)>0$ such that the following holds. Let $N=tq$, where $1\leq t\leq T$ and $(t,q)=1$, let $A\subseteq\mathbb Z_N$ have size $n$, and let $1\leq m\leq n$. For every $s$ with $1\leq s\leq c_{\mathrm K}m$,
	\begin{equation}
		\label{eq:kneser-low-energy}
		|B_s(A,m)|-1\leq C_{\mathrm K}\left(\frac{N\sqrt{s}}{n\sqrt{m}}+\frac{N}{\lambda}\right),
	\end{equation}
	where $N/\lambda$ is interpreted as $0$ when $q=1$.
\end{lemma}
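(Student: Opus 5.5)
We follow the Pham--Sauermann argument for bounding the low-energy set, with Cauchy--Davenport replaced by Kneser's theorem (Theorem~\ref{thm:kneser}). The first observation is that $B_s$ is approximately closed under addition. Since $\|\chi(x-y)\|_N^2$ is subadditive in $\chi$ up to a factor $r$ (the elementary inequality $\|x_1+\cdots+x_r\|_N^2\leq r\sum\|x_i\|_N^2$), we get $\Psi_{A,m}(\chi_1+\cdots+\chi_r)\leq r\sum_i\Psi_{A,m}(\chi_i)$. Hence
\[
r\cdot B_s \;=\; B_s+\cdots+B_s \;\subseteq\; B_{r^2 s}.
\]
Also $0\in B_s$ and $B_s=-B_s$.

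Second, we prove an upper bound for the size of a low-energy set at a larger scale. Averaging $\Psi_{A,m}$ over all $\chi\in\mathbb Z_N$ and using orthogonality, for each $x\neq y$ the mean over $\chi$ of $\|\chi(x-y)\|_N^2$ is at least an absolute constant times $1$, up to the loss coming from $\operatorname{ord}(x-y)$. Here one should instead use $\operatorname{Re}e_N\leq1-2\|\cdot\|^2$ and $\sum_\chi e_N(\chi d)=0$ for $d\neq0$. This gives
\[
\sum_{\chi}\Bigl(1-\tfrac1{n^2}\textstyle\sum_{x,y}\operatorname{Re}e_N(\chi(x-y))\Bigr)=N\Bigl(1-\tfrac{1}{n}\Bigr),
\]
so that $\sum_{\chi\in B_{S}}\frac1{n^2}\sum_{x,y}|\ldots|$ is controlled. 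Concretely, for $\chi\in B_S$ the quantity $\frac{1}{n^2}|\sum_x e_N(\chi x)|^2\geq 1-2\pi^2 S/m$. When $S\leq c m$ this is at least $1/2$, so by Parseval
\[
|B_S|\cdot\tfrac{n^2}{2}\leq \sum_\chi\Bigl|\sum_{x\in A}e_N(\chi x)\Bigr|^2=Nn,
\]
that is, $|B_S|\leq 2N/n$. This requires $S\leq c_{\mathrm K}'m$.

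Third comes the Kneser growth step. Take $r\asymp\sqrt{c_{\mathrm K} m/s}$ maximal with $r^2s\leq c'_{\mathrm K}m$. Apply Theorem~\ref{thm:kneser} to $r$ copies of $B_s$, with $H=\Stab(rB_s)$:
\[
r|B_s+H|-(r-1)|H|\leq |rB_s|\leq |B_{r^2s}|\leq 2N/n.
\]
\textbf{Case 1:} $H$ is not $q$-large. Then $|H|\leq T$, so $r(|B_s|-1)\leq r(|B_s+H|-|H|)\leq 2N/n$ after absorbing the bounded $|H|$. Indeed, if $|B_s+H|>|H|$ we get $|B_s|-1\leq |B_s+H|\leq T\cdot 2N/(rn)+T$ roughly, giving $|B_s|-1\lesssim_T N\sqrt{s}/(n\sqrt m)$ plus a bounded term. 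That bounded term must be absorbed. Either we note $|B_s|\geq2$ forces a genuine nonzero element, or we observe that $N/\lambda\geq t\geq1$ when $q>1$. When $q=1$, $N=t\leq T$, so $|B_s|-1\leq T$ trivially; but then it must be dominated by $N\sqrt s/(n\sqrt m)\geq \sqrt s/\sqrt m\cdot(N/n)$, which holds since $n\leq N$ and we may take $C_{\mathrm K}$ large, provided $s/m$ is not tiny. This edge case needs care: if $B_s$ contains a nonzero $\chi$ and $q=1$, we use the Parseval bound directly.

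\textbf{Case 2:} $H$ is $q$-large. Then $H\supseteq$ a subgroup of order a prime factor of $q$, so $|H|\geq\lambda$. We also have $rB_s\subseteq B_{r^2s}$, which is a union of $H$-cosets of size at most $2N/n$. Here the bound is instead obtained by noting that $H\subseteq rB_s-rB_s\subseteq B_{4r^2s}$. Every $\chi\in H$ then has small energy, meaning $A$ is concentrated near few cosets of $H^\perp$, of index $|H|\geq\lambda$. Pushing this through is where the $N/\lambda$ term arises. The simplest route is to bound $|B_s|\leq|rB_s|\leq 2N/n$ and note that the $N/\lambda$ error is only needed when $n$ is small. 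I expect this case to be the main obstacle: converting a large stabilizer into the clean additive term $N/\lambda$ rather than $N/n$. The first attempt would be to apply the argument to the quotient $\mathbb Z_N/H$, where the images of $B_s$ grow stabilizer-freely, yielding $|B_s+H|/|H|-1\lesssim N\sqrt s/(n\sqrt m |H|)$. From this, $|B_s|\leq |B_s+H|\lesssim N\sqrt s/(n\sqrt m)+|H|$, and then we bound $|H|$ by showing $H\subseteq B_{4r^2 s}$ of size $\leq 2N/n$, or when $|H|$ is large use $|H|\leq N/\lambda'$ with the complementary index argument.
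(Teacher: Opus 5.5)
Your frequency-side setup is correct, and it differs from the paper's route. The paper never applies Kneser to $B_s$ itself. It works in physical space with the set $Q=Q_{s,10s/m}$ of points on which every character of $B_s$ is nearly trivial, so $|Q|\geq 9n/10$ by averaging. It uses $rQ\subseteq Q_{s,1/200}$ and the Parseval bound $|Q_{s,1/200}|\leq 5N/(4|B_s|)$. There the periodic case $Q\subseteq K=\Stab(rQ)$ gives $|B_s|\leq 5N/(4|K|)$, and the condition on the \emph{order} of $K$ is what produces $N/\lambda$.

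Your steps $rB_s\subseteq B_{r^2s}$, $|B_S|\leq 2N/n$ for $S\leq m/(4\pi^2)$, and the Kneser consequence $|B_s|-1\leq 2N/(rn)+|H|-1$ with $H=\Stab(rB_s)$ are all valid, and Case~1 is fine. But Case~2 is never closed, and none of your sketched routes closes it:
\begin{itemize}
\item $|H|\geq\lambda$ is a lower bound, so it cannot bound $|B_s|$ from above.
\item The fallback $|B_s|\leq 2N/n$ does not imply the claim unless $n\gtrsim\lambda$. For instance, $N=q=p^2$, $s=1$, $m=n=p^{1/2}$ gives $N/n=p^{3/2}$ against a target of $p^{5/4}+p$.
\item The quotient argument only reproduces $|B_s+H|-|H|\leq 2N/(rn)$.
\item The ``complementary index argument'' with an undefined $\lambda'$ is not carried out.
\end{itemize}

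The missing idea is that, on the frequency side, the relevant dichotomy concerns the \emph{index} of $H$, not its order. The paper's order condition on a physical stabilizer dualizes to an index condition here, which is why the order-based split you borrowed does not close the argument.

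Since $0\in rB_s$ and $rB_s$ is $H$-periodic, you get $H\subseteq rB_s\subseteq B_{r^2s}$, hence $|H|\leq 2N/n$. There is no need for $B_{4r^2s}$, which may fall outside the range where your Parseval bound holds. Now split by the index:
\begin{itemize}
\item If $[\mathbb Z_N:H]$ is divisible by a prime factor of $q$, then the index is at least $\lambda$, so $|H|\leq N/\lambda$ and you are done.
\item Otherwise the index divides $t$ (as $(t,q)=1$), so $|H|\geq N/T$. Together with $|H|\leq 2N/n$ this forces $n\leq 2T$. Then $|B_s|-1\leq N\leq(2T)^{3/2}N\sqrt s/(n\sqrt m)$, using $s\geq1$ and $m\leq n$.
\end{itemize}
This treats every stabilizer uniformly, including $H=\{0\}$, $H=\mathbb Z_N$ and the case $q=1$, and makes your order-based case split unnecessary. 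With this step added, your route becomes a complete alternative proof.
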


\begin{proof}
	Fix
	\[
	\kappa:=\frac1{100},\qquad c_{\mathrm K}:=\frac{\kappa^2}{16}.
	\]
	If $c_{\mathrm K}m<1$, the asserted range of $s$ is empty, so assume $1\leq s\leq c_{\mathrm K}m$. Put
	\[
	Q_{s,\delta}:=\left\{x\in\mathbb Z_N:\sum_{\chi\in B_s}\|\chi x\|_N^2<\delta|B_s|\right\}.
	\]
	Let $Y,Y'$ be independent uniformly random elements of $A$. By \eqref{eq:deterministic-energy},
	\[
	\mathbb E\sum_{\chi\in B_s}\|\chi(Y-Y')\|_N^2=\frac1m\sum_{\chi\in B_s}\Psi_{A,m}(\chi)\leq\frac{s}{m}|B_s|.
	\]
	Markov's inequality and averaging over $Y'$ give an element $a_0\in A$ such that
	\begin{equation}
		\label{eq:Q-large}
		|\{a\in A:a-a_0\in Q_{s,10s/m}\}|\geq\frac{9n}{10}.
	\end{equation}
	In particular,
	\begin{equation}
		\label{eq:Q-cardinality-lower}
		|Q_{s,10s/m}|\geq\frac{9n}{10}.
	\end{equation}
	
	The set $B_s$ is symmetric and contains $0$. Hence, for every $x\in Q_{s,1/200}$,
	\[
	\left|\sum_{\chi\in B_s}e_N(\chi x)\right|=\sum_{\chi\in B_s}\cos(2\pi\chi x/N)\geq\sum_{\chi\in B_s}(1-20\|\chi x\|_N^2)\geq\frac9{10}|B_s|.
	\]
	Character orthogonality therefore gives
	\begin{equation}
		\label{eq:Q-parseval-N}
		|Q_{s,1/200}|\leq\frac{5N}{4|B_s|}.
	\end{equation}
	Moreover, for every positive integer $r$,
	\begin{equation}
		\label{eq:Q-growth-N}
		rQ_{s,\delta}\subseteq Q_{s,r^2\delta},
	\end{equation}
	because $\|x_1+\cdots+x_r\|_N^2\leq r\sum_{i=1}^r\|x_i\|_N^2$.
	
	Choose
	\[
	r:=\left\lfloor\kappa\sqrt{\frac{m}{s}}\right\rfloor.
	\]
	Since $s\leq c_{\mathrm K}m$, one has $\kappa\sqrt{m/s}\geq4$, and hence
	\[
	r\geq2,\qquad r\geq\frac{\kappa}{2}\sqrt{\frac{m}{s}},\qquad r^2\frac{10s}{m}\leq10\kappa^2<\frac1{200}.
	\]
	Put $Q:=Q_{s,10s/m}$ and $K:=\operatorname{Stab}(rQ)$. By \eqref{eq:Q-growth-N}, $rQ\subseteq Q_{s,1/200}$. Kneser's theorem gives
	\begin{equation}
		\label{eq:kneser-rQ}
		|rQ|\geq r|Q+K|-(r-1)|K|=r(|Q+K|-|K|)+|K|.
	\end{equation}
	
	Suppose first that $Q+K$ contains at least two $K$-cosets. Then $|Q+K|-|K|\geq|Q+K|/2\geq|Q|/2$, and therefore, by \eqref{eq:Q-cardinality-lower},
	\[
	|rQ|\geq\frac{r|Q|}{2}\geq\frac{9rn}{20}.
	\]
	Combining this with \eqref{eq:Q-parseval-N} gives
	\[
	|B_s|\leq\frac{25N}{9rn}\leq\frac{50}{9\kappa}\frac{N\sqrt{s}}{n\sqrt{m}}.
	\]
	
	Suppose next that $Q+K=K$. Then $Q\subseteq K$. Since $K=\operatorname{Stab}(rQ)$, the nonempty set $rQ\subseteq K$ is a union of $K$-cosets and hence equals $K$. Thus $K\subseteq Q_{s,1/200}$, and \eqref{eq:Q-parseval-N} gives
	\begin{equation}
		\label{eq:periodic-Bs-bound}
		|B_s|\leq\frac{5N}{4|K|}.
	\end{equation}
	If $K=\mathbb Z_N$, then $|B_s|\leq5/4$, and therefore $B_s=\{0\}$. Assume that $K$ is proper. If $|K|$ is divisible by a prime divisor of $q$, then $|K|\geq\lambda$, and \eqref{eq:periodic-Bs-bound} is at most $5N/(4\lambda)$. Otherwise $\gcd(|K|,q)=1$; since $|K|\mid N=tq$ and $(t,q)=1$, it follows that $|K|\mid t$ and hence $|K|\leq T$. Equation \eqref{eq:Q-cardinality-lower} then gives $n\leq10T/9$. In this bounded case, using $m\leq n$ and $s\geq1$,
	\[
	|B_s|-1\leq N\leq\left(\frac{10T}{9}\right)^{3/2}\frac{N\sqrt{s}}{n\sqrt{m}}.
	\]
	Choosing $C_{\mathrm K}(T)$ larger than the constants in the preceding cases proves \eqref{eq:kneser-low-energy}.
\end{proof}
}

\begin{proposition}[Boolean-slice anticoncentration in $\mathbb Z_{tq}$]
	\label{prop:one-colour}
	For every $0<\varepsilon<1$ and every $T\geq1$, there exists $C_{\varepsilon,T}>0$ such that the following holds. Let $N=tq$, where $1\leq t\leq T$ and $(t,q)=1$, let $A\subseteq\mathbb Z_N$ have size $n\geq2$, and let $R$ be a uniformly random $m$-subset of $A$, where $1\leq m\leq(1-\varepsilon)n$. Then, for every $z\in\mathbb Z_N$,
	\begin{equation}
		\label{eq:one-slice-N}
		\Pr[\Sigma(R)=z]\leq\frac1N+C_{\varepsilon,T}\left(\frac{\sqrt{\log(2+n)}}{n\sqrt{m}}+\frac1\lambda\right),
	\end{equation}
	where $1/\lambda$ is interpreted as $0$ when $q=1$.
\end{proposition}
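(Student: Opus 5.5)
In the main range the plan is to combine Lemma~\ref{lem:boolean-slice-fourier-reduction} with Lemma~\ref{lem:kneser-low-energy}. The other ranges of $m$ are handled by complementation or by crude bounds.

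\emph{Main range.} Suppose first that
\[
C_0\log(2+n)\leq m\leq c_0 n/\log(2+n).
\]
Lemma~\ref{lem:boolean-slice-fourier-reduction} bounds $\Pr[\Sigma(R)=z]$ by $1/N+3n^{-9}$ plus
\[
\frac{C_{\mathrm F}}{N}\sum_{\ell}e^{-c_{\mathrm F}2^\ell}\bigl(|B_{C_{\mathrm E}2^\ell}|-1\bigr),
\]
where $s=C_{\mathrm E}2^\ell\leq m$. To apply Lemma~\ref{lem:kneser-low-energy} we need $s\leq c_{\mathrm K}m$. For the levels with $c_{\mathrm K}m<s\leq m$ we use monotonicity, $B_s\subseteq B_m$, together with the trivial bound $|B_s|\leq N$; their total weight is $e^{-c'm}\leq n^{-9}$ once $C_0$ is large. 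For the remaining levels, Lemma~\ref{lem:kneser-low-energy} gives
\[
\frac{|B_s|-1}{N}\leq C_{\mathrm K}\Bigl(\frac{\sqrt{C_{\mathrm E}2^\ell}}{n\sqrt m}+\frac1\lambda\Bigr).
\]
Since $\sum_\ell e^{-c_{\mathrm F}2^\ell}2^{\ell/2}$ converges, this gives a contribution $O_T\bigl(1/(n\sqrt m)+1/\lambda\bigr)$. The error term $3n^{-9}$ is absorbed into $1/(n\sqrt m)$ because $m\leq n$. This already proves \eqref{eq:one-slice-N} in this range, even without the $\sqrt{\log}$ factor.

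\emph{Small $m$.} Suppose $m<C_0\log(2+n)$. Condition on all but one chosen element: write $R=R'\cup\{X\}$, where $X$ is uniform on $A\setminus R'$. Then $\Pr[\Sigma(R)=z]\leq 1/(n-m+1)$, which is $\leq 2/n$ for $n$ large, since $m=O(\log n)$. Because $\sqrt m\leq\sqrt{C_0\log(2+n)}$, we have $2/n\leq 2\sqrt{C_0}\,\sqrt{\log(2+n)}/(n\sqrt m)$, as required. This is exactly where the $\sqrt{\log(2+n)}$ factor comes from. The finitely many small values of $n$ are absorbed into the constant.

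\emph{Large $m$.} Suppose $c_0 n/\log(2+n)<m\leq(1-\varepsilon)n$. This is the main obstacle, since Lemma~\ref{lem:boolean-slice-fourier-reduction} does not apply directly. Two devices are available.

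First, complementation: $\Sigma(R)=\Sigma(A)-\Sigma(A\setminus R)$, and $A\setminus R$ is a uniform $(n-m)$-slice with $n-m\geq\varepsilon n$. So it suffices to treat slices of size between $\varepsilon n$ and $n/2$, roughly.

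Second, conditioning. Split $A$ uniformly at random into $A'$ of size $n'\asymp_\varepsilon m\log(2+m)/c_0$ and its complement. A uniform $m$-slice can be generated by first choosing how many elements come from $A'$ (this count is hypergeometric and concentrates, say around $m'$) and then taking uniform slices of each part independently. Conditioning on everything outside $A'$ and applying the main-range bound to the slice inside $A'$ gives a bound $1/N+O(1/(n'\sqrt{m'})+1/\lambda)$, up to an $n^{-9}$ failure probability. Here $n'\asymp n$ and $m'\asymp n/\log n$. Since $1/(n'\sqrt{m'})\lesssim\sqrt{\log n}/(n\sqrt n)\lesssim\sqrt{\log(2+n)}/(n\sqrt m)$, the bound has the required form.

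The delicate points are choosing the sizes so that $m'$ lands in the admissible window $[C_0\log,\,c_0 n'/\log]$ with overwhelming probability (via the hypergeometric Chernoff bound), and handling the case where $m$ is too close to $n$ for a subslice of $A'$ to fall in that window. This second case is resolved by the complementation step, and I expect it to be where the $\varepsilon$-dependence of $C_{\varepsilon,T}$ enters.
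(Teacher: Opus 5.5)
Your intermediate-range argument (Fourier reduction plus the Kneser low-energy bound, with the trivial bound on the high dyadic levels) and your small-$m$ argument (expose all but one element) are the paper's. The large-$m$ step, however, has a genuine gap.

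\textbf{Why the random split fails.} Splitting $A$ into a uniformly random $A'$ chosen independently of $R$ preserves the sampling ratio. The count $|R\cap A'|$ concentrates at $m'\approx mn'/n$, so $m'/n'\approx m/n$. The Fourier reduction lemma needs $m'\le c_0n'/\log(2+n')$, that is, $m/n\lesssim c_0/\log(2+n')$. In the large range $m/n>c_0/\log(2+n)$, so this forces $\log(2+n')\lesssim n/m$, and when $m\asymp n$ it forces $n'$ to be bounded.

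Your stated parameters are in fact mutually inconsistent. The choice $n'\asymp m\log(2+m)/c_0$ exceeds $n$ as soon as $m\gg n/\log n$. And with $n'\asymp n$ one gets $m'\asymp m$, not $m'\asymp n/\log n$. Complementation does not rescue this: it only reduces to $m\le n/2$, where the ratio can still be a constant.

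\textbf{The missing idea.} Condition on part of $R$ itself rather than on part of $A$. Generate $R=R_0\sqcup R_1$, where $R_0$ is a uniform $(m-h)$-subset of $A$ and $R_1$ is a uniform $h$-subset of $A\setminus R_0$. Take $h=\lfloor\alpha n/\log(2+n)\rfloor$ with $\alpha<c_0\varepsilon/4$. Every $m$-subset arises from exactly $\binom mh$ pairs, so $R$ is uniform.

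The residual pool $A\setminus R_0$ has size $n-m+h\geq\varepsilon n$. Hence, for large $n$, $C_0\log(2+n-m+h)\le h\le c_0(n-m+h)/\log(2+n-m+h)$, so $h$ lies in the admissible window for that pool. Applying the intermediate-range bound conditionally, with target $z-\Sigma(R_0)$, gives
\[
1/N+O_T\bigl(1/(\varepsilon n\sqrt h)+1/\lambda\bigr).
\]
Moreover $1/(n\sqrt h)\lesssim_\varepsilon\sqrt{\log(2+n)}/n^{3/2}\le\sqrt{\log(2+n)}/(n\sqrt m)$. Averaging over $R_0$ finishes the proof.

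This is where the dependence on $\varepsilon$ actually enters, through the pool-size bound $n-m+h\geq\varepsilon n$. No complementation is needed.
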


\begin{proof}
	We divide the proof into three ranges.
	
	{\emph{Intermediate values of $m$.}} Suppose that
	\begin{equation}
		\label{eq:medium-slice-range}
		C_0\log(2+n)\leq m\leq c_0\frac{n}{\log(2+n)},
	\end{equation}
	where the constants are those of Lemma~\ref{lem:boolean-slice-fourier-reduction}. Put
	\[
	c_2:=\frac{c_{\mathrm K}}{C_{\mathrm E}},
	\qquad
	L_{\mathrm K}:=\lfloor\log_2(c_2m)\rfloor,
	\qquad
	L_{\mathrm E}:=\left\lfloor\log_2\left(\frac{m}{C_{\mathrm E}}\right)\right\rfloor.
	\]
	After increasing $C_0$, we may assume $c_2m\geq1$, so $0\leq L_{\mathrm K}\leq L_{\mathrm E}$. Define
	\[
	C_{\mathrm M}:=C_{\mathrm K}(T)\max\{\sqrt{C_{\mathrm E}},1\}.
	\]
	For every $0\leq\ell\leq L_{\mathrm K}$, one has $C_{\mathrm E}2^\ell\leq c_{\mathrm K}m$, and {Lemma}~\ref{lem:kneser-low-energy}, applied with $C_{\mathrm E}2^\ell$ in place of $s$, gives
	\[
	|B_{C_{\mathrm E}2^\ell}|-1\leq C_{\mathrm M}\left(\frac{N2^{\ell/2}}{n\sqrt{m}}+\frac{N}{\lambda}\right).
	\]
	Consequently,
	\begin{align}
		&\frac{C_{\mathrm F}}{N}\sum_{\ell=0}^{L_{\mathrm K}}e^{-c_{\mathrm F}2^\ell}\bigl(|B_{C_{\mathrm E}2^\ell}|-1\bigr)\notag\\
		&\qquad\leq C_{\mathrm F}C_{\mathrm M}\left(\frac1{n\sqrt m}\sum_{\ell=0}^{L_{\mathrm K}}e^{-c_{\mathrm F}2^\ell}2^{\ell/2}+\frac1\lambda\sum_{\ell=0}^{L_{\mathrm K}}e^{-c_{\mathrm F}2^\ell}\right).
		\label{eq:medium-low-energy-sum}
	\end{align}
	Both infinite series
	\[
	S_1:=\sum_{\ell=0}^{\infty}e^{-c_{\mathrm F}2^\ell}2^{\ell/2},
	\qquad
	S_0:=\sum_{\ell=0}^{\infty}e^{-c_{\mathrm F}2^\ell}
	\]
	converge. Indeed, for all sufficiently large $\ell$, one has $2^{\ell/2}\leq e^{c_{\mathrm F}2^\ell/2}$, so the tail of $S_1$ is dominated by $\sum_{\ell}e^{-c_{\mathrm F}2^\ell/2}$; this and the convergence of $S_0$ follow from $2^\ell\geq\ell+1$. Hence the right-hand side of \eqref{eq:medium-low-energy-sum} is at most
	\[
	C_T\left(\frac1{n\sqrt m}+\frac1\lambda\right)
	\]
	for a constant $C_T$ depending only on $T$.
	
	For $L_{\mathrm K}<\ell\leq L_{\mathrm E}$, use the trivial estimate $|B_{C_{\mathrm E}2^\ell}|-1\leq N$. Let $v:=2^{L_{\mathrm K}+1}$. Then $v>c_2m$ and $v\geq1$, so
	\begin{align*}
		\frac{C_{\mathrm F}}{N}\sum_{\ell=L_{\mathrm K}+1}^{L_{\mathrm E}}e^{-c_{\mathrm F}2^\ell}\bigl(|B_{C_{\mathrm E}2^\ell}|-1\bigr)
		&\leq C_{\mathrm F}\sum_{j=0}^{\infty}e^{-c_{\mathrm F}2^jv}\\
		&\leq C_{\mathrm F}\sum_{j=0}^{\infty}e^{-c_{\mathrm F}(j+1)v}\\
		&=\frac{C_{\mathrm F}e^{-c_{\mathrm F}v}}{1-e^{-c_{\mathrm F}v}}\\
		&\leq\frac{C_{\mathrm F}}{1-e^{-c_{\mathrm F}}}e^{-c_{\mathrm F}c_2m}.
	\end{align*}
	Thus this remaining dyadic range contributes at most $C'e^{-c_3m}$, where $c_3:=c_{\mathrm F}c_2>0$ and $C'$ is absolute. Enlarge $C_0$, if necessary; this only narrows the range in Lemma~\ref{lem:boolean-slice-fourier-reduction}. We may then assume that
	\[
	C'e^{-c_3m}\leq n^{-9}
	\]
	whenever $m\geq C_0\log(2+n)$. Substituting the preceding estimates into Lemma~\ref{lem:boolean-slice-fourier-reduction} gives
	\[
	\Pr[\Sigma(R)=z]\leq\frac1N+C_T\left(\frac1{n\sqrt m}+\frac1\lambda\right)+4n^{-9}.
	\]
	Since $m\leq n$,
	\[
	4n^{-9}\leq\frac4{n\sqrt m}.
	\]
	Therefore, for a constant $C_{\mathrm{med}}=C_{\mathrm{med}}(T)$,
	\begin{equation}
		\label{eq:medium-one-slice-N}
		\Pr[\Sigma(R)=z]\leq\frac1N+C_{\mathrm{med}}\left(\frac1{n\sqrt m}+\frac1\lambda\right).
	\end{equation}
	
	{\emph{Small values of $m$.}} Suppose that $m<C_0\log(2+n)$. Expose $m-1$ elements of $R$. Conditional on this exposure, the last element is uniform on a set of size $n-m+1\geq\varepsilon n$, and hence
	\[
	\Pr[\Sigma(R)=z]\leq\frac1{\varepsilon n}\leq\frac{\sqrt{C_0}}{\varepsilon}\frac{\sqrt{\log(2+n)}}{n\sqrt m}.
	\]
	
	{\emph{Large values of $m$.}} Suppose that $m>c_0n/\log(2+n)$.  {Put $\ell_n:=\log(2+n)$} and choose a constant $\alpha=\alpha(\varepsilon)>0$ satisfying $\alpha<c_0\varepsilon/4$. For all sufficiently large $n$, the integer
	\[
	h:=\left\lfloor\alpha\frac{n}{{\ell_n}}\right\rfloor
	\]
	satisfies $1\leq h\leq m$. Generate $R$ by first choosing a uniformly random $(m-h)$-subset $R_0\subseteq A$ and then a uniformly random $h$-subset $R_1\subseteq A\setminus R_0$, and put $R=R_0\cup R_1$. Every $m$-subset of $A$ arises from the same number $\binom{m}{h}$ of pairs $(R_0,R_1)$, so this procedure indeed generates a uniform $m$-subset. Conditional on $R_0$, the remaining pool $A':=A\setminus R_0$ has size
	\[
	n':=n-m+h\geq\varepsilon n.
	\]
	For all sufficiently large $n$, one has $h\geq\alpha n/(2{\ell_n})$. Since $n/{\ell_n}^2\to\infty$ and $\log(2+n')\leq {\ell_n}$, this gives
	\[
	h\geq C_0\log(2+n').
	\]
	On the other hand, $n'\geq\varepsilon n$ and $\log(2+n')\leq {\ell_n}$, so
	\[
	\frac{c_0n'}{\log(2+n')}\geq\frac{c_0\varepsilon n}{{\ell_n}}> \frac{\alpha n}{{\ell_n}}\geq h.
	\]
	Thus
	\[
	C_0\log(2+n')\leq h\leq c_0\frac{n'}{\log(2+n')},
	\]
	which is exactly the range required in Lemma~\ref{lem:boolean-slice-fourier-reduction}.
	Applying \eqref{eq:medium-one-slice-N} conditionally to $R_1\subseteq A'$ with target $z-\Sigma(R_0)$ yields
	\[
	\Pr[\Sigma(R)=z\mid R_0]\leq\frac1N+C_{\mathrm{med}}\left(\frac1{n'\sqrt h}+\frac1\lambda\right).
	\]
	Since $n'\geq\varepsilon n$ and $h\geq(\alpha/2)n/{\ell_n}$ for sufficiently large $n$,
	\[
	\frac1{n'\sqrt h}\leq C_\varepsilon\frac{\sqrt{\log(2+n)}}{n^{3/2}}\leq C_\varepsilon\frac{\sqrt{\log(2+n)}}{n\sqrt m}.
	\]
	Averaging over $R_0$ proves \eqref{eq:one-slice-N}. The finitely many values of $n$ excluded in {the argument for large values of $m$} are absorbed by increasing $C_{\varepsilon,T}$.
\end{proof}

{The restriction
\[
C_0\log(2+n)\leq m\leq c_0\frac{n}{\log(2+n)}
\]
in Lemma~\ref{lem:boolean-slice-fourier-reduction} comes directly from its proof. The upper bound ensures that the parts in the random partition have logarithmic size, while the lower bound makes the exponentially small Fourier remainder negligible. Proposition~\ref{prop:one-colour} treats smaller values of $m$ by exposing all but one element, and larger values by conditioning on a random subset of size comparable to $n/\log n$ to which the lemma applies.}

For later use, fix once and for all a constant $\Gamma_T\geq\max\{1,C_{1/2,T}\}$, and define
\begin{equation}
	\label{eq:theta-def}
	\vartheta(u;N,\lambda):=\frac{u}{N}+\Gamma_T\frac{\sqrt{\log(2+u)}}{\sqrt u}+\Gamma_T\frac{u}{\lambda},\qquad u\geq1.
\end{equation}
Any additional constants arising from a fixed number of equations or from a different fixed value of $\varepsilon$ will be kept outside $\vartheta$.}

\subsection{Ordinary interval chains}

{We next record the ordinary chain estimate used in the cyclic local-repair argument.}

{\begin{lemma}[Increment form of nested-chain equations]
\label{lem:chain-increment-form}
Let $A$ be a finite subset of an abelian group, let
\[
R_0:=\varnothing\subseteq R_1\subseteq\cdots\subseteq R_D\subseteq R_{D+1}:=A,\qquad C_i:=R_{i+1}\setminus R_i\quad(0\leq i\leq D),
\]
and fix targets $z_1,\ldots,z_D$. Put $z_0:=0$ and $z_{D+1}:=\Sigma(A)$. Then, for every $j\in\{0,\ldots,D\}$,
\begin{equation}
\label{eq:chain-increment-equivalence}
\{\Sigma(R_k)=z_k\text{ for all }1\leq k\leq D\}=\bigcap_{\substack{0\leq i\leq D\\ i\neq j}}\{\Sigma(C_i)=z_{i+1}-z_i\}.
\end{equation}
\end{lemma}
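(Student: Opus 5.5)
The identity is pure bookkeeping with telescoping sums, so I will argue elementwise in the underlying probability space. The first step is to record that the increments $C_0,\ldots,C_D$ are pairwise disjoint and that $R_k=C_0\cup\cdots\cup C_{k-1}$ for $0\leq k\leq D+1$. This follows by induction on $k$ from the nesting $R_i\subseteq R_{i+1}$. Consequently
\[
\Sigma(R_k)=\sum_{i=0}^{k-1}\Sigma(C_i),\qquad \Sigma(C_i)=\Sigma(R_{i+1})-\Sigma(R_i).
\]
Since $R_0=\varnothing$ and $R_{D+1}=A$, the conventions $z_0=0$ and $z_{D+1}=\Sigma(A)$ give $\Sigma(R_0)=z_0$ and $\Sigma(R_{D+1})=z_{D+1}$ automatically. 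The left-hand side of \eqref{eq:chain-increment-equivalence} is therefore the event $\Sigma(R_k)=z_k$ for all $0\leq k\leq D+1$.

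For the inclusion from left to right, I will assume $\Sigma(R_k)=z_k$ for every $0\leq k\leq D+1$. Then $\Sigma(C_i)=z_{i+1}-z_i$ holds for every $0\leq i\leq D$, and in particular for every $i\neq j$.

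For the reverse inclusion, I will assume $\Sigma(C_i)=z_{i+1}-z_i$ for all $i\neq j$. The missing increment is recovered from the total. Because $\sum_{i=0}^{D}\Sigma(C_i)=\Sigma(A)=z_{D+1}$ and the targets telescope as $\sum_{i=0}^{D}(z_{i+1}-z_i)=z_{D+1}-z_0=\Sigma(A)$, subtracting the known increments yields $\Sigma(C_j)=z_{j+1}-z_j$. Now all increment equations hold, and summing the first $k$ of them gives $\Sigma(R_k)=z_k$ for each $1\leq k\leq D$.

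There is no real obstacle here. The only point needing care is to use the endpoint conventions $z_0=0$ and $z_{D+1}=\Sigma(A)$ correctly, so that the telescoping identity for the targets matches the deterministic total $\Sigma(A)$. Once that is in place, the redundancy of any single increment equation, and hence the freedom to omit an arbitrary index $j$, is immediate.
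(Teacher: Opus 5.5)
Your proposal is correct and follows the paper's proof essentially step for step. Both arguments get the increment equations from $\Sigma(C_i)=\Sigma(R_{i+1})-\Sigma(R_i)$ using the endpoint conventions, recover the omitted equation for $C_j$ from the total $\Sigma(A)$ by telescoping, and then sum the first $k$ increments to obtain $\Sigma(R_k)=z_k$.
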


\begin{proof}
If $\Sigma(R_k)=z_k$ for every $1\leq k\leq D$, then
\[
\Sigma(C_i)=\Sigma(R_{i+1})-\Sigma(R_i)=z_{i+1}-z_i
\]
for $0\leq i\leq D$, where the cases $i=0$ and $i=D$ use $R_0=\varnothing$, $R_{D+1}=A$, $z_0=0$, and $z_{D+1}=\Sigma(A)$. Conversely, suppose that the equations on the right-hand side of \eqref{eq:chain-increment-equivalence} hold. Since $C_0,\ldots,C_D$ partition $A$,
\[
\Sigma(C_j)=\Sigma(A)-\sum_{i\neq j}\Sigma(C_i)=z_{D+1}-z_0-\sum_{i\neq j}(z_{i+1}-z_i)=z_{j+1}-z_j.
\]
Thus the increment equation also holds for $i=j$. Consequently, for every $1\leq k\leq D$,
\[
\Sigma(R_k)=\sum_{i=0}^{k-1}\Sigma(C_i)=\sum_{i=0}^{k-1}(z_{i+1}-z_i)=z_k.
\]
This proves \eqref{eq:chain-increment-equivalence}.
\end{proof}

{
A \emph{uniformly random nested chain} with prescribed cardinalities
$M_1<\cdots<M_D$ means a chain chosen uniformly among all chains
$R_1\subset\cdots\subset R_D\subset A$ with $|R_i|=M_i$. Equivalently,
with $R_0:=\varnothing$, $R_{D+1}:=A$, and
$C_i:=R_{i+1}\setminus R_i$, the increments
$(C_0,\ldots,C_D)$ form a uniformly random ordered partition of $A$
with their prescribed cardinalities.
}

\begin{proposition}[Cyclic chain anticoncentration]
\label{prop:ordinary-chain}
Fix $D\geq1$ and $T\geq1$.  {There exist constants
$C_1=C_1(D,T)>0$ and $C_2=C_2(D,T)>0$} with the following property.  Let $A\subseteq\Z_N$ be a set of size $n$, and
let
\[
        R_1\subseteq R_2\subseteq\cdots\subseteq R_D\subseteq A
\]
be a uniformly random nested chain with prescribed sizes
\[
        |R_i|=M_i,
        \qquad
        1\leq M_1<\cdots<M_D<n.
\]
Put $M_0=0$, $M_{D+1}=n$, and
\[
        \Delta_i=M_{i+1}-M_i,
        \qquad 0\leq i\leq D.
\]
Then, for every $z_1,\ldots,z_D\in\Z_N$,
\begin{equation}
\label{eq:ordinary-chain-pointwise}
        \Pr\bigl[\Sigma(R_i)=z_i\text{ for all }1\leq i\leq D\bigr]
        \leq
        \sum_{j=0}^D
        \prod_{\substack{0\leq i\leq D\\ i\neq j}}
        \left(
        \frac1N
        +{C_1}\frac{\sqrt{\log(2+n)}}{n\sqrt{\Delta_i}}
        +\frac{{C_1}}{\lambda}
        \right).
\end{equation}
Consequently,
\begin{equation}
\label{eq:ordinary-chain-summed}
\sum_{1\leq M_1<\cdots<M_D<n}
        \Pr\bigl[\Sigma(R_i)=z_i\text{ for all }i\bigr]
        \leq
        {C_2\vartheta(n;N,\lambda)^D}.
\end{equation}
\end{proposition}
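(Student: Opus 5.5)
The plan is to prove the pointwise bound \eqref{eq:ordinary-chain-pointwise} by a pigeonhole on the largest increment combined with Lemma~\ref{lem:chain-increment-form}, and then to sum over the sizes $M_1<\dots<M_D$ to get \eqref{eq:ordinary-chain-summed}.

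\textbf{Pointwise bound.} Since $\sum_{i=0}^D\Delta_i=n$, some index $j$ satisfies $\Delta_j\geq n/(D+1)$. For this $j$, Lemma~\ref{lem:chain-increment-form} replaces the chain event by the intersection of the increment events $\{\Sigma(C_i)=z_{i+1}-z_i\}$ over $i\neq j$. We expose these increments one at a time in the order $i\neq j$. Conditional on the previously exposed increments, the next $C_i$ is a uniform $\Delta_i$-subset of the remaining pool $A'$. This pool still contains $C_j$ and the increments not yet exposed, so $|A'|\geq\Delta_i+\Delta_j$. Because $\Delta_j\geq n/(D+1)$, we have $|A'|\geq n/(D+1)$ and $\Delta_i\leq(1-\varepsilon)|A'|$, where for instance $\varepsilon=1/(D+2)$ works since $\Delta_j/|A'|\geq\Delta_j/n$ is bounded below.

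Proposition~\ref{prop:one-colour} then bounds each conditional probability by
\[
\frac1N+C_{\varepsilon,T}\Bigl(\frac{\sqrt{\log(2+|A'|)}}{|A'|\sqrt{\Delta_i}}+\frac1\lambda\Bigr).
\]
The case $|A'|=1$ cannot occur when $\Delta_i\geq1$ and $\Delta_j\geq1$. Using $|A'|\geq n/(D+1)$, this is at most the factor in \eqref{eq:ordinary-chain-pointwise} with $C_1$ depending on $D$ and $T$. Multiplying the conditional bounds via the tower property gives the product for this $j$, and this is bounded by the sum over all $j$.

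\textbf{Summed bound.} This is the main bookkeeping step. Writing $\beta_i$ for the factor attached to $i$, the left side of \eqref{eq:ordinary-chain-summed} is at most
\[
\sum_j\sum_{(\Delta_i)}\prod_{i\neq j}\beta_i .
\]
For fixed $j$, the compositions $(\Delta_0,\dots,\Delta_D)$ of $n$ into positive parts are determined by the $D$ values $\Delta_i$ with $i\neq j$. We therefore drop the constraint and sum each of these freely over $1\leq\Delta_i\leq n$, which yields $\prod_{i\neq j}\sum_{\Delta=1}^{n}\beta(\Delta)$. Each inner sum is
\[
\frac nN+C_1\frac{\sqrt{\log(2+n)}}{n}\sum_{\Delta\leq n}\Delta^{-1/2}+\frac{C_1n}{\lambda}\leq\frac nN+2C_1\frac{\sqrt{\log(2+n)}}{\sqrt n}+C_1\frac n\lambda .
\]
With $\Gamma_T\geq1$, this is at most $\max\{2C_1,1\}\,\vartheta(n;N,\lambda)$. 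Summing over the $D+1$ choices of $j$ gives $C_2\vartheta^D$ with $C_2=(D+1)\max\{2C_1,1\}^D$.

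\textbf{Main obstacle.} The only delicate point is ensuring that every conditional application of Proposition~\ref{prop:one-colour} has a pool of size comparable to $n$ and a slice size bounded away from the whole pool. Both are secured by keeping the largest increment $C_j$ unexposed throughout.
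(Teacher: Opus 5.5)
Your proposal is correct and follows essentially the same route as the paper. You keep an increment $C_j$ with $\Delta_j\geq n/(D+1)$ unexposed, rewrite the chain event via Lemma~\ref{lem:chain-increment-form} as deterministic increment equations, and apply Proposition~\ref{prop:one-colour} conditionally on pools of size comparable to $n$. You then sum by dropping the composition constraint and using $\sum_{u\leq n}u^{-1/2}\leq2\sqrt n$.

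The differences are cosmetic. The paper takes $\Delta_j$ maximal so that $\varepsilon=1/2$ suffices, whereas your $\varepsilon$ depends on $D$, which only changes $C_1(D,T)$. Your unspecified exposure order is also harmless, because all increment targets are deterministic.
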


\begin{proof}
Let
\[
        C_i:=R_{i+1}\setminus R_i,
        \qquad 0\leq i\leq D.
\]
The sets $C_0,\ldots,C_D$ form a uniformly random ordered partition
of $A$ with prescribed sizes $\Delta_0,\ldots,\Delta_D$.  Choose an
index $j$ for which $\Delta_j$ is maximal.  Then
\[
        \Delta_j\geq \frac{n}{D+1}.
\]
Expose the increments in the order
\[
        C_0,C_1,\ldots,C_{j-1},C_D,C_{D-1},\ldots,C_{j+1},
\]
leaving $C_j$ unestimated.  At the moment when $C_i$ is exposed, the
remaining pool contains both $C_i$ and $C_j$.  Hence the slice
$C_i$ occupies at most one half of that pool, while the pool itself
has size at least $\Delta_j\geq n/(D+1)$. Put $z_0:=0$ and $z_{D+1}:=\Sigma(A)$. By Lemma~\ref{lem:chain-increment-form},
\[
\{\Sigma(R_i)=z_i\text{ for all }1\leq i\leq D\}
=
\bigcap_{\substack{0\leq i\leq D\\ i\neq j}}
\{\Sigma(C_i)=z_{i+1}-z_i\}.
\]
{Thus each exposed increment is subject to one deterministic target equation, while $C_j$ is left unestimated.} Proposition~\ref{prop:one-colour}, with $\varepsilon=1/2$, therefore
gives the product in \eqref{eq:ordinary-chain-pointwise}.

{
To justify the use of the common parameter $n$ in every factor on the right-hand side of \eqref{eq:ordinary-chain-pointwise}, let $n_i$ be the
size of the pool from which $C_i$ is exposed.  The preceding construction
gives $n/(D+1)\leq n_i\leq n$, and hence
\[
 \frac{\sqrt{\log(2+n_i)}}{n_i\sqrt{\Delta_i}}
 \leq
 (D+1)\frac{\sqrt{\log(2+n)}}{n\sqrt{\Delta_i}}.
\]
Thus replacing every residual pool size by $n$ changes only the constant
$C_1=C_1(D,T)$.

For completeness, put
\[
        a:=\frac1N+\frac{C_1}{\lambda},
        \qquad
        b:=C_1\frac{\sqrt{\log(2+n)}}{n}.
\]
For a fixed $j\in\{0,\ldots,D\}$, dropping the constraint on the sum of the
increments and using $\sum_{u=1}^n u^{-1/2}\leq2\sqrt n$ gives
\[
\sum_{1\leq M_1<\cdots<M_D<n}
 \prod_{\substack{0\leq i\leq D\\ i\neq j}}
 \left(a+\frac{b}{\sqrt{\Delta_i}}\right)
 \leq
 \left(na+2b\sqrt n\right)^D.
\]
Summing over the $D+1$ possible choices of $j$ and choosing
$C_2=C_2(D,T)>0$ sufficiently large, we obtain
\[
 (D+1)\left(na+2b\sqrt n\right)^D
 \leq C_2\vartheta(n;N,\lambda)^D,
\]
which proves \eqref{eq:ordinary-chain-summed}.
}
\end{proof}

An unrestricted conditioned version of
Proposition~\ref{prop:ordinary-chain} is false in general, since
conditioning on prescribed positions may cause two distinct interval
equations to have the same residual support, or may otherwise reduce
the rank of the system.  In the applications below, however, the
conditioned contributions are confined to a bounded local window,
while the remaining random parts form a strict nested chain extending
to the right or to the left.  The following corollary records precisely
this form.

\begin{corollary}[Ordinary interval chains]
\label{cor:ordinary-interval-chain}
Fix $D\geq1$ and $K\geq0$, and let $1\leq h\leq D$.  Let
$x_1,\ldots,x_n$ be a uniformly random ordering of an $n$-element set
$A\subseteq\Z_N$.

Suppose first that $c\in[n]$, that $F\subseteq[1,c]$ satisfies
$|F|\leq K$, and that
\[
	c<b_1<\cdots<b_h,
	\qquad
	b_h-c<n-|F|.
\]
Let
\[
	\mathcal F_F:=\sigma(x_j:j\in F),
\]
and let $Z_1,\ldots,Z_h$ be $\mathcal F_F$-measurable random
variables.  The variables $Z_\ell$ may also depend deterministically
on the chosen endpoint tuple $(b_1,\ldots,b_h)$.  Then, uniformly in
$z_1,\ldots,z_h\in\Z_N$,
\begin{equation}
\label{eq:right-ordinary-interval-chain}
	\sum_{c<b_1<\cdots<b_h}
	\Pr\left[
		\sum_{j=c+1}^{b_\ell}x_j=z_\ell-Z_\ell
		\text{ for all }1\leq\ell\leq h
	\right]
	\leq
	C_{D,K}\vartheta(n;N,\lambda)^h.
\end{equation}

The analogous left-going estimate also holds.  More precisely, if
$F\subseteq[c,n]$, $|F|\leq K$, and
\[
	a_1<\cdots<a_h<c,
	\qquad
	c-a_1<n-|F|,
\]
then
\begin{equation}
\label{eq:left-ordinary-interval-chain}
	\sum_{a_1<\cdots<a_h<c}
	\Pr\left[
		\sum_{j=a_\ell}^{c-1}x_j=z_\ell-Z_\ell
		\text{ for all }1\leq\ell\leq h
	\right]
	\leq
	C_{D,K}\vartheta(n;N,\lambda)^h.
\end{equation}

Both conclusions remain valid after conditioning on the values in any
subset $F_0\subseteq F$ and after replacing the targets
$z_\ell$ by random variables measurable with respect to
\[
	\mathcal F_{F_0}:=\sigma(x_j:j\in F_0).
\]
Thus, almost surely,
\begin{equation}
\label{eq:ordinary-interval-chain-conditioned}
	\sum_{\mathrm{adm}}
	\Pr\left[
		\sum_{j\in I_\ell^{\mathrm{rem}}}x_j
		=
		W_\ell-Z_\ell
		\text{ for all }1\leq\ell\leq h
		\,\middle|\,
		\mathcal F_{F_0}
	\right]
	\leq
	C_{D,K}\vartheta(n;N,\lambda)^h,
\end{equation}
where the sum is over the admissible ordered remote endpoints,
$W_1,\ldots,W_h$ are $\mathcal F_{F_0}$-measurable, and
\[
	I_\ell^{\mathrm{rem}}=[c+1,b_\ell]
\]
in the right-going case, while
\[
	I_\ell^{\mathrm{rem}}=[a_\ell,c-1]
\]
in the left-going case.
\end{corollary}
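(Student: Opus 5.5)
We reduce everything to Proposition~\ref{prop:ordinary-chain} applied to the sub-ordering of the positions not in $F$. Treat the right-going case; the left-going case follows by reversing the ordering, which is again uniformly random. First condition on $\mathcal F_F$, that is, on the set of positions $F$ and the values $(x_j)_{j\in F}$. Given this, the remaining elements $A':=A\setminus\{x_j:j\in F\}$ fill the positions $[n]\setminus F$ in a uniformly random order. Here $|A'|=n'\geq n-K$. Each $Z_\ell$, and the part of $\sum_{j=c+1}^{b_\ell}x_j$ coming from positions in $F$, is now fixed. Since $F\subseteq[1,c]$, no position of $F$ actually lies in $[c+1,b_\ell]$, so the intervals are untouched by $F$. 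Let $y_1,y_2,\ldots$ be the elements of $A'$ read in positional order starting from position $c+1$, and let $c':=|[1,c]\setminus F|$. Then the window $[c+1,b_\ell]$ corresponds to an interval of the ordering of $A'$ starting at $c'+1$, with length $b_\ell-c$. The constraint $b_h-c<n-|F|$ says exactly that these intervals never exhaust $A'$.

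Next we convert the interval sums into a nested chain. We further condition on the unordered set $P$ of the first $c'$ elements of $A'$, together with their sum. The elements following them form a uniformly random ordering of $A'':=A'\setminus P$, and the sets $R_\ell:=\{y_{c'+1},\ldots,y_{c'+b_\ell-c}\}$ form a uniformly random nested chain in $A''$. Their sizes are $M_\ell=b_\ell-c$, which satisfy $1\leq M_1<\cdots<M_h$. The events become $\Sigma(R_\ell)=z_\ell-Z_\ell$, with targets that are fixed under the conditioning. Summing over $b_1<\cdots<b_h$ is the same as summing over $M_1<\cdots<M_h$.

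The main difficulty is the top endpoint. We need $M_h<|A''|$, but we only know $b_h-c<n'$, and $|A''|=n'-c'$ may be small. To handle this, we split according to whether $M_h<|A''|$ or $M_h=|A''|$.
\begin{itemize}
\item If $M_h<|A''|$, we apply \eqref{eq:ordinary-chain-summed} with $D=h$ in $A''$. This gives $C_2\vartheta(|A''|;N,\lambda)^h$. Since $\vartheta(u)$ is increasing up to constants in $u$ on the relevant range (its dominant terms $u/N$ and $u/\lambda$ are monotone, and $\sqrt{\log(2+u)/u}\leq\ldots$), we still need $\vartheta(|A''|)\lesssim\vartheta(n)$. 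This fails for the middle term when $|A''|$ is small.
\item So we instead avoid conditioning on $P$ when $c'$ is large. We reinterpret the intervals inside the ordering of all of $A'$ as differences $\Sigma(S_{b_\ell})-\Sigma(S_c)$ of prefix sets. Then $S_{c'}\subset S_{c'+M_1}\subset\cdots$ is a nested chain of length $h+1$ in $A'$. We apply the increment form, Lemma~\ref{lem:chain-increment-form}, dropping the unconstrained increment $C_0$ together with the largest increment. The factors for the increments among the interval pieces are then bounded by Proposition~\ref{prop:one-colour} with pool sizes at least $\max\{\Delta_{\max},\ldots\}\geq n'/(h+2)$.
\end{itemize}

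In other words, we repeat the proof of Proposition~\ref{prop:ordinary-chain} directly. We expose the $h$ constrained increments $C_1,\ldots,C_h$ (the last being $R_h$ minus earlier ones, and the tail being unconstrained). Each is exposed from a pool containing the tail or $P$, so the pool has size $\gtrsim n/(h+2)$. We need the pool to be at least twice the increment, and this is arranged by ordering the exposures so that the largest remaining block stays unexposed, exactly as in that proof. Summing the product of $\bigl(\frac1N+C\frac{\sqrt{\log(2+n)}}{n\sqrt{\Delta_i}}+\frac C\lambda\bigr)$ over the free lengths then gives $(na+2b\sqrt n)^h\lesssim\vartheta(n)^h$. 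This yields \eqref{eq:right-ordinary-interval-chain}, with $C_{D,K}$ absorbing $K$ through $n'\geq n-K$; small $n$ is handled by enlarging the constant.

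The conditioned version \eqref{eq:ordinary-interval-chain-conditioned} follows because our bound already holds conditionally on all of $\mathcal F_F\supseteq\mathcal F_{F_0}$, uniformly in the $\mathcal F_F$-measurable targets $W_\ell-Z_\ell$. Applying the tower property then passes it down to $\mathcal F_{F_0}$.
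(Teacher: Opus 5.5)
Your reduction to the residual ordering of $A'=A\setminus\{x_j:j\in F\}$ and your final tower-property step are fine. The gap is in the exposure scheme you settle on, and it is exactly at the difficulty you flag.

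You work with the $h+2$ blocks $P,C_1,\ldots,C_h,T$, where $P$ is the unconditioned entries before position $c+1$ and $T$ is the tail after $b_h$. You leave $P$ and $T$ unconstrained and estimate all of $C_1,\ldots,C_h$, asserting that each pool contains $P$ or $T$ and therefore has size $\gtrsim n/(h+2)$. That assertion fails when the largest block is some interval increment $C_j$ and $P,T$ are both small.

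Already for $h=1$, $c'=0$ and $M_1=n'-1$, the slice $C_1$ fills all but one element of its pool. Proposition~\ref{prop:one-colour} then does not apply for any fixed $\varepsilon$. If you instead keep $C_j$ unexposed, as your last paragraph also prescribes, you lose one of the $h$ equations. For the roughly $n/2$ tuples with $M_1>n'/2$ you are then left with the trivial bound $1$ each, a total of order $n$ rather than $\vartheta(n;N,\lambda)$.

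Your bullet invoking Lemma~\ref{lem:chain-increment-form} has the same defect. Dropping the unconstrained $C_0=P$ already uses up the one redundant equation, since the target of $S_{c'}$ is not prescribed. Dropping the largest increment as well then discards a genuine constraint, unless you estimate the $T$-equation whose target depends on $\Sigma(P)$. Your write-up bounds only the interval pieces.

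The missing observation is that $P$ and $T$ enter only through
$\Sigma(P)+\Sigma(T)=\Sigma(A')-(z_h-Z_h)$, and $P\cup T$ is a single block of a uniformly random ordered partition of $A'$. Hence the sets $R_\ell$ of entries at the unconditioned positions of $[c+1,b_\ell]$ already form a uniformly random nested chain in $A'$ itself. Their complement $P\cup T$ has size $n'-M_h\geq1$, which is precisely what the hypothesis $b_h-c<n-|F|$ guarantees.

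There is therefore no need to condition on $P$. Proposition~\ref{prop:ordinary-chain} applies verbatim with $n'$ in place of $n$, and its proof automatically charges the redundant equation to the block $P\cup T$ whenever some $C_j$ is the largest block. Finally, $\vartheta(n';N,\lambda)\leq\sqrt2\,\vartheta(n;N,\lambda)$ once $n\geq2K+2$, and smaller $n$ is absorbed into the constant.

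This is the paper's proof. Your concern that $|A''|$ may be small is an artifact of conditioning on $P$.
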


\begin{proof}
We prove the right-going conditioned statement; the left-going case
follows by reversing the order of the positions.

First condition on all values occupying the positions in $F$.  Fix
such a realization and put
\[
	A'
	:=
	A\setminus\{x_j:j\in F\},
	\qquad
	n'
	:=
	|A'|
	=
	n-|F|.
\]
Let
\[
	j_1<\cdots<j_{n'}
\]
be the elements of $[n]\setminus F$, and set
\[
	\bar x_s:=x_{j_s},
	\qquad 1\leq s\leq n'.
\]
Conditional on $\mathcal F_F$, the sequence
$\bar x_1,\ldots,\bar x_{n'}$ is a uniformly random ordering of $A'$.

{
Since $F\subseteq[1,c]$, exactly
\[
	q_0:=c-|F|
\]
unconditioned positions lie in $[1,c]$.  For an admissible tuple
$c<b_1<\cdots<b_h$, put
\[
	M_\ell:=b_\ell-c,
	\qquad 1\leq\ell\leq h.
\]
Then
\[
	1\leq M_1<\cdots<M_h<n',
\]
and
\[
	\sum_{j=c+1}^{b_\ell}x_j
	=
	\sum_{s=q_0+1}^{q_0+M_\ell}\bar x_s.
\]
Consequently, the sets
\[
	R_\ell
	:=
	\{\bar x_{q_0+1},\ldots,\bar x_{q_0+M_\ell}\},
	\qquad 1\leq\ell\leq h,
\]
form a uniformly random nested chain
}

\[
	R_1\subset\cdots\subset R_h\subset A'
\]
with prescribed cardinalities $|R_\ell|=M_\ell$.

After conditioning on $\mathcal F_F$, the quantities
$W_\ell-Z_\ell$ are fixed.  Proposition~\ref{prop:ordinary-chain}
therefore applies for every fixed endpoint tuple.  Summing over all
such tuples, equivalently over
\[
	1\leq M_1<\cdots<M_h<n',
\]
and using \eqref{eq:ordinary-chain-summed}, we obtain
\begin{equation}
\label{eq:ordinary-chain-after-full-conditioning}
	\sum_{\mathrm{adm}}
	\Pr\left[
		\sum_{j=c+1}^{b_\ell}x_j
		=
		W_\ell-Z_\ell
		\text{ for all }1\leq\ell\leq h
		\,\middle|\,
		\mathcal F_F
	\right]
	\leq
	{C_{D,K,T}\vartheta(n';N,\lambda)^h}.
\end{equation}
The estimate in Proposition~\ref{prop:ordinary-chain} is uniform in
the targets, so it is harmless that the conditional targets may depend
on the endpoint tuple.

If $n\geq2K+2$, then $n'\geq n/2$, and hence
\[
	\vartheta(n';N,\lambda)
	\leq
	\sqrt2\,\vartheta(n;N,\lambda).
\]
If $n<2K+2$, then $n=O_K(1)$, so there are only
$O_{D,K}(1)$ admissible endpoint tuples.  Since
\[
	\vartheta(n;N,\lambda)
	\geq
	\frac{\sqrt{\log(2+n)}}{\sqrt n},
\]
the same conclusion follows after increasing $C_{D,K}$.
This proves the desired estimate after conditioning on
$\mathcal F_F$.

Finally, if only the values in $F_0\subseteq F$ are conditioned, we
average the preceding estimate over the remaining values occupying
$F\setminus F_0$ and apply the tower property.  This proves
\eqref{eq:ordinary-interval-chain-conditioned}, and the unconditioned
statements follow by taking $F_0=\varnothing$.
\end{proof}}

{
\begin{definition}[Local--remote chain property]
\label{def:local-remote-property}
Fix $D\geq1$, $K\geq0$, and $\Theta>0$.  We say that a uniformly
random ordering $x_1,\ldots,x_n$ has the
\emph{$(D,K,\Theta)$-local--remote chain property}, with constant
$C_{D,K}$, if for every $1\leq h\leq D$ the right-going and left-going
systems of Corollary~\ref{cor:ordinary-interval-chain} satisfy the
corresponding summed bound with right-hand side $C_{D,K}\Theta^h$.
This includes the version obtained after conditioning on any subset
of the at most $K$ local values and after translating the targets by
random variables measurable with respect to those conditioned values.
\end{definition}

Thus Corollary~\ref{cor:ordinary-interval-chain} says that a uniformly
random ordering of $A\subseteq\mathbb Z_N$ has the
$(D,K,\vartheta(n;N,\lambda))$-local--remote chain property.
}

\subsection{The non-periodic branch and the inverse theorem}
\label{subsec:nonperiodic-inverse}
{
We now study the non-periodic branch, in which no coset of a proper $q$-large subgroup contains almost all of the set. In the chain argument, however, the one-slice estimates are applied not to the original set but to random residual pools. We therefore first show that non-periodicity is preserved with high probability under successive exposures, and then establish a quantitative one-slice estimate for every non-periodic residual pool. This estimate also yields the inverse theorem below. Finally, we combine the two ingredients through a stopping-time argument to obtain the nested-chain estimates required in the applications.}

For a positive integer $m$, write \[ \Omega(m):=\sum_{p^\nu\parallel m}\nu \] for the number of prime factors of $m$, counted with multiplicity.

{
Let
\[
        N=tq,\qquad 1\leq t\leq T,\qquad (t,q)=1.
\]
{Recall that a subgroup $H<\mathbb Z_N$ is $q$-large if $|H|$ is divisible by a prime factor of $q$.} If $H$ is not $q$-large, then $|H|\mid t$ and hence $|H|\leq T$.

For $0<\eta\leq1$ and $u\geq1$, put
\begin{equation}
\label{eq:theta-zero-def}
        \vartheta_{0,\eta}(u;N):=\frac{u}{N}+C\eta^{-1/2}\frac{\sqrt{\log(2+u)}}{\sqrt u}.
\end{equation}
The constant $C$ in this notation depends only on the fixed parameters occurring in the statement in which the notation is used.

\begin{lemma}[Stability of non-periodicity under random exposure]
\label{lem:nonperiodicity-random-exposure}
Fix $D,T,L\geq1$ and $\delta>0$. There exist constants $C=C(D,T,L,\delta)$ and $c=c(D,T,L,\delta)>0$ such that the following holds. Let $N=tq$, where $1\leq t\leq T$, $(t,q)=1$, and $q$ has at most $L$ prime factors counted with multiplicity. Let $A\subseteq\mathbb Z_N$ have size $n$, let $0<\eta\leq1/4$, and assume that $\eta n\geq C\log(2+n)$ and
\[
        |A\cap(a+H)|\leq(1-2\eta)n
\]
for every coset $a+H$ of every proper $q$-large subgroup $H<\mathbb Z_N$. Let $(C_0,\ldots,C_{s-1},C_*)$ be a uniformly random ordered partition of $A$ into sets of prescribed cardinalities, where $s\leq D$, and define
\[
        A_i:=A\setminus\bigcup_{r<i}C_r,\qquad 0\leq i\leq s.
\]
Suppose that $|A_i|\geq\delta n$ for all $0\leq i\leq s$. For each $i$, let $\mathcal B_i$ be the event that there exist a proper $q$-large subgroup $H<\mathbb Z_N$ and a coset $a+H$ such that
\[
        |A_i\cap(a+H)|>(1-\eta)|A_i|.
\]
Then
\[
        \Pr\left(\bigcup_{i=0}^s\mathcal B_i\right)\leq C\exp(-c\eta n).
\]
\end{lemma}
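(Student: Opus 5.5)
The plan is a union bound that combines three counts: the number of proper $q$-large subgroups is bounded, only boundedly many cosets of each are relevant, and for each fixed coset a hypergeometric Chernoff estimate applies.

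\emph{Marginal distribution of $A_i$.} Fix an index $i\in\{0,\ldots,s\}$. Since $(C_0,\ldots,C_{s-1},C_*)$ is a uniformly random ordered partition with prescribed sizes, the set $A_i=C_i\cup\cdots\cup C_{s-1}\cup C_*$ is, marginally, a uniformly random subset of $A$ of the deterministic size $n_i:=|A_i|\geq\delta n$. No joint information about the $A_i$ is needed, because we will take a union bound over the at most $D+1$ indices $i$.

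\emph{Bounded number of subgroups.} Subgroups of $\mathbb Z_N$ correspond to divisors of $N=tq$. There are at most $d(t)\,d(q)\leq T\cdot 2^{L}$ of these, since $q$ has at most $L$ prime factors counted with multiplicity. So the number of proper $q$-large subgroups $H$ is $O_{T,L}(1)$.

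\emph{Bounded number of relevant cosets.} Fix $H$. If $\mathcal B_i$ occurs through the coset $a+H$, then
\[
|A\cap(a+H)|\geq|A_i\cap(a+H)|>(1-\eta)n_i\geq\tfrac34\delta n .
\]
The cosets of $H$ are disjoint, so at most $4/(3\delta)$ cosets satisfy this lower bound. These "heavy" cosets are determined by $A$ alone, not by the randomness. We may therefore restrict the union bound to the heavy cosets of each $H$. This is the step that controls the a priori unbounded count $N/|H|$ of cosets.

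\emph{Chernoff estimate for one coset.} Fix a heavy coset $a+H$ and let $X:=|A_i\setminus(a+H)|$. The hypothesis gives $|A\setminus(a+H)|\geq 2\eta n$. Since $A_i$ is a uniform $n_i$-subset of $A$, $X$ is hypergeometric with
\[
\mu=\mathbb E X\geq 2\eta n_i .
\]
The event $|A_i\cap(a+H)|>(1-\eta)n_i$ is the event $X<\eta n_i\leq\mu/2$. By the hypergeometric lower-tail bound in Section~2, its probability is at most
\[
e^{-\mu/8}\leq e^{-\eta n_i/4}\leq e^{-\delta\eta n/4}.
\]

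\emph{Assembling the bound.} Take the union over the at most $D+1$ indices $i$, the $O_{T,L}(1)$ subgroups $H$, and the at most $4/(3\delta)$ heavy cosets of each. This gives
\[
\Pr\Bigl(\bigcup_i\mathcal B_i\Bigr)\leq C\exp(-c\eta n),\qquad c=\delta/4,\quad C=O_{D,T,L,\delta}(1).
\]
The hypothesis $\eta n\geq C\log(2+n)$ is not needed for this bound; it only makes the bound nontrivial. The main points to check are the heavy-coset reduction, which relies on $(1-\eta)\geq3/4$, and the fact that each $A_i$ is marginally uniform.
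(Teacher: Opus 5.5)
Your proposal is correct and follows the paper's proof essentially step for step. Both arguments use the same ingredients: marginal uniformity of each $A_i$, the hypergeometric Chernoff bound $e^{-\mu/8}\le e^{-\delta\eta n/4}$ for a fixed coset, the restriction to the at most $4/(3\delta)$ cosets with $|A\cap(a+H)|\ge\tfrac34\delta n$, and a union bound over $O_{T,L}(1)$ subgroups and $D+1$ indices.
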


\begin{proof}
Fix $i$, a proper $q$-large subgroup $H$, and a coset $Q=a+H$. The marginal distribution of $A_i$ is uniform among the $|A_i|$-subsets of $A$. Hence $X_{i,Q}:=|A_i\setminus Q|$ is hypergeometric with mean
\[
        \mu_{i,Q}=\frac{|A_i|}{n}|A\setminus Q|\geq2\eta|A_i|.
\]
If $|A_i\cap Q|>(1-\eta)|A_i|$, then $X_{i,Q}<\eta|A_i|\leq\mu_{i,Q}/2$. The multiplicative hypergeometric Chernoff inequality therefore gives
\[
        \Pr\bigl[|A_i\cap Q|>(1-\eta)|A_i|\bigr]\leq\exp(-\mu_{i,Q}/8)\leq\exp(-\eta|A_i|/4)\leq\exp(-\delta\eta n/4).
\]
Only boundedly many cosets can witness such an event. Indeed, if the event has positive probability, then necessarily
\[
        |A\cap Q|\geq(1-\eta)|A_i|\geq(1-\eta)\delta n\geq\frac{3\delta n}{4}.
\]
The cosets of a fixed subgroup are disjoint, so at most $4/(3\delta)$ cosets of that subgroup satisfy this inequality. Since $t\leq T$ and $\Omega(q)\leq L$, the number of subgroups of $\mathbb Z_N$ is $O_{T,L}(1)$. A union bound over at most $D+1$ pools, the $O_{T,L}(1)$ proper $q$-large subgroups, and the $O_\delta(1)$ relevant cosets proves the assertion.
\end{proof}

\begin{lemma}[Quantitative non-periodic one-slice estimate]\label{lem:parameter-dependent-nonperiodic-slice}
Fix $T,L\geq1$. There exists a constant $C=C(T,L)>0$ such that the following holds. Let $N=tq$, where $1\leq t\leq T$, $(t,q)=1$, and $\Omega(q)\leq L$. Let $P\subseteq\mathbb Z_N$ have size $u$, let $0<\rho\leq1/4$, and assume that
\begin{equation}
\label{eq:one-slice-rho-size}
        \rho u\geq C\log^2(2+u)
\end{equation}
and
\begin{equation}
\label{eq:one-slice-rho-nonperiodic}
        |P\cap(a+H)|\leq(1-\rho)u
\end{equation}
for every coset $a+H$ of every proper $q$-large subgroup $H<\mathbb Z_N$. If $R$ is a uniformly random $m$-subset of $P$ with $1\leq m\leq u/2$, then, for every $z\in\mathbb Z_N$,
\begin{equation}
\label{eq:parameter-dependent-one-slice}
        \Pr[\Sigma(R)=z]\leq\frac1N+C\rho^{-1/2}\frac{\sqrt{\log(2+u)}}{u\sqrt m}.
\end{equation}
\end{lemma}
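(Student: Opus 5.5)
The plan is to rerun the proof of Proposition~\ref{prop:one-colour}, replacing Lemma~\ref{lem:kneser-low-energy} by a non-periodic variant. In this variant the additive term $N/\lambda$ disappears, and the price is a factor $\rho^{-1/2}$. The Fourier reduction, Lemma~\ref{lem:boolean-slice-fourier-reduction}, is used unchanged, since it never used primality or non-periodicity.

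\emph{Step 1: non-periodic low-energy bound.} I aim to show that for $1\leq s\leq c\rho m$,
\[
|B_s(P,m)|-1\leq C\frac{N\sqrt s}{u\sqrt{\rho m}}.
\]
\begin{itemize}
\item As in the proof of Lemma~\ref{lem:kneser-low-energy}, the expected energy $\mathbb E\sum_{\chi\in B_s}\|\chi(Y-Y')\|_N^2$ is at most $(s/m)|B_s|$.
\item Apply Markov with the sharper threshold $\delta:=4s/(\rho m)$ instead of $10s/m$. This gives $a_0\in P$ with $|\{a\in P:a-a_0\in Q_{s,\delta}\}|>(1-\rho/2)u$.
\item Take $r:=\lfloor\kappa\sqrt{\rho m/s}\rfloor\geq2$, so that $r^2\delta<1/200$ and $rQ_{s,\delta}\subseteq Q_{s,1/200}$. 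Apply Kneser to $rQ$ with $K=\operatorname{Stab}(rQ)$, and split into cases as before.
\item If $Q+K$ has at least two $K$-cosets, then $|rQ|\geq r|Q|/2\gg r u$. Combined with \eqref{eq:Q-parseval-N}, this gives the displayed bound.
\item If $K=\mathbb Z_N$, then $B_s=\{0\}$.
\item If $K$ is proper and $q$-large, then $Q\subseteq K$ puts more than $(1-\rho/2)u$ elements of $P$ in the coset $a_0+K$. This contradicts \eqref{eq:one-slice-rho-nonperiodic}, so the case is empty; this is exactly where the $N/\lambda$ loss disappears.
\item If $K$ is not $q$-large, then $|K|\leq T$, which forces $u<2T$. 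This is excluded by \eqref{eq:one-slice-rho-size} once $C$ is large.
\end{itemize}

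\emph{Step 2: intermediate $m$.} Here $C_0\rho^{-1}\log(2+u)\leq m\leq c_0u/\log(2+u)$. Feed Step 1 into Lemma~\ref{lem:boolean-slice-fourier-reduction} for the dyadic levels with $C_{\mathrm E}2^\ell\leq c\rho m$. The convergent series $\sum e^{-c_{\mathrm F}2^\ell}2^{\ell/2}$ gives a contribution $\ll\rho^{-1/2}/(u\sqrt m)$. The remaining levels are bounded trivially by $|B|\leq N$ and contribute $\ll e^{-c\rho m}$. This is at most $u^{-9}$ precisely because $\rho m\geq C_0\log(2+u)$.

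\emph{Step 3: small $m$.} If $m<C_0\rho^{-1}\log(2+u)$, expose $m-1$ elements of $R$. Since $m\leq u/2$, the last element is uniform on at least $u/2$ points, so the probability is at most $2/u$. This is $\leq C\rho^{-1/2}\sqrt{\log(2+u)}/(u\sqrt m)$ because $m\ll\rho^{-1}\log(2+u)$.

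\emph{Step 4: large $m$.} If $m>c_0u/\log(2+u)$, split $R=R_0\cup R_1$ as in Proposition~\ref{prop:one-colour}, with $|R_1|=h\asymp u/\log(2+u)$. Two checks are needed.
\begin{itemize}
\item $h$ must lie in the Step 2 range, which requires $\rho h\geq C_0\log(2+u)$. This holds by the hypothesis $\rho u\geq C\log^2(2+u)$, and this is where that hypothesis is used.
\item The residual pool $P\setminus R_0$, of size at least $u/2$, must itself be non-periodic with parameter $\asymp\rho$. I would handle this first by the Chernoff and union-bound argument of Lemma~\ref{lem:nonperiodicity-random-exposure}. The exceptional probability is $\exp(-c\rho u)\leq u^{-10}$ by \eqref{eq:one-slice-rho-size}, and this is absorbed into the error term.
\end{itemize}
Conditioning on a good $R_0$ and applying Step 2 gives $\rho^{-1/2}/(u\sqrt h)\ll\rho^{-1/2}\sqrt{\log(2+u)}/(u\sqrt m)$.

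The main obstacle I expect is Step 4: one must verify that non-periodicity survives passing to the residual pool and that the exceptional event is genuinely negligible at scale $\rho^{-1/2}\sqrt{\log(2+u)}/(u\sqrt m)$. The Markov-threshold bookkeeping in Step 1 is the other delicate point.
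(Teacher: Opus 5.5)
Your proposal is correct and follows the paper's proof essentially step for step. It uses the same three ranges of $m$, the same Markov--Kneser low-energy bound in which the proper $q$-large periodic case is excluded by \eqref{eq:one-slice-rho-nonperiodic} (your threshold $4s/(\rho m)$ in place of the paper's $2v/(\rho m)$ is immaterial), and the same split $R=R_0\cup R_1$ with Lemma~\ref{lem:nonperiodicity-random-exposure} controlling the residual pool at parameter $\rho/2$.
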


\begin{proof}
Write $\ell_u:=\log(2+u)$. By increasing $C=C(T,L)$, we may assume that every choice of $u$ satisfying \eqref{eq:one-slice-rho-size} is larger than a constant depending only on $T$ and $L$. We divide the proof into three ranges.

Suppose first that $m\leq C_0\rho^{-1}\ell_u$, where $C_0$ is a sufficiently large absolute constant. Expose $m-1$ elements of $R$. Conditional on this exposure, the last element is uniform on a set of size $u-m+1$, and therefore
\[
        \Pr[\Sigma(R)=z]\leq\frac1{u-m+1}\leq\frac2u,
\]
where the last inequality uses $m\leq u/2$. Since $m\leq C_0\rho^{-1}\ell_u$, the final quantity is at most the second term on the right-hand side of \eqref{eq:parameter-dependent-one-slice}, after increasing $C$.

We next assume that
\begin{equation}
\label{eq:parameter-medium-range}
        C_0\rho^{-1}\ell_u\leq m\leq c_0\frac{u}{\ell_u},
\end{equation}
where $c_0>0$ is a sufficiently small absolute constant. For $\chi\in\mathbb Z_N$, define
\[
        \Psi(\chi):=\frac{m}{u^2}\sum_{x,y\in P}\|\chi(x-y)\|_N^2,
        \qquad
        B_v:=\{\chi\in\mathbb Z_N:\Psi(\chi)\leq v\}.
\]
{After renaming constants and harmlessly enlarging the error term,
Lemma~\ref{lem:boolean-slice-fourier-reduction}, applied to $P$, gives
absolute constants $C_1,c_1>0$ such that}
\begin{equation}
\label{eq:parameter-fourier-reduction}
        \Pr[\Sigma(R)=z]\leq\frac1N+\frac{C_1}{N}\sum_{\substack{s\in\{1,2,4,\ldots\}\\s\leq m/C_1}}e^{-c_1s}\bigl(|B_{C_1s}|-1\bigr)+Cu^{-8}+Ce^{-c_1m}.
\end{equation}
The subtraction of $1$ removes the trivial character, which has already contributed the main term $1/N$.

We estimate $|B_v|-1$ for $1\leq v\leq c_2\rho m$, where $c_2>0$ is sufficiently small. For $\delta>0$, put
\[
        Q_{v,\delta}:=\left\{x\in\mathbb Z_N:\sum_{\chi\in B_v}\|\chi x\|_N^2<\delta|B_v|\right\},
        \qquad
        Q:=Q_{v,2v/(\rho m)}.
\]
Averaging over $p_0\in P$ and then applying Markov's inequality gives some $p_0\in P$ for which
\begin{equation}
\label{eq:parameter-Q-large}
        |\{p\in P:p-p_0\in Q\}|\geq(1-\rho/2)u.
\end{equation}
Indeed,
\[
        \frac1{u^2}\sum_{p,p_0\in P}\sum_{\chi\in B_v}\|\chi(p-p_0)\|_N^2
        =\frac1m\sum_{\chi\in B_v}\Psi(\chi)
        \leq\frac{v}{m}|B_v|.
\]

We also record the Parseval bound
\begin{equation}
\label{eq:parameter-parseval}
        |Q_{v,1/200}|\leq\frac{5N}{4|B_v|}.
\end{equation}
To see this, note that $B_v$ is symmetric and contains $0$. Hence, for every $x\in Q_{v,1/200}$,
\[
        \sum_{\chi\in B_v}e_N(\chi x)
        =\sum_{\chi\in B_v}\cos(2\pi\chi x/N)
        \geq\sum_{\chi\in B_v}\bigl(1-20\|\chi x\|_N^2\bigr)
        \geq0.9|B_v|.
\]
Character orthogonality gives
\[
        \sum_{x\in\mathbb Z_N}\left|\sum_{\chi\in B_v}e_N(\chi x)\right|^2=N|B_v|,
\]
which implies \eqref{eq:parameter-parseval}.

Choose
\[
        r:=\left\lfloor c_3\sqrt{\frac{\rho m}{v}}\right\rfloor
\]
with $c_3>0$ sufficiently small. By decreasing $c_2$ if necessary, we have $r\geq2$ and
\[
        rQ\subseteq Q_{v,1/200}.
\]
Let $K:=\operatorname{Stab}(rQ)$. If $Q+K$ contains at least two $K$-cosets, Kneser's theorem gives
\[
\begin{aligned}
        |rQ|
        &\geq r|Q+K|-(r-1)|K|\\
        &=r\bigl(|Q+K|-|K|\bigr)+|K|\\
        &\geq\frac r2|Q|
        \geq c_4ru.
\end{aligned}
\]
Together with \eqref{eq:parameter-parseval}, this yields
\begin{equation}
\label{eq:parameter-Bs-bound}
        |B_v|-1\leq C_5\frac{N\sqrt v}{u\sqrt{\rho m}}.
\end{equation}

It remains to consider the case $Q+K=K$, equivalently $Q\subseteq K$. If $K=\mathbb Z_N$, then $rQ$ is a nonempty $\mathbb Z_N$-invariant subset and hence $rQ=\mathbb Z_N$. Therefore $\mathbb Z_N\subseteq Q_{v,1/200}$, and \eqref{eq:parameter-parseval} gives
\[
        N\leq\frac{5N}{4|B_v|}.
\]
Thus $|B_v|\leq5/4$. Since $|B_v|$ is an integer and $0\in B_v$, we obtain $B_v=\{0\}$, so \eqref{eq:parameter-Bs-bound} holds. If $K<\mathbb Z_N$ is $q$-large, then \eqref{eq:parameter-Q-large} places at least $(1-\rho/2)u>(1-\rho)u$ elements of $P$ in the coset $p_0+K$, contradicting \eqref{eq:one-slice-rho-nonperiodic}. If $K$ is not $q$-large, then $|K|\mid t$ and hence $|K|\leq T$, whereas \eqref{eq:parameter-Q-large} implies $(1-\rho/2)u\leq T$; this is incompatible with \eqref{eq:one-slice-rho-size} after increasing $C=C(T,L)$. Consequently, \eqref{eq:parameter-Bs-bound} holds throughout the range $1\leq v\leq c_2\rho m$.

Choose $c_4>0$ sufficiently small that $C_1s\leq c_2\rho m$ whenever $s\leq c_4\rho m$. For these dyadic values, apply \eqref{eq:parameter-Bs-bound} with $v=C_1s$. Since
\[
        \sum_{s\in\{1,2,4,\ldots\}}e^{-c_1s}\sqrt{s}=O(1),
\]
their total contribution to \eqref{eq:parameter-fourier-reduction} is at most $C/(u\sqrt{\rho m})$. For $s>c_4\rho m$, use the trivial estimate $|B_{C_1s}|-1\leq N$; the corresponding dyadic tail is at most $Ce^{-c\rho m}$. By \eqref{eq:parameter-medium-range}, $\rho m\geq C_0\ell_u$, so, after increasing $C_0$, this tail and the terms $u^{-8}$ and $e^{-c_1m}$ are absorbed into $C/(u\sqrt{\rho m})$. Hence
\begin{equation}
\label{eq:parameter-medium-one-slice}
        \Pr[\Sigma(R)=z]\leq\frac1N+\frac{C}{u\sqrt{\rho m}}.
\end{equation}

Finally, suppose that $c_0u/\ell_u<m\leq u/2$. Let
\[
        h:=\left\lfloor c_5\frac{u}{\ell_u}\right\rfloor,
\]
where $c_5>0$ is sufficiently small. Generate $R$ by first choosing a uniformly random $(m-h)$-subset $R_0\subseteq P$, and then choosing a uniformly random $h$-subset $R_1\subseteq P\setminus R_0$, with $R=R_0\cup R_1$. Put
\[
        P':=P\setminus R_0,
        \qquad
        u':=|P'|=u-m+h\geq u/2.
\]
By taking $c_5$ sufficiently small and then increasing the constant in \eqref{eq:one-slice-rho-size}, the integer $h$ satisfies
\[
        C_0(\rho/2)^{-1}\log(2+u')\leq h\leq c_0\frac{u'}{\log(2+u')}.
\]
Apply Lemma~\ref{lem:nonperiodicity-random-exposure} to the ordered partition $(R_0,P')$, with parameter $\rho/2$ and $\delta=1/2$. Its initial hypothesis is exactly \eqref{eq:one-slice-rho-nonperiodic}. Thus, except on an event of probability at most $Ce^{-c\rho u}$,
\[
        |P'\cap(a+H)|\leq(1-\rho/2)u'
\]
for every coset of every proper $q$-large subgroup. Conditional on such a residual set $P'$ and on $R_0$, apply \eqref{eq:parameter-medium-one-slice} to the uniform $h$-subset $R_1\subseteq P'$, with $\rho/2$ in place of $\rho$. Since $u'\asymp u$ and $h\asymp u/\ell_u$, we obtain
\[
        \Pr[\Sigma(R)=z\mid R_0]\leq\frac1N+C\rho^{-1/2}\frac{\sqrt{\ell_u}}{u^{3/2}}.
\]
Averaging over $R_0$ adds at most $Ce^{-c\rho u}$. By \eqref{eq:one-slice-rho-size}, this additive error is smaller than the second term in \eqref{eq:parameter-dependent-one-slice}. Finally, $m\leq u/2$ implies $u^{-3/2}\leq C/(u\sqrt m)$, completing the proof.
\end{proof}

{
The preceding estimate yields the following inverse theorem.
\begin{theorem}[Inverse theorem for Boolean-slice anticoncentration]
\label{thm:boolean-slice-inverse}
Fix $T,L\geq1$ and $0<\varepsilon,\nu<1$. There exists a constant
$C=C(T,L,\varepsilon,\nu)>0$ such that the following holds. Let
\[
        N=tq,\qquad 1\leq t\leq T,\qquad (t,q)=1,
        \qquad \Omega(q)\leq L,
\]
and let $A\subseteq\mathbb Z_N$ have size $n\geq2$. Then at least one of
the following alternatives holds:
\begin{enumerate}[label=\textup{(\roman*)}]
\item there exist a proper $q$-large subgroup $H<\mathbb Z_N$ and a
      coset $a+H$ such that
      \[
              |A\cap(a+H)|\geq(1-\nu)n;
      \]
\item for every integer $m$ satisfying
      \[
              1\leq m\leq(1-\varepsilon)n,
      \]
      every uniformly random $m$-subset $R$ of $A$, and every
      $z\in\mathbb Z_N$, one has
      \[
              \Pr[\Sigma(R)=z]
              \leq
              \frac1N+
              C\frac{\sqrt{\log(2+n)}}{n\sqrt m}.
      \]
\end{enumerate}
When $q=1$, alternative~\textup{(i)} is impossible.
\end{theorem}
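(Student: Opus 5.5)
The inverse theorem is essentially a repackaging of Lemma~\ref{lem:parameter-dependent-nonperiodic-slice}, with a reduction from $m\leq(1-\varepsilon)n$ to $m\leq u/2$. We assume (i) fails, so every coset of every proper $q$-large subgroup satisfies $|A\cap(a+H)|<(1-\nu)n$. We then aim to prove (ii) with $\rho:=\min\{\nu,1/4\}$. When $q=1$ there are no proper $q$-large subgroups, so (i) is vacuous and the non-periodicity hypothesis holds automatically; this also explains the final remark.

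\emph{Small $n$ and small $m$.} If $\rho n<C'\log^2(2+n)$, where $C'$ is the constant of Lemma~\ref{lem:parameter-dependent-nonperiodic-slice}, then $n$ is bounded in terms of $\nu,T,L$. In that case we expose $m-1$ elements of $R$; the last element is uniform on at least $\varepsilon n$ elements. Hence $\Pr[\Sigma(R)=z]\leq 1/(\varepsilon n)$, which is at most $C\sqrt{\log(2+n)}/(n\sqrt m)$ once $C$ is large, because $m\leq n$ is bounded. The same exposure argument handles $m$ bounded in terms of $\varepsilon$ for every $n$.

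\emph{Case $m\leq n/2$.} Here we apply Lemma~\ref{lem:parameter-dependent-nonperiodic-slice} directly with $P=A$, $u=n$ and $\rho$ as above. Its size hypothesis holds in the non-bounded regime, and its non-periodicity hypothesis is exactly the failure of (i). This gives the bound with constant $C\rho^{-1/2}$, which depends only on $T,L,\nu$.

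\emph{Case $n/2<m\leq(1-\varepsilon)n$.} This is the only step needing care, because the lemma requires $m\leq u/2$. We split $R=R_0\cup R_1$, where $R_0$ is a uniform $(m-h)$-subset and $R_1$ is a uniform $h$-subset of $P':=A\setminus R_0$. We choose $h:=\lfloor \varepsilon n/2\rfloor$, so that $u':=|P'|=n-m+h\geq\varepsilon n$ and $h\leq u'/2$. By Lemma~\ref{lem:nonperiodicity-random-exposure}, applied to the partition $(R_0,P')$ with $\eta=\rho/2$ and $\delta=\varepsilon$, the residual pool $P'$ satisfies the non-periodicity hypothesis with parameter $\rho/2$ except with probability $Ce^{-c\rho n}$. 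On the good event, conditional on $R_0$, the lemma applied to $R_1\subseteq P'$ with target $z-\Sigma(R_0)$ gives
\[
\Pr[\Sigma(R)=z\mid R_0]\leq\frac1N+C\rho^{-1/2}\frac{\sqrt{\log(2+n)}}{\varepsilon^{3/2}n^{3/2}}.
\]
Averaging over $R_0$ adds the exponentially small exceptional probability. This is absorbed into $\sqrt{\log(2+n)}/(n\sqrt m)$, since $m\asymp n$ and $\rho n\gg\log^2 n$.

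The main obstacle is bookkeeping in this last case. One has to check that Lemma~\ref{lem:nonperiodicity-random-exposure} applies: the slack hypothesis $(1-2\eta)n$ with $\eta=\rho/2$ is implied by the failure of (i), and we need $\eta n\geq C\log(2+n)$, which is ensured by the large-$n$ reduction. One also has to check that all constants depend only on $T,L,\varepsilon,\nu$.
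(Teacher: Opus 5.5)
Your proof is correct. For $m\le n/2$, and for the bounded range of $n$, it coincides with the paper's.

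The difference is in the range $n/2<m\le(1-\varepsilon)n$. The paper does not split $R$ there; it passes to the complement. Since $R':=A\setminus R$ is a uniformly random $(n-m)$-subset of $A$ with $\varepsilon n\le n-m\le n/2$, and $\Sigma(R)=z$ if and only if $\Sigma(R')=\Sigma(A)-z$, one application of Lemma~\ref{lem:parameter-dependent-nonperiodic-slice} to $R'$ (with $P=A$) gives the bound. The comparison $1/\sqrt{n-m}\le\sqrt{(1-\varepsilon)/\varepsilon}\,/\sqrt m$ then finishes. That route needs no second invocation of Lemma~\ref{lem:nonperiodicity-random-exposure}, creates no exceptional event to absorb, and costs only a factor $\varepsilon^{-1/2}$ rather than your $\varepsilon^{-3/2}$.

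Your split-and-condition argument also works. The sizes check out: $u'=n-m+h\ge\varepsilon n$, $h\le n-m$ gives $h\le u'/2$, and the slack $(1-2\eta)n=(1-\rho)n$ is exactly what the failure of (i) provides. However, it essentially re-runs the large-$m$ range already contained in the proof of Lemma~\ref{lem:parameter-dependent-nonperiodic-slice}. What it buys is robustness: it uses only a conditional one-slice bound on residual pools, not the exact identity $\Sigma(R)+\Sigma(A\setminus R)=\Sigma(A)$.

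One bookkeeping point: your large-$n$ threshold must be allowed to depend on $\varepsilon$, not only on $\nu,T,L$. The constant in Lemma~\ref{lem:nonperiodicity-random-exposure} depends on $\delta=\varepsilon$, and the size hypothesis for $P'$ reads $\tfrac{\rho}{2}u'\ge C'\log^2(2+u')$ with only $u'\ge\varepsilon n$. This is harmless because the final constant $C$ may depend on $\varepsilon$.
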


\begin{proof}
Put
\[
        \rho:=\min\{\nu,1/4\}.
\]
Suppose that alternative~\textup{(i)} does not hold. Then, for every
coset $a+H$ of every proper $q$-large subgroup $H<\mathbb Z_N$,
\[
        |A\cap(a+H)|<(1-\nu)n\leq(1-\rho)n.
\]
Let $C_0=C_0(T,L)$ be the constant in
Lemma~\ref{lem:parameter-dependent-nonperiodic-slice}. Assume first that
\[
        \rho n\geq C_0\log^2(2+n).
\]
If $1\leq m\leq n/2$, that lemma, applied with $P=A$ and parameter
$\rho$, gives
\[
        \Pr[\Sigma(R)=z]
        \leq
        \frac1N+C_1(T,L)\rho^{-1/2}
        \frac{\sqrt{\log(2+n)}}{n\sqrt m}.
\]

Now suppose that
\[
        \frac n2<m\leq(1-\varepsilon)n.
\]
The complement $R':=A\setminus R$ is a uniformly random
$m'$-subset of $A$, where
\[
        m':=n-m\leq\frac n2,
        \qquad
        m'\geq\varepsilon n.
\]
Moreover,
\[
        \Sigma(R)=z
        \quad\Longleftrightarrow\quad
        \Sigma(R')=\Sigma(A)-z.
\]
Applying the preceding estimate to $R'$ and using
\[
        \frac1{\sqrt{m'}}
        \leq
        \sqrt{\frac{1-\varepsilon}{\varepsilon}}\,
        \frac1{\sqrt m}
\]
gives alternative~\textup{(ii)}, after changing the constant.

It remains to consider the values of $n$ for which
\[
        \rho n<C_0\log^2(2+n).
\]
There are only finitely many such $n$, with a bound depending on
$T,L$, and $\nu$. Since probabilities are at most $1$, all these cases
are absorbed by increasing $C=C(T,L,\varepsilon,\nu)$. This proves
alternative~\textup{(ii)} whenever alternative~\textup{(i)} fails.
\end{proof}

{Theorem~\ref{thm:boolean-slice-inverse} is the conceptual
formulation of the obstruction and serves as the guiding dichotomy for
the rest of the paper.  The later valid-ordering arguments use the
quantitative, boundedly conditioned chain version developed next.}
}

\begin{theorem}[Quantitative non-periodic chain estimates]
\label{thm:nonperiodic-chain}
Fix $D,T,L\geq1$. There exist constants $C=C(D,T,L)$ and $c=c(D,T,L)>0$ such that the following holds. Let $N=tq$, where $1\leq t\leq T$, $(t,q)=1$, and $\Omega(q)\leq L$. Let $A\subseteq\mathbb Z_N$ have size $n$, let $0<\eta\leq1/4$, and assume that
\begin{equation}
\label{eq:eta-size-condition}
        \eta n\geq C\log^2(2+n)
\end{equation}
and
\begin{equation}
\label{eq:nonperiodic-nocoset}
        |A\cap(a+H)|\leq(1-\eta)n
\end{equation}
for every coset $a+H$ of every proper $q$-large subgroup $H<\mathbb Z_N$. Let $R_1\subseteq\cdots\subseteq R_D\subseteq A$ be a uniformly random nested chain with prescribed sizes $|R_i|=M_i$, where $1\leq M_1<\cdots<M_D<n$. Put $M_0=0$, $M_{D+1}=n$, and $\Delta_i=M_{i+1}-M_i$. Then, for every $z_1,\ldots,z_D\in\mathbb Z_N$,
\begin{align}
\label{eq:nonperiodic-chain-product}
&\Pr[\Sigma(R_i)=z_i\text{ for all }1\leq i\leq D]\\
&\qquad\leq\sum_{j=0}^D\prod_{\substack{0\leq i\leq D\\i\neq j}}\left(\frac1N+C\eta^{-1/2}\frac{\sqrt{\log(2+n)}}{n\sqrt{\Delta_i}}\right)+C\exp(-c\eta n).\nonumber
\end{align}
Consequently,
\begin{equation}
\label{eq:nonperiodic-chain-summed}
\sum_{1\leq M_1<\cdots<M_D<n}\Pr[\Sigma(R_i)=z_i\text{ for all }i]\leq C_D\vartheta_{0,\eta}(n;N)^D+C\exp(-c\eta n/2).
\end{equation}
Moreover, fix an integer $K\geq0$. After allowing the constants $C$ and $c$ to depend additionally on $K$, the following conditioned version holds. Let $x_1,\ldots,x_n$ be a uniformly random ordering of $A$, let $F\subseteq[n]$ satisfy $|F|\leq K$, and put
\[
        \mathcal F_F:=\sigma(x_j:j\in F),
        \qquad
        A_F:=A\setminus\{x_j:j\in F\},
        \qquad
        n_F:=|A_F|=n-|F|.
\]
Conditional on $\mathcal F_F$, let
\[
        R^F_1\subseteq\cdots\subseteq R^F_h\subseteq A_F,
        \qquad 1\leq h\leq D,
\]
be a uniformly random nested chain with prescribed sizes, and let $W_1,\ldots,W_h$ be $\mathcal F_F$-measurable $\mathbb Z_N$-valued random variables. Then, almost surely, the estimates \eqref{eq:nonperiodic-chain-product} and \eqref{eq:nonperiodic-chain-summed} hold with $D$, $A$, $n$, $\eta$, and $z_i$ replaced respectively by $h$, $A_F$, $n_F$, $\eta/2$, and $W_i$. In particular, after changing the constants, their right-hand sides may be written in terms of the original parameters $n$ and $\eta$.
\end{theorem}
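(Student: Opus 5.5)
The plan is to mimic the proof of Proposition~\ref{prop:ordinary-chain}, replacing Proposition~\ref{prop:one-colour} by Lemma~\ref{lem:parameter-dependent-nonperiodic-slice}. Lemma~\ref{lem:nonperiodicity-random-exposure} supplies the non-periodicity of the residual pools, and a stopping-time argument discards the bad exposures.

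\emph{Setup.} Write $C_i:=R_{i+1}\setminus R_i$. These increments form a uniformly random ordered partition of $A$ with sizes $\Delta_0,\ldots,\Delta_D$. Fix $j$ with $\Delta_j$ maximal, so $\Delta_j\geq n/(D+1)$. Expose the increments in the order $C_0,\ldots,C_{j-1},C_D,\ldots,C_{j+1}$. By Lemma~\ref{lem:chain-increment-form}, the target event is the intersection of the $D$ increment equations $\Sigma(C_i)=z_{i+1}-z_i$ for $i\neq j$. At the moment $C_i$ is exposed, the current pool $P_i$ contains $C_i\cup C_j$, so $|P_i|\geq n/(D+1)$ and $|C_i|\leq|P_i|/2$.

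\emph{Good event and stopping time.} Reindex the exposure order as an ordered partition $(C'_0,\ldots,C'_{D-1},C_j)$; the pools $P_i$ are then exactly the sets $A_i$ of Lemma~\ref{lem:nonperiodicity-random-exposure}, with $s=D$ and $\delta=1/(D+1)$. Apply that lemma with parameter $\eta/2$, which is allowed because hypothesis \eqref{eq:nonperiodic-nocoset} gives $|A\cap(a+H)|\leq(1-2\cdot\eta/2)n$. Off the union of the events $\mathcal B_i$, which has probability at most $C\exp(-c\eta n)$, every pool $P_i$ satisfies \eqref{eq:one-slice-rho-nonperiodic} with $\rho=\eta/2$.

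Define the stopping time $\tau$ as the first exposure step at which the current pool is bad. The probability of the target event is at most $\Pr[\tau<\infty]$ plus the probability that all equations hold and every pool is good. For the second term, condition successively on the previously exposed increments. On $\{\tau>i\}$, which is determined by the history before $C_i$ is exposed, Lemma~\ref{lem:parameter-dependent-nonperiodic-slice} applies to the uniform $\Delta_i$-subset $C_i$ of $P_i$. Its size hypothesis \eqref{eq:one-slice-rho-size} follows from \eqref{eq:eta-size-condition} and $|P_i|\asymp_D n$. This gives the conditional factor $1/N+C\eta^{-1/2}\sqrt{\log(2+n)}/(n\sqrt{\Delta_i})$. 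Multiplying via the tower property gives the $j$-th product, and bounding the sum over $j$ crudely gives \eqref{eq:nonperiodic-chain-product}.

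\emph{Summation.} For \eqref{eq:nonperiodic-chain-summed}, sum over the $M$'s exactly as in Proposition~\ref{prop:ordinary-chain}, using $\sum_{\Delta\le n}\Delta^{-1/2}\leq2\sqrt n$, which produces $\vartheta_{0,\eta}^D$. The exponential error is summed over at most $n^D$ tuples, giving $n^DC e^{-c\eta n}\leq Ce^{-c\eta n/2}$ by \eqref{eq:eta-size-condition}, after enlarging $C$.

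\emph{Conditioned version.} Condition on $\mathcal F_F$, so that $A_F$ has size $n_F=n-|F|$ and the remaining chain is uniform in $A_F$. Since $|F|\leq K$, each coset satisfies $|A_F\cap(a+H)|\leq(1-\eta)n\leq(1-\eta/2)n_F$ once $\eta n\geq 2K$, which is guaranteed by \eqref{eq:eta-size-condition} after enlarging $C$ depending on $K$. The size condition persists with changed constants, so the unconditioned statement applies to $A_F$ with $\eta/2$ and the fixed targets $W_i$. Since $n_F\asymp n$ and the bounds are uniform in the targets, the right-hand sides can be rewritten in terms of $n$ and $\eta$.

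\emph{Main obstacle.} The delicate step is the interaction of conditioning with the good-pool event. The slice lemma must be applied only on $\mathcal F$-measurable good events, which is why the stopping time is used rather than conditioning on the global good event; the latter would destroy the uniformity of later slices. The factor $1/2$ loss in $\eta$ must also be compatible with the $(1-2\eta)$ hypothesis in Lemma~\ref{lem:nonperiodicity-random-exposure}, which the choice of parameter $\eta/2$ above ensures.
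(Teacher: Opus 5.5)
Your proposal is correct and follows the paper's proof essentially step for step: the same exposure order around a maximal increment, Lemma~\ref{lem:nonperiodicity-random-exposure} with parameter $\eta/2$ to bound $\Pr(\tau<\infty)$, Lemma~\ref{lem:parameter-dependent-nonperiodic-slice} with $\rho=\eta/2$ applied on the $\mathcal F_r$-measurable events $\{\tau>r\}$ and iterated via the tower property, the same summation over cut points, and the same reduction of the conditioned version to the unconditioned one on $A_F$. The only differences are constant choices, $\delta=1/(D+1)$ instead of $1/(2D+2)$ and $\eta n\geq 2K$ instead of $4K$, and both of your choices suffice.
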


\begin{proof}
Set $R_0:=\varnothing$ and $R_{D+1}:=A$, and let $C_i:=R_{i+1}\setminus R_i$ for $0\leq i\leq D$. The sets $C_0,\ldots,C_D$ form a uniformly random ordered partition of $A$ with prescribed sizes $\Delta_0,\ldots,\Delta_D$. Choose $j$ with $\Delta_j=\max_i\Delta_i$, {leave $C_j$ unexposed}, and expose the other increments in the order
\[
        C_{\iota_0},\ldots,C_{\iota_{D-1}}:=C_0,\ldots,C_{j-1},C_D,C_{D-1},\ldots,C_{j+1}.
\]
For $0\leq r\leq D-1$, let $\mathcal F_r$ be the sigma-algebra generated by $C_{\iota_0},\ldots,C_{\iota_{r-1}}$, and put
\[
        P_r:=A\setminus\bigcup_{s<r}C_{\iota_s}.
\]
Then $P_r$ is $\mathcal F_r$-measurable and, conditional on $\mathcal F_r$, the set $C_{\iota_r}$ is a uniformly random $\Delta_{\iota_r}$-subset of $P_r$. Since $P_r$ contains both $C_{\iota_r}$ and {$C_j$}, one has
\begin{equation}
\label{eq:stopped-pool-size}
        \Delta_{\iota_r}\leq\frac{|P_r|}{2},\qquad |P_r|\geq\Delta_j\geq\frac{n}{D+1}.
\end{equation}

Call $P_r$ bad if there exist a proper $q$-large subgroup $H<\mathbb Z_N$ and a coset $a+H$ such that
\begin{equation}
\label{eq:stopped-bad-pool}
        |P_r\cap(a+H)|>(1-\eta/2)|P_r|.
\end{equation}
Define
\[
        \tau:=\min\{0\leq r\leq D-1:P_r\text{ is bad}\},
\]
with $\tau=\infty$ if no such stage exists. This is a stopping time for $(\mathcal F_r)$. Relabel the exposed increments as $(C_{\iota_0},\ldots,C_{\iota_{D-1}})$ and {the remaining increment as $C_*$}. Lemma~\ref{lem:nonperiodicity-random-exposure}, applied with parameter $\eta/2$ and $\delta=1/(2D+2)$, gives
\begin{equation}
\label{eq:stopping-time-tail}
        \Pr(\tau<\infty)\leq C\exp(-c\eta n).
\end{equation}
Indeed, the initial condition required by that lemma is exactly \eqref{eq:nonperiodic-nocoset}.

Fix $0\leq r\leq D-1$ and a concrete $\mathcal F_r$-history with $r<\tau$. Then $P_r$ is determined and satisfies
\[
        |P_r\cap(a+H)|\leq(1-\eta/2)|P_r|
\]
for every coset of every proper $q$-large subgroup. By \eqref{eq:eta-size-condition} and \eqref{eq:stopped-pool-size}, the hypothesis \eqref{eq:one-slice-rho-size} of Lemma~\ref{lem:parameter-dependent-nonperiodic-slice} holds for $P_r$ with $\rho=\eta/2$. Therefore, for every $\mathcal F_r$-measurable target $w\in\mathbb Z_N$,
\begin{equation}
\label{eq:stopped-one-step}
        \Pr\left[\sum_{x\in C_{\iota_r}}x=w\,\middle|\,\mathcal F_r\right]\leq\frac1N+C\eta^{-1/2}\frac{\sqrt{\log(2+n)}}{n\sqrt{\Delta_{\iota_r}}}.
\end{equation}
Here we used $|P_r|\asymp_Dn$ to replace $|P_r|$ by $n$ in the denominator.

Put $z_0:=0$ and $z_{D+1}:=\Sigma(A)$, and, for $0\leq r\leq D-1$, define
\[
w_r:=z_{\iota_r+1}-z_{\iota_r}.
\]
By Lemma~\ref{lem:chain-increment-form},
\begin{equation}
\label{eq:stopped-chain-increment-form}
\{\Sigma(R_i)=z_i\text{ for all }1\leq i\leq D\}=\bigcap_{r=0}^{D-1}\{\Sigma(C_{\iota_r})=w_r\}.
\end{equation}
The targets $w_r$ are deterministic and therefore $\mathcal F_r$-measurable. This also explains the reverse part of the exposure: if $\iota_r\in\{j+1,\ldots,D-1\}$, the equation is $\Sigma(C_{\iota_r})=z_{\iota_r+1}-z_{\iota_r}$, while for $\iota_r=D$ it is $\Sigma(C_D)=\Sigma(A)-z_D$; none of these targets depends on an increment that has not yet been exposed. Define
\[
\mathcal G_r:=\{\Sigma(C_{\iota_s})=w_s\text{ for all }0\leq s<r\}\cap\{\tau>r\},\qquad 0\leq r\leq D,
\]
where, for $r=D$, the condition $\tau>D$ is equivalent to $\tau=\infty$, since every finite value of $\tau$ lies in $\{0,\ldots,D-1\}$. For $0\leq r\leq D-1$, the event $\mathcal G_r$ belongs to $\mathcal F_r$. Moreover, $P_0=A$ is not bad, because \eqref{eq:nonperiodic-nocoset} gives
\[
|A\cap(a+H)|\leq(1-\eta)n<(1-\eta/2)n
\]
for every proper $q$-large subgroup coset. Hence $\tau>0$ deterministically and $\Pr(\mathcal G_0)=1$. Put
\[
        b_r:=\frac1N+C\eta^{-1/2}\frac{\sqrt{\log(2+n)}}{n\sqrt{\Delta_{\iota_r}}}.
\]
For $0\leq r\leq D-1$, the tower property and \eqref{eq:stopped-one-step} give
\[
\begin{aligned}
        \Pr(\mathcal G_{r+1})&=\mathbb E\left[\mathbf 1_{\mathcal G_r}\Pr\bigl(\Sigma(C_{\iota_r})=w_r,\ \tau>r+1\mid\mathcal F_r\bigr)\right]\\
        &\leq\mathbb E\left[\mathbf 1_{\mathcal G_r}\Pr\bigl(\Sigma(C_{\iota_r})=w_r\mid\mathcal F_r\bigr)\right]\leq b_r\Pr(\mathcal G_r).
\end{aligned}
\]
Iteration yields
\[
        \Pr[\Sigma(R_i)=z_i\text{ for all }i,\ \tau=\infty]\leq\prod_{r=0}^{D-1}b_r.
\]
Combining this with \eqref{eq:stopping-time-tail} proves \eqref{eq:nonperiodic-chain-product} for {the chosen index $j$}, and hence proves the stated sum over all $j$.

Summing the principal term over the cut points proceeds as in Proposition~\ref{prop:ordinary-chain}, using $\sum_{u\leq n}u^{-1/2}\leq2\sqrt n$. The additive exponential error contributes at most
\[
        Cn^D\exp(-c\eta n).
\]
By \eqref{eq:eta-size-condition}, one has $D\log n+\log C\leq c\eta n/2$ for all sufficiently large $n$, and therefore
\[
        Cn^D\exp(-c\eta n)\leq\exp(-c\eta n/2).
\]
This proves \eqref{eq:nonperiodic-chain-summed}.

It remains to prove the conditioned assertion. Fix $K\geq0$, let $x_1,\ldots,x_n$ be a uniformly random ordering of $A$, and let $F\subseteq[n]$ satisfy $|F|\leq K$. Put
\[
        \mathcal F_F:=\sigma(x_j:j\in F),
        \qquad
        A_F:=A\setminus\{x_j:j\in F\},
        \qquad
        n_F:=|A_F|=n-|F|.
\]
After increasing the constant in \eqref{eq:eta-size-condition}, depending additionally on $K$, we may assume that
\[
        \eta n\geq4K
        \qquad\text{and}\qquad
        n_F\geq\frac n2.
\]
For every coset $a+H$ of every proper $q$-large subgroup $H<\mathbb Z_N$, one has
\[
\begin{aligned}
        |A_F\cap(a+H)|
        &\leq |A\cap(a+H)|\\
        &\leq(1-\eta)n\\
        &\leq(1-\eta/2)n_F.
\end{aligned}
\]
Indeed,
\[
        (1-\eta/2)n_F-(1-\eta)n
        =\frac{\eta n}{2}-(1-\eta/2)|F|
        \geq\frac{\eta n}{2}-K
        \geq0.
\]
Moreover,
\[
        \frac{\eta}{2}n_F
        \geq\frac{\eta n}{4}
        \geq C'\log^2(2+n_F)
\]
after increasing the constant in \eqref{eq:eta-size-condition} once more.

Conditional on $\mathcal F_F$, the set $A_F$ is fixed. Every uniformly random nested chain in $A_F$ therefore satisfies the already proved unconditioned assertion, with $\eta/2$ in place of $\eta$ and with chain depth $h\leq D$. The conditional targets $W_1,\ldots,W_h$ are fixed after conditioning on $\mathcal F_F$, so the uniformity in the targets applies.

Finally, since
\[
        \frac n2\leq n_F\leq n,
        \qquad
        (\eta/2)^{-1/2}=\sqrt2\,\eta^{-1/2},
\]
the factors obtained with $n_F$ and $\eta/2$ satisfy
\[
        \frac1N+C(\eta/2)^{-1/2}
        \frac{\sqrt{\log(2+n_F)}}{n_F\sqrt{\Delta_i}}
        \leq
        \frac1N+C_K\eta^{-1/2}
        \frac{\sqrt{\log(2+n)}}{n\sqrt{\Delta_i}}.
\]
Likewise,
\[
        \exp\left(-c\frac{\eta}{2}n_F\right)
        \leq \exp(-c_K\eta n).
\]
Thus, after changing the constants, the conditional estimates have the same form as \eqref{eq:nonperiodic-chain-product} and \eqref{eq:nonperiodic-chain-summed} in terms of $n$ and $\eta$. This proves the conditioned assertion.
\end{proof}

\begin{corollary}[Non-periodic interval estimates]
\label{cor:nonperiodic-interval-chain}
{Assume the hypotheses of
Theorem~\ref{thm:nonperiodic-chain}, and fix $K_0\geq0$. For every
$1\leq h\leq D$, every right-going or left-going system satisfying the
hypotheses of Corollary~\ref{cor:ordinary-interval-chain} with $K=K_0$,
and every choice of targets, one has}
\begin{equation}
\label{eq:nonperiodic-local-remote-with-error}
\sum_{\mathrm{adm}}\Pr\left[\sum_{j\in I^{\mathrm{rem}}_\ell}x_j=z_\ell-Z_\ell\text{ for all }1\leq\ell\leq h\right]\leq C_{D,K_0}\vartheta_{0,\eta}(n;N)^h+C_{D,K_0}\exp(-c\eta n/2).
\end{equation}
{Here $\sum_{\mathrm{adm}}$, the remote intervals
$I_\ell^{\mathrm{rem}}$, and the local variables $Z_\ell$ have exactly
the meanings specified in Corollary~\ref{cor:ordinary-interval-chain}.
The same estimate holds after conditioning on any subset of the values
in the local window and after translating the targets by random variables
measurable with respect to those conditioned values.}

In particular, after decreasing $c=c(D,T,L)>0$ if necessary and defining
\begin{equation}
\label{eq:absorbed-nonperiodic-theta}
\widehat\vartheta_{0,\eta,D}(n;N):=\vartheta_{0,\eta}(n;N)+\exp\left(-\frac{c\eta n}{2D}\right),
\end{equation}
one has the pure-power estimate
\begin{equation}
\label{eq:nonperiodic-local-remote-pure-power}
\sum_{\mathrm{adm}}\Pr\left[\sum_{j\in I^{\mathrm{rem}}_\ell}x_j=z_\ell-Z_\ell\text{ for all }1\leq\ell\leq h\right]\leq C'_{D,K_0}\widehat\vartheta_{0,\eta,D}(n;N)^h,
\end{equation}
with the same conditioned version.
{Equivalently, the random ordering has the
$(D,K_0,\widehat\vartheta_{0,\eta,D}(n;N))$-local--remote chain
property of Definition~\ref{def:local-remote-property}.}
\end{corollary}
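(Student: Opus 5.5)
We would follow the proof of Corollary~\ref{cor:ordinary-interval-chain} step by step, replacing the call to Proposition~\ref{prop:ordinary-chain} by the conditioned part of Theorem~\ref{thm:nonperiodic-chain}. Treat the right-going case; the left-going case follows by reversing positions. First condition on all values $x_j$ with $j\in F$, where $F\subseteq[1,c]$ and $|F|\leq K_0$. The remaining values $\bar x_1,\ldots,\bar x_{n_F}$ then form a uniformly random ordering of $A_F$. With $q_0:=c-|F|$ and $M_\ell:=b_\ell-c$, the remote interval sums become $\Sigma(R_\ell)$ for the nested chain
\[
R_\ell=\{\bar x_{q_0+1},\ldots,\bar x_{q_0+M_\ell}\}.
\]
The admissibility condition $b_h-c<n-|F|$ gives $1\leq M_1<\cdots<M_h<n_F$. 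The chain $R_1\subset\cdots\subset R_h$ is a uniformly random nested chain in $A_F$ with prescribed sizes, and the targets $z_\ell-Z_\ell$ (or $W_\ell-Z_\ell$) are $\mathcal F_F$-measurable.

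The conditioned version of Theorem~\ref{thm:nonperiodic-chain} applies to $A_F$ with $\eta/2$ in place of $\eta$. The hypothesis that $A_F$ remains non-periodic is already verified in that proof. Summing over $M_1<\cdots<M_h$ via \eqref{eq:nonperiodic-chain-summed}, and rewriting in terms of $n$ and $\eta$ as the theorem permits, bounds the conditional sum by
\[
C\vartheta_{0,\eta}(n;N)^h+C\exp(-c\eta n/2),
\]
almost surely. Two points need checking. The estimate is uniform in targets, so targets depending on the endpoint tuple are harmless. Also $n_F\geq n/2$, so $\vartheta_{0,\eta/2}(n_F;N)\leq C_{K_0}\vartheta_{0,\eta}(n;N)$. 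For conditioning on only $F_0\subseteq F$, average over the values in $F\setminus F_0$ and use the tower property; taking $F_0=\varnothing$ gives the unconditioned statement. This proves \eqref{eq:nonperiodic-local-remote-with-error}.

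For the pure-power form \eqref{eq:nonperiodic-local-remote-pure-power}, use $h\leq D$. Shrinking $c$ by a factor of $D$, we have
\[
\exp(-c\eta n/2)\leq\exp\!\left(-\frac{c\eta n}{2D}\right)^{h}\leq\widehat\vartheta_{0,\eta,D}(n;N)^h.
\]
Also $\vartheta_{0,\eta}\leq\widehat\vartheta_{0,\eta,D}$. Adding the two bounds and adjusting the constant gives the claim, and hence the local--remote chain property of Definition~\ref{def:local-remote-property}.

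The main obstacle is the degenerate regime in which the size condition \eqref{eq:eta-size-condition} fails for $A_F$. This regime cannot occur, because the hypotheses of Theorem~\ref{thm:nonperiodic-chain} are assumed for $A$ with the constant enlarged according to $K_0$, exactly as the conditioned assertion of that theorem allows. We would also check that the number of admissible tuples is absorbed without loss, since the summed estimate already incorporates the factor $n^h$ against the exponential error.
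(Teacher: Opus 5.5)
Your proposal is correct and follows the paper's proof essentially step for step. You condition on the local window $F$, identify the remote intervals with a uniformly random nested chain in $A_F$, and invoke the conditioned part of Theorem~\ref{thm:nonperiodic-chain}, which is uniform in tuple-dependent targets. You then average over the unconditioned local values by the tower property and absorb the exponential error using $h\leq D$. The only cosmetic difference is that no further shrinking of $c$ is needed in that last step: $\widehat\vartheta_{0,\eta,D}$ already uses $c\eta n/(2D)$, and $\exp(-c\eta n/(2D))\leq1$ gives the inequality directly.
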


\begin{proof}
{Let $F$ be the local set of positions in the given
system, as in Corollary~\ref{cor:ordinary-interval-chain}, so that
$|F|\leq K_0$, and condition first on all values $(x_j)_{j\in F}$.} Put
\[
        \mathcal F_F:=\sigma(x_j:j\in F),
        \qquad
        A_F:=A\setminus\{x_j:j\in F\},
        \qquad
        n_F:=|A_F|=n-|F|.
\]
Conditional on $\mathcal F_F$, the entries occupying the positions in $[n]\setminus F$ form a uniformly random ordering of $A_F$, and every local random variable $Z_\ell$ is fixed.

Fix an admissible ordered tuple of remote endpoints. In the right-going case, the remote supports
\[
        I_\ell^{\mathrm{rem}}=[c+1,b_\ell],
        \qquad 1\leq\ell\leq h,
\]
are disjoint from $F\subseteq[1,c]$ and form a strict nested family. Hence the sets
\[
        R_\ell^F:=\{x_j:j\in I_\ell^{\mathrm{rem}}\},
        \qquad 1\leq\ell\leq h,
\]
form, conditional on $\mathcal F_F$, a uniformly random nested chain in $A_F$ with the prescribed cardinalities. In the left-going case the same conclusion holds after reversing the order of the indices, since
\[
        [a_h,c-1]\subset\cdots\subset[a_1,c-1]
\]
and these intervals are disjoint from $F\subseteq[c,n]$.

Apply the conditioned version of Theorem~\ref{thm:nonperiodic-chain}, with chain depth $h$ and conditional targets $z_\ell-Z_\ell$. Its estimates are uniform in the realized local values and in the targets. Summing over the admissible remote endpoints exactly as in the proof of Corollary~\ref{cor:ordinary-interval-chain} gives
\[
\sum_{\mathrm{adm}}
\Pr\left[
\sum_{j\in I_\ell^{\mathrm{rem}}}x_j=z_\ell-Z_\ell
\text{ for all }1\leq\ell\leq h
\,\middle|\,
\mathcal F_F
\right]
\leq
C_{D,K_0}\vartheta_{0,\eta}(n;N)^h
+
C_{D,K_0}\exp(-c\eta n/2).
\]
Averaging over the values occupying $F$ proves \eqref{eq:nonperiodic-local-remote-with-error}.

More generally, suppose that only the values in a subset $F_0\subseteq F$ are conditioned and that the translated targets are $\mathcal F_{F_0}$-measurable. Condition additionally on the values occupying $F\setminus F_0$, apply the preceding estimate, and then average over these additional local values. The tower property gives the asserted partial-conditioning version.

Finally, since $1\leq h\leq D$,
\[
\exp(-c\eta n/2)=\left(\exp\left(-\frac{c\eta n}{2D}\right)\right)^D\leq\left(\exp\left(-\frac{c\eta n}{2D}\right)\right)^h\leq\widehat\vartheta_{0,\eta,D}(n;N)^h.
\]
Combining this with \eqref{eq:nonperiodic-local-remote-with-error} proves \eqref{eq:nonperiodic-local-remote-pure-power}.
\end{proof}

\begin{proposition}[Polynomial non-periodic chain estimates]
\label{prop:polynomial-nonperiodic-chain}
Fix $D,T,L\geq1$ and $0<\kappa<1$. Let $N=tq$, where $1\leq t\leq T$, $(t,q)=1$, and $\Omega(q)\leq L$. Let $A\subseteq\mathbb Z_N$ have size $n$ and assume that
\[
        |A\cap(a+H)|\leq(1-n^{-\kappa})n
\]
for every coset of every proper $q$-large subgroup. Then, for all sufficiently large $n$, the conclusions of Theorem~\ref{thm:nonperiodic-chain} hold with
\[
        \frac1N+C\frac{n^{\kappa/2}\sqrt{\log(2+n)}}{n\sqrt{\Delta_i}}
\]
in each factor and with an additive error term $C\exp(-cn^{1-\kappa})$. Consequently, the summed interval-chain parameter is
\begin{equation}
\label{eq:polynomial-chain-theta}
        \vartheta_{0,\kappa}(n;N):=\frac nN+Cn^{\kappa/2}\frac{\sqrt{\log(2+n)}}{\sqrt n}.
\end{equation}
The same statement holds after conditioning on $O_D(1)$ positions.
\end{proposition}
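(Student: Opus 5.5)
The plan is to specialize Theorem~\ref{thm:nonperiodic-chain} to the parameter $\eta:=n^{-\kappa}$, check its hypotheses for large $n$, and read off the constants. First, the hypothesis on cosets is exactly \eqref{eq:nonperiodic-nocoset} with $\eta=n^{-\kappa}$. For large $n$ we also have $\eta\leq1/4$. The size condition \eqref{eq:eta-size-condition} becomes $n^{1-\kappa}\geq C\log^2(2+n)$, which holds for all $n$ exceeding a threshold depending on $D,T,L,\kappa$ because $1-\kappa>0$. This threshold is the meaning of ``sufficiently large $n$''.

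With these hypotheses verified, \eqref{eq:nonperiodic-chain-product} applies. Its factors read $\frac1N+C\eta^{-1/2}\frac{\sqrt{\log(2+n)}}{n\sqrt{\Delta_i}}=\frac1N+C\frac{n^{\kappa/2}\sqrt{\log(2+n)}}{n\sqrt{\Delta_i}}$, and the error term becomes $C\exp(-c\eta n)=C\exp(-cn^{1-\kappa})$. This gives the pointwise claim.

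For the summed form I would repeat the summation of Proposition~\ref{prop:ordinary-chain}. With $a=1/N$ and $b=Cn^{\kappa/2}\sqrt{\log(2+n)}/n$, the bound $\sum_{u\le n}u^{-1/2}\le2\sqrt n$ yields $(D+1)(na+2b\sqrt n)^D\le C_D\vartheta_{0,\kappa}(n;N)^D$, where $\vartheta_{0,\kappa}$ is as in \eqref{eq:polynomial-chain-theta}; this agrees with $\vartheta_{0,\eta}$ at $\eta=n^{-\kappa}$. The additive error $Cn^D\exp(-cn^{1-\kappa})$ is at most $\exp(-cn^{1-\kappa}/2)$ for large $n$. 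If a pure-power form is wanted, as in Corollary~\ref{cor:nonperiodic-interval-chain}, I would note that this error is in fact $\le\vartheta_{0,\kappa}(n;N)^D$, since $\vartheta_{0,\kappa}\ge n^{-1/2}$ and $\exp(-cn^{1-\kappa}/2)$ decays faster than any power of $n$. The error can therefore be absorbed into the main term after enlarging $C$.

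The conditioned version on $O_D(1)$ positions follows from the conditioned part of Theorem~\ref{thm:nonperiodic-chain}, used with $K=O_D(1)$. The one point needing care, and the only real obstacle, is that the constants there depend on $K$ and require $\eta n\ge4K$ together with the enlarged size condition. With $\eta n=n^{1-\kappa}\to\infty$, both again hold for large $n$. The passage from $(n_F,\eta/2)$ back to $(n,\eta)$ only changes constants, so the same polynomial factors and the error term $C\exp(-cn^{1-\kappa})$ persist, with $c$ possibly decreased. The interval-form statements then follow exactly as in Corollary~\ref{cor:nonperiodic-interval-chain}.
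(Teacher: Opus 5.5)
Your proposal is correct and follows the paper's proof. Like the paper, you substitute $\eta=n^{-\kappa}$ into Theorem~\ref{thm:nonperiodic-chain}, observe that $\eta n=n^{1-\kappa}\geq C\log^2(2+n)$ and $\eta^{-1/2}=n^{\kappa/2}$ for large $n$, and take the conditioned statement directly from that theorem. Your additional checks ($\eta\leq 1/4$, the re-derived summation, and the absorption of the exponential error into the polynomial term) are all valid but go beyond what the paper needs to write.
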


\begin{proof}
Apply Theorem~\ref{thm:nonperiodic-chain} with
\[
        \eta=n^{-\kappa}.
\]
Then $\eta n=n^{1-\kappa}\geq C\log^2(2+n)$ for all sufficiently large $n$, $\eta^{-1/2}=n^{\kappa/2}$, and the additive error term is $C\exp(-cn^{1-\kappa})$. The conditioned statement is included in that theorem.
\end{proof}

\begin{remark}
\label{rem:polynomial-nonperiodic-parameters}
For every fixed $\kappa<1$, the second term in \eqref{eq:polynomial-chain-theta} is $n^{-(1-\kappa)/2+o(1)}$. Hence the non-periodic local-repair argument retains a positive polynomial saving even when the periodic alternative is required to concentrate all but $n^{1-\kappa}$ elements in one coset.
\end{remark}}

\section{Polynomial-range applications}
\label{sec:polynomial-applications}

{
We first abstract the local-repair argument of Pham--Sauermann in a form driven by interval-chain estimates, and then derive the cyclic polynomial-range theorem.
}

\subsection{Cyclic groups}
\label{subsec:cyclic-application}

{For a bijection $\sigma:[n]\to A$ and a set of positions $J\subseteq[n]$, write $\Sigma_\sigma(J):=\sum_{j\in J}\sigma(j)$.

\begin{lemma}[Local repair from interval-chain estimates]
\label{lem:cyclic-local-repair}
Fix $\beta>0$. There exists an integer $D=D(\beta)\geq2$ with the following property. Let $A\subseteq G\setminus\{0\}$ be an $n$-element subset of a finite abelian group, let $\sigma:[n]\to A$ be a uniformly random bijection, and let $0<\Theta\leq1$. {
Suppose that $\sigma$ has the
$(D,100D^2,\Theta)$-local--remote chain property of
Definition~\ref{def:local-remote-property}, with constant $C_D$.}
{If $\Theta\leq n^{-\beta}$, then there exists an integer
\[
n_0=n_0(\beta,C_D)
\]
such that $A$ admits a valid ordering whenever $n\geq n_0$.}

\end{lemma}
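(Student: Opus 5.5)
The plan is to run the Pham--Sauermann local-repair argument, using only the local--remote chain property as input.

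\emph{Collisions as interval equations.} For the random bijection $\sigma$, a collision between two partial sums is an event $\Sigma_\sigma([a,b])=0$ for some interval $[a,b]\subseteq[n]$ with $a\leq b$; we call this a zero interval. Since $A\subseteq G\setminus\{0\}$, every zero interval has length at least $2$. We measure the badness of $\sigma$ by the expected number of zero intervals. Fixing the left endpoint $a$ and summing over $b$ is a right-going system with $h=1$, $c=a-1$ and $F=\varnothing$. The $(D,100D^2,\Theta)$-property then gives at most $C_D\Theta$ zero intervals per endpoint, so the expected total is at most $C_Dn\Theta\leq C_Dn^{1-\beta}$. This is too many to discard. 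We therefore do not aim for a collision-free random ordering. Instead, we aim for a configuration in which every zero interval can be destroyed by a bounded local modification.

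\emph{Clustering of zero intervals.} The key estimate controls configurations of several zero intervals that interact. A typical case is $D$ zero intervals sharing a common endpoint region, or nested chains of zero intervals whose endpoints lie within a bounded window. For $h\leq D$ zero intervals all having one endpoint in a window $F$ of bounded size, the remote endpoints form a strict nested right-going or left-going chain. The chain property bounds the sum over those endpoints by $C_D\Theta^h$. Summing over the $O(n)$ positions of the window, we get an expected count of $O(n\Theta^h)=O(n^{1-h\beta})$. We choose $D=D(\beta)$ with $D\beta>1+\beta$, say $D=\lceil 2/\beta\rceil+2$. Then, with probability $1-o(1)$, no position carries $D$ incident zero intervals. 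Also no two zero intervals have endpoints within $O(D)$ of each other in a way that blocks the repair. This last claim is again bounded by $O(n\Theta^2)=o(1)$ whenever $\beta>1/2$. For small $\beta$ we need the full depth-$D$ chains instead, grouping endpoints into local clusters of bounded size.

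\emph{Local repair.} Next we fix a typical $\sigma$ with at most $n^{1-\beta+o(1)}$ zero intervals, and these form sparse, well-separated clusters. For each cluster we perform a swap of two or three entries inside a window of size $O(D^2)$ around an endpoint. Such a swap changes exactly the partial sums whose index lies in the window, and it does so by bounded differences. We must check two things. First, the swap kills the offending equations: this is a finite pigeonhole among $O(D)$ candidate swaps, using that a local modification forces a specific new equation. Second, the swap creates no new zero interval. A newly created zero interval is an interval equation with one endpoint inside the modified window. Its target is shifted by a quantity measurable with respect to the local values, which is exactly why the property allows up to $K=100D^2$ conditioned local positions and $\mathcal F_{F_0}$-measurable translated targets. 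Here we carry out the estimate for the random $\sigma$ before fixing it. We union bound over all windows and all $O_D(1)$ swap patterns. For each pattern, we bound the probability that all $O(D)$ candidate swaps fail. Failure means at least $h$ independent-rank new interval equations anchored in the window, and these cost $C_D\Theta^h$ each. With $h$ chosen so that $n\cdot n\Theta^{h}\to0$, the expected number of unrepairable clusters tends to $0$, and this fixes $n_0(\beta,C_D)$.

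\emph{Main obstacle.} The main difficulty is the rank issue flagged before Corollary~\ref{cor:ordinary-interval-chain}. Conditioning on local values may make two failure equations share the same remote support, so they do not multiply to $\Theta^2$. I would handle this by classifying failure patterns by their distinct remote endpoints. Coinciding remote endpoints produce a purely local relation among the bounded window values. Such a relation either is impossible, since elements are distinct and nonzero, or can be avoided by choosing a different swap from the $O(D)$ candidates. This is the step where the choice of $D$ and of the window size $100D^2$ must be tuned so that every failure pattern has at least $h$ genuinely distinct remote endpoints in a strictly nested chain.
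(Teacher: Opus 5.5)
Your overall architecture matches the paper's. It is a first-moment bound for a uniformly random ordering, then repair of each bad right endpoint $b$ by a short transposition $\pi_{b,z}$ with $z>b$, and finally charging $D$ failures of one kind to a depth-$D$ chain with $D\beta$ large. But two steps fail as written.

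\textbf{First gap: the degeneracy step.} Your resolution of the obstacle you flag as the main one does not work. When two of the $D$ equations share a remote endpoint, or a remote endpoint falls inside the window, subtraction produces a relation among window values. Zero intervals $[a,b_i]$ and $[a,b_j]$ with $b_i<b_j$ force $\Sigma_\sigma([b_i+1,b_j])=0$. Two blocking witnesses with a common remote endpoint force equal sums of two distinct local subsets.

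Such relations are not ``impossible because elements are distinct and nonzero.'' Each has probability of order $1/n$, so (e.g.\ when $A=-A$) a constant expected number of windows carry one. No union bound removes them, and in the clustering step there is no swap to re-choose.

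The missing idea is to couple the coincidence with a nearby bad endpoint. The paper's auxiliary event $\mathcal B_0$ is that some $b\in B(\sigma)$ admits distinct $J,J'\subseteq[b,b+20D]$ with equal sums. Conditioning on the values in $[b,b+20D]$, the coincidence costs $O(1/n)$, while $b\in B(\sigma)$ costs $C_D\Theta$ by the one-equation conditioned left-going bound. Summing over $b$ gives $\Pr(\mathcal B_0)\leq C_D\Theta$. Off $\mathcal B_0$, local subset-sum injectivity yields exactly what your chain counts need: distinct remote endpoints, remote endpoints outside the window, and no witness contained in the window.

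\textbf{Second gap: interaction between repairs.} You choose swaps cluster by cluster and estimate each against the original $\sigma$. There are about $n\Theta$ bad endpoints. A zero interval created by the swap in one window may end inside another repaired window, so its sum depends on both swaps, and estimates against $\sigma$ alone do not see this. You also do not show that ``two or three entries'' suffice for a cluster with up to $D$ bad endpoints.

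The paper processes bad endpoints from right to left. It maintains the invariant that every remaining zero interval ends at an unprocessed bad endpoint. A candidate $z$ is declared blocked relative to $\sigma\circ\pi$, where $\pi$ is the product of the transpositions already chosen. The bad event $\mathcal B_3$ then quantifies over all admissible $\pi$, i.e.\ products of disjoint transpositions of span at most $5D$ fixing $[b-1]$.

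This unbounded union is tamed as follows. Delete every transposition of $\pi$ that does not have exactly one endpoint in some witness set. For a fixed endpoint tuple this leaves $O_D(1)$ reduced permutations. Since $\sigma\circ\pi$ is again uniform, the chain property applies term by term.

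Two further ingredients complete the repair. One is a boundary event: no bad endpoint among the last $30D+1$ positions. The other is the slot count $5D-D-D-2D\geq D$, subtracting nearby bad endpoints, earlier partners and blocked positions.
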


\begin{proof}
{Choose, for instance, an integer $D>3/\beta$.}
For a bijection $\sigma:[n]\to A$, define
\[
B(\sigma):=\{b\in[n]:\Sigma_\sigma([a,b])=0\text{ for some }2\leq a<b\}.
\]
Since $0\notin A$, no singleton interval has sum zero, and $\sigma$ is valid precisely when $B(\sigma)=\varnothing$.

 {For $u<v$, let $\pi_{u,v}$ denote the transposition of $u$ and $v$.  A permutation of $[n]$ is admissible if it is a product of pairwise disjoint transpositions $\pi_{u,v}$ with $u,v\in[n]$ and $0<v-u\leq5D$.} Let $b\in B(\sigma)$, let $\pi$ be admissible and fix $[b-1]$, and let
{
\[
        b<\RepairPos\leq\min\{n,b+5D\}.
\]
}
The position $\RepairPos$ is blocked for $(\sigma,b,\pi)$ if there is an interval $[s,t]$, with $2\leq s<t\leq n$, such that
\[
\Sigma_{\sigma\circ\pi\circ\pi_{b,\RepairPos}}([s,t])=0
\]
and either $s\in\{b+1,\ldots,\RepairPos\}$ or $t\in\{b,\ldots,\RepairPos-1\}$.

{
Any interval $[s,t]$ with these properties is called a
\emph{witnessing interval}, or simply a \emph{witness}, for the blocked
position $\RepairPos$.  It is called \emph{right-going} if
$s\in\{b+1,\ldots,\RepairPos\}$ and \emph{left-going} if
$t\in\{b,\ldots,\RepairPos-1\}$.  If both conditions hold, either orientation
may be assigned to the witness.
}

{
The four bad events below encode the possible failures of the repair
algorithm.  Event $\mathcal B_1$ guarantees room to move a bad
endpoint to the right, $\mathcal B_2$ controls the local density of bad
endpoints, $\mathcal B_3$ guarantees an unblocked partner, and
$\mathcal B_0$ provides the local subset-sum injectivity used to
separate remote witnesses.  By the local--remote chain property, every
system of $h\leq D$ remote equations arising below satisfies
\begin{equation}
\label{eq:abstract-cyclic-chain}
        \sum_{\mathrm{adm}}\Pr[\text{the $h$ equations hold}]
        \leq C_D\Theta^h,
\end{equation}
including after the bounded local conditioning used in the proof.
{In \eqref{eq:abstract-cyclic-chain}, $\sum_{\mathrm{adm}}$ denotes the
sum over the admissible ordered remote endpoints in the relevant
right-going or left-going system, in the sense of
Corollary~\ref{cor:ordinary-interval-chain}.}
}

Let $\mathcal B_0$ be the event that there exist $b\in B(\sigma)$ with $b\leq n-30D$ and distinct subsets $J,J'\subseteq\{b,b+1,\ldots,b+20D\}$ satisfying $\sum_{j\in J}\sigma(j)=\sum_{j\in J'}\sigma(j)$. Let $\mathcal B_1$ be the event that $B(\sigma)\cap\{n-30D,\ldots,n\}\neq\varnothing$. Let $\mathcal B_2$ be the event that some interval of $20D+1$ consecutive positions contains more than $D$ elements of $B(\sigma)$. Let $\mathcal B_3$ be the event that there exist $b\in B(\sigma)$ and an admissible permutation $\pi$ fixing $[b-1]$ for which at least $2D$ positions in
{$\{b+1,\ldots,\min\{n,b+5D\}\}$}
are blocked for $(\sigma,b,\pi)$.

Assume that $\mathcal B_1,\mathcal B_2,\mathcal B_3$ do not occur, and write $B(\sigma)=\{b_1>\cdots>b_m\}$.  {We construct distinct positions $\RepairPos_i$} not in $B(\sigma)$ with $b_i<\RepairPos_i\leq b_i+5D$ such that, after the first $i$ transpositions, every zero-sum interval has right endpoint in $\{b_{i+1},\ldots,b_m\}$. Suppose that $\RepairPos_1,\ldots,z_{i-1}$ have been chosen and put $\pi:=\pi_{b_1,\RepairPos_1}\cdots\pi_{b_{i-1},z_{i-1}}$. Then $\pi$ fixes $[b_i]$. The interval $\{b_i+1,\ldots,b_i+5D\}$ is contained in $[n]$ because $\mathcal B_1$ does not occur. It contains at most $D$ elements of $B(\sigma)$ and at most $D$ previously chosen partners: if $\RepairPos_j\in[b_i+1,b_i+5D]$, then $|b_j-b_i|\leq|b_j-\RepairPos_j|+|\RepairPos_j-b_i|\leq10D$, and $\mathcal B_2$ applies. At most $2D$ of the remaining positions are blocked because $\mathcal B_3$ does not occur. Hence at least $5D-D-D-2D=D$ positions remain, and one of them is chosen as $\RepairPos_i$.

Set $\sigma_i:=\sigma\circ\pi\circ\pi_{b_i,\RepairPos_i}$. If $[a,c]$ is a zero-sum interval for $\sigma_i$, then the fact that $\RepairPos_i$ is not blocked implies $a\notin\{b_i+1,\ldots,\RepairPos_i\}$ and $c\notin\{b_i,\ldots,\RepairPos_i-1\}$. Consequently $[a,c]$ contains either both of $b_i,\RepairPos_i$ or neither of them, and its sum is unchanged by $\pi_{b_i,\RepairPos_i}$. Thus $[a,c]$ was zero-sum for $\sigma\circ\pi$, so the induction hypothesis gives $c\in\{b_i,\ldots,b_m\}$. The equality $c=b_i$ is impossible, since then $[a,b_i]$ would certify that $\RepairPos_i$ is blocked. Hence $c\in\{b_{i+1},\ldots,b_m\}$. After the last transposition no zero-sum interval remains.

We next estimate the four bad events. We first record a consequence of \eqref{eq:abstract-cyclic-chain}. Let $b'=b+5D$, let $u_1,\ldots,u_h\in[b,b']$, and let $\rho_1,\ldots,\rho_h$ be permutations supported on $[b,b']$, where $1\leq h\leq D$. Then
\begin{equation}
\label{eq:right-remote-witness}
\Pr\Bigl[\exists\ b'<x_1<\cdots<x_h\text{ and an admissible }\pi:\ \Sigma_{\sigma\circ\pi\circ\rho_i}([u_i,x_i])=0\text{ for all }i\Bigr]\leq C_D\Theta^h.
\end{equation}
The analogous estimate holds for $x_1<\cdots<x_h<b$ and the equations $\Sigma_{\sigma\circ\pi\circ\rho_i}([x_i,u_i])=0$.

{
Fix $x_1<\cdots<x_h$ and put
\[
        E_i:=\rho_i([u_i,x_i]).
\]
If a transposition occurring in $\pi$ has either both endpoints or
neither endpoint in every $E_i$, deleting it does not change any of the
$h$ sums.  We may therefore delete all such transpositions.  Since
$\rho_i$ is supported on $[b,b']$, the set $E_i$ differs from the
interval $[b'+1,x_i]$ only inside the bounded window $[b,b']$.
Consequently every transposition which remains has both endpoints
within distance $5D$ either of that window or of one of the remote
endpoints $x_1,\ldots,x_h$.  For the fixed endpoint tuple there are
therefore at most $C_D$ actual reduced admissible permutations $\pi$,
not merely $C_D$ abstract incidence types.

Fix one of these reduced permutations and write
$\tau:=\sigma\circ\pi$.  Then $\tau$ is a uniformly random bijection.
Moreover,
\[
\Sigma_{\sigma\circ\pi\circ\rho_i}([u_i,x_i])
=
\Sigma_\tau([b'+1,x_i])+Z_i,
\qquad
Z_i:=\Sigma_\tau\bigl(\rho_i([u_i,b'])\bigr).
\]
The variable $Z_i$ is determined by the values of $\tau$ in $[b,b']$.
After conditioning on these values, the intervals $[b'+1,x_i]$ form a
strict nested chain and the conditioned form of
\eqref{eq:abstract-cyclic-chain} applies.  A union bound over the at
most $C_D$ reduced permutations, followed by summation over
$b'<x_1<\cdots<x_h$, proves \eqref{eq:right-remote-witness}.
Reversing the order of the positions gives the left-endpoint version.
}

{
Fix $b\leq n-30D$ and distinct $J,J'\subseteq[b,b+20D]$, and let
\[
        E_{J,J'}:=
        \left\{\sum_{j\in J}\sigma(j)=\sum_{j\in J'}\sigma(j)\right\}.
\]
Choose $i\in J\triangle J'$.  Conditional on the values in
$[b,b+20D]\setminus\{i\}$, the equation defining $E_{J,J'}$ determines
at most one value of $\sigma(i)$.  Since at least $n-20D$ values remain
available, $\Pr(E_{J,J'})\leq2/n$ for all sufficiently large $n$.
Let $\mathcal F$ be generated by the values in $[b,b+20D]$.  For every
realization of $\mathcal F$, the one-equation conditioned form of
\eqref{eq:abstract-cyclic-chain} gives
\[
        \Pr[b\in B(\sigma)\mid\mathcal F]\leq C_D\Theta.
\]
The tower property therefore yields
\[
\begin{aligned}
&\Pr\Bigl[b\in B(\sigma),\
\sum_{j\in J}\sigma(j)=\sum_{j\in J'}\sigma(j)\Bigr]\\
&\qquad=
\mathbb E\!\left[
\mathbf 1_{E_{J,J'}}
\Pr[b\in B(\sigma)\mid\mathcal F]
\right]
\leq \frac{C_D\Theta}{n},
\end{aligned}
\]
after enlarging $C_D$.
}
A union bound over $b,J,J'$ yields
\begin{equation}
\label{eq:B0-cyclic-bound}
\Pr(\mathcal B_0)\leq C_D\Theta.
\end{equation}
The one-equation estimate and a union bound over the final $30D+1$ positions similarly give
\begin{equation}
\label{eq:B1-cyclic-bound}
\Pr(\mathcal B_1)\leq C_D\Theta.
\end{equation}

Suppose that $\mathcal B_2\setminus(\mathcal B_0\cup\mathcal B_1)$ occurs. Let $b_0$ be the smallest bad endpoint in a window containing more than $D$ bad endpoints. {Since $\mathcal B_1$ does not occur, $b_0\leq n-30D$, so the definition of $\mathcal B_0$ applies with $b=b_0$.} Choose distinct $b_1,\ldots,b_D\in B(\sigma)\cap[b_0+1,b_0+20D]$. Choose $a_i$ with $2\leq a_i<b_i$ and $\Sigma_\sigma([a_i,b_i])=0$. The $a_i$ are distinct, since $a_i=a_j$ would give a nonempty zero-sum subset of $[b_0,b_0+20D]$ by subtracting the two equations. Moreover, every $a_i<b_0$, since otherwise $[a_i,b_i]$ itself would be a nonempty zero-sum subset of that window. Relabel so that $a_1<\cdots<a_D<b_0$. Conditional on the values in $[b_0,b_0+20D]$, the equations are
\[
\Sigma_\sigma([a_i,b_0-1])=-\Sigma_\sigma([b_0,b_i]),\qquad 1\leq i\leq D.
\]
The intervals on the left form a nested family. Applying \eqref{eq:abstract-cyclic-chain} and summing over the $n$ choices of $b_0$ and the $O_D(1)$ local data gives
\begin{equation}
\label{eq:B2-cyclic-bound}
\Pr(\mathcal B_2)\leq C_D\bigl(\Theta+n\Theta^D\bigr).
\end{equation}

Suppose that $\mathcal B_3\setminus(\mathcal B_0\cup\mathcal B_1)$ occurs, witnessed by $b$ and $\pi$. Since $\pi$ fixes $[b-1]$ and is admissible, it maps every subset of $[b,b+5D]$ to a subset of $[b,b+10D]$. The failure of $\mathcal B_0$ implies that the map $J\mapsto\sum_{j\in\pi(J)}\sigma(j)$ is injective on the subsets of $[b,b+5D]$. In particular, a blocked witness cannot be contained in $[b,b+5D]$. {Assign each blocked position one witnessing interval and, if that interval satisfies both alternatives in the definition of blocked, assign either orientation.  At least $D$ of the blocked positions then have witnesses of one common orientation.}

Consider $D$ right-going witnesses. There are distinct $\RepairPos_1,\ldots,\RepairPos_D\in[b+1,b+5D]$, positions $s_i\in[b+1,\RepairPos_i]$, and positions $t_i>b+5D$ such that
\[
\Sigma_{\sigma\circ\pi\circ\pi_{b,\RepairPos_i}}([s_i,t_i])=0,
\qquad 1\leq i\leq D.
\]
The $t_i$ are distinct. Indeed, if $t_i=t_j$ and $\RepairPos_i<\RepairPos_j$, then cancellation of the common part $[b+5D+1,t_i]$ gives equal sums under $\sigma\circ\pi$ for the two local subsets $\pi_{b,\RepairPos_i}([s_i,b+5D])$ and $\pi_{b,\RepairPos_j}([s_j,b+5D])$. These subsets are distinct: the first contains $\RepairPos_j$, whereas the second does not. This contradicts the local injectivity above. After relabeling, assume $t_1<\cdots<t_D$. For fixed $b$, $\RepairPos_1,\ldots,\RepairPos_D$, and $s_1,\ldots,s_D$, estimate \eqref{eq:right-remote-witness} with $\rho_i=\pi_{b,\RepairPos_i}$ gives probability at most $C_D\Theta^D$. Summing over $b$ and the $O_D(1)$ local choices gives $C_Dn\Theta^D$. {For left-going witnesses the same cancellation argument shows that their remote left endpoints are distinct; after relabeling, the left-endpoint version of \eqref{eq:right-remote-witness} applies.} Therefore
\begin{equation}
\label{eq:B3-cyclic-bound}
\Pr(\mathcal B_3)\leq C_D\bigl(\Theta+n\Theta^D\bigr).
\end{equation}

{Combining \eqref{eq:B0-cyclic-bound}--\eqref{eq:B3-cyclic-bound},
\[
\Pr(\mathcal B_0\cup\mathcal B_1\cup\mathcal B_2\cup\mathcal B_3)
\leq C_D\bigl(\Theta+n\Theta^D\bigr).
\]
If $\Theta\leq n^{-\beta}$, then
\[
C_D\bigl(\Theta+n\Theta^D\bigr)
\leq
C_D\bigl(n^{-\beta}+n^{1-\beta D}\bigr).
\]
Since $D>3/\beta$, both exponents $-\beta$ and $1-\beta D$ are negative. Hence there exists
\[
n_0=n_0(\beta,C_D)
\]
such that
\[
C_D\bigl(n^{-\beta}+n^{1-\beta D}\bigr)<1
\]
whenever $n\geq n_0$. For such $n$, some ordering avoids all four bad events, and the transpositions constructed above produce a valid ordering of $A$.}
\end{proof}

}

\begin{proof}[Proof of \cref{thm:cyclic-small}]
Put
\[
        N=tq,
        \qquad
        n=|S|,
        \qquad
        \lambda=P^-(q).
\]
{
After increasing $C=C(\alpha,T)$, if necessary, so that $C>T$, the
case $q=1$ is vacuous. We may therefore assume that $q>1$, and hence
$\lambda\leq q\leq N$.
}
{Choose}
\[
        0<\beta<
        \min\left\{
        \frac12,\frac{\alpha}{1-\alpha}
        \right\}.
\]
By Corollary~\ref{cor:ordinary-interval-chain}, the hypothesis of
Lemma~\ref{lem:cyclic-local-repair} holds with a constant
$C_D=C_D(D,T)$ and with
\begin{equation}
\label{eq:Theta-cyclic}
        \Theta_C
        :=
        \vartheta(n;N,\lambda)
        =
        \frac nN
        +{\Gamma_T}\frac{\sqrt{\log(2+n)}}{\sqrt n}
        +{\Gamma_T}\frac n\lambda.
\end{equation}
Since $n\leq\lambda^{1-\alpha}$ and $\lambda\leq N$, we have
\[
        \frac nN
        \leq
        \frac n\lambda
        \leq
        \lambda^{-\alpha}
        \leq
        n^{-\alpha/(1-\alpha)}.
\]
{
Because $\beta<1/2$ and
$\beta<\alpha/(1-\alpha)$, each of the three terms in
\eqref{eq:Theta-cyclic} is $o(n^{-\beta})$.  After increasing the lower
threshold $C=C(\alpha,T)$ appearing in
Theorem~\ref{thm:cyclic-small}, if necessary, so that also
$C\geq n_0(\beta,C_D)$, we therefore obtain
\[
        \Theta_C\leq n^{-\beta}
        \qquad\text{and}\qquad
        n\geq n_0(\beta,C_D).
\]
}
Lemma~\ref{lem:cyclic-local-repair} gives the desired ordering.
\end{proof}

\subsection{Covered moduli}
\label{subsec:covered-moduli}

{
Let $c>0$ be the constant in Theorem~\ref{thm:large-known}.  For
$k=tq$, where $(t,q)=1$ and $t$ is fixed,
Theorem~\ref{thm:cyclic-small} covers all sufficiently large
cardinalities up to
\[
        (P^-(q))^{1-\alpha},
\]
while Theorem~\ref{thm:large-known} covers
\[
        |S|\geq(tq)^{1-c}.
\]
Hence these two ranges overlap whenever
\begin{equation}
\label{eq:cyclic-criterion}
        (P^-(q))^{1-\alpha}\geq(tq)^{1-c}
\end{equation}
for some $\alpha<c$.

It remains only to cover the bounded cardinalities below the constant
in Theorem~\ref{thm:cyclic-small}.  When $q=p$ is prime and
$\mathbb Z_t$ is strongly sequenceable,
Theorem~\ref{thm:kravitz-direct} supplies a logarithmic range which
contains every fixed threshold for all sufficiently large $p$.
Since \eqref{eq:cyclic-criterion} also holds for fixed $t$ and
sufficiently large $p$, all cardinalities are covered.  As observed
above, $\mathbb Z_t$ is strongly sequenceable for every
$2\leq t\leq21$.  Since there are only finitely many such $t$, the
prime threshold may be chosen uniformly over this range.  The same
argument for any fixed strongly sequenceable $\mathbb Z_t$ proves the
more general assertion in Corollary~\ref{cor:prime-cases}.
}

\section{Comparable bounded prime-power moduli via layered local repair}
\label{sec:bounded-prime-power}

\journalchg{We prove Theorem~\ref{thm:comparable-prime-power} by a reverse-absorption argument for cyclic groups with a fixed number of
comparable prime factors, counted with multiplicity.}  The periodic
alternative may now recur inside successively smaller subgroups.  At
each stage, either non-periodic Kneser growth gives the
anticoncentration needed for local repair, or almost all unused elements
lie in one coset of a proper subgroup; in the latter case the new
exceptions are appended to the existing head and the construction
descends.  Since each descent decreases the number of prime factors of
the current subgroup, the process terminates after boundedly many steps.

\journalchg{Two points are important.}  A regular tail in a proper coset may wrap around
the quotient many times, but the quotient coordinate acts as a clock:
internal collisions use intervals whose length is divisible by the
order of that coset in the quotient.  Moreover, the recursive statements
must retain the path already constructed.  In the
subgroup-dominant branch one exceptional element is reserved as a bridge
into a fresh quotient coset; a Kneserized terminal correction then either
permits the cycle trick or produces the next genuine descent.  We treat
these ingredients in order and conclude with the proof of
Theorem~\ref{thm:comparable-prime-power}.

Fix throughout $L\geq2$ and $\gamma\geq1$ as in that theorem, and write
\[
 G\simeq\Z_k,\qquad
 k=\prod_{i=1}^s p_i^{e_i},\qquad
 e_1+\cdots+e_s\leq L,
 \qquad
 p_{\min}:=p_1<\cdots<p_s\leq\gamma p_{\min}.
\]
Every subgroup and quotient used below has prime bases drawn from this
list, hence still $\gamma$-comparable and at least $p_{\min}$.  All lower
bounds on $p_{\min}$ will be absorbed into the final threshold
$p_0(L,\gamma)$.  For a cyclic group $M$, let
$\Omega(M):=\Omega(|M|)$ be the number of prime factors of $|M|$, with
multiplicity; it strictly decreases on passing to a proper subgroup.  For
$K\leq G$ write $\overline x^{\,K}:=x+K\in G/K$, omitting the superscript
when clear.

For $v_0\in G$ and a finite sequence
$w=(w_1,\ldots,w_t)\in(G\setminus\{0\})^t$ with pairwise distinct
terms, put
\[
 \PathVar(v_0;w)=(v_0,v_1,\ldots,v_t),
 \qquad v_i=v_0+\sum_{j=1}^i w_j.
\]
We call $\PathVar(v_0;w)$ the \emph{partial-sum walk} associated with $w$
from $v_0$.  It is a \emph{path} if $v_0,\ldots,v_t$ are pairwise
distinct, and a \emph{cycle} if $v_t=v_0$ while
$v_0,\ldots,v_{t-1}$ are pairwise distinct.  The sequence $w$ is
\emph{valid from $v_0$} if $v_1,\ldots,v_t$ are pairwise distinct.
Thus its partial-sum walk is either a path, or it visits $v_0$ exactly
once after time $0$ and has no other repeated vertex; when this return
occurs at the final step, the walk is a cycle.  For $v_0=0$, validity
from $v_0$ is exactly the usual notion of a valid ordering.  The empty
sequence is a valid ordering of the empty set.  We use
\[
 V(\PathVar)=\{v_0,\ldots,v_t\},\quad
 V^+(\PathVar)=\{v_1,\ldots,v_t\},\quad
 \operatorname{end}(\PathVar)=v_t,\quad
 \operatorname{lab}(\PathVar)=\{w_1,\ldots,w_t\}.
\]
A set is unused relative to $\PathVar$ if it is disjoint from
$\operatorname{lab}(\PathVar)$, and an extension is obtained by concatenating
label sequences.  A \emph{path extension} is one for which the resulting
partial-sum walk is a path.  \journalchg{A coset $C$ is \emph{fresh for $\PathVar$} if
$C\cap(V(\PathVar)\setminus\{v_0\})=\varnothing$.}
The terms regular set, exceptional set, helper, and pending bridge are
used descriptively for the explicit data in the recursive statements.
We use $O_D(\cdot)$, $\gg_D$, and $\asymp_D$ for estimates whose
implicit constants may depend on $D$ and on the fixed parameters
$L,\gamma$; explicitly, $X\gg_D Y$ means $X\geq c_DY$ for some
$c_D>0$ with this dependence.  Conditioning on $O_D(1)$ positions is
uniform over all admissible choices and distinct assigned entries.

Comparability gives, whenever $K$ is non-trivial and $\Omega(K)=r$,
\begin{equation}
\label{eq:least-prime-subgroup}
 P^-(|K|)\geq c_{L,\gamma}|K|^{1/r}.
\end{equation}
If $g\notin K$ and $d=\ord_{G/K}(g+K)$, then likewise
\begin{equation}
\label{eq:clock-lower-bound}
 d\geq c_{L,\gamma}|K|^{1/r}.
\end{equation}
Indeed, each prime factor of $|K|$ is at most
$\gamma P^-(|K|)$, while every non-trivial quotient order is divisible
by a prime at least $p_{\min}$; one may take
$c_{L,\gamma}=\gamma^{-1}$.

Set
\[
 a_0:=1,\qquad a_r:=4^{-r}\quad(1\leq r\leq L),
\]
and choose $\beta=\beta(L)>0$ so that, for every $1\leq r\leq L$,
\begin{equation}
\label{eq:exponent-hierarchy}
 a_r+20\beta<a_{r-1},\qquad
 a_r+20\beta<\frac1r,
 \qquad
 20\beta<\min\left\{a_L,\frac1L,1-a_1\right\}.
\end{equation}
{Such a choice of $\beta$ is possible because the finitely many quantities
\[
a_{r-1}-a_r,\qquad \frac1r-a_r\qquad(1\leq r\leq L),
\]
together with $a_L$, $1/L$, and $1-a_1$, are all positive. Here $a_r$ is the exponent controlling the maximum allowed size $m^{a_r}$ of the exceptional set at a recursive stage with $\Omega(K)=r$. The parameter $\beta$ provides uniform slack for the growth of the already constructed path and for the losses accumulated over at most $L$ descents; the convention $a_0=1$ is used when comparing a stage with the next lower value of $\Omega(K)$. We shall repeatedly use the consequences
\[
a_{r-1}-a_r>20\beta,\qquad 2(a_r+\beta)<1,
\]
and
\[
a_r+3\beta-\frac1r<-17\beta,\qquad a_r+3\beta-\frac1r-\frac{a_r}{2}<-17\beta,\qquad a_r+3\beta-1<-17\beta
\]
for every $1\leq r\leq L$.}
All implicit constants in this section may depend on $L$ and $\gamma$.
A \emph{descent} will simply mean a recursive transition from a regular
set in a coset of $K$ to a concentrated regular subset in a coset of a
proper subgroup $K'<K$, after appending the intervening exceptions and
helpers to the current path.  Hence every descent strictly decreases
$\Omega(K)$.

\subsection{Polynomial non-periodicity and layered local repair}

The recursive descent alternates between probabilistic completion and
further structural concentration.  At a stage with a regular set of
size $m$ in a subgroup $K$ satisfying $\Omega(K)=r$, the first lemma
quantifies the non-periodic branch: unless all but at most $m^{a_r}$
elements lie in a coset of a proper subgroup, it gives an explicit upper
bound for the summed interval-chain parameter and for the accompanying
additive exponential error.  These bounds are kept in polynomial form so
that they remain effective through at most $L$ recursive levels.  We
also need to preserve the part of the ordering already built: when a new
dominant coset is found, its exceptional elements are appended to the
current path rather than restarting the construction.  The two lemmas
below provide precisely these two pieces of the recursive state.

{
\begin{lemma}[Polynomial Kneser alternative]
\label{lem:polynomial-kneser-alternative}
For every fixed $D\geq1$ there exist constants
\[
m_0=m_0(D,L),\qquad C=C(D,L)>0,\qquad c=c(D,L)>0
\]
such that the following holds. Let $K$ be a non-trivial cyclic group with $1\leq\Omega(K)=r\leq L$, and let $X\subseteq K$ have size $m\geq m_0$. Then one of the following alternatives holds.
\begin{enumerate}[label=(\roman*)]
\item There are a non-trivial proper subgroup $K'<K$ and a coset $x+K'$ such that
\[
|X\setminus(x+K')|\leq m^{a_r}.
\]
\item For every non-trivial proper subgroup $H<K$ and every coset $z+H$,
\begin{equation}
\label{eq:polynomial-np-condition}
|X\cap(z+H)|\leq m-m^{a_r}.
\end{equation}
In this case, for every $1\leq h\leq D$, every $z_1,\ldots,z_h\in K$, and every uniformly random nested chain
\[
R_1\subset\cdots\subset R_h\subset X
\]
with prescribed cardinalities $|R_i|=M_i$, one has
\begin{equation}
\label{eq:polynomial-np-chain-summed}
\sum_{1\leq M_1<\cdots<M_h<m}\Pr\bigl[\Sigma(R_i)=z_i\text{ for all }1\leq i\leq h\bigr]\leq C\vartheta_{0,1-a_r}(m;|K|)^h+C\exp(-cm^{a_r}),
\end{equation}
where
\begin{equation}
\label{eq:polynomial-np-parameter}
\vartheta_{0,1-a_r}(m;|K|)=\frac{m}{|K|}+Cm^{-a_r/2}\sqrt{\log(2+m)}.
\end{equation}
Moreover, let $x_1,\ldots,x_m$ be a uniformly random ordering of $X$, and let $F$ be the local set of positions in a right-going or left-going system covered by Corollary~\ref{cor:nonperiodic-interval-chain}, with $|F|\leq100D^2$. Put
\[
\mathcal F_{\mathrm{loc}}:=\sigma(x_j:j\in F).
\]
Then, almost surely, every such local--remote system with $1\leq h\leq D$ satisfies
\begin{equation}
\label{eq:polynomial-np-local-remote}
\sum_{\mathrm{adm}}
\Pr\left[
\sum_{j\in I^{\mathrm{rem}}_\ell}x_j
=z_\ell-Z_\ell
\text{ for all }1\leq\ell\leq h
\,\middle|\,
\mathcal F_{\mathrm{loc}}
\right]
\leq
C\vartheta_{0,1-a_r}(m;|K|)^h
+C\exp(-cm^{a_r}).
\end{equation}
The estimate remains valid when the targets $z_\ell$ are replaced by $\mathcal F_{\mathrm{loc}}$-measurable random variables.
\end{enumerate}
We call~\textup{(ii)} the \emph{polynomial non-periodic alternative}.
\end{lemma}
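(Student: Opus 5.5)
This is a dichotomy, and I will derive it from Theorem~\ref{thm:nonperiodic-chain} and Corollary~\ref{cor:nonperiodic-interval-chain}, applied inside $K$ rather than inside $G$. Identify $K\simeq\mathbb Z_{|K|}$ and take the factorization $N=|K|$ with $t=1$ and $q=|K|$. Then every nontrivial subgroup of $K$ is $q$-large, and $\Omega(q)=r\leq L$, so those results apply with $T=1$. If alternative~(i) fails, then every coset $x+K'$ of every nontrivial proper subgroup $K'<K$ misses more than $m^{a_r}$ elements of $X$. Hence $|X\cap(z+H)|<m-m^{a_r}$, which is \eqref{eq:polynomial-np-condition}.

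For the estimates in~(ii), I will set
\[
\eta:=m^{a_r-1}=m^{-(1-a_r)}.
\]
Then \eqref{eq:polynomial-np-condition} reads $|X\cap(z+H)|\leq(1-\eta)m$, which is exactly \eqref{eq:nonperiodic-nocoset}. The size condition \eqref{eq:eta-size-condition} becomes $m^{a_r}\geq C\log^2(2+m)$. Since $a_r\geq a_L=4^{-L}>0$, this holds for all $m\geq m_0(D,L)$, and it also gives $\eta\leq1/4$. Equivalently, this is Proposition~\ref{prop:polynomial-nonperiodic-chain} with $\kappa=1-a_r$. Theorem~\ref{thm:nonperiodic-chain}, with $h$ in place of $D$ and all constants maximized over $h\leq D$ and $r\leq L$, then yields \eqref{eq:nonperiodic-chain-summed}. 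Its main term is
\[
\vartheta_{0,\eta}(m;|K|)=\frac{m}{|K|}+C\eta^{-1/2}\frac{\sqrt{\log(2+m)}}{\sqrt m}=\frac{m}{|K|}+Cm^{-a_r/2}\sqrt{\log(2+m)},
\]
which is \eqref{eq:polynomial-np-parameter}. Its error term is $C\exp(-c\eta m/2)=C\exp(-c'm^{a_r})$. This gives \eqref{eq:polynomial-np-chain-summed}.

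For the conditioned statement \eqref{eq:polynomial-np-local-remote}, I will invoke Corollary~\ref{cor:nonperiodic-interval-chain} with $K_0=100D^2$, using the same $\eta$, in its conditioned form. This yields the bound $C\vartheta_{0,\eta}^h+C\exp(-c\eta m/2)$ after conditioning on $\mathcal F_{\mathrm{loc}}$, and it allows targets measurable with respect to the conditioned values. The constants depend on $D$, $K_0$ and $L$ only, hence on $D$ and $L$.

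The step needing care is the verification of hypotheses. The constants in Theorem~\ref{thm:nonperiodic-chain} must not depend on $|K|$ or on $r$ beyond the bound $\Omega(q)\leq L$. The conditioned version requires $\eta m\geq4K_0$ and $n_F\geq m/2$; both hold once $m_0$ is large, because $\eta m=m^{a_r}\geq m^{4^{-L}}$. Finally, I will compare the two alternatives to make sure they are genuinely exhaustive: the statement of~(ii) uses $\leq m-m^{a_r}$, while the negation of~(i) gives strict inequality, so no gap arises.
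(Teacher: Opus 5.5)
Your proposal is correct and follows essentially the paper's own proof. Both argue the dichotomy by contraposition, take $t=1$ and $q=|K|$ so that every non-trivial subgroup is $q$-large, and substitute $\eta=m^{a_r-1}$ (equivalently Proposition~\ref{prop:polynomial-nonperiodic-chain} with $\kappa=1-a_r$) into Theorem~\ref{thm:nonperiodic-chain} and into Corollary~\ref{cor:nonperiodic-interval-chain} with $K_0=100D^2$. The only cosmetic difference is that the paper cites the proposition for the summed estimate and introduces $\eta$ explicitly only for the conditioned local--remote bound.
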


\begin{proof}
If \eqref{eq:polynomial-np-condition} fails, there exist a non-trivial proper subgroup $H<K$ and a coset $z+H$ such that
\[
|X\cap(z+H)|>m-m^{a_r}.
\]
Taking $K'=H$ and $x=z$ gives~\textup{(i)}.

Suppose that \eqref{eq:polynomial-np-condition} holds. Apply Proposition~\ref{prop:polynomial-nonperiodic-chain} with
\[
T=1,\qquad N=|K|,\qquad \kappa=1-a_r.
\]
Since $t=1$ and $q=|K|$, every non-trivial subgroup of $K$ is $q$-large, and the concentration hypothesis in that proposition is exactly \eqref{eq:polynomial-np-condition}. Moreover,
\[
\frac{m}{|K|}+Cm^{(1-a_r)/2}\frac{\sqrt{\log(2+m)}}{\sqrt m}=\frac{m}{|K|}+Cm^{-a_r/2}\sqrt{\log(2+m)}.
\]
The summed chain estimate in Proposition~\ref{prop:polynomial-nonperiodic-chain} therefore gives \eqref{eq:polynomial-np-chain-summed}, after changing the constants.

For the local--remote conclusion, put
\[
\eta:=m^{a_r-1}.
\]
Since $0<a_r<1$, after increasing $m_0=m_0(D,L)$ if necessary, we may assume that
\[
0<\eta\leq\frac14
\]
and
\[
\eta m=m^{a_r}\geq C\log^2(2+m).
\]
Moreover,
\[
\eta^{-1/2}\frac{\sqrt{\log(2+m)}}{\sqrt m}
=m^{-a_r/2}\sqrt{\log(2+m)}.
\]
The hypotheses of Corollary~\ref{cor:nonperiodic-interval-chain} are therefore satisfied. Its boundedly conditioned conclusion gives \eqref{eq:polynomial-np-local-remote}. Since the local bound $100D^2$ depends only on $D$, all constants continue to depend only on $D$ and $L$.
\end{proof}
}

We shall repeatedly extend a path which has already been constructed.
The next greedy lemma records this operation explicitly.

\begin{lemma}[Greedy extension of a pre-existing head]
\label{lem:preexisting-head}
Let $\PathVar$ be a path in a finite abelian group $G$, with endpoint
$v$ and vertex set $V(\PathVar)$, including the initial vertex.  Let $A,E$ be
disjoint subsets of $G\setminus\{0\}$ which are unused relative to $\PathVar$,
and let $Z\subseteq G$ be a set of prescribed vertices.  If
\[
        |A|\geq 20\bigl(|V(\PathVar)|+|E|+|Z|+1\bigr),
\]
then $\PathVar$ has an extension  $\PathExtVar$ whose associated walk is a path, which uses every element of $E$,
uses at most $|E|+1$ elements of $A$, and satisfies
\[
        \bigl(V(\PathExtVar)\setminus V(\PathVar)\bigr)\cap Z=\varnothing.
\]
The elements of $E$ may be consumed in any prescribed order. If one requires the extension to use at least one element of $A$,
this is automatic when $E\neq\varnothing$; when $E=\varnothing$ and
$A\neq\varnothing$, one admissible unused element of $A$ can be appended.
\end{lemma}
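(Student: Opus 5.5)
We build the extension greedily, one element of $E$ at a time, in the prescribed order $e_1,\ldots,e_r$ with $r=|E|$. Each element of $E$ is preceded, if necessary, by a single \emph{helper} from $A$. The current path is maintained together with its forbidden vertex set
\[
 F:=V(\text{current path})\cup Z .
\]
Every new vertex must avoid $F$, and $F$ is updated after each step. Since $E\cap A=\varnothing$ and both are unused, labels never repeat. A path extension is then automatic once every new vertex is outside $F$, because $V(\mathcal P)\subseteq F$ and $Z\subseteq F$.

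Suppose the current endpoint is $v$ and the next element to consume is $e$. If $v+e\notin F$, we append $e$ directly, which adds one vertex. Otherwise we look for a helper $a\in A$, not yet used, such that
\[
 v+a\notin F\qquad\text{and}\qquad v+a+e\notin F\cup\{v+a\}.
\]
The last condition $v+a+e\neq v+a$ is automatic since $e\neq0$. The first condition fails for at most $|F|$ elements $a$, since $a\mapsto v+a$ is injective. The second fails for at most $|F|$ further elements, since $a\mapsto v+a+e$ is injective. Hence at most $2|F|$ elements of $A$ are excluded, plus the helpers already used. Appending $a$ and then $e$ adds two new vertices, neither in $F$, and they are distinct from each other.

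\textbf{Counting.} After consuming $i$ elements of $E$, at most $2i$ new vertices have been added and at most $i$ helpers used. So throughout,
\[
 |F|\le |V(\mathcal P)|+|Z|+2|E|,
\]
and the number of excluded helpers is at most
\[
 2|F|+|E|\le 2|V(\mathcal P)|+2|Z|+5|E|<|A|,
\]
using the hypothesis $|A|\ge 20(|V(\mathcal P)|+|E|+|Z|+1)$. Thus a helper always exists, and at most $|E|$ elements of $A$ are used. This is within the allowed $|E|+1$.

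\textbf{The final clause.} If at least one element of $A$ must be used, there are two cases. When $E\neq\varnothing$, we can always force the helper route at the first step, since the same count applies; this is what "automatic" refers to. When $E=\varnothing$, we append one $a\in A$ with $v+a\notin F$. At most $|F|\le|V(\mathcal P)|+|Z|<|A|$ elements are excluded, so such an $a$ exists and uses exactly one element of $A\le|E|+1$.

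The only delicate point is the bookkeeping that every new vertex avoids both the old path and $Z$, so that the walk stays a path. The injectivity of translations makes this a direct count. I expect no genuine obstacle: the constant $20$ is far more than needed.
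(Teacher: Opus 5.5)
Your argument is correct and is essentially the paper's proof: a greedy construction in which each new vertex must avoid the current vertex set together with $Z$. At most $2(|V(\mathcal P)|+2|E|+|Z|)+|E|<|A|$ helper choices are excluded at any step. The only difference is cosmetic: the paper always inserts a helper $a_i$ before every $e_i$, which is why using an element of $A$ is automatic there when $E\neq\varnothing$. You insert helpers only when the direct step fails and force one at the first step instead, which works equally well.
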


\begin{proof}
Write the prescribed order of $E$ as $e_1,\ldots,e_t$.  Suppose that the
pairs $a_1,e_1,\ldots,a_{i-1},e_{i-1}$ have already been appended, and
let $\PathVar_{i-1}$ be the current path.  To append $a_i,e_i$, it is
enough to require that
\[
 \operatorname{end}(\PathVar_{i-1})+a_i,
 \qquad
 \operatorname{end}(\PathVar_{i-1})+a_i+e_i
\]
avoid $V(\PathVar_{i-1})\cup Z$.  Each vertex in this set forbids at most one
value of $a_i$ through each of the two displayed expressions.  Since
$|V(\PathVar_{i-1})|\leq |V(\PathVar)|+2(i-1)$ and fewer than $i$ elements of $A$ have
already been used, the total number of forbidden choices is at most
\[
 2\bigl(|V(\PathVar)|+2t+|Z|\bigr)+t
 <20\bigl(|V(\PathVar)|+t+|Z|+1\bigr)\leq |A|.
\]
Thus an unused admissible $a_i$ exists.  The two new vertices are
distinct because $e_i\neq0$, so the resulting walk remains a path and creates no
new vertex in $Z$.

After all elements of $E$ have been used, if $E=\varnothing$ and the optional helper is required, append one further unused admissible element of $A$.  At that stage at most
$|V(\PathVar)|+2t+|Z|+t$ values are forbidden by old vertices, prescribed
vertices, and previously used helpers, which is again smaller than
$|A|$.  The resulting path has all the asserted properties.
\end{proof}

We next isolate the local-repair input used in the proper-coset branch.
Let $K\leq G$ have order $N$, let $X\subseteq K$ have size $m$, and fix
$g\in G$ such that $g+K$ has order $d$ in $G/K$.  Put $c:=dg\in K$.

{
Put
\[
 \operatorname{Ord}(X):=\{\sigma:[m]\to X:\sigma\text{ is bijective}\}.
\]
For every $\sigma\in\operatorname{Ord}(X)$ and $0\leq b\leq m$, define
\[
 x_i^\sigma:=\sigma(i),\qquad
 P_b(\sigma):=\sum_{i=1}^b x_i^\sigma,\qquad
 V_b(\sigma):=bg+P_b(\sigma),\qquad
 Q_b(\sigma):=P_b(\sigma)+\left\lfloor\frac bd\right\rfloor c,
\]
with $P_0(\sigma)=V_0(\sigma)=Q_0(\sigma)=0$.  If $b=\nu d+t$ with $0\leq t<d$, then
$V_b(\sigma)=tg+Q_b(\sigma)$.  The terminal value
\[
 Q_m(\sigma)=\sum_{x\in X}x+\left\lfloor\frac md\right\rfloor c
\]
is independent of $\sigma$; we denote this common value simply by $Q_m$.
}

For each $t\in\{0,\ldots,d-1\}$, let $W_t\subseteq K$ be the set of
$K$-coordinates to be avoided by prefixes whose length is congruent to
$t$ modulo $d$, and put
\[
        \mathbf W=(W_0,\ldots,W_{d-1}).
\]
Thus the prefix ending at $b$ is forbidden when
\quad
{$Q_b(\sigma)\in W_{\operatorname{res}_d(b)}$}, where
$\operatorname{res}_d(b)\in\{0,\ldots,d-1\}$ is the least
non-negative residue of $b$ modulo $d$.  We assume the terminal
compatibility conditions
\begin{equation}
\label{eq:terminal-compatibility}
 Q_m\notin W_{\operatorname{res}_d(m)},
 \qquad
 m\equiv0\pmod d\Longrightarrow Q_m\neq0.
\end{equation}
Here $Q_m$, and hence these conditions, is independent of the ordering
of $X$.  The first condition prevents the terminal tail vertex from
meeting the forbidden set in its layer; the second prevents the full
tail from returning to its initial vertex when the quotient clock
returns to its initial coset.

For $0<a<1$, the required non-periodicity condition is
\begin{equation}
\label{eq:clock-chain-nonperiodicity}
        |X\cap(z+H)|\leq m-m^a
\end{equation}
for every non-trivial proper subgroup $H<K$ and every coset $z+H$.
Define
\[
 \vartheta_{d,a}(m,N):=
 \frac{m}{dN}
 +\frac{\sqrt{\log(2+m)}}{d m^{a/2}}
 +\frac1m,
 \qquad
 \Wnorm:=\sum_{t=0}^{d-1}|W_t|,
\]
\[
 \Theta_{d,a}(m,N,\mathbf W):=
 (1+\Wnorm)\vartheta_{d,a}(m,N).
\]

The following statement is a layered version of the local-repair argument developed by Pham and Sauermann in the proof of their polynomial-range theorem \cite[Section~5]{PhamSauermann}. In the present setting, the image of the tail in (G/K) provides a deterministic clock, while the set of forbidden values for a prefix, determined by the vertices already visited by the head, depends on its clock layer. The estimates must also remain valid after conditioning on boundedly many coordinates and under the local permutations arising in the repair procedure, as required by the successive stages of the recursive descent. Its proof is obtained by substantially adapting the Pham--Sauermann procedure to this layered setting, but requires a lengthy bookkeeping framework for local and remote cut points, conditioned chain estimates, and blocker configurations. Since the recursive argument below uses only the resulting statement, we defer the proof to {Section~\ref{sec:layered-local-repair-proof}}.
{
\begin{lemma}[Layered local repair]
\label{lem:layered-local-repair}
Fix $0<a<1$ and $\rho>0$. There exist integers $D=D(\rho)\geq1$ and $m_1=m_1(a,\rho,L)\geq1$ such that the following holds. Retain the notation introduced above: $K\leq G$ is cyclic with
\[
N:=|K|,\qquad \Omega(K)\leq L,
\]
the set $X\subseteq K$ has size $m\geq m_1$, the element $g\in G$ has image of order
\[
d:=\operatorname{ord}_{G/K}(g+K)
\]
in $G/K$, and
\[
c:=dg\in K.
\]

{Let $\mathbf W=(W_0,\ldots,W_{d-1})$.  The hypotheses below depend only on $K$, $X$, $g$, and $\mathbf W$, not on a preliminary ordering of $X$.} Assume
\[
0\notin g+X,
\]
conditions \eqref{eq:clock-chain-nonperiodicity} and \eqref{eq:terminal-compatibility}, and
\begin{equation}
\label{eq:clock-repair-smallness}
\Theta_{d,a}(m,N,\mathbf W)\leq m^{-\rho}.
\end{equation}

{Then there exists $\sigma_*\in\operatorname{Ord}(X)$ such that the pairs
\[
(Q_j(\sigma_*),\operatorname{res}_d(j)),\qquad 0\leq j\leq m,
\]
are pairwise distinct. Equivalently, the tail vertices
$V_0(\sigma_*),\ldots,V_m(\sigma_*)$ are pairwise distinct. Moreover,
\[
Q_j(\sigma_*)\notin W_{\operatorname{res}_d(j)}
\]
for every $1\leq j\leq m$.}
\end{lemma}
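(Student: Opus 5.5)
We follow the scheme of Lemma~\ref{lem:cyclic-local-repair}, replacing "zero-sum interval" by "layered collision". For $\sigma\in\operatorname{Ord}(X)$ a bad endpoint $b$ is one of two kinds. The first kind is an internal collision: some $a<b$ with $a\equiv b\pmod d$ and $Q_a(\sigma)=Q_b(\sigma)$. This is equivalent to $\Sigma_\sigma([a+1,b])+\frac{b-a}{d}c=0$, an interval equation whose length is forced to be a multiple of $d$. The second kind is a forbidden hit, $Q_b(\sigma)\in W_{\operatorname{res}_d(b)}$.

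The hypothesis $0\notin g+X$ excludes collisions of length one. When $d=1$ the constraint on lengths is vacuous; when $d>1$, length-one collisions are impossible anyway by the residue condition. The terminal conditions \eqref{eq:terminal-compatibility} exclude bad events at $b=m$ that could never be repaired, since $Q_m$ does not depend on $\sigma$.

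Repair proceeds exactly as before. We process bad endpoints from right to left and swap each $b$ with an unblocked partner $\RepairPos\in(b,b+5D]$. A transposition changes only the prefix sums $P_j$ for $b\leq j<\RepairPos$. Hence "blocked" must now also include the creation of forbidden hits at those positions, and the inductive invariant is unchanged.

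The probabilistic input is the local--remote chain property for $X$ with the layered parameter. We apply Lemma~\ref{lem:polynomial-kneser-alternative}(ii), with $a$ in place of $a_r$ and $T=1$, or directly Corollary~\ref{cor:nonperiodic-interval-chain} with $\eta=m^{a-1}$. Under \eqref{eq:clock-chain-nonperiodicity}, this gives per-equation factors $\frac1N+C\frac{m^{(1-a)/2}\sqrt{\log(2+m)}}{m\sqrt{\Delta}}$, plus an error $\exp(-cm^a)$.

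The gain of $1/d$ comes from summation. Remote endpoints range only over one residue class modulo $d$ for internal collisions, so the sum over cut points becomes $\sum_{u\equiv u_0\ (d)}u^{-1/2}\ll \sqrt m/d+1$. This yields a summed factor $\ll \frac{m}{dN}+\frac{\sqrt{\log(2+m)}}{dm^{a/2}}+\frac1m$. The term $1/m$ covers the boundary terms and the one-element events; the exponential error is absorbed into $m^{-\rho}$ as in Corollary~\ref{cor:nonperiodic-interval-chain}.

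For forbidden hits the endpoint is single, but the target ranges over $W_t$. A union bound over targets adds $|W_t|$ times a one-equation bound, and summing over $t$ multiplies by $1+\Wnorm$. Thus every system of $h\leq D$ remote equations, of mixed type, has total weight at most $C_D\Theta_{d,a}^h$. This holds after conditioning on $O_D(1)$ local values, since the offsets $\frac{b-a}{d}c$ and the local shifts are measurable with respect to the conditioned data.

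We then bound the bad events $\mathcal B_0,\ldots,\mathcal B_3$ as in Lemma~\ref{lem:cyclic-local-repair}. The bounds are $\Pr(\mathcal B_0),\Pr(\mathcal B_1)\ll\Theta$ and $\Pr(\mathcal B_2),\Pr(\mathcal B_3)\ll\Theta+m\Theta^D$. We take $D>3/\rho$ and use \eqref{eq:clock-repair-smallness} to make the total less than $1$ for $m\geq m_1$. On the complement, the repaired ordering $\sigma_*$ has distinct pairs $(Q_j,\operatorname{res}_d j)$ and avoids every $W$, which gives the conclusion. The equivalence with distinctness of the $V_j$ follows from $V_j=\operatorname{res}_d(j)g+Q_j$ together with the fact that the $tg+K$, for $0\leq t<d$, are distinct cosets.

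The main obstacle is the distinctness and nested-chain step in $\mathcal B_2$ and $\mathcal B_3$ when witnesses are of mixed type. In the interval case, $D$ witnesses with distinct remote endpoints form a nested chain. Here a forbidden-hit witness is a prefix, that is, the interval $[1,t]$ with local modification. To handle this, we will split witnesses by type and orientation and keep a sub-family of size at least $D/4$ of a single type. Prefix witnesses form a left-going chain anchored at position $1$. Their remote endpoints are distinct whenever their targets differ, and otherwise local injectivity (the failure of $\mathcal B_0$) forces distinctness, as in the cancellation argument. Choosing $D$ four times larger accommodates this pigeonholing.
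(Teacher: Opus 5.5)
You handle the clock collisions correctly in essence. If you phrase the injectivity event $\mathcal B_0$ for the labels $g+x_i\in G$, the constants $\frac{\ell}{d}c$ cancel in the argument, and your residue-restricted summation matches Lemma~\ref{lem:residue-restricted-one-step}, provided the at most one increment size below $d$ is bounded by exposing all but one element.

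\textbf{The gap: forbidden-hit witnesses in $\mathcal B_3$ (and already in $\mathcal B_2$).} Fix a bad endpoint $b$. A target blocker for a candidate $z_i$ is an equation $Q_{t_i}(\sigma\circ\pi\circ\pi_{b,z_i})=y_i$ with $b\le t_i<z_i$.
\begin{itemize}
\item Its right endpoint lies inside the repair window, and the prefix $[1,t_i]$ has no remote cut point at all.
\item Once you condition on the window, as in the cyclic argument, all such equations constrain the same remote quantity $P_{b-1}$. There is no nested chain with distinct remote endpoints, and the only free parameters are $b$ and the targets.
\item Your distinctness claim is false. Two blockers with $t_i=t_j$ and $z_i\neq z_j$ give $x_{z_i}-x_{z_j}=y_i-y_j$. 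For $y_i\neq y_j$ in $W_{\operatorname{res}_d(t_i)}$ this is a non-zero local relation that no injectivity event excludes.
\item The same problem arises for clustered target hits in $\mathcal B_2$, whose prefixes $[1,b_i]$ differ only inside the window. Applying the slice bound to these short increments gives factors of order $|W|m^{-(1+a)/2}$ with nothing left to sum, and such factors need not be at most $\Theta$.
\end{itemize}

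\textbf{The missing idea: a local-pivot triangularization.}
\begin{enumerate}
\item Keep one representative target equation. Replace every other equation by its difference with the representative. Each difference is a purely local row with coefficient $\pm1$ at a distinct pivot: the candidate $z_i$ (in decreasing order) in $\mathcal B_3$, or the bad endpoint in $\mathcal B_2$.
\item Estimate these rows first, conditioning only on the non-pivot window entries. Each pivot is then uniform on $\gg m$ values, and summing a row over its target in $W$ costs $O(\Wnorm/m)$.
\item Only afterwards, with the whole window exposed, estimate the representative. Use the conditioned single-target bound summed over its endpoint and target, which costs $O(\Theta)$.
\end{enumerate}
This yields $O\bigl(\Theta(\Wnorm/m)^{D-1}\bigr)\le O(\Theta^D)$, since $\Wnorm/m\le\Theta$. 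This step is the paper's anchored-target estimate, Lemma~\ref{lem:layered-targets}(ii), applied to canonical blocker profiles built by rule (C1). Without it, your bounds $\Pr(\mathcal B_2),\Pr(\mathcal B_3)\ll\Theta+m\Theta^D$ are not established.
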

}

\subsection{Kneserized terminal correction}

\journalchg{We now treat the subgroup-dominant part of reverse absorption.  The exceptional elements are ordered
in the quotient and the final one is kept as a bridge into a quotient
coset not visited by the preceding head.  Before crossing that bridge we
must modify the regular set inside the subgroup so that the remaining
elements can be completed by the cycle trick.  The correction lemma has
a structural alternative: either at most three regular elements can be
placed before the bridge so that the remaining set satisfies the hypothesis
of the cycle trick, or ordinary Kneser theory shows that almost all regular
elements lie in a coset of a proper subgroup.  The latter outcome supplies
the next recursive descent.}

\journalchg{We first record the quotient-ordering lemma used below.}

\begin{lemma}[Small quotient head]
\label{lem:small-quotient-head}
Let $t\geq1$, and let $Q$ be a finite abelian group in which every
non-zero element has order greater than $t$.  Every multiset of $t$ non-zero elements of $Q$
can be ordered as $q_1,\ldots,q_t$ so that
\[
        q_1+\cdots+q_j\neq q_1+\cdots+q_t
        \qquad(1\leq j<t).
\]
Equivalently, after reversing the order, every partial sum with fewer
than $t$ terms is non-zero.
\end{lemma}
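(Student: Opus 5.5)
The plan is a greedy construction after reversal. Put $r_i:=q_{t+1-i}$. It then suffices to order the multiset as $r_1,\ldots,r_t$ so that every prefix sum
\[
S_k:=r_1+\cdots+r_k,\qquad 1\leq k\leq t-1,
\]
is non-zero; the total $S_t$ is unconstrained.

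The only arithmetic input is the order hypothesis. If $y\neq0$ and $1\leq k\leq t$, then $ky\neq0$ because $\ord(y)>t$. Moreover, the multiples $y,2y,\ldots,ty$ are pairwise distinct. This handles the constant multiset at once: if all $t$ elements equal $y$, then $S_k=ky\neq0$ for every $k<t$. The case $t=1$ is vacuous, and for $t=2$ any ordering works because $r_1\neq0$.

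For the general case I would build the sequence greedily, using the constant case as the first block. Let $y$ be a value of maximal multiplicity $\mu$. Place all $\mu$ copies of $y$ first; by the above, $S_k=ky\neq0$ for $k\leq\mu$ (when $\mu<t$). Then extend one element at a time. At a stage with current sum $S\neq0$ and remaining multiset $M$ with $|M|\geq2$, choose $x\in M$ with $S+x\neq0$. Such an $x$ exists unless every element of $M$ equals $-S$. When $|M|=1$ the last element is appended freely, since $S_t$ is not constrained.

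The main obstacle is this \emph{stuck configuration}: $M$ consists of $j\geq2$ copies of a single value $w$, the current sum is $S=-w$, and the initial block value satisfies $y\neq w$. To avoid it, I would add a look-ahead rule to the greedy step. Whenever $M$ contains at least two distinct values, choose $x$ with $S+x\neq0$ and, if possible, with $M\setminus\{x\}$ not of the form $\{w,\ldots,w\}$ with $S+x=-w$.

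One then has to check that the excluded choices never exhaust $M$. If $M$ has at least three distinct values, at most one value is excluded by $S+x=0$ and at most one by the look-ahead condition, so a choice remains. If $M$ has exactly two values $\{u,w\}$, the look-ahead can only exclude $x=u$ when $u$ occurs once and $S+u=-w$. In that case we take $x=w$ instead. This is allowed unless $S+w=0$. But $S+w=0$ together with $S+u=-w$ forces $u=0$, which is impossible since all elements are non-zero.

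If the greedy nevertheless reaches a constant remainder $\{w,\ldots,w\}$ with $S\neq-w$, the remaining prefix sums are $S+iw$ for $1\leq i<j$. These are pairwise distinct, so at most one of them vanishes. In that event I would use a single exchange: swap one copy of $w$ with an earlier non-$w$ element of the greedy segment, and use maximality of $\mu$, which gives $j\leq\mu$, together with the distinctness of the multiples of $y$ and of $w$, to show that the modified sequence has no zero prefix sum. This exchange step is where I expect the real case analysis to lie, and I would verify it first.
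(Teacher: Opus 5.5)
Your proposal does not yet prove the lemma. Everything rests on the exchange step, which you leave unspecified and unverified, and the mechanisms set up before it do not cover all cases.

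First, the stuck configuration can occur immediately after the initial block, before the look-ahead ever acts. In $\mathbb Z_7$ with $t=5$, take the multiset $\{1,1,1,4,4\}$, so $y=1$ and $\mu=3$. After $1,1,1$ one has $S=3$ and remainder $\{4,4\}$, with $S+4=0$. Here the greedy segment is empty and $S=-w$. So neither the look-ahead nor your exchange paragraph applies, since that paragraph assumes $S\neq -w$ and swaps with an element of the greedy segment.

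Second, even when the look-ahead acts, it only excludes a zero at the first step of a constant tail. In $\mathbb Z_{11}$ with $t=8$, take $\{1,1,1,1,2,2,2,3\}$. Your rules allow the choice $3$ after $1,1,1,1$, leaving remainder $\{2,2,2\}$ with $S=7$. Then $S+2\cdot 2=0$, so $S_7=0$ is forced.

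Third, the exchange itself is not controlled. Exchanging the entries in positions $q<q'$ shifts every prefix sum $S_k$ with $q\leq k<q'$ by the same non-zero amount. Nothing in the proposal prevents one of these shifted sums from vanishing. Maximality of $\mu$ and distinctness of the multiples of $y$ and $w$ only control sums of the form $ay+bw$. They say nothing about prefix sums involving the other values placed by the greedy.

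The missing idea is to build the sequence from the right, as the paper does. In your reversed formulation, let $t\geq 2$, let $\Sigma$ be the total sum, and choose the last entry $r_t\neq\Sigma$. Such an entry exists: if all entries equalled $\Sigma$, then $t\Sigma=\Sigma$, so $(t-1)\Sigma=0$. Since $\Sigma\neq 0$ (it is an entry) and $1\leq t-1<t$, this contradicts the order hypothesis. Then $S_{t-1}=\Sigma-r_t\neq 0$. Every non-zero element still has order greater than $t-1$, so induction orders the remaining $t-1$ entries with all their shorter prefix sums non-zero.

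Going backwards, the value to avoid at each step is the sum of the elements still to be placed. A constant remainder equal to it is therefore excluded by the order hypothesis. Going forwards, the value $-S$ to avoid is fixed by the elements already used and is unrelated to the remainder. That is exactly why your stuck configuration arises. With the backward construction, no maximal-multiplicity block, look-ahead, or exchange is needed.
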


\begin{proof}
We prove the equivalent non-zero-prefix formulation by induction on
$t$.   {Let $\Sigma_{\rm tot}$ be the total sum.}  If $t=1$ there is nothing to prove.
For $t>1$, choose an entry $a\neq{\Sigma_{\rm tot}}$.  Such an entry exists: if every
entry were equal to ${\Sigma_{\rm tot}}$, then $(t-1){\Sigma_{\rm tot}}=0$, contrary to the order
assumption.  Order the remaining multiset inductively and put $a$ last.
All proper prefixes contained in the first $t-1$ entries are non-zero by
induction, while their full sum is ${\Sigma_{\rm tot}}-a\neq0$.  Reversing this
ordering gives the stated formulation.
\end{proof}

\journalchg{The following lemma provides the terminal correction needed when
the subgroup has composite order.  Its proof combines
Theorem~\ref{thm:huicochea-restricted} with ordinary Kneser theory: in
the remaining case, the first result forces the restricted and ordinary
double sumsets to coincide, and the second yields concentration in a
coset of a proper subgroup.}

\begin{lemma}[Terminal correction or genuine descent]
\label{lem:terminal-correction-descent}
Let $G$ be a finite abelian group, let $K\leq G$ be a cyclic subgroup
of odd order, and let $\PathVar$ be a path in $G$ with endpoint $v$ and
vertex set $V(\PathVar)$.  Let $R\subseteq K\setminus\{0\}$ be a set of unused
elements, let $e\in G\setminus K$ be unused, and assume that the coset
$v+e+K$ is fresh for $\PathVar$.  Let $Z\subseteq G$ satisfy $|Z|\leq1$.  Put
\[
        m:=|R|,
        \qquad
        u:=|V(\PathVar)|+|Z|.
\]
Assume
\begin{equation}
\label{eq:terminal-correction-size}
        m\geq64(u+2)^2.
\end{equation}
Then one of the following holds.
\begin{enumerate}[label=(\roman*)]
\item There are distinct elements $d_1,\ldots,d_t\in R$, with
      $0\leq t\leq3$, such that the extension
      \[
            \PathVar':=\PathVar\text{ extended by }(d_1,\ldots,d_t,e)
      \]
      is a path and
      $(V(\PathVar')\setminus V(\PathVar))\cap Z=\varnothing$.  For
      $\Delta:=\{d_1,\ldots,d_t\}$ and $C:=R\setminus\Delta$ one has
      \[
            \SumC:=\sum_{c\in C}c\neq0,
            \qquad
            -\SumC\notin C.
      \]
\item There are a non-trivial proper subgroup $H<K$ and a coset $x+H$
      such that
      \[
            |R\setminus(x+H)|\leq u.
      \]
\end{enumerate}
\end{lemma}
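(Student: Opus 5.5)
The plan is to choose $\Delta$ greedily by increasing $t$, and to show that if no choice with $t\leq 2$ (or, in one degenerate case, $t=3$) works, then the restricted double sumset of a large subset of $R$ is unusually small. Theorem~\ref{thm:huicochea-restricted} and Kneser's theorem then turn that smallness into alternative~(ii).

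\emph{Reformulation.} Write $S:=\Sigma(R)$. For $\Delta\subseteq R$ and $C=R\setminus\Delta$, the two conditions in~(i) read
\[
S-\Sigma(\Delta)\neq0
\quad\text{and}\quad
\Sigma(\Delta)-S\notin R\setminus\Delta .
\]
The path conditions are local. The new vertices $v+d_1,\ v+d_1+d_2,\ \ldots$ lie in the coset $v+K$, and each must avoid $V(\PathVar)\cup Z$, a set of size at most $u$. The last vertex $v+\Sigma(\Delta)+e$ lies in the fresh coset $v+e+K$, so it only has to avoid $Z$. Consequently:
\begin{itemize}
\item the first step forbids at most $u$ values of $d_1$;
\item each later step forbids at most $u+1$ values of the partial sum $d_1+\cdots+d_i$;
\item the final step excludes at most one value of $\Sigma(\Delta)$.
\end{itemize}
Let $F_0:=((V(\PathVar)\cup Z)-v)\cap R$ and $R':=R\setminus F_0$, so $|R'|\geq m-u$. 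Then every pair $d_1\neq d_2$ in $R'$ whose sum avoids a set $F$ with $|F|\leq u+2$ gives a legal path extension $(d_1,d_2,e)$.

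\emph{The case $t=2$ fails for all admissible pairs.} Suppose that for every pair $d_1\neq d_2$ in $R'$ with $d_1+d_2\notin F$, the choice $\Delta=\{d_1,d_2\}$ violates the reformulated conditions. Then $d_1+d_2=S$, or $d_1+d_2-S\in R$. Hence
\[
\widehat2R'\subseteq (R+S)\cup\{S\}\cup F ,
\qquad\text{so}\qquad
|\widehat2R'|\leq m+u+3 .
\]
Since $m\geq64(u+2)^2$ and $|R'|\geq m-u$, this is smaller than $2|R'|-2\sqrt{2|R'|}-3$. Because $K$ has odd order, Theorem~\ref{thm:huicochea-restricted} forces $\widehat2R'=R'+R'$, and therefore $|R'+R'|\leq m+u+3$.

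\emph{Kneser step.} Apply Theorem~\ref{thm:kneser} with $H:=\Stab(R'+R')$. Suppose $R'+H$ consists of $k$ cosets of $H$. Kneser gives $|R'+R'|\geq(2k-1)|H|$. Comparing with $|R'|\leq k|H|$ and $|R'|\geq m-u$ yields
\[
(k-1)|H|\leq 2u+3 ,
\qquad
|H|\geq (m-u)/k .
\]
Together these give $(k-1)(m-u)/k\leq 2u+3$, and the size hypothesis then forces $k=1$. The subcase $H=\{0\}$ would give $|R'+R'|\geq2|R'|-1$, which is impossible, so $H$ is non-trivial. If $H$ is proper, then $R'\subseteq x+H$, hence $|R\setminus(x+H)|\leq|F_0|\leq u$, which is alternative~(ii).

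\emph{Remaining case $H=K$, the expected main obstacle.} Here $R'+R'=K$, so $|K|\leq m+u+3$ and $R$ is almost all of $K\setminus\{0\}$. The obstruction then came from the trivial inclusion $(R+S)\cup\{S\}\supseteq K$ minus $O(u)$ points, not from structure. I would treat this by a direct counting argument, using $t\leq 3$. The number of elements of $K\setminus\{0\}$ missing from $R$ is at most $u+3$. I would pick $d_1,d_2,d_3\in R'$ with prescribed partial sums avoiding the $O(u)$ forbidden values, and with $\Sigma(\Delta)-S$ landing either in $\Delta$ or in $K\setminus R$ while keeping $S-\Sigma(\Delta)\neq0$. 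The dense case leaves at least $m-O(u)$ free choices at each step, while only $O(u)$ values are excluded, so a valid triple should exist.

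If instead $S=0$ or $-S\in R$ blocks $t=0$, these are simply absorbed into the $t\geq1$ cases above, since the analysis never needs $t=0$ to succeed. The delicate part is arranging the final landing condition in the dense case compatibly with the path constraints. This is where I expect the bookkeeping to be heaviest and where $t=3$ is genuinely needed.
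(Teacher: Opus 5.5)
Your argument for the case where some admissible pair exists, and for what follows when none does, is essentially the paper's non-dense branch. Your set $(R+S)\cup\{S\}\cup F$ is the paper's forbidden set $B$. The chain (Theorem~\ref{thm:huicochea-restricted} gives $\widehat2R'=R'+R'$, then Kneser forces one coset, and a proper stabilizer gives alternative~(ii)) is exactly what the paper does.

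\textbf{The gap: the case $H=K$ is not proved.} You leave this case at ``a valid triple should exist'', and it is not a routine avoidance count. The condition $\Sigma(\Delta)-S\in\Delta$ is a \emph{hitting} condition. With $|\Delta|=3$ it forces two of the $d_i$ to sum exactly to $S$, and the existence of such pairs is precisely what must be shown. Your other option, $\Sigma(\Delta)-S\in K\setminus R$, cannot be relied on. For instance, if $R=K\setminus\{0\}$ then $K\setminus R=\{0\}$, and landing at $0$ violates $\SumC\neq0$.

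\textbf{The missing idea.} This is how the paper settles the dense regime. It splits off $|K|\leq m+u+4$ at the start, rather than reaching it after the Kneser step; that difference in ordering is immaterial. In your case $|K|\leq m+u+3$, so inclusion--exclusion gives
\[
|R\cap(S-R)|\geq 2m-|K|\geq m-u-3 .
\]
\begin{enumerate}
\item Pick $z\in R\cap(S-R)$ with $v+z\notin V(\PathVar)\cup Z$ and $2z\neq S$. Since $|K|$ is odd, at most one $z$ has $2z=S$.
\item Set $y:=S-z$. Then $y\in R$ and $y\neq z$.
\item Choose $x\in R\setminus\{z,y,-y\}$ such that $v+z+x$ and $v+z+x+y$ avoid $V(\PathVar)\cup Z$, and $v+z+x+y+e$ avoids the initial vertex of $\PathVar$ and $Z$. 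This excludes at most $2u+5<m$ values.
\end{enumerate}
Appending $z,x,y,e$ gives a path. Excluding $-y$ is what keeps $v+z+x+y\neq v+z$. Finally $\SumC=S-z-x-y=-x\neq0$ and $-\SumC=x\in\Delta$, so $-\SumC\notin C$.

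\textbf{A smaller slip.} Freshness of $v+e+K$ only excludes the \emph{non-initial} vertices of $\PathVar$. The initial vertex may lie in that coset, so the final vertex must avoid it as well as $Z$. Your bound $|F|\leq u+2$ still holds, since $F$ consists of $W$ (at most $u$ values) plus these two values, but the justification should be corrected accordingly.
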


\begin{proof}
Put
\[
        \SumR:=\sum_{r\in R}r
\]
and let
\[
        W:=\{a\in K:v+a\in V(\PathVar)\cup Z\}.
\]
Thus $|W|\leq u$.  We first treat the dense case
\begin{equation}
\label{eq:terminal-dense}
        |K|\leq m+u+4.
\end{equation}
By inclusion--exclusion,
\[
        |R\cap(\SumR-R)|\geq2m-|K|\geq m-u-4.
\]
Since $K$ has odd order, the equation $2z=\SumR$ has at most one
solution.  Moreover,
\[
        m-u-4>|W|+1
\]
by \eqref{eq:terminal-correction-size}.  We may therefore choose
$z\in R\cap(\SumR-R)$ outside $W$ and outside the possible solution
of $2z=\SumR$.
Put $y:=\SumR-z$.  Then $z,y$ are distinct elements of $R$ and
\[
        z+y=\SumR,
        \qquad
        v+z\notin V(\PathVar)\cup Z.
\]
Choose
\[
        x\in R\setminus\{z,y,-y\}
\]
so that the vertices
\[
        v+z+x,
        \qquad
        v+z+x+y
\]
avoid $V(\PathVar)\cup Z$, and so that the vertex
$v+z+x+y+e$ avoids the initial vertex of $\PathVar$ and the possible element of
$Z$.  The first two requirements exclude at most $2u$ values of $x$.
The first new vertex $v+z$ already avoids $V(\PathVar)\cup Z$ because
$z\notin W$.  Since $x,y\neq0$, the second new vertex differs from the
first and the third differs from the second; the third equals the first
only when $x=-y$, which has already been excluded.

The final vertex belongs to $v+e+K$.  This coset contains no non-initial
vertex of $\PathVar$ because it is fresh for $\PathVar$, and it is disjoint from
$v+K$, which contains the three preceding new vertices, because
$e\notin K$.  Requiring the final vertex to differ from the initial
vertex and from the possible element of $Z$ excludes at most two further
values of $x$.  Together with the three explicitly removed values
$z,y,-y$, at most $2u+5$ values are forbidden.  The size condition
\eqref{eq:terminal-correction-size} therefore guarantees such a choice
of $x$.  Append the labels $z,x,y,e$ to $\PathVar$, and put
$\Delta:=\{z,x,y\}$.  Then
\[
        \sum_{c\in R\setminus\Delta}c=-x.
\]
Thus $\SumC=-x\neq0$ and $-\SumC=x\notin C$, so
conclusion~(i) holds.

Assume now that \eqref{eq:terminal-dense} fails.  Put
\[
        R_0:=R\setminus W,
        \qquad
        m_0:=|R_0|.
\]
Let $B$ consist of
\[
        W,
        \qquad
        \SumR+(R\cup\{0\}),
\]
together with those values $s\in K$ for which $v+s+e$ is either the
initial vertex of $\PathVar$ or an element of $Z$.  There is at most one value
of $s$ for the initial vertex and at most one for the possible element
of $Z$.  Thus
\begin{equation}
\label{eq:terminal-forbidden-size}
        |B|\leq m+u+3<|K|.
\end{equation}
If there is
\[
        s\in\widehat2R_0\setminus B,
\]
write $s=x+y$ with distinct $x,y\in R_0$, append the labels $x,y,e$ to
$\PathVar$, and put $\Delta:=\{x,y\}$.  Since $x\notin W$ and $s\notin W$, the
vertices $v+x$ and $v+s$ do not belong to $V(\PathVar)\cup Z$; they are
distinct because $y\neq0$.  The final vertex $v+s+e$ lies in the fresh
coset $v+e+K$, and therefore differs from every non-initial vertex of
$\PathVar$.  It also differs from $v+x$ and $v+s$, which lie in $v+K$, and the
definition of $B$ ensures that it is neither the initial vertex nor an
element of $Z$.  Thus the resulting extension is a path and creates no
new vertex in $Z$.  Moreover, for $C=R\setminus\{x,y\}$,
\[
        \SumC=\SumR-s\neq0,
        \qquad
        -\SumC=s-\SumR\notin R,
\]
so conclusion~(i) follows.

It remains to suppose that
\begin{equation}
\label{eq:restricted-contained-forbidden}
        \widehat2R_0\subseteq B.
\end{equation}
The size condition gives $m_0\geq m-u\geq2$.  If
$\widehat2R_0\neq R_0+R_0$, Theorem~\ref{thm:huicochea-restricted},
applied with $A=R_0$, gives
\[
        |\widehat2R_0|
        \geq2m_0-2\sqrt{2m_0}-3.
\]
Since $m_0\geq m-u$ and $m\geq64(u+2)^2$, we have
$u\leq\sqrt m/8-2$ and hence
\[
 2m_0-2\sqrt{2m_0}-3
 \geq 2m-2u-2\sqrt{2m}-3
 >m+u+3.
\]
This contradicts
\eqref{eq:terminal-forbidden-size} and
\eqref{eq:restricted-contained-forbidden}.  Consequently,
\begin{equation}
\label{eq:restricted-equals-ordinary}
        \widehat2R_0=R_0+R_0.
\end{equation}

Let
\[
        H:=\Stab(R_0+R_0),
        \qquad
        h:=|H|,
        \qquad
        q:=|R_0+H|/h.
\]
Kneser's theorem, together with
\eqref{eq:restricted-contained-forbidden} and
\eqref{eq:restricted-equals-ordinary}, gives
\begin{equation}
\label{eq:terminal-kneser-count}
        (2q-1)h
        \leq |R_0+R_0|
        \leq m+u+3
        \leq m_0+2u+3.
\end{equation}
On the other hand $m_0\leq qh$.  If $q\geq2$, then
\eqref{eq:terminal-kneser-count} yields
\[
        (q-1)h\leq2u+3.
\]
Since $q\geq2$, also $h\leq(q-1)h$, and therefore
\[
        qh=h+(q-1)h\leq2(q-1)h\leq4u+6.
\]
This contradicts $m_0\geq m-u>4u+6$, which follows from
\eqref{eq:terminal-correction-size}.  Hence $q=1$, so $R_0$ is contained
in a single coset $x+H$.

The subgroup $H$ is proper: otherwise
$R_0+R_0=K$, and then
\eqref{eq:restricted-contained-forbidden} and
\eqref{eq:restricted-equals-ordinary} would give $K\subseteq B$,
contrary to \eqref{eq:terminal-forbidden-size}.  It is non-trivial
because $m_0>1$ and $R_0$ is contained in a single $H$-coset.  Finally,
\[
        |R\setminus(x+H)|
        \leq |R\setminus R_0|
        \leq |W|
        \leq u,
\]
which is conclusion~(ii).
\end{proof}

\begin{lemma}[Cycle trick]
\label{lem:cycle-trick}
Let $K$ be a finite abelian group and $C\subseteq K\setminus\{0\}$.
Put
\[
        d_0:=-\sum_{c\in C}c
\]
and assume $d_0\notin C\cup\{0\}$.  If $C\cup\{d_0\}$ has a valid
ordering, then $C$ has an ordering whose non-empty partial sums are
pairwise distinct and non-zero.
\end{lemma}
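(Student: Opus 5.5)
The plan is to take a valid ordering of $C\cup\{d_0\}$ and read it cyclically, starting just after $d_0$. Write the valid ordering as $y_1,\ldots,y_{n+1}$, where $n=|C|$, and let $y_k=d_0$. Put $s_0:=0$ and $s_i:=y_1+\cdots+y_i$. Validity says that $s_1,\ldots,s_{n+1}$ are pairwise distinct. The total sum is
\[
s_{n+1}=\sum_{c\in C}c+d_0=0.
\]
Hence none of $s_1,\ldots,s_n$ equals $0$, since each differs from $s_{n+1}$. So the walk $s_0,s_1,\ldots,s_n$ together with the closing step $s_{n+1}=0$ is a closed walk whose vertices $s_0,\ldots,s_n$ are pairwise distinct. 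In other words it is a Hamiltonian cycle on $n+1$ distinct vertices, with edge labels $y_1,\ldots,y_{n+1}$.

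Next rotate the cycle so that it starts at vertex $s_k$, the endpoint of the $d_0$-edge. The new label sequence is
\[
y_{k+1},\ldots,y_{n+1},y_1,\ldots,y_{k-1},
\]
which is an ordering of $C$, followed by the deleted edge $d_0$. Its partial sums are $s_j-s_k$, as $j$ runs cyclically over the $n$ vertices other than $s_k$, in the order they are visited. These vertices are pairwise distinct and all differ from $s_k$. Therefore the non-empty partial sums of the new ordering of $C$ are pairwise distinct and non-zero, which is exactly the conclusion.

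The only point needing care is that the vertices of the cycle are genuinely distinct. This uses $s_{n+1}=0$ together with the validity hypothesis, which excludes repetitions among $s_1,\ldots,s_{n+1}$ and hence excludes $s_i=0$ for $1\leq i\leq n$. The hypothesis $d_0\notin C\cup\{0\}$ is used only to make $C\cup\{d_0\}$ a genuine set of non-zero elements of size $n+1$, so that a valid ordering of it makes sense. I expect no real obstacle beyond the index bookkeeping for the cases $k=1$ and $k=n+1$, which are handled by the same cyclic reading.
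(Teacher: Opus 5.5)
Your proof is correct and follows the same route as the paper. You use the zero total sum of $C\cup\{d_0\}$ to close the valid walk into a cycle on the pairwise distinct vertices $s_0,\ldots,s_n$, then rotate it to start at the endpoint of the $d_0$-edge and drop that edge; your explicit formula $s_j-s_k$ for the rotated partial sums simply makes precise the paper's remark that the rotation is again valid and returns to its start only through $d_0$.
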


\begin{proof}
The total sum of $C\cup\{d_0\}$ is zero.  Hence a valid ordering of
this set traces a cycle whose vertices, apart from the coincidence of the
initial and final vertices, are pairwise distinct.  Start this cycle
immediately after the edge labelled $d_0$.  The resulting cyclic rotation
is again valid and has the form
\[
        c_1,\ldots,c_m,d_0.
\]
The vertices reached after $c_1,\ldots,c_j$, $1\leq j\leq m$, are
therefore pairwise distinct.  None is the initial vertex, since the final
edge $d_0$ returns to that vertex and validity makes the final vertex
distinct from all preceding non-initial vertices.  Thus
$c_1,\ldots,c_m$ has pairwise distinct non-zero partial sums.
\end{proof}

\subsection{Recursive completion}

The three propositions below implement the recursive completion.  The
first treats a set concentrated in a proper non-zero coset, the second
treats a subgroup tail together with one element outside the subgroup
that is reserved for the final bridge, and the third reduces the
subgroup-dominant case to the second.  Every recursive step replaces the current subgroup by a proper
subgroup and therefore decreases $\Omega$.  We choose the constants in
the following order: first the proper-coset constants, inductively in
$r$; then the pending-bridge constants, again inductively in $r$ and
using the proper-coset constants for smaller values; finally the
subgroup-dominant constants.

We begin with the proper-coset case.  The initial path is included in the
statement because the proposition is also used after a descent.

\begin{proposition}[Proper-coset completion with an initial head]
\label{prop:proper-coset-recursive}
For every $1\leq r\leq L$ there exist constants
$M_{\rm pc}=M_{\rm pc}(r,L,\gamma)$ and
$P_{\rm pc}=P_{\rm pc}(r,L,\gamma)$ such that the following holds when
the smallest prime base satisfies $p_{\min}\geq P_{\rm pc}$.
Let $K<G$ be cyclic with $\Omega(K)=r$, let $g\notin K$, and let
$\PathVar$ be a path.  Let $A,E\subseteq G\setminus\{0\}$ be disjoint from
each other and from $\operatorname{lab}(\PathVar)$, and assume that
$A\cup E$ is exactly the set of elements that remain to be appended.
Write
\[
        A=g+X_0\subseteq g+K,
        \qquad X_0\subseteq K,
\]
and put
\[
        \Sigma:=\operatorname{end}(\PathVar)+\sum_{a\in A}a+
                   \sum_{e\in E}e.
\]
Assume that $|A|\geq M_{\rm pc}$, that
$\Sigma\notin V^+(\PathVar)$, and
\begin{equation}
\label{eq:proper-state-bound}
        |V(\PathVar)|+|E|+1\leq |A|^{a_r+\beta}.
\end{equation}
Then $\PathVar$ has an extension using every element of $A\sqcup E$ exactly
once and having pairwise distinct non-initial vertices.
\end{proposition}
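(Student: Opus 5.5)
Proof proposal. We argue by induction on $r=\Omega(K)$, following the dichotomy of Lemma~\ref{lem:polynomial-kneser-alternative}. Put $m:=|A|=|X_0|$ and let $d:=\ord_{G/K}(g+K)$. By \eqref{eq:clock-lower-bound}, $d\geq c_{L,\gamma}|K|^{1/r}$, and $m\leq|K|$. The plan has three steps.

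\emph{Step 1: absorb the exceptions.} Use Lemma~\ref{lem:preexisting-head} to append all of $E$ to $\PathVar$, interleaved with at most $|E|+1$ helpers from $A$. We choose the prescribed avoidance set $Z$ so that the new endpoint $v$ is compatible with the final vertex $\Sigma$. Concretely, after the head the tail must end at $\Sigma$, and $\Sigma\notin V^+(\PathVar)$. The new head vertices must avoid $\Sigma$, so we put $\Sigma\in Z$. The head now has $O(|A|^{a_r+\beta})$ vertices. Write the remaining regular set as $g+X$ with $X\subseteq X_0$ and $|X|=m'\geq m-m^{a_r+\beta}$.

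\emph{Step 2: set up the layered repair.} The tail from $v$ is the walk $V_b=v+bg+P_b$. Its vertex in quotient layer $t$ is forbidden to hit head vertices in the coset $v+tg+K$. This defines $\mathbf W$, with $\Wnorm\leq|V(\PathVar)|\leq m^{a_r+2\beta}$. The condition $0\notin g+X$ holds since $0\notin A$. The terminal compatibility \eqref{eq:terminal-compatibility} follows from $\Sigma\notin V(\text{head})$ and, when $d\mid m'$, from $\Sigma\neq v$.

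Now apply Lemma~\ref{lem:polynomial-kneser-alternative} to $X$ inside $K$ with exponent $a_r$. In the non-periodic alternative we invoke Lemma~\ref{lem:layered-local-repair} with $a=a_r$. It remains to check \eqref{eq:clock-repair-smallness}, namely
\[
(1+\Wnorm)\Bigl(\frac{m}{d|K|}+\frac{\sqrt{\log m}}{d\,m^{a_r/2}}+\frac1m\Bigr)\leq m^{-\rho}.
\]
\begin{itemize}
\item For the first term, $m/|K|\leq 1$ and $d\geq c|K|^{1/r}\geq cm^{1/r}$, so by the exponent hierarchy this term gives exponent $a_r+3\beta-1/r<-17\beta$.
\item For the second term, the inequality $a_r+3\beta-1/r-a_r/2<-17\beta$ applies.
\item For the third term, $a_r+3\beta-1<-17\beta$.
\end{itemize}
Thus \eqref{eq:clock-repair-smallness} holds with $\rho=\beta$ once $m\geq M_{\rm pc}$. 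The repaired tail has pairwise distinct vertices avoiding the head and ends at $\Sigma$, which finishes this case. For $r=1$ every proper subgroup is trivial, so alternative (i) cannot have large $X\cap(x+K')$, and we are always in this branch.

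\emph{Step 3: descent.} In the concentrated alternative, $X\setminus(x+K')$ has at most $m^{a_r}$ elements, with $K'<K$ and $\Omega(K')=r'<r$. Then $A'=g+x+(\text{dominant part})$ lies in the proper coset $g+x+K'$ of $G$; this coset is proper since $g\notin K$. The new exceptional set is $E'=g+(X\setminus(x+K'))$, of size at most $m^{a_r}$. We keep the current head. Condition \eqref{eq:proper-state-bound} at level $r'$ requires
\[
|V|+|E'|+1\leq |A'|^{a_{r'}+\beta}.
\]
The left side is $O(m^{a_r+2\beta})$ and the right side is about $m^{a_{r'}+\beta}$, so this follows from $a_{r-1}-a_r>20\beta$. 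We then apply the inductive hypothesis to $(\PathVar,A',E')$, with $\Sigma$ unchanged.

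The main obstacle is that the descent must be performed before Step~1. Otherwise the helpers already consumed would be counted wrongly and the head bound would be polluted. So the actual order is: first test $X_0$ with Lemma~\ref{lem:polynomial-kneser-alternative}; descend if it is concentrated, and only otherwise absorb $E$ and repair. Two further points need care. Non-periodicity must survive removal of the $|E|+1$ helpers; losing $m^{a_r+\beta}$ elements must not break \eqref{eq:clock-chain-nonperiodicity}. We handle this by testing with a slightly smaller exponent and using the slack $20\beta$, and by choosing helpers greedily so as to preserve the bound. Finally, all thresholds depend only on $r,L,\gamma$, and $p_{\min}\geq P_{\rm pc}$ guarantees $m\geq m_1$ where needed.
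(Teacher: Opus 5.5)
Your Steps 1--3, in the order you first write them, are exactly the paper's proof. The paper absorbs $E$ with \cref{lem:preexisting-head} and $Z=\{\Sigma\}$, applies \cref{lem:polynomial-kneser-alternative} to the \emph{residual} set $X$, and then either runs \cref{lem:layered-local-repair} or descends while keeping the extended head.

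The ``main obstacle'' you then raise does not exist. After absorption the head has at most $m^{a_r+3\beta}$ vertices and $E'$ has at most $m^{a_r}$ elements. The gap $a_{r-1}-a_r>20\beta$ absorbs both in \eqref{eq:proper-state-bound} at level $r'$, which is precisely how the paper handles the descent.

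The reordering you adopt instead is what creates a genuine gap. Its descent branch is fine: you merge $E$ with $g+(X_0\setminus(x+K'))$ and keep $\PathVar$. Its non-periodic branch is not.
\begin{itemize}
\item Testing $X_0$ at exponent $a_r$ only guarantees $|X_0\setminus(z+H)|\geq m^{a_r}$.
\item Step~1 then removes up to $|E|+1$ helpers. This can be of order $|A|^{a_r+\beta}\gg m^{a_r}$, and the helpers are constrained only by avoidance conditions.
\item They may therefore exhaust $X_0\setminus(z+H)$ and leave $X$ inside a single coset, so \eqref{eq:clock-chain-nonperiodicity} fails for every $a$.
\end{itemize}
``Testing with a slightly smaller exponent'' goes in the wrong direction, since it shrinks the margin. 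You would have to test at an exponent exceeding $a_r+\beta$, say $a_r+2\beta$, which the descent still tolerates. The greedy helper choice is not justified as stated. The simplest cure is to keep your original order, so that non-periodicity is checked on the very set fed to \cref{lem:layered-local-repair}.

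Two smaller points in Step~2 also need fixing.
\begin{itemize}
\item The hypothesis is only $\Sigma\notin V^+(\PathVar)$, and $\Sigma$ may equal the initial vertex, for example in the top-level call with $\PathVar=(0)$ and $\sum_{s\in S}s=0$. So the sets $W_t$ must be built from the \emph{non-initial} head vertices; otherwise the first condition in \eqref{eq:terminal-compatibility} can fail. This restriction is harmless, since validity allows one return to the initial vertex and the repaired tail vertices are pairwise distinct.
\item For the second condition you need the new endpoint $v$ to be a non-initial vertex, so that $v\neq\Sigma$ follows from $Z=\{\Sigma\}$. This is why the paper insists that at least one helper from $A$ be appended even when $E=\varnothing$.
\end{itemize}
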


\begin{proof}
{
We prove the proposition by induction on $r$. Assume that
$M_{\rm pc}(r',L,\gamma)$ and $P_{\rm pc}(r',L,\gamma)$ have already
been chosen for every $1\leq r'<r$. Put
\[
D_r:=D(2\beta),
\]
where $D(2\beta)$ is supplied by
Lemma~\ref{lem:layered-local-repair}. Choose
\[
P_{\rm pc}(r,L,\gamma)\geq
\max_{1\leq r'<r}P_{\rm pc}(r',L,\gamma),
\]
where the maximum is omitted when $r=1$. Next choose
$M_{\rm pc}(r,L,\gamma)$ sufficiently large that
\[
M_{\rm pc}(r,L,\gamma)\geq
4\max_{1\leq r'<r}M_{\rm pc}(r',L,\gamma),
\]
where again the maximum is omitted when $r=1$, and
\[
\frac12M_{\rm pc}(r,L,\gamma)\geq
\max\bigl\{m_0(D_r,L),m_1(a_r,2\beta,L)\bigr\}.
\]
Increase $M_{\rm pc}(r,L,\gamma)$ further, if necessary, so that all
numerical inequalities used below hold whenever
$|A|\geq M_{\rm pc}(r,L,\gamma)$.
}

Since $a_r+\beta<1$, the state bound
\eqref{eq:proper-state-bound} implies the size hypothesis of
Lemma~\ref{lem:preexisting-head} by the choice of $M_{\rm pc}(r,L,\gamma)$.  Apply
that lemma with $Z=\{\Sigma\}$.  It gives a path extension  $\PathExtVar$ of $\PathVar$
which uses every element of $E$ and at least one element of $A$, and
satisfies $\Sigma\notin V^+(\PathExtVar)$.  The elements of $A$ not used in this
extension form a set
\[
        g+X,
        \qquad
        m:=|X|.
\]
Since the extension uses at most $|E|+1$ elements of $A$, the state
bound and the choice of $M_{\rm pc}(r,L,\gamma)$ give
\[
        m\geq |A|-|E|-1
        \geq |A|-|A|^{a_r+\beta}
        \geq |A|/2
        \geq M_{\rm pc}(r,L,\gamma)/2.
\]
{
The extension from $\PathVar$ to $\PathExtVar$ creates at most $2|E|+1$ new vertices. Hence
\[
|V(\PathExtVar)|\leq |V(\PathVar)|+2|E|+1\leq2\bigl(|V(\PathVar)|+|E|+1\bigr)\leq2|A|^{a_r+\beta}.
\]
Since $m\geq|A|/2$, after increasing $M_{\rm pc}(r,L,\gamma)$ we have $2|A|^{a_r+\beta}\leq m^{a_r+3\beta}$. Therefore
\begin{equation}
\label{eq:head-size-proper}
|V(\PathExtVar)|\leq m^{a_r+3\beta}.
\end{equation}

Apply Lemma~\ref{lem:polynomial-kneser-alternative} to $X\subseteq K$ with chain depth $D=D_r$. Since
\[
m\geq\frac{|A|}{2}\geq\frac{M_{\rm pc}(r,L,\gamma)}{2}\geq m_0(D_r,L),
\]
its size hypothesis is satisfied.
}

Assume first that the non-periodic conclusion holds.  Put
\[
        d:=\ord_{G/K}(g+K),
        \qquad
        c:=dg\in K.
\]
For $0\leq t<d$, define
\begin{equation}
\label{eq:layered-head-targets}
        W_t:=\{w-\operatorname{end}(\PathExtVar)-tg:
        w\in V^+(\PathExtVar),\ 
        w-\operatorname{end}(\PathExtVar)-tg\in K\},
\end{equation}
where $V^+(\PathExtVar)$ denotes the vertices of $\PathExtVar$ other than its initial
vertex.  Then
\[
        \Wnorm:=\sum_{t=0}^{d-1}|W_t|
        \leq |V^+(\PathExtVar)|
        \leq m^{a_r+3\beta}.
\]
Indeed, the quotient classes $tg+K$, $0\leq t<d$, are distinct, so a
head vertex contributes to at most one of the sets $W_t$.

{
Since $g+X$ is exactly the set of labels remaining after $\PathExtVar$, the terminal tail vertex is
{$\Sigma$.} Equivalently, using only order-independent quantities,
\[
{\operatorname{end}(\PathExtVar)+mg+\sum_{x\in X}x
=\operatorname{end}(\PathExtVar)+\operatorname{res}_d(m)g+Q_m=\Sigma.}
\]
Consequently, $Q_m\in W_{\operatorname{res}_d(m)}$ would imply $\Sigma\in V^+(\PathExtVar)$, contrary to the construction of $\PathExtVar$. This proves the first condition in \eqref{eq:terminal-compatibility}. If $m\equiv0\pmod d$ and $Q_m=0$, then the terminal tail vertex equals $\operatorname{end}(\PathExtVar)$. Since the construction of $\PathExtVar$ uses at least one element of $A$, one has $\operatorname{end}(\PathExtVar)\in V^+(\PathExtVar)$, again contradicting the first terminal condition. Thus the second condition also holds.

}

By \eqref{eq:clock-lower-bound} and $m\leq |K|$,
\[
        d\geq c_{L,\gamma}|K|^{1/r}
        \geq c_{L,\gamma}m^{1/r}.
\]
By the inequalities in \eqref{eq:exponent-hierarchy},
\[
 a_r+3\beta-\frac1r<-17\beta,
 \qquad
 a_r+3\beta-\frac1r-\frac{a_r}{2}<-17\beta,
 \qquad
 a_r+3\beta-1<-17\beta.
\]
Consequently, by the choice of $M_{\rm pc}(r,L,\gamma)$ so that
$\sqrt{\log(2+m)}\leq m^{\beta}$,
\begin{align*}
        \Theta_{d,a_r}(m,|K|,\mathbf W)
        &\leq
        C m^{a_r+3\beta}
        \left(m^{-1/r}
        +m^{-1/r-a_r/2}\sqrt{\log(2+m)}
        +m^{-1}\right)\\
        &\leq m^{-2\beta}.
\end{align*}
Since $g+X\subseteq A\subseteq G\setminus\{0\}$, one has
$0\notin g+X$.
Lemma~\ref{lem:layered-local-repair}, applied with
$a=a_r$ and $\rho=2\beta$, gives an ordering of $X$ whose tail vertices
are pairwise distinct and avoid $V^+(\PathExtVar)$ by the definition of the sets
in \eqref{eq:layered-head-targets}.  Hence the new non-initial vertices
are distinct from one another and from all non-initial vertices of $\PathExtVar$.
A possible equality with the initial vertex of $\PathVar$ can occur for at most
one tail vertex and is allowed by the definition of validity.  Appending
the corresponding elements $g+x$ therefore gives the required extension
of $\PathExtVar$.

{In the periodic conclusion there are a non-trivial proper subgroup $K'<K$ and a coset $x_0+K'$ such that
\[
        |X\setminus(x_0+K')|\leq m^{a_r}.
\]
In particular, this periodic branch cannot occur when $r=1$. } Set
\[
        A':=g+(X\cap(x_0+K'))
        \subseteq (g+x_0)+K'
\]
and put
\[
        E':=(g+X)\setminus A'.
\]
Since $g\notin K$, we have $g+x_0\notin K'$.  Put $r':=\Omega(K')$, so that $1\leq r'\leq r-1$.  For all sufficiently large $m$ one has $|A'|\geq m-m^{a_r}\geq m/2$. 

{Using \eqref{eq:head-size-proper} and $|E'|\leq m^{a_r}$, we have, for all sufficiently large $m$,
\[
|V(\PathExtVar)|+|E'|+1\leq m^{a_r+3\beta}+m^{a_r}+1\leq 3m^{a_r+3\beta}.
\]
Since $(a_{r-1}+\beta)-(a_r+3\beta)=a_{r-1}-a_r-2\beta>18\beta$ and $|A'|\geq m/2$, our choice of $M_{\rm pc}(r,L,\gamma)$ ensures both $|A'|\geq M_{\rm pc}(r',L,\gamma)$ and
\[
|V(\PathExtVar)|+|E'|+1\leq 3m^{a_r+3\beta}\leq |A'|^{a_{r-1}+\beta}\leq |A'|^{a_{r'}+\beta}.
\]}

The final vertex obtained after using all remaining elements is still
$\Sigma$, and $\Sigma\notin V^+(\PathExtVar)$.  Hence the induction hypothesis for
parameter $r'$, with $K'$ in place of $K$, applies.  Since $r'<r$, the
recursion terminates.
\end{proof}

We next treat a subgroup tail together with one element outside the
subgroup, reserved for the final bridge.  We assume that the coset entered
through this element is disjoint from $V^+(\PathVar)$.

\begin{proposition}[Subgroup tail with a pending bridge]
\label{prop:subgroup-pending-bridge}
For every $1\leq r\leq L$ there exist constants
$M_{\rm br}=M_{\rm br}(r,L,\gamma)$ and
$P_{\rm br}=P_{\rm br}(r,L,\gamma)$ such that the following holds when
$p_{\min}\geq P_{\rm br}$.  Let $K<G$ be cyclic with $\Omega(K)=r$, and
assume that, for every subgroup $H\leq K$ and every subset
$T\subseteq H\setminus\{0\}$, the set $T$ has a valid ordering.  Let $\PathVar$ be a path
with endpoint $v$, let $R\subseteq K\setminus\{0\}$ be disjoint from
$\operatorname{lab}(\PathVar)$, and let
$e\notin K\cup\operatorname{lab}(\PathVar)$.  Assume that
\[
        (v+e+K)\cap V^+(\PathVar)=\varnothing.
\]
Put
\[
        \Sigma:=v+e+\sum_{x\in R}x
\]
and assume that $|R|\geq M_{\rm br}$, that
$\Sigma\notin V^+(\PathVar)$, and
\begin{equation}
\label{eq:subgroup-state-bound}
        |V(\PathVar)|+1\leq |R|^{a_r+\beta}.
\end{equation}
Then $\PathVar$ has an extension using $e$ and every element of $R$ exactly
once and having pairwise distinct non-initial vertices.
\end{proposition}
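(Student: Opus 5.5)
The plan is an induction on $r=\Omega(K)$. At each level we apply Lemma~\ref{lem:terminal-correction-descent} with $Z=\varnothing$, so that $u=|V(\PathVar)|$. Its first alternative is completed by the cycle trick inside the fresh coset $v+e+K$. Its second alternative is a genuine descent, handled either by Proposition~\ref{prop:proper-coset-recursive} or by the induction hypothesis at a smaller value of $\Omega$.

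\emph{Checking the hypotheses of Lemma~\ref{lem:terminal-correction-descent}.} Since every prime base is at least $p_{\min}\geq P_{\rm br}>2$, the group $K$ has odd order. The set $R\subseteq K\setminus\{0\}$ consists of unused elements, and $e\notin K$ is unused. Freshness of $v+e+K$ is exactly the assumption $(v+e+K)\cap V^+(\PathVar)=\varnothing$. The size condition $|R|\geq 64(u+2)^2$ follows from the state bound \eqref{eq:subgroup-state-bound}: since $2(a_r+\beta)<1$, we have $64(|R|^{a_r+\beta}+1)^2\leq|R|$ once $M_{\rm br}$ is large.

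\emph{Alternative (i).} Here we obtain a path $\PathVar'=\PathVar$ extended by $(d_1,\ldots,d_t,e)$. Its endpoint $v'$ lies in $v+e+K$, and the remaining set $C=R\setminus\Delta$ satisfies $\SumC\neq0$ and $-\SumC\notin C$. Put $d_0=-\SumC$, so that $d_0\notin C\cup\{0\}$. The set $C\cup\{d_0\}$ is a subset of $K\setminus\{0\}$, so by hypothesis it has a valid ordering. Lemma~\ref{lem:cycle-trick} then gives an ordering $c_1,\ldots,c_{|C|}$ of $C$ whose partial sums are pairwise distinct and non-zero. We append these labels after $v'$. The new vertices $v'+c_1+\cdots+c_j$ lie in $v+e+K$ and are pairwise distinct. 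They differ from $v'$ because the partial sums are non-zero. They differ from every non-initial vertex of $\PathVar$ by freshness, and from the vertices added in the correction step, which lie in $v+K$, because $e\notin K$. At most one of them can equal the initial vertex of $\PathVar$, and validity allows this. This gives the required extension.

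\emph{Alternative (ii).} Here there are a non-trivial proper subgroup $H<K$ and a coset $x+H$ with $|R\setminus(x+H)|\leq u$. Put $r'=\Omega(H)<r$. This alternative cannot occur when $r=1$, which settles the base case. Write $A=R\cap(x+H)$, so that $|A|\geq|R|-u\geq|R|/2$.

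\emph{Subcase $x\notin H$.} We apply Proposition~\ref{prop:proper-coset-recursive} with $H$ in place of $K$, $g=x$, the set $A=x+X_0$, and $E=(R\setminus A)\cup\{e\}$. The total endpoint is unchanged and equals $\Sigma$, which is not in $V^+(\PathVar)$. The state bound holds because $|V(\PathVar)|+|E|+1\leq 2|R|^{a_r+\beta}+2$. Since $a_r+\beta<a_{r-1}+\beta-18\beta$, this is at most $|A|^{a_{r'}+\beta}$ for large $|R|$. This requires $p_{\min}\geq P_{\rm pc}(r')$ and $|A|\geq M_{\rm pc}(r')$, both of which are arranged by the order in which the constants are chosen.

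\emph{Subcase $x\in H$, so that $A\subseteq H$.} We first use Lemma~\ref{lem:preexisting-head} with helper set $A$, exceptional set $R\setminus H$ and $Z=\{\Sigma\}$. This gives a path $\widehat{\PathVar}$ ending at some $\hat v\in v+K$. Its new vertices lie in $v+K$, so the coset $\hat v+e+H\subseteq v+e+K$ is still disjoint from $V^+(\widehat{\PathVar})$. Moreover $\Sigma\notin V^+(\widehat{\PathVar})$, and the target sum is unchanged. The hypothesis on valid orderings of subsets of subgroups is inherited by $H$. The head grew by at most $O(u)$ vertices, and the remaining set $R'\subseteq A$ still has size at least $|R|/2$. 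Hence the state bound at level $r'$ holds, and the induction hypothesis, Proposition~\ref{prop:subgroup-pending-bridge} at level $r'$ applied to $\widehat{\PathVar}$, $R'$ and $e$, finishes the proof.

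I expect the main obstacle to be the bookkeeping rather than any single estimate. One must check that freshness of the bridge coset survives the greedy extension in the subcase $x\in H$; it does, because all added vertices stay in $v+K$. One must also make sure the state-bound exponents degrade by only $O(\beta)$ per descent, and that the constants $M_{\rm br}(r)$ and $P_{\rm br}(r)$ dominate both $M_{\rm pc}(r')$, $P_{\rm pc}(r')$ and $M_{\rm br}(r')$, $P_{\rm br}(r')$ for every $r'<r$.
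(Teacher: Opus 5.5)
Your proposal is correct and follows the paper's proof: induction on $r$; Lemma~\ref{lem:terminal-correction-descent} followed by the cycle trick in alternative~(i); and in alternative~(ii) a split according to whether $x\in H$ (greedy absorption via Lemma~\ref{lem:preexisting-head}, then the induction hypothesis at level $\Omega(H)$) or $x\notin H$ (Proposition~\ref{prop:proper-coset-recursive}). The deviations are harmless simplifications. Taking $Z=\varnothing$ is fine because $\Sigma$ already lies in the fresh coset $v+e+K$. For $x\notin H$, you put $e$ together with $R\setminus(x+H)$ into the exceptional set $E$ of Proposition~\ref{prop:proper-coset-recursive} and let its own greedy step absorb them. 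The paper instead first absorbs $R\setminus(x+H)$ with the extra forbidden vertices $v_0-e$ and $\Sigma-e$, and then appends $e$ by hand.
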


\begin{proof}
{We prove the proposition by induction on $r$. Assume that the constants
$M_{\rm br}(r',L,\gamma)$ and $P_{\rm br}(r',L,\gamma)$ have already
been chosen for every $1\leq r'<r$, and recall that all proper-coset
constants have already been chosen. Choose
\[
P_{\rm br}(r,L,\gamma)\geq
\max\left\{
3,\,
\max_{1\leq r'<r}P_{\rm br}(r',L,\gamma),\,
\max_{1\leq r'<r}P_{\rm pc}(r',L,\gamma)
\right\},
\]
where the maxima over $1\leq r'<r$ are omitted when $r=1$. Next choose
\[
M_{\rm br}(r,L,\gamma)\geq
2\max_{1\leq r'<r}
\left\{
M_{\rm br}(r',L,\gamma),
M_{\rm pc}(r',L,\gamma)
\right\},
\]
where this maximum is omitted when $r=1$. Increase
$M_{\rm br}(r,L,\gamma)$ further, if necessary, so that the size
condition in Lemma~\ref{lem:terminal-correction-descent}, all numerical
inequalities used below, and all lower-cardinality requirements hold
whenever $|R|\geq M_{\rm br}(r,L,\gamma)$.} Since
$p_{\min}\geq P_{\rm br}(r,L,\gamma)\geq3$, every prime divisor of
$|K|$ is odd; hence $K$ has odd order, as required in
Lemma~\ref{lem:terminal-correction-descent}.

Apply Lemma~\ref{lem:terminal-correction-descent} to $\PathVar,R,e$ with
$Z=\{\Sigma\}$.  Its numerical hypothesis follows from
\eqref{eq:subgroup-state-bound} and
$2(a_r+\beta)<1$, which follows from
\eqref{eq:exponent-hierarchy}: for all sufficiently large $|R|$,
\[
  |V(\PathVar)|+|Z|+2\leq |R|^{a_r+\beta}+2
  \leq \frac18|R|^{1/2},
\]
and hence $|R|\geq64(|V(\PathVar)|+|Z|+2)^2$.

Suppose first that conclusion~\textup{(i)} of
Lemma~\ref{lem:terminal-correction-descent} gives distinct elements
$d_1,\ldots,d_t$, in the displayed order, and put
\[
        \Delta:=\{d_1,\ldots,d_t\},
        \qquad
        C:=R\setminus\Delta,
        \qquad
        d_0:=-\sum_{c\in C}c.
\]
By the hypothesis on subgroups of $K$, the set
$C\cup\{d_0\}$ has a valid ordering.
Lemma~\ref{lem:cycle-trick} therefore gives an ordering of $C$ with
pairwise distinct non-zero partial sums.  Append $d_1,\ldots,d_t,e$ and
then the ordering of $C$ supplied by Lemma~\ref{lem:cycle-trick}; when
$t=0$, the first block consists only of $e$.  By
Lemma~\ref{lem:terminal-correction-descent}\textup{(i)}, appending the
first block $d_1,\ldots,d_t,e$ produces a path and none of its new
vertices equals $\Sigma$.  The vertices added before $e$ lie in $v+K$,
whereas all vertices added after $e$ lie in $v+e+K$.  These two cosets are
disjoint, and the latter contains no vertex of $V^+(\PathVar)$ by hypothesis.
The ordering of $C$ has pairwise distinct non-zero partial sums, so its
vertices are pairwise distinct and do not return to the vertex reached
after $e$.  The last vertex of the full extension is $\Sigma$.  Since
$\Sigma\notin V^+(\PathVar)$, if the initial vertex of $\PathVar$
lies in $v+e+K$, at most one of the pairwise distinct tail vertices can
equal it, which is allowed because validity only requires the non-initial vertices to be pairwise distinct.  Hence all
non-initial vertices of the completed extension are pairwise distinct.

Suppose next that the lemma gives a proper subgroup $H<K$ and a coset
$x+H$ containing all but at most
\[
        u:=|V(\PathVar)|+1
\]
elements of $R$.  Put
\[
        A_0:=R\cap(x+H),
        \qquad
        F:=R\setminus A_0.
\]
Let $v_0$ be the initial vertex of $\PathVar$ and put
\[
        Z':=\{\Sigma,\,v_0-e,\,\Sigma-e\}.
\]
Since $|F|\leq u$, $|A_0|\geq |R|-u$, and
$u\leq |R|^{a_r+\beta}$, the size hypothesis of
Lemma~\ref{lem:preexisting-head} holds for $(\PathVar,A_0,F,Z')$ by the choice
of $M_{\rm br}(r,L,\gamma)$.  Apply that lemma with forbidden set $Z'$ to extend
$\PathVar$ by every element of $F$ and by at most $|F|+1$ elements chosen from
$A_0$; require at least one such element. 

{
Let $\PathVar_1$ be the resulting path, let $v_1:=\operatorname{end}(\PathVar_1)$, and let
\[
        A_1:=A_0\setminus\operatorname{lab}(\PathVar_1)
\]
be the elements of $A_0$ that remain to be used.

Since $\PathVar_1$ is obtained from $\PathVar$ by appending every element of $F$ and precisely the elements of $A_0\setminus A_1$, while $R=F\sqcup A_0$, conservation of the total sum gives
\[
        v_1+e+\sum_{a\in A_1}a
        =\operatorname{end}(\PathVar)+e+\sum_{x\in R}x
        =\Sigma.
\]
Moreover, $\Sigma\notin V^+(\PathVar)$ by hypothesis, and every new vertex of $\PathVar_1$ avoids $Z'\supseteq\{\Sigma\}$. Hence $\Sigma\notin V^+(\PathVar_1)$.

The endpoint $v_1$ is one of the new vertices and the new vertices avoid $Z'$, so $v_1\notin\{v_0-e,\Sigma-e\}$.
}

Hence appending $e$ cannot
produce either $v_0$ or $\Sigma$.  Every vertex in
$V(\PathVar_1)\setminus V(\PathVar)$ lies in $v+K$, while
$v_1+e+K=v+e+K$.  Therefore
\[
        (v_1+e+K)\cap V^+(\PathVar_1)=\varnothing.
\]
Indeed, the intersection with $V^+(\PathVar)$ is empty by the hypothesis on $\PathVar$, and
the new vertices of $\PathVar_1$ lie in the disjoint coset $v+K$.  Moreover,
at most $|F|+1\leq u+1$ elements of $A_0$ have been used in constructing
$\PathVar_1$; hence, for all sufficiently large $|R|$,
\[
        |A_1|\geq |R|-2u-1\geq |R|/2.
\]
Put $r':=\Omega(H)$, so that $1\leq r'\leq r-1$. {
The construction of $\PathVar_1$ adds at most $2|F|+1$ vertices. Since $|F|\leq u$, where $u=|V(\PathVar)|+1\leq |R|^{a_r+\beta}$, and since $u\geq2$, we have
\[
|V(\PathVar_1)|+2\leq |V(\PathVar)|+2|F|+3\leq3u+2\leq4|R|^{a_r+\beta}.
\]
Since $|A_1|\geq |R|/2$ and $(a_{r-1}+\beta)-(a_r+\beta)=a_{r-1}-a_r>20\beta$, our choice of $M_{\rm br}(r,L,\gamma)$ ensures the lower-cardinality requirements for the recursive call and
\begin{equation}
\label{eq:subgroup-descent-size}
|V(\PathVar_1)|+2\leq |A_1|^{a_{r-1}+\beta}\leq |A_1|^{a_{r'}+\beta}.
\end{equation}
}

If $x\in H$, then $A_1\subseteq H$.  Apply the induction hypothesis for
this proposition with parameter $r'$, subgroup $H$, path $\PathVar_1$, outside
element $e$, and remaining set $A_1$.  The valid-ordering hypothesis is
inherited by the subgroups of $H$, and
\[
        (v_1+e+H)\cap V^+(\PathVar_1)=\varnothing
\]
because $v_1+e+H\subseteq v_1+e+K$.

{If $x\notin H$, append $e$ to $\PathVar_1$ and denote the resulting walk by $\PathVar_2$. Its new endpoint belongs to $v_1+e+K$, which is disjoint from $V^+(\PathVar_1)$, and it is not the initial vertex because $\PathVar_1$ avoids $v_0-e$. Hence $\PathVar_2$ is a path. By the preceding total-sum identity and $\operatorname{end}(\PathVar_2)=v_1+e$, one has
\[
        \operatorname{end}(\PathVar_2)+\sum_{a\in A_1}a=\Sigma.
\]
The remaining elements $A_1$ lie in the proper non-zero coset $x+H$. Since $\Sigma\notin V^+(\PathVar_1)$ and the new endpoint $v_1+e$ is not $\Sigma$, one also has $\Sigma\notin V^+(\PathVar_2)$.
}
Moreover,
\eqref{eq:subgroup-descent-size} gives the state bound required by the
instance of Proposition~\ref{prop:proper-coset-recursive} with parameter
$r'$, because
$|V(\PathVar_2)|+1=|V(\PathVar_1)|+2$.  Applying that proposition with $A_1$ as its proper-coset set and with
$E=\varnothing$ completes the ordering.  In both cases
$r'<r$, so the recursion terminates.
\end{proof}

\begin{proposition}[Subgroup-dominant completion]
\label{prop:subgroup-layered-completion}
For every $1\leq r\leq L$ there exist constants
$M_{\rm sg}=M_{\rm sg}(r,L,\gamma)$ and
$P_{\rm sg}=P_{\rm sg}(r,L,\gamma)$ such that the following holds when
$p_{\min}\geq P_{\rm sg}$.  Let $K<G$ be a proper cyclic subgroup with
$\Omega(K)=r$, and assume that, for every subgroup $H\leq K$ and every subset
$T\subseteq H\setminus\{0\}$, the set $T$ has a valid ordering.  Let $S\subseteq G\setminus\{0\}$ and put
\[
        S=A\sqcup E,
        \qquad
        A:=S\cap K,
        \qquad
        E:=S\setminus K.
\]
Assume that $|A|\geq M_{\rm sg}$ and
\[
        |E|\leq |A|^{a_r}.
\]
Then $S$ has a valid ordering.
\end{proposition}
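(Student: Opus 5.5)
The plan is to reduce to Proposition~\ref{prop:subgroup-pending-bridge}. The exceptional elements of $E$ are ordered so that their walk stays in cosets of $K$, and the last one is kept as a bridge into a fresh coset.

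\emph{Case $E=\varnothing$.} Then $S\subseteq K$, and the hypothesis on subgroups of $K$ applied to $H=K$ gives a valid ordering directly.

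\emph{Case $E\neq\varnothing$, setup.} Write $t:=|E|\leq |A|^{a_r}$ and project $E$ to $G/K$. Every non-zero element of $G/K$ has order at least $p_{\min}$. After taking $P_{\rm sg}$ large, and using $t\leq|A|^{a_r}\leq|G|^{a_r}$ together with comparability, I expect $t<p_{\min}$. I am not sure this inequality holds in general. If it fails, I would instead interleave elements of $A$ as greedy helpers so that the quotient walk never has to be globally injective. Granting $t<p_{\min}$, Lemma~\ref{lem:small-quotient-head}, in its non-zero-prefix form, orders $\overline E$ as $\overline e_1,\ldots,\overline e_t$. Then all proper prefix sums $\overline e_1+\cdots+\overline e_j$ with $j<t$ differ from the total $\overline e_1+\cdots+\overline e_t$.

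\emph{Building the head.} Let $e:=e_t$ be the bridge. Start from the trivial path $\PathVar_0=(0)$. Apply Lemma~\ref{lem:preexisting-head} with regular set $A$, exceptional set $\{e_1,\ldots,e_{t-1}\}$ in that prescribed order, and forbidden set $Z$ containing $\Sigma:=\sum_{s\in S}s$. This produces a path $\PathVar$ using all $e_j$ with $j<t$ and at most $t$ elements of $A$. Its vertices lie in the cosets $\overline e_1+\cdots+\overline e_j$ for $j\leq t-1$, including $j=0$.

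\emph{Freshness and the bridge.} The bridge lands in the coset $v+e+K$, whose image in $G/K$ is the total quotient sum. By the choice of ordering this coset differs from every coset visited by $V^+(\PathVar)$, so $(v+e+K)\cap V^+(\PathVar)=\varnothing$. If $\overline E$ sums to zero, this target coset is $K$ itself, which contains the initial vertex $0$. That is still allowed, since only $V^+$ must avoid it, and the non-zero-prefix property handles the prefixes. The condition $\Sigma\notin V^+(\PathVar)$ is secured by $Z$.

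\emph{Applying the pending-bridge proposition.} Let $R$ be the unused part of $A$, so $|R|\geq |A|-t\geq |A|/2$. The head satisfies $|V(\PathVar)|+1\leq 2t+2\leq 3|A|^{a_r}\leq |R|^{a_r+\beta}$ once $M_{\rm sg}$ is large. Proposition~\ref{prop:subgroup-pending-bridge} with parameter $r$ then extends $\PathVar$ by $e$ and all of $R$ with distinct non-initial vertices. Together with the head this is a valid ordering of $S$.

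\emph{Main obstacle.} The delicate step is the quotient ordering: it must guarantee freshness of the bridge coset while allowing $\overline e_j=0$. That case cannot occur, since $E\cap K=\varnothing$, but repeated quotient images can occur, and a multiset is permitted by Lemma~\ref{lem:small-quotient-head}. The remaining gap is the inequality $t<p_{\min}$, which I cannot confirm in all parameter regimes. In the worst case I would replace the global quotient-injectivity argument by greedy helper interleaving and keep only the requirement that the final coset is fresh.

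Finally, choose $M_{\rm sg}(r)\geq 2M_{\rm br}(r)$ and $P_{\rm sg}(r)\geq P_{\rm br}(r)$.
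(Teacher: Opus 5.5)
Your overall route matches the paper's: quotient ordering via Lemma~\ref{lem:small-quotient-head}, a greedy head via Lemma~\ref{lem:preexisting-head}, then Proposition~\ref{prop:subgroup-pending-bridge} with bridge $e_t$. However, the freshness step fails when $\overline e_1+\cdots+\overline e_t=0$.

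Lemma~\ref{lem:preexisting-head} appends the exceptional elements in pairs $a_i,e_i$. Starting from $(0)$, its first new vertex is therefore a helper $a_1\in A\subseteq K$. You note yourself that the head visits the coset indexed by $j=0$. When the quotient total is zero, the bridge coset $v+e_t+K$ is exactly $K$. It then contains the non-initial vertex $a_1$, not just the initial vertex $0$. So the hypothesis $(v+e+K)\cap V^+(\PathVar)=\varnothing$ of Proposition~\ref{prop:subgroup-pending-bridge} is violated. The non-zero-prefix property only controls vertices reached after $e_1$.

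The paper handles this with a case split. If the quotient total is zero, which forces $t\geq2$, it starts from the path $(0,e_1)$ with no helper before $e_1$. Its new vertex $e_1\notin K$ is neither $0$ nor $\Sigma(S)$, since $\Sigma(S)\in K$ in this case. Only then does it apply Lemma~\ref{lem:preexisting-head} to $e_2,\ldots,e_{t-1}$. Every non-initial vertex then lies in one of the non-zero classes $\overline e_1+\cdots+\overline e_j$ with $1\leq j\leq t-1$, so $K\cap V^+(\PathVar)=\varnothing$.

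The inequality $t<p_{\min}$ that you leave open does hold, but you must compare with $|K|$, not $|G|$. Bounding by $|G|^{a_r}$ only gives $(\gamma p_{\min})^{La_r}$, and $La_r=L4^{-r}$ can exceed $1$. Instead, $\Omega(K)=r$ and every prime factor is at most $\gamma p_{\min}$, so $|A|\leq|K|\leq(\gamma p_{\min})^r$, i.e.\ $p_{\min}\geq\gamma^{-1}|A|^{1/r}$. Since $a_r<1/r$ by \eqref{eq:exponent-hierarchy}, taking $M_{\rm sg}$ large gives $t\leq|A|^{a_r}<\gamma^{-1}|A|^{1/r}\leq p_{\min}\leq P^-(|G/K|)$.

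Your proposed fallback would not have worked anyway. Helpers from $A\subseteq K$ have zero image in $G/K$, so interleaving them cannot change which quotient cosets the head visits. Freshness of the bridge coset depends only on those quotient classes.
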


\begin{proof}
Choose
$P_{\rm sg}(r,L,\gamma)\geq P_{\rm br}(r,L,\gamma)$ and choose
$M_{\rm sg}(r,L,\gamma)$ large enough that every remaining subset
$R\subseteq A$ with $|R|\geq |A|/2$ occurring below satisfies
$|R|\geq M_{\rm br}(r,L,\gamma)$, as well as all displayed asymptotic
inequalities.  If $E=\emptyset$, apply the assumed valid-ordering hypothesis with
$H=K$ and $T=A$.  Assume $E\neq\emptyset$ and put $t:=|E|$.  The projections of the elements of
$E$ to $G/K$ are non-zero.  Since $K<G$, the quotient is non-trivial and
\[
        P^-(|G/K|)\geq p_{\min}.
\]
Moreover, $|A|\leq |K|\leq(\gamma p_{\min})^r$, whence
$p_{\min}\geq\gamma^{-1}|A|^{1/r}$.  The second inequality in
\eqref{eq:exponent-hierarchy} gives $a_r<1/r$.  By choosing
$M_{\rm sg}(r,L,\gamma)$ sufficiently large, the hypothesis
$|E|\leq |A|^{a_r}$ therefore yields
\[
 t=|E|\leq |A|^{a_r}
 <\gamma^{-1}|A|^{1/r}
 \leq p_{\min}
 \leq P^-(|G/K|).
\]
Every non-zero element of $G/K$ consequently has order greater than $t$.
{
For each $e\in E$, write
\[
\overline e:=e+K\in G/K.
\]
Lemma~\ref{lem:small-quotient-head} gives an ordering $e_1,\ldots,e_t$ of $E$ such that
\[
\overline e_1+\cdots+\overline e_j
\neq
\overline e_1+\cdots+\overline e_t
\qquad(1\leq j<t).
\]
Put
\[
\BarSum:=\overline e_1+\cdots+\overline e_t\in G/K.
\]
}

Do not use $e_t$ in the initial path construction, and distinguish two cases.  If
$\BarSum\neq0$, start with the path $\PathVar(0;\varnothing)=(0)$.  The
hypothesis $|E|\leq |A|^{a_r}$ and $a_r<1$ imply the size
condition of Lemma~\ref{lem:preexisting-head} by the choice of
$M_{\rm sg}(r,L,\gamma)$.  Apply that lemma, with $Z=\{\Sigma(S)\}$, to the elements
$e_1,\ldots,e_{t-1}$ and elements selected from $A$. { This gives a path $\PathVar$ with $\Sigma(S)\notin V^+(\PathVar)$. Every helper selected from $A\subseteq K$ has zero image in $G/K$. Hence appending a helper leaves the quotient coordinate unchanged, while appending $e_i$ advances it from $\overline e_1+\cdots+\overline e_{i-1}$ to $\overline e_1+\cdots+\overline e_i$. Therefore every quotient class represented in $V^+(\PathVar)$ is either $0$ or a proper partial sum of $\overline e_1,\ldots,\overline e_t$, and is consequently different from $\BarSum$.}

If $\BarSum=0$, then necessarily $t\geq2$.  Begin instead with the path
$\PathVar_0=\PathVar(0;(e_1))=(0,e_1)$, before selecting any element of $A$.  Since $\Sigma(S)\in K$ whereas $e_1\notin K$, this
first vertex is not $\Sigma(S)$.  The same size estimate verifies the
hypothesis of Lemma~\ref{lem:preexisting-head} for $P_0$.  Apply that
lemma to $P_0$, the elements
$e_2,\ldots,e_{t-1}$, the set $A$, and
$Z=\{\Sigma(S)\}$. 
{Again, every helper selected from $A\subseteq K$ has zero image in $G/K$. Thus every quotient class represented by a non-initial vertex of $\PathVar$ is one of the proper partial sums $\overline e_1,\overline e_1+\overline e_2,\ldots,\overline e_1+\cdots+\overline e_{t-1}$. These elements are all non-zero because the total sum is zero and the ordering supplied by Lemma~\ref{lem:small-quotient-head} has no proper partial sum equal to the total. Hence
\[
        K\cap V^+(\PathVar)=\varnothing.
\]
}

Thus, in both cases, $\PathVar$ is a path, $\Sigma(S)\notin V^+(\PathVar)$, and the
coset
\[
        \operatorname{end}(\PathVar)+e_t+K
\]
is disjoint from $V^+(\PathVar)$.  Let
\[
        R:=A\setminus\operatorname{lab}(\PathVar)
\]
be the elements of $A$ not used in constructing $\PathVar$.  In either case the
construction selects at most $|E|$ elements from $A$ and creates at most
$2|E|$ new vertices.  Thus
\[
        |R|\geq |A|-|E|\geq |A|/2,
        \qquad
        |V(\PathVar)|+1\leq 2|E|+2\leq 2|A|^{a_r}+2
\]
for all sufficiently large $|A|$. {By the choice of $M_{\rm sg}(r,L,\gamma)$,
\[
        |V(\PathVar)|+1\leq |R|^{a_r+\beta}.
\]
Since $\PathVar$ uses the elements $e_1,\ldots,e_{t-1}$ and precisely the elements of $A\setminus R$, conservation of the total sum gives
\[
        \operatorname{end}(\PathVar)+e_t+\sum_{x\in R}x
        =\sum_{s\in S}s
        =\Sigma(S).
\]
Thus the terminal value denoted by $\Sigma$ in Proposition~\ref{prop:subgroup-pending-bridge} is exactly $\Sigma(S)$, which does not belong to $V^+(\PathVar)$. That proposition, applied with $e=e_t$ and the remaining set $R$, completes the ordering.
}
\end{proof}

\begin{proof}[Proof of Theorem~\ref{thm:comparable-prime-power}]
Fix $L\geq2$ and $\gamma\geq1$.  Let $c_*>0$ be the absolute constant
in Theorem~\ref{thm:large-known}, and set
\[
        c_0:=\min\{c_*,1/2\}.
\]
The large-set theorem remains valid with $c_0$ in place of $c_*$.  Put
\[
        \eta_0:=\frac14\min\left\{a_L,
        \frac{c_0}{1-c_0}\right\}>0.
\]
{Let $D_*:=D(\eta_0)$ be the chain depth supplied by Lemma~\ref{lem:cyclic-local-repair}. Let
\[
m_{\rm np}:=m_0(D_*,L),
\]
and let $C_{\rm np}=C_{\rm np}(D_*,L)$ and $c_{\rm np}=c_{\rm np}(D_*,L)$ be constants for which all conclusions of Lemma~\ref{lem:polynomial-kneser-alternative}, including the boundedly conditioned local--remote estimate, hold at depth $D_*$. Put
\[
C_{\rm LR}:=C_{\rm np}+1.
\]
Let
\[
n_{\rm lr}:=n_0(\eta_0,C_{\rm LR}),
\]
where $n_0$ is the threshold supplied by Lemma~\ref{lem:cyclic-local-repair}.
}

Choose an integer $n_*=n_*(L,\gamma)$ larger than
\[
  2\max_{1\leq r\leq L}
  \{M_{\rm pc}(r,L,\gamma),M_{\rm br}(r,L,\gamma),
    M_{\rm sg}(r,L,\gamma)\},
\]
and larger than $m_{\rm np}$ and $n_{\rm lr}$.  Choose it sufficiently
large that, for every $n\geq n_*$ and every $1\leq r\leq L$,
\begin{equation}
\label{eq:top-level-np-smallness}
 C_{\rm np}n^{-a_r/2}\sqrt{\log(2+n)}
 \leq \frac12n^{-\eta_0},
 \qquad
 C_{\rm np}e^{-c_{\rm np}n^{a_r}}
 \leq n^{-\eta_0D_*},
 \qquad
 n^{-c_0/(1-c_0)}\leq \frac12n^{-\eta_0}.
\end{equation}
Choose $n_*$ also so that, for every $n\geq n_*$ and every
$1\leq r<\ell\leq L$,
{
\begin{equation}
\label{eq:top-level-elementary-sizes}
 n-n^{a_\ell}\geq \frac n2,
 \qquad n^{a_\ell}+2\leq (n/2)^{a_r+\beta},
 \qquad n^{a_\ell}\leq (n/2)^{a_r}.
\end{equation}
These inequalities are possible because $a_\ell<1$ and $a_r\geq a_{\ell-1}>a_\ell+20\beta$.

Choose $0<\alpha_0<c_0$ and apply Theorem~\ref{thm:cyclic-small} with $t=1$ and $\alpha=\alpha_0$. Since $\alpha_0<c_0$, its polynomial range
\[
        |S|\leq p^{1-\alpha_0}
\]
overlaps the large-set range
\[
        |S|\geq p^{1-c_0}
\]
in Theorem~\ref{thm:large-known}. The finitely many smaller cardinalities are covered by Theorem~\ref{thm:cdf-known} once $p$ is large enough.  Hence there is a threshold $P_{\rm cyc}$ such that every subset of $\mathbb Z_p\setminus\{0\}$ has a valid ordering for every prime $p\geq P_{\rm cyc}$.  Finally choose $p_*=p_*(L,\gamma)$ larger than
}
\[
 \max\left\{
 P_{\rm cyc},\,3,\,\frac{(n_*-1)!}{2},\,
 \max_{1\leq r\leq L}
 \{P_{\rm pc}(r,L,\gamma),P_{\rm br}(r,L,\gamma),
   P_{\rm sg}(r,L,\gamma)\}\right\}.
\]
For every integer $1\leq s<n_*$ one has
\[
        p_{\min}\geq p_*>\frac{(n_*-1)!}{2}\geq\frac{s!}{2}.
\]
Every prime divisor of $|G|$ is at least $p_{\min}$, so the cyclic part
of Theorem~\ref{thm:cdf-known} applies to every set of cardinality less
than $n_*$.  We prove the theorem under the hypothesis
$p_{\min}\geq p_*$ and finally set $p_0(L,\gamma):=p_*$.

Call a non-trivial cyclic group $H$
\emph{$(L,\gamma,p_*)$-admissible} when
$\Omega(H)\leq L$ and, writing its distinct prime divisors as
$q_1<\cdots<q_t$, one has
\[
        p_*\leq q_1<\cdots<q_t\leq\gamma q_1.
\]
Every non-trivial subgroup of an $(L,\gamma,p_*)$-admissible cyclic
group is again $(L,\gamma,p_*)$-admissible, because its distinct prime
divisors form a nonempty sublist of those of the ambient group.  We use
strong induction on
$\ell=\Omega(G)$.  If $\ell=1$, then $G$ has prime order and the result
follows from the choice of $P_{\rm cyc}\leq p_*$.  Assume now that
$2\leq\ell\leq L$ and that the result holds for every
$(L,\gamma,p_*)$-admissible cyclic group with fewer than $\ell$ prime
factors, counted with multiplicity.
Let
\[
        N:=|G|,
        \qquad S\subseteq G\setminus\{0\},
        \qquad n:=|S|.
\]
If $n=0$, the empty ordering is valid.  If $1\leq n<n_*$,
Theorem~\ref{thm:cdf-known} gives a valid ordering by the preceding
factorial inequality.  If
$n\geq N^{1-c_0}$, Theorem~\ref{thm:large-known} gives one.  Hence we may
assume
\begin{equation}
\label{eq:top-level-middle-range}
        n_*\leq n<N^{1-c_0}.
\end{equation}

Apply Lemma~\ref{lem:polynomial-kneser-alternative} to $S\subseteq G$
with chain depth $D_*$.  Suppose first that its polynomial non-periodic
alternative holds.  From \eqref{eq:top-level-middle-range},
\[
        \frac nN
        \leq n^{-c_0/(1-c_0)}.
\]
Since $a_\ell\geq a_L$, the definition of $\eta_0$ and
\eqref{eq:top-level-np-smallness} imply
\[
        \vartheta_{0,1-a_\ell}(n;N)\leq n^{-\eta_0}
\]
for $n\geq n_*$.  The additive term in the conditioned chain bound from
Lemma~\ref{lem:polynomial-kneser-alternative} is
\[
        C_{\rm np}\exp(-c_{\rm np}n^{a_\ell})
        \leq n^{-\eta_0D_*}
\]
by \eqref{eq:top-level-np-smallness}. { Set
\[
\Theta_{\rm LR}:=n^{-\eta_0}.
\]
For every $1\leq h\leq D_*$, the boundedly conditioned local--remote conclusion of Lemma~\ref{lem:polynomial-kneser-alternative} gives
\[
\sum_{\mathrm{adm}}\Pr[\text{the $h$ equations hold}]
\leq C_{\rm np}\vartheta_{0,1-a_\ell}(n;N)^h
+C_{\rm np}\exp(-c_{\rm np}n^{a_\ell}).
\]
By the preceding estimates,
\[
\vartheta_{0,1-a_\ell}(n;N)\leq\Theta_{\rm LR}
\]
and
\[
C_{\rm np}\exp(-c_{\rm np}n^{a_\ell})
\leq n^{-\eta_0D_*}
\leq\Theta_{\rm LR}^h.
\]
Consequently,
\[
\sum_{\mathrm{adm}}\Pr[\text{the $h$ equations hold}]
\leq(C_{\rm np}+1)\Theta_{\rm LR}^h
=C_{\rm LR}\Theta_{\rm LR}^h
\]
for every $1\leq h\leq D_*$. The same estimate holds after conditioning on the values in at most $100D_*^2$ prescribed positions and after translating the targets by random variables measurable with respect to those values. Since $n\geq n_*\geq n_{\rm lr}$ and $\Theta_{\rm LR}=n^{-\eta_0}$, Lemma~\ref{lem:cyclic-local-repair} applies and gives a valid ordering of $S$.

It remains to consider the structural alternative. There are a non-trivial proper subgroup $K<G$ and a coset $g+K$ such that, with
\[
        A:=S\cap(g+K),\qquad E:=S\setminus A,
\]
one has
\begin{equation}
\label{eq:top-level-periodic-split}
        |E|\leq n^{a_\ell},\qquad |A|\geq n-n^{a_\ell}\geq n/2.
\end{equation}
Put $r:=\Omega(K)\leq\ell-1$. Since $|A|\geq n/2$, the second inequality in \eqref{eq:top-level-elementary-sizes} gives
\begin{equation}
\label{eq:top-level-recursive-state}
        |E|+2\leq n^{a_\ell}+2\leq |A|^{a_r+\beta}.
\end{equation}
}
If $g\notin K$, apply
Proposition~\ref{prop:proper-coset-recursive} with the initial path
$\PathVar(0;\varnothing)=(0)$.  Its state condition is exactly
\eqref{eq:top-level-recursive-state}, and its condition on the final
vertex is vacuous because $V^+(\PathVar(0;\varnothing))=\varnothing$.

If $g\in K$, then $g+K=K$ and $A=S\cap K$.  Every non-trivial
subgroup $H\leq K$ has fewer than $\ell$ prime factors and is
$(L,\gamma,p_*)$-admissible, so the induction hypothesis gives a valid
ordering for every subset of $H\setminus\{0\}$.  For the trivial
subgroup the only such subset is empty, which is valid by convention.
Moreover, since $|A|\geq n/2$, the third inequality in
\eqref{eq:top-level-elementary-sizes} gives
\[
        |E|\leq n^{a_\ell}\leq |A|^{a_r}.
\]
Hence Proposition~\ref{prop:subgroup-layered-completion} applies and completes
the ordering.

Both branches yield a valid ordering of $S$.  Taking
$p_0(L,\gamma)=p_*$ proves the theorem.
\end{proof}

\section{\texorpdfstring{{Proof of the layered local-repair lemma}}{Proof of the layered local-repair lemma}}
\label{sec:layered-local-repair-proof}

{This section proves Lemma~\ref{lem:layered-local-repair}.}  We retain
all notation and hypotheses from its statement.

{Let $\sigma$ be uniformly distributed on $\operatorname{Ord}(X)$ and write $x_i:=\sigma(i)$.  For a fixed permutation $\varpi$ of the positions, put
\[
 \sigma^\varpi:=\sigma\circ\varpi.
\]
Thus the entry in position $i$ of $\sigma^\varpi$ is $x_{\varpi(i)}$.  Since $\sigma\mapsto\sigma^\varpi$ is a bijection of $\operatorname{Ord}(X)$,}
\begin{equation}
\label{eq:permutation-absorption}
 {\Pr[\sigma^\varpi\in\mathcal A]=\Pr[\sigma\in\mathcal A]}
\end{equation}
for every event $\mathcal A$.

{{Throughout this section} we abbreviate
$P_b:=P_b(\sigma)$, $V_b:=V_b(\sigma)$, and $Q_b:=Q_b(\sigma)$ for
the coordinates of the currently considered ordering.  Whenever the
ordering is changed explicitly, the argument will instead use the
corresponding ordering symbol, such as $\sigma^\varpi$ or
$\widetilde\sigma$.}

{
\subsection*{Overview of the proof and the role of the profiles}

The proof has two logically distinct layers.  The probabilistic layer
starts from a uniformly random ordering and shows that there is an
ordering in which every local defect has sufficiently many admissible
repair choices.  The deterministic layer then processes the defects
from right to left and removes them by short disjoint transpositions.
The formal apparatus developed below is the interface between these two
layers.

There are two kinds of defects.  A collision between two tail vertices
becomes an affine equation supported on an interval whose length is a
multiple of the quotient order $d$; a visit to a vertex already used by
the head becomes a prefix target equation.  We call both of them
\emph{obstruction equations}.  An endpoint at which either kind of
defect occurs will be called a bad endpoint.  When such an endpoint is
repaired, all positions moved by the relevant transposition lie in a
short window $J$, whose length is bounded in terms of $D$ only.  Pulling
every affected obstruction back to the original random ordering
separates the contribution of positions outside $J$, which we call its
\emph{remote part}, from a correction involving only coordinates whose
positions lie in $J$.

The central deterministic step treats any family of at most $D$
obstruction equations associated with one repair window.  It performs
two successive triangularizations.  First, equations having the same
dependence on positions outside $J$ are grouped together, and one
chosen representative is subtracted from the other equations in the
group.  This cancels their common remote part and produces local
equations, each of which determines a distinct coordinate of $J$; these
coordinates, exposed from right to left, will be called the local
pivots.  Second, the remaining representative equations are ordered
according to where their remote prefix sums end and are replaced by
successive differences.  Their remote parts then become sums over
consecutive, disjoint blocks outside $J$; the resulting ordered sequence
will be called a remote chain.  Lemma~\ref{lem:clock-triangularization}
formalizes both triangularizations and shows that they can be selected
uniformly before the numerical endpoints and target values are
inserted.

The logical flow can be summarized as follows:
\[
 \boxed{\substack{\text{obstruction equations}\\
                   \text{from one local window}}}
 \longrightarrow
 \boxed{\substack{\text{pullback to the}\\
                   \text{original ordering}}}
 \longrightarrow
 \boxed{\substack{\text{identify common dependence}\\
                   \text{on positions outside }J}}
\]
\[
 \boxed{\substack{\text{Triangulation I}\\
                   \text{distinct local coordinates}}}
 \longrightarrow
 \boxed{\substack{\text{Triangulation II}\\
                   \text{consecutive remote increments}}}
\]
\[
 \boxed{\text{probabilistic estimates}}
 \longrightarrow
 \boxed{\substack{\text{blocked repair choices yield}\\
                   \text{systems already estimated}}}
 \longrightarrow
 \boxed{\text{right-to-left repair}}.
\]

The formal notion of a complete $J$-local profile, defined below,
records the combinatorial template of size bounded in terms of $D$ that
determines these row operations.  It does not record the numerical
positions at which the remote prefix sums end, nor the target values.
Those parameters are inserted only after the profile is fixed and are
then summed probabilistically. This separation prevents the outer union bound from paying an additional polynomial factor for the numerical choices of remote cut points and target values: these choices are summed within the probability estimate for a fixed combinatorial profile.

After triangularization, the probability estimate follows a sequential
exposure of the coordinates involved.  Once all entries of $J$ except
the distinct pivot coordinates have been exposed, each local equation
forces one still-unexposed coordinate and costs $O(1/m)$.  Writing $k$
for the number of local pivots and $h_{\rm rem}$ for the number of
remote-chain equations, the local contribution is $m^{-k}$.  After the
whole window has been exposed, the remaining coordinates still form a
uniform ordering of the residual set, and the remote equations are
estimated one increment at a time.  Since every collision interval must
have length divisible by $d$, each admissible remote endpoint varies in
one prescribed residue class modulo $d$.  Summing over that progression
produces the saving encoded by $\vartheta_{d,a}(m,N)$.  Thus the basic
heuristic, made uniform in Lemma~\ref{lem:clock-chain}, is
\[
 \underbrace{m^{-k}}_{\text{local pivots}}
 \underbrace{\vartheta_{d,a}(m,N)^{h_{\rm rem}}}_{\text{remote chain}}
 \leq \vartheta_{d,a}(m,N)^{k+h_{\rm rem}}.
\]
The residue-restricted estimate for one remote increment is isolated as
Lemma~\ref{lem:residue-restricted-one-step};
Lemma~\ref{lem:clock-chain} iterates that estimate along the remote
chain, and Lemma~\ref{lem:layered-targets} then incorporates the layered
forbidden targets.

The remaining part of {this section} reconnects this abstract machinery
with the repair algorithm.  A candidate transposition fails precisely
when it creates a new obstruction containing exactly one of its two
positions; such an obstruction is called a blocker.  Lemma~\ref{lem:blocker-systems}
shows that every family of at most $D$ blockers arising in the algorithm
canonically generates one of the profiles already estimated.
Lemma~\ref{lem:deterministic-layered-repair} is then purely
combinatorial.  It shows that right-to-left repair succeeds provided
that bad endpoints do not occur near the boundary, do not cluster too
densely, and do not have too many blocked partners.  The events
\textup{(E1)}, \textup{(E2)}, and \textup{(E3)} are exactly the
probabilistic failures of these three deterministic conditions.  The
auxiliary event \textup{(E0)} is used only for one class of blockers, to
ensure that its second relevant endpoint lies outside the local repair
window.

\paragraph{\textbf{Return to the proof of Lemma~\ref{lem:layered-local-repair}.}}
We now introduce formally the obstruction equations and the local data
summarized above.  The definitions below uniformly encode the solution-preserving row operations and the sequential exposure just described, under bounded local conditioning and in the presence of the local permutations occurring in the repair analysis.
}

{All equations below are understood in $K$.  Since $g+K$ has order $d$ in $G/K$, the equality
$V_{s-1}(\sigma)=V_t(\sigma)$ can hold only if
$t\equiv s-1\pmod d$.  For $1\leq s\leq t\leq m$ satisfying this congruence, the equality is equivalent in $K$ to the \emph{clock-collision equation}}
\begin{equation}
\label{eq:clock-collision-equation}
 {\mathcal C([s,t]):\qquad
 \sum_{i=s}^t x_i+\frac{t-s+1}{d}c=0.}
\end{equation}
{For $t\in[m]$ and $y\in W_{\operatorname{res}_d(t)}$, the associated
\emph{target equation} is}
\begin{equation}
\label{eq:clock-target-equation}
 {\mathcal T(t,y):\qquad Q_t(\sigma)=y.}
\end{equation}
{We call either \eqref{eq:clock-collision-equation} or
\eqref{eq:clock-target-equation} an \emph{obstruction equation}, with
support $[s,t]$ or $[1,t]$, respectively, and right endpoint $t$; we
write $\supp(E)$ for its support.  The variable cut points of
$\mathcal C([s,t])$ are $s-1$ and $t$, called its left and right cut
points; the unique variable cut point of $\mathcal T(t,y)$ is its right
cut point $t$.}

{
For every integer $D\geq1$, set $\Lambda_D:=40D+1$. A \emph{$D$-local window} is an interval $J=[j_-,j_+]\subseteq[m]$ with $|J|\leq\Lambda_D$; put}
\[
 J^-:=\{j_--1,\ldots,j_+-1\},\qquad
 J^\partial:=J^-\cup J.
\]
A left cut point is $J$-local if it lies in $J^-$, whereas a right
cut point is $J$-local if it lies in $J$.  All other variable cut points
are $J$-remote.  Thus every $J$-local cut point lies in $J^\partial$.

\begin{example}[A $D$-local window]
\label{ex:D-local-window}
Suppose that $m\geq10$, $\Lambda_D\geq4$, and $J=[7,10]$.  Then
\[
 j_-=7,\qquad j_+=10,\qquad
 J^-=\{6,7,8,9\},\qquad J^\partial=\{6,7,8,9,10\}.
\]
The clock-collision equation $\mathcal C([4,9])$ has cut points $3$ and $9$:
its left cut point is $J$-remote and its right cut point is $J$-local.
The target equation $\mathcal T(9,y)$ has the single $J$-local cut point
$9$.
\end{example}

Let $E$ be an obstruction equation as defined above.  In the
local-repair arguments below, we shall compare the affine equation
associated with $E$ before and after rearranging entries whose positions
lie in the local window $J$.  
{For a given comparison, let
\[
 \varpi_E\in\operatorname{Sym}([m]),
 \qquad
 \supp(\varpi_E):=\{i\in[m]:\varpi_E(i)\neq i\}\subseteq J,
\]
be the corresponding permutation of positions.  Thus $\varpi_E$ fixes
every position outside $J$.  Put
\[
 \widetilde x_i^E:=x_{\varpi_E(i)}
 \qquad (i\in[m]).
\]

Write the obstruction equation in the rearranged ordering
$\widetilde x^E=(\widetilde x_1^E,\ldots,\widetilde x_m^E)$ as}
\[
 \overline F_E({\widetilde x^E})
 :=
 \sum_{i=1}^m\overline\varepsilon_E(i){\widetilde x_i^E}+\kappa_E=0,
 \qquad
 \overline\varepsilon_E(i)\in\{-1,0,1\},
\]
where $\kappa_E\in K$ denotes its constant term.
Its pullback to the original variables
$x=(x_1,\ldots,x_m)$ is the affine form
\begin{equation}
\label{eq:pulled-back-equation}
 F_E(x)
 :=
 \overline F_E({\widetilde x^E}(x))
 =
 \sum_{j=1}^m\varepsilon_E(j)x_j+\kappa_E,
 \qquad
 \varepsilon_E(j)
 =
 \overline\varepsilon_E({\varpi_E}^{-1}(j)).
\end{equation}

For an affine form $F$ in the variables $x_1,\ldots,x_m$, write
$[x_j]F$ for the coefficient of $x_j$.  We say that $F$ is
\emph{$J$-local} if
\[
 [x_j]F=0
 \qquad\text{for every }j\notin J.
\]
Let
\[
 F_E^{\mathrm{id}}(x)
 :=
 \sum_{j=1}^m\overline\varepsilon_E(j)x_j+\kappa_E
\]
be the affine form before the local rearrangement, and define
\begin{equation}
\label{eq:local-correction}
 H_E(x):=F_E(x)-F_E^{\mathrm{id}}(x)
 =\sum_{j\in J}
   \bigl(\varepsilon_E(j)-\overline\varepsilon_E(j)\bigr)x_j.
\end{equation}
We call $H_E$ the \emph{local correction} produced by ${\varpi_E}$.
In particular, $H_E$ is $J$-local.
Whenever an obstruction equation $E$ is included below in a system
relative to $J$, it is accompanied by a specified permutation
${\varpi_E}$ supported on $J$; the notation $F_E$, $\varepsilon_E$, and
$H_E$ always refers to that choice.

\begin{example}[Pullback of a locally permuted obstruction equation]
\label{ex:pullback-local-permutation}
Let $d=3$, let $J=[7,10]$, and consider the clock-collision equation
\[
 E=\mathcal C([4,9]).
\]
Since the interval has length $6$, 
{the equation in a generic ordering
$\mathbf u=(u_1,\ldots,u_m)$ is}
\[
 {\overline F_E(\mathbf u)
 =u_4+u_5+u_6+u_7+u_8+u_9+2c=0.}
\]
Thus
\[
 \overline\varepsilon_E(i)
 =
 \begin{cases}
  1,&\text{if }4\leq i\leq9,\\
  0,&\text{otherwise},
 \end{cases}
 \qquad
 \kappa_E=2c.
\]
Take ${\varpi_E}=(8\ 10)$, viewed as a permutation of $[m]$ fixing all
other positions.  Its support is contained in $J$.  
{Writing $\widetilde x_i^E:=x_{\varpi_E(i)}$, one has}
\[
{\widetilde x_8^E=x_{10},\qquad
\widetilde x_{10}^E=x_8,\qquad
\widetilde x_i^E=x_i\quad(i\notin\{8,10\}).}
\]
Pulling the equation back to the original ordering
gives
\[
 \begin{aligned}
 F_E(x)
  &=x_4+x_5+x_6+x_7+x_{10}+x_9+2c\\
  &=P_9-P_3-x_8+x_{10}+2c.
 \end{aligned}
\]
For instance,
\[
 [x_8]F_E=0,
 \qquad
 [x_{10}]F_E=1.
\]
Since $F_E^{\mathrm{id}}=P_9-P_3+2c$, the local correction is
\[
 H_E=-x_8+x_{10}.
\]
In the terminology introduced below, this is an example of type
$\mathsf{LRC}$: the left cut point $3$ is $J$-remote, whereas the
right cut point $9$ is $J$-local.
\end{example}

\begin{lemma}[Cut-point configurations under a local permutation]
\label{lem:local-cut-point-configurations}
{
Let $E$ be an obstruction equation and let ${\varpi_E}$ be supported on $J$. If $E={\mathcal C([s,t])}$ is a clock-collision equation with at least one $J$-local cut point, then exactly one of the first three rows of Table~\eqref{eq:J-normal-forms} applies. If $E={\mathcal T(t,y)}$ is a target equation with ${t}\in J$ or ${t}>j_+$, then exactly one of the last two rows applies.
}

In the table, $r_E$ denotes the unique $J$-remote cut point, when one is
present.  For a clock-collision equation ${\mathcal C([s,t])}$, we write
$u_E:={s-1}$ and $v_E:={t}$ whenever the corresponding cut point is
$J$-local.  For a target equation ${\mathcal T(t,y)}$, we write
$v_E:={t}$ when its cut point is $J$-local.
\begin{equation}
\label{eq:J-normal-forms}
\begin{array}{c|c|c}
\nu_J(E)&\text{cut-point configuration}&F_E\\ \hline
\mathsf{LC}&u_E={s-1}\in J^-,\ v_E={t}\in J
  &P_{v_E}-P_{u_E}+H_E+\kappa_E\\
\mathsf{LRC}&v_E={t}\in J,\ r_E={s-1}<j_--1
  &P_{v_E}-P_{r_E}+H_E+\kappa_E\\
\mathsf{RRC}&u_E={s-1}\in J^-,\ r_E={t}>j_+
  &P_{r_E}-P_{u_E}+H_E+\kappa_E\\
\mathsf{AT}&v_E={t}\in J
  &P_{v_E}+H_E+\kappa_E\\
\mathsf{RRT}&r_E={t}>j_+
  &P_{r_E}+H_E+\kappa_E.
\end{array}
\end{equation}
\end{lemma}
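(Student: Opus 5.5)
The proof is a direct case analysis on the positions of the variable cut points relative to $J^-$ and $J$, combined with the algebraic identity $F_E=F_E^{\mathrm{id}}+H_E$ from \eqref{eq:local-correction}. I will first fix the shape of $F_E^{\mathrm{id}}$ for each equation type, then classify which cut points can be local, and finally check exclusivity.

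First, the forms. For a clock-collision equation $\mathcal C([s,t])$, definition \eqref{eq:clock-collision-equation} gives
\[
F_E^{\mathrm{id}}=\sum_{i=s}^t x_i+\kappa_E=P_t-P_{s-1}+\kappa_E,
\qquad \kappa_E=\frac{t-s+1}{d}\,c .
\]
For a target equation $\mathcal T(t,y)$, definition \eqref{eq:clock-target-equation} gives $F_E^{\mathrm{id}}=P_t+\kappa_E$, with $\kappa_E=\lfloor t/d\rfloor c-y$. Adding $H_E$ then produces exactly the third-column expressions in Table~\eqref{eq:J-normal-forms}, once the cut points are relabelled as $u_E,v_E,r_E$ according to locality.

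Second, the configurations for a clock-collision equation. Its cut points are $s-1<t$, and at least one of them is $J$-local.
\begin{enumerate}
\item If both are local, we are in row $\mathsf{LC}$.
\item If only $t\in J$ is local, then $s-1\notin J^-$. Since $s-1<t\leq j_+$, every value $s-1\geq j_--1$ would lie in $J^-=[j_--1,j_+-1]$. Hence $s-1<j_--1$, which is row $\mathsf{LRC}$.
\item If only $s-1\in J^-$ is local, then $t\notin J$. Since $t>s-1\geq j_--1$, we have $t\geq j_-$, so $t\notin J$ forces $t>j_+$. This is row $\mathsf{RRC}$.
\end{enumerate}

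The target case is immediate: $t\in J$ gives row $\mathsf{AT}$, and $t>j_+$ gives row $\mathsf{RRT}$. The rows are mutually exclusive because they are distinguished by which cut points are local.

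The only point requiring care is the boundary bookkeeping in cases 2 and 3. The argument must use that $J^-$ is exactly the interval $[j_--1,j_+-1]$, together with the strict inequality $s-1<t$, so that the remote cut point necessarily falls strictly on the stated side of the window and no case is missed. No probabilistic input is needed.
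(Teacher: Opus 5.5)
Your proposal is correct and follows the paper's proof essentially line for line. In both, you write $F_E=F_E^{\mathrm{id}}+H_E$ with $F_E^{\mathrm{id}}=P_t-P_{s-1}+\kappa_E$ or $P_t+\kappa_E$, and use $s-1<t$ together with $J^-=[j_--1,j_+-1]$ to force the single remote cut point strictly to the stated side of the window. The paper's extra remarks (why clock equations with two remote cut points and targets with $t<j_-$ are excluded, and that $H_E=0$ in type $\mathsf{RRT}$) are explanatory and not needed for the statement as posed.
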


\begin{proof}
By \eqref{eq:local-correction}, the effect of ${\varpi_E}$ is the
$J$-local correction $H_E$.  For a clock-collision equation one has
\[
 F_E^{\mathrm{id}}={P_t-P_{s-1}}+\kappa_E.
\]
If $u_E={s-1}\in J^-$ and $v_E={t}\in J$ (equivalently, if
${s,t}\in J$), both cut points are local and the first row follows.
If ${t}\in J$ but ${s}\notin J$, then ${s-1<t\leq j_+}$ and the left cut
point can be remote only to the left of $J^-$; hence
${s-1<j_--1}$, giving type $\mathsf{LRC}$.  Symmetrically, if ${s}\in J$
but ${t}\notin J$, then ${t>j_+}$, giving type $\mathsf{RRC}$.
Substituting the corresponding cut points into
${P_t-P_{s-1}}+H_E+\kappa_E$ gives the first three displayed forms.

{
It remains to explain why no type with two remote cut points is included. If neither cut point is $J$-local, then either ${[s,t]}\cap J=\varnothing$ or $J\subseteq{[s,t]}$. In the first case a permutation supported on $J$ changes no coordinate in the interval sum, while in the second case it merely permutes coordinates within the interval. Hence
\[
\sum_{i={s}}^{{t}} x_{{\varpi_E}(i)}=\sum_{i={s}}^{{t}} x_i
\]
in either case. Such an equation cannot be created or destroyed by a rearrangement supported on $J$ and is therefore excluded from the local profile. Thus, the three clock types in Table~\eqref{eq:J-normal-forms} cover precisely the clock-collision equations that have at least one $J$-local cut point.
}

For a target equation,
\[
 F_E^{\mathrm{id}}={P_t}+\kappa_E.
\]
When $ {t}\in J$ this gives type $\mathsf{AT}$.  When ${t>j_+}$, the prefix
contains all of $J$, so in fact $H_E=0$; we nevertheless retain this
right-remote target as type $\mathsf{RRT}$ because such equations occur
in the layered-target estimates and may be considered jointly with
local equations.  A target with ${t}<j_-$ is supported entirely before
$J$, is unchanged by the local rearrangement, and plays no role in a
new local obstruction.  This proves the classification and the five
forms.
\end{proof}

{

We now introduce the reduction classes and local markings needed for the
first triangularization; the second triangularization will be formalized
after the complete profiles have been defined.
}

An obstruction equation covered by
Lemma~\ref{lem:local-cut-point-configurations} is called
\emph{$J$-admissible}.  For such an equation $E$, let
\[
 \nu_J(E)\in
 \{\mathsf{LC},\mathsf{LRC},\mathsf{RRC},\mathsf{AT},\mathsf{RRT}\}
\]
denote its type, namely the corresponding row of
\eqref{eq:J-normal-forms}.  We refer to
$\mathsf{LRC}$, $\mathsf{RRC}$, and $\mathsf{RRT}$ as the
\emph{remote types}.

A \emph{$D$-bounded obstruction system of size $h$} is an ordered family
$\mathcal E=(E_1,\ldots,E_h)$ of $J$-admissible equations, with
$1\leq h\leq D$.  We use $E$ to denote one of the equations $E_i$ in
this ordered family.  After passing to a profile, the same symbol serves
as the index of the corresponding row; its numerical remote cut point
and target value are supplied later by $\mathbf r$ and $\mathbf y$.

For $E,E'\in\mathcal E$, write $E\sim_J E'$ if either
\[
 \nu_J(E)=\nu_J(E')\in\{\mathsf{LC},\mathsf{AT}\},
\]
or
\[
 \nu_J(E)=\nu_J(E')
 \in\{\mathsf{LRC},\mathsf{RRC},\mathsf{RRT}\}
 \qquad\text{and}\qquad
 r_E=r_{E'}.
\]
The equivalence classes of $\sim_J$ are called the
\emph{$J$-reduction classes}, and $[E]_J$ denotes the class containing
$E$.

\begin{lemma}[Locality within a reduction class]
\label{lem:reduction-class-locality}
If $E\sim_J E'$, then the affine form $F_E-F_{E'}$ is $J$-local.
\end{lemma}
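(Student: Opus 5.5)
The plan is a case check over the five rows of Table~\eqref{eq:J-normal-forms}, using the normal forms from Lemma~\ref{lem:local-cut-point-configurations}. If $E\sim_J E'$, then by definition $\nu_J(E)=\nu_J(E')$, and for the remote types the remote cut points agree, $r_E=r_{E'}$. In every row $F_E$ is a signed sum of at most two prefix sums plus $H_E+\kappa_E$. I will show that all prefix sums not indexed by a $J$-local cut point cancel in $F_E-F_{E'}$. The leftover terms will then be the local corrections $H_E-H_{E'}$, the constants $\kappa_E-\kappa_{E'}$, and differences of prefix sums indexed by $J$-local cut points.

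The two basic facts I will use are these. First, $H_E$ and $H_{E'}$ are $J$-local by \eqref{eq:local-correction}, and constants carry no coefficients. Second, for $u,v\in J^\partial=\{j_--1,\ldots,j_+\}$, the difference $P_v-P_u$ is a signed sum of $x_j$ over positions strictly between $\min\{u,v\}$ and $\max\{u,v\}$, plus the larger endpoint. All those positions lie in $[j_-,j_+]=J$, because the smallest possible index $j_--1$ contributes only as a lower limit. So any difference of prefix sums whose indices both lie in $J^\partial$ is $J$-local.

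\textbf{Type $\mathsf{LC}$.} Here $F_E-F_{E'}=(P_{v_E}-P_{v_{E'}})-(P_{u_E}-P_{u_{E'}})+(H_E-H_{E'})+\text{const}$. The $v$'s lie in $J$ and the $u$'s lie in $J^-\subseteq J^\partial$, so this form is $J$-local.

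\textbf{Type $\mathsf{AT}$.} Here $F_E-F_{E'}=P_{v_E}-P_{v_{E'}}$ plus local terms and constants, with both $v$'s in $J$. This is $J$-local.

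\textbf{Type $\mathsf{LRC}$.} Since $r_E=r_{E'}$, the terms $P_{r_E}$ cancel, leaving $P_{v_E}-P_{v_{E'}}$ plus local terms, again $J$-local.

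\textbf{Type $\mathsf{RRC}$.} The common $P_{r_E}$ cancels, leaving $-(P_{u_E}-P_{u_{E'}})$ with $u$'s in $J^-$, which is $J$-local.

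\textbf{Type $\mathsf{RRT}$.} The common $P_{r_E}$ cancels and only $H_E-H_{E'}$ remains (in fact both corrections vanish here). This is $J$-local.

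The only point needing care is the boundary index $j_--1\in J^-$, which is not in $J$. The check is that $P_{j_+}-P_{j_--1}=\sum_{i\in J}x_i$, and more generally that a prefix difference never contributes the coefficient of $x_{j_--1}$ itself. That handles the boundary, so I expect no genuine obstacle; the argument is bookkeeping over the five types.
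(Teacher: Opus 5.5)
Your proof is correct and follows the same route as the paper: a case check over the rows of Table~\eqref{eq:J-normal-forms}, in which the common remote cut point cancels within a reduction class, while the remaining prefix differences (indexed by points of $J^\partial$) and the corrections $H_E$ are supported on $J$. Your explicit treatment of the boundary index $j_--1$ spells out what the paper asserts in a single line.
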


\begin{proof}
If both equations have type $\mathsf{LC}$ or $\mathsf{AT}$, all their
variable cut points are $J$-local, so the difference of their prefix
terms is supported on $J$.  If they have one of the remote types
$\mathsf{LRC}$, $\mathsf{RRC}$, or $\mathsf{RRT}$, their common remote
cut point cancels, while the remaining cut-point terms and the local
corrections are supported on $J$.  The claim follows from
\eqref{eq:J-normal-forms}.
\end{proof}

{
\begin{example}[Two reduction classes and the two triangularizations]
\label{ex:two-triangularizations}
We now follow one bounded system through the entire reduction.  The
example is concrete enough to display all the operations, but it is
presented before the formal notions of marking and complete profile so
that those notions can subsequently be read as a precise record of the
same procedure.

Let $d=2$, let $J=[7,10]$, and assume $m\geq16$.  A genuine
realization with two remote levels is the following.  Choose even cut points
\[
 10<r_\Gamma<r_\Delta\leq m,
 \qquad r_\Gamma\equiv r_\Delta\equiv0\pmod2{.}
\]
{Because the rows may come from obstruction equations pulled back
from different candidate repairs,} each row below has its own prescribed
local permutation: the identity
for $R_\Gamma,R_\Delta,E_1,E_2$, and $(8\ 9)$ for $E_3$, which is the
pullback of a second copy of $\mathcal C([9,r_\Gamma])$.  Thus consider
\[
 \begin{aligned}
  R_\Gamma
   &:=P_{r_\Gamma}-P_6+\frac{r_\Gamma-6}{2}c,
     &&\text{from }\mathcal C([7,r_\Gamma]),\\
  R_\Delta
   &:=P_{r_\Delta}-P_8+\frac{r_\Delta-8}{2}c,
     &&\text{from }\mathcal C([9,r_\Delta]),\\
  E_1&:=x_9+x_{10}+c,
     &&\text{from }\mathcal C([9,10]),\\
  E_2
   &:=P_{r_\Gamma}-P_8+\frac{r_\Gamma-8}{2}c,
     &&\text{from }\mathcal C([9,r_\Gamma]),\\
  E_3
   &:=P_{r_\Gamma}-P_8-x_9+x_8
        +\frac{r_\Gamma-8}{2}c,
     &&\text{the pullback of $\mathcal C([9,r_\Gamma])$ through }(8\ 9).
 \end{aligned}
\]
Indeed, the transposition fixes $P_{r_\Gamma}$ and sends $P_8$ to
$P_8-x_8+x_9$, which gives the displayed expression for $E_3$.
Thus $E_1$ has type $\mathsf{LC}$.  The rows
$R_\Gamma,E_2,E_3$ have type $\mathsf{RRC}$ and share the same remote
right cut point $r_\Gamma$, while $R_\Delta$ represents a second
$\mathsf{RRC}$ reduction class with remote cut point $r_\Delta$.

Choose $R_\Gamma$ as representative of the first reduction class and
retain $R_\Delta$ as the representative of the second.  Subtracting
$R_\Gamma$ from the two non-representative rows gives
\[
 \begin{aligned}
  Z_1&:=E_1=x_9+x_{10}+c,\\
  Z_2&:=E_2-R_\Gamma=-x_7-x_8-c,\\
  Z_3&:=E_3-R_\Gamma=-x_7-x_9-c.
 \end{aligned}
\]

With pivot order $10>8>7$, the coefficient matrix on
$x_{10},x_8,x_7$ is
\[
 \begin{pmatrix}
  1&0&0\\
  0&-1&-1\\
  0&0&-1
 \end{pmatrix}.
\]
After the non-pivot entries of $J$ are exposed, the equations can be
solved in reverse order: $Z_3$ determines $x_7$, then $Z_2$ determines
$x_8$, and finally $Z_1$ determines $x_{10}$.  These are the three
local-pivot rows and contribute a factor $O_D(m^{-3})$.

The two representatives contain the remaining remote information.
Replace them by
\[
 G_1:=R_\Gamma,
 \qquad
 G_2:=R_\Delta-R_\Gamma.
\]
For these rows one has
\[
 G_2=P_{r_\Delta}-P_{r_\Gamma}
       +(P_6-P_8)+\frac{r_\Delta-r_\Gamma-2}{2}c.
\]
Once the entries in $J$ are exposed, $G_1$ controls the remote increment
$(10,r_\Gamma]$ and $G_2$ controls $(r_\Gamma,r_\Delta]$.  These two
consecutive increments form the remote chain.  Both cut points range
in the prescribed residue class $0$ modulo $2$.

The transformed five-equation system therefore has three local rows
and two remote rows, leading to a bound of the form
\[
 m^{-3}\vartheta_{d,a}(m,N)^2
 \leq \vartheta_{d,a}(m,N)^5.
\]
The complete profile introduced below records the two reduction
classes $\Gamma,\Delta$, their representatives, the marking $10,8,7$,
and the order $\Gamma<\Delta$, but not the numerical values of
$r_\Gamma,r_\Delta$.
\end{example}
}

For later row reduction we use the following standard notions.  An
\emph{invertible triangular row transformation} of an ordered system
$F_1=\cdots=F_s=0$ has the form
\begin{equation}
\label{eq:triangular-row-transformation}
 G_i=\epsilon_iF_i+\sum_{j<i}a_{ij}F_j,
 \qquad \epsilon_i\in\{\pm1\},\quad a_{ij}\in\mathbb Z.
\end{equation}
Its matrix is lower triangular with diagonal $\pm1$, hence unimodular
and solution-preserving over $K$.  An ordered family of $J$-local rows
$(F_i,p_i)_{i=1}^k$ is a \emph{right-to-left local-pivot family} if
\begin{equation}
\label{eq:local-pivot-family}
 p_1>\cdots>p_k,\qquad [x_{p_i}]F_i\in\{\pm1\},
 \qquad [x_{p_j}]F_i=0\quad(j<i).
\end{equation}
Then $F_i$ is a local pivot equation with pivot $p_i$; solving these
equations in reverse order leaves $x_{p_i}$ as the only pivot variable
in row $i$ whose value has not already been determined.

Choose one representative $R_\Gamma$ in every $J$-reduction class
other than the $\mathsf{LC}$ class.  Let
\[
 \mathcal L
 :=\{E:\nu_J(E)=\mathsf{LC}\}
 \cup\{E:\nu_J(E)\neq\mathsf{LC},\ E\neq R_{[E]_J}\}.
\]
For each $E\in\mathcal L$, define the $J$-local row
\[
 G_E:=
 \begin{cases}
  F_E,
    &\text{if }\nu_J(E)=\mathsf{LC},\\[1mm]
  F_E-F_{R_{[E]_J}},
    &\text{if }\nu_J(E)\neq\mathsf{LC}.
 \end{cases}
\]
A \emph{right-to-left marking} consists of an injective map
$\ell:\mathcal L\to J$ and an ordering
$E_1,\ldots,E_k$ of $\mathcal L$ such that the single ordered family
\[
 \bigl(G_{E_i},\ell(E_i)\bigr)_{i=1}^k
\]
is a right-to-left local-pivot family.  The system is
\emph{right-to-left} if such representatives and a marking exist.

\begin{example}[{The marking in Example~\ref{ex:two-triangularizations}}]
\label{ex:marking-two-triangularizations}

{In Example~\ref{ex:two-triangularizations}, mark $Z_1,Z_2,Z_3$ respectively by
$10,8,7$.}  Their displayed coefficient matrix shows directly that
$(Z_1,10),(Z_2,8),(Z_3,7)$ is a right-to-left local-pivot family.
\end{example}

A \emph{complete $J$-local profile} $\mathfrak P$ of size $h$
for a right-to-left system consists of the following data:
\begin{enumerate}[label=(\alph*),leftmargin=2.4em]
\item $J$, the ordered equation indices $E_1,\ldots,E_h$, the set $\mathcal L$, the marking
      $\ell$, and its pivot order;
\item for every equation $E$, its type $\nu_J(E)$, the local cut points appearing
      in \eqref{eq:J-normal-forms}, and $\varepsilon_E|_J$;
\item the reduction classes and their chosen representatives;
\item within each remote type, the strict order of its reduction classes
      induced by their numerical remote cut points.
\end{enumerate}
The \emph{profile row order} is obtained by listing the representative of
the $\mathsf{AT}$ class, when present, then the representatives of the
remote reduction classes of types $\mathsf{LRC}$, $\mathsf{RRC}$, and
$\mathsf{RRT}$, in this order and in the strict class orders specified by
$\mathfrak P$, and finally the equations in $\mathcal L$ in pivot order.

{
The preceding definition separates the bounded combinatorial data determining the row reduction from the numerical parameters inserted only when the associated probabilistic events are formed.  The distinction is summarized below:
\begin{center}
\begin{tabular}{p{0.43\textwidth}|p{0.43\textwidth}}
\textbf{Recorded by the profile} & \textbf{Inserted and summed afterwards}\\ \hline
Types, local cut points and coefficients; reduction classes and their
representatives; pivot positions and pivot order; relative order of
remote classes.
& Numerical remote cut points $\mathbf r$; $\mathsf{RRT}$ layer
assignment $\tau$; target tuple $\mathbf y$; values of the exposed
coordinates.\\
\end{tabular}
\end{center}
Thus a profile fixes the row-reduction pattern, whereas
$\mathbf r,\tau,$ and $\mathbf y$ parametrize the probabilistic events
to which that fixed pattern is applied.
}

\begin{example}[{The complete profile associated with Example~\ref{ex:two-triangularizations}}]
\label{ex:profile-two-triangularizations}

{For Example~\ref{ex:two-triangularizations}, the complete profile records the}
$\mathsf{LC}$ row $E_1$, the $\mathsf{RRC}$ classes
$\Gamma=\{R_\Gamma,E_2,E_3\}$ and $\Delta=\{R_\Delta\}$, their chosen
representatives, the marking $10,8,7$, and the class order
$\Gamma<\Delta$.  Changing $r_\Gamma,r_\Delta$ while preserving their
admissibility and order leaves the profile unchanged.
\end{example}

\begin{remark}[Number of complete profiles]
\label{rem:number-complete-local-profiles}
For fixed $J$, there are only $O_D(1)$ complete $J$-local profiles of
size at most $D$.  Indeed, $h\leq D$ and $|J^\partial|=O_D(1)$, so there
are only $O_D(1)$ choices for the types, local cut points, the
restrictions $\varepsilon_E|_J$, the set $\mathcal L$, the injective marking
and pivot order, the reduction classes and their representatives, and
the strict orders of the remote classes.

\end{remark}

Fix a complete $J$-local profile $\mathfrak P$ of size $h$, with
ordered equation indices $E_1,\ldots,E_h$.  An
\emph{$\mathsf{RRT}$-layer assignment} is a map
\[
 \tau:\{E_i:i\in[h],\ \nu_J(E_i)=\mathsf{RRT}\}
 \longrightarrow\{0,\ldots,d-1\}
\]
that is constant on each $\mathsf{RRT}$ reduction class.

Let $\operatorname{RC}_{\rm rem}(\mathfrak P)$ denote the set of remote
$J$-reduction classes occurring in $\mathfrak P$.  A tuple
\[
 \mathbf r=(r_\Gamma)_{\Gamma\in
              \operatorname{RC}_{\rm rem}(\mathfrak P)}
\]
assigns one numerical remote cut point $r_\Gamma$ to each remote
reduction class $\Gamma$.  We denote by $\mathcal R(\mathfrak P,\tau)$
the set of all such tuples satisfying the following conditions:
\begin{enumerate}[label=(\roman*),leftmargin=2.5em]
\item for every remote class $\Gamma$,
\[
 r_\Gamma\in\{0,1,\ldots,m\},
\]
with $r_\Gamma<j_--1$ if $\Gamma$ has type $\mathsf{LRC}$, and
$r_\Gamma>j_+$ if $\Gamma$ has type $\mathsf{RRC}$ or $\mathsf{RRT}$;
\item within each remote type, the values $r_\Gamma$ respect the strict
order of the reduction classes specified by $\mathfrak P$;
\item for every remote class $\Gamma$ and every $E\in\Gamma$,
\[
 \begin{cases}
 r_\Gamma\equiv u_E\pmod d,
   &\text{if }\nu_J(E)=\mathsf{RRC},\\
 r_\Gamma\equiv v_E\pmod d,
   &\text{if }\nu_J(E)=\mathsf{LRC},\\
 r_\Gamma\equiv \tau(E)\pmod d,
   &\text{if }\nu_J(E)=\mathsf{RRT}.
 \end{cases}
\]
\end{enumerate}
For a remote equation $E$, let $[E]_J$ denote its $J$-reduction class and
define
\[
 r_E(\mathbf r):=r_{[E]_J}.
\]
Thus $r_E(\mathbf r)$ is the component of $\mathbf r$ indexed by the
reduction class containing $E$.  For example, if $E,E'\in\Gamma$, then
\[
 r_E(\mathbf r)=r_{E'}(\mathbf r)=r_\Gamma.
\]
For every equation index $E$ of type $\mathsf{AT}$ or $\mathsf{RRT}$,
introduce a target coordinate $y_E$.  Let $\mathcal Y(\mathfrak P,\tau)$ be
the set of tuples
\[
 \mathbf y=(y_{E_i})_{\substack{i\in[h]\\
              \nu_J(E_i)\in\{\mathsf{AT},\mathsf{RRT}\}}}
\]
satisfying
\[
 y_E\in
 \begin{cases}
  W_{\operatorname{res}_d(v_E)},
    &\text{if }\nu_J(E)=\mathsf{AT},\\
  W_{\tau(E)},
    &\text{if }\nu_J(E)=\mathsf{RRT}.
 \end{cases}
\]
If no equation has type $\mathsf{AT}$ or $\mathsf{RRT}$, then
$\mathcal Y(\mathfrak P,\tau)$ consists of the unique empty tuple.

Fix $\mathbf r\in\mathcal R(\mathfrak P,\tau)$ and
$\mathbf y\in\mathcal Y(\mathfrak P,\tau)$.  For every equation $E$,
define its prefix part $B_E^{\mathbf r}$ and affine constant
$\kappa_E^{\mathbf r,\mathbf y}$ by
\[
\begin{array}{c|c|c}
\nu_J(E)&B_E^{\mathbf r}&\kappa_E^{\mathbf r,\mathbf y}\\ \hline
\mathsf{LC}&P_{v_E}-P_{u_E}&\dfrac{v_E-u_E}{d}\,c\\
\mathsf{LRC}&P_{v_E}-P_{r_E(\mathbf r)}&\dfrac{v_E-r_E(\mathbf r)}{d}\,c\\
\mathsf{RRC}&P_{r_E(\mathbf r)}-P_{u_E}&\dfrac{r_E(\mathbf r)-u_E}{d}\,c\\
\mathsf{AT}&P_{v_E}&\lfloor v_E/d\rfloor c-y_E\\
\mathsf{RRT}&P_{r_E(\mathbf r)}&\lfloor r_E(\mathbf r)/d\rfloor c-y_E.
\end{array}
\]

\begin{remark}[Well-definedness of the associated affine forms]
\label{rem:profile-forms-well-defined}
For $j\in J$, the coefficient $[x_j]B_E^{\mathbf r}$ is independent
of the numerical values in $\mathbf r$.  Indeed, in type
$\mathsf{LRC}$ the remote cut point lies to the left of $J$, so
$[x_j]P_{r_E(\mathbf r)}=0$; in types $\mathsf{RRC}$ and
$\mathsf{RRT}$ it lies to the right of $J$, so
$[x_j]P_{r_E(\mathbf r)}=1$.  The remaining types have no numerical
remote cut point.

The coefficients multiplying $c$ in the rows of types $\mathsf{LC}$,
$\mathsf{LRC}$, and $\mathsf{RRC}$ are integers.
For type $\mathsf{LC}$ this follows from the defining condition
$d\mid v_E-u_E$ for a clock-collision equation.  For types
$\mathsf{LRC}$ and $\mathsf{RRC}$ it follows, respectively, from
$r_E(\mathbf r)\equiv v_E\pmod d$ and
$r_E(\mathbf r)\equiv u_E\pmod d$, which are part of the definition of
$\mathcal R(\mathfrak P,\tau)$.
\end{remark}

The profile therefore determines the $J$-local correction
\[
 H_E^{\mathfrak P}:=
 \sum_{j\in J}\bigl(\varepsilon_E(j)-[x_j]B_E^{\mathbf r}\bigr)x_j,
\]
which is independent of the chosen
$\mathbf r\in\mathcal R(\mathfrak P,\tau)$.  The affine form associated with $E$ is
\begin{equation}
\label{eq:profile-affine-form}
 F_E^{\mathbf r,\mathbf y}
 :=B_E^{\mathbf r}+H_E^{\mathfrak P}
  +\kappa_E^{\mathbf r,\mathbf y}.
\end{equation}
For $\mathbf r\in\mathcal R(\mathfrak P,\tau)$ and
$\mathbf y\in\mathcal Y(\mathfrak P,\tau)$, define the associated system
and event by
\[
 \mathcal F(\mathfrak P,\tau,\mathbf r,\mathbf y)
 :=(F_{E_i}^{\mathbf r,\mathbf y}=0)_{i\in[h]},
\]
\begin{equation}
\label{eq:profile-system-event}
 \mathsf A(\mathfrak P,\tau,\mathbf r,\mathbf y)
 :=\bigcap_{i\in[h]}\{F_{E_i}^{\mathbf r,\mathbf y}(x)=0\}.
\end{equation}
Thus $\mathsf A(\mathfrak P,\tau,\mathbf r,\mathbf y)$ is precisely the
event that all equations in
$\mathcal F(\mathfrak P,\tau,\mathbf r,\mathbf y)$ hold.

{
Let $R'_1,\ldots,R'_s$ be an ordered block of representative affine forms of one remote type, and suppose that their numerical remote cut points satisfy $r_1<\cdots<r_s$. The \emph{triangular remote chain} associated with this block is the ordered family
\begin{equation}
\label{eq:triangular-remote-chain}
G_1:=R'_1,\qquad G_i:=R'_i-R'_{i-1}\quad(2\leq i\leq s).
\end{equation}
The numbers $r_1,\ldots,r_s$ are called its chain cut points. This definition also covers $r_1=0$, which can occur for an equation of type $\mathsf{LRC}$ whose interval begins at position~$1$.
}

\begin{example}[{The remote chain associated with Example~\ref{ex:two-triangularizations}}]
\label{ex:remote-chain-two-triangularizations}

{For Example~\ref{ex:two-triangularizations}, the ordered representative block is}
$R_\Gamma,R_\Delta$, with $r_\Gamma<r_\Delta$, and its triangular remote
chain is simply
\[
 G_1=R_\Gamma,\qquad G_2=R_\Delta-R_\Gamma.
\]
After exposing $J$, the remote parts are the consecutive increments
$(10,r_\Gamma]$ and $(r_\Gamma,r_\Delta]$.
\end{example}

{
The next lemma packages the two row operations already visible in the
examples.  First, subtracting the representative of a reduction class
from every non-representative row extracts all local information into a
right-to-left pivot family.  Second, taking successive differences of
the ordered representatives converts nested remote prefixes into
consecutive remote increments.  More importantly, the profile alone
determines a single unimodular transformation that works for every
admissible choice of $\mathbf r$ and $\mathbf y$.  Thus the row reduction
is uniform throughout the subsequent sums over these parameters.
}

\begin{lemma}[Triangularization of bounded obstruction systems]
\label{lem:clock-triangularization}
Fix $D\geq1$, a complete $J$-local profile $\mathfrak P$ of size $h$
arising from a right-to-left $D$-bounded obstruction system, and an
$\mathsf{RRT}$-layer assignment $\tau$ for its equations of type
$\mathsf{RRT}$.  Let
\[
 \nu_{\mathsf{AT}}:=
 \begin{cases}
  1,&\text{if $\mathfrak P$ contains an equation of type $\mathsf{AT}$},\\
  0,&\text{otherwise}.
 \end{cases}
\]
{
Put
\[
 k:=|\mathcal L|,
 \qquad
 h_{\mathrm{rem}}:=|\operatorname{RC}_{\rm rem}(\mathfrak P)|.
\]
}
There is an invertible triangular row transformation, depending only on
$\mathfrak P$, such that, for every
$\mathbf r\in\mathcal R(\mathfrak P,\tau)$ and every
$\mathbf y\in\mathcal Y(\mathfrak P,\tau)$, applying it to the rows of
$\mathcal F(\mathfrak P,\tau,\mathbf r,\mathbf y)$ in profile row order
produces
\begin{enumerate}[label=(\alph*),leftmargin=2.5em]
\item the representative of the $\mathsf{AT}$ class, when it is present;
\item for each nonempty remote type, the triangular remote chain
      associated with its ordered block of representative rows;
\item a right-to-left local-pivot family of size $k$.
\end{enumerate}
{
Altogether the chains in~\textup{(b)} contain $h_{\mathrm{rem}}$ rows, and
\[
        \nu_{\mathsf{AT}}+k+h_{\mathrm{rem}}=h.
\]
}
There are at most three such chains, one for each of the types
$\mathsf{LRC}$, $\mathsf{RRC}$, and $\mathsf{RRT}$.

More precisely, fix one nonempty remote type, let
$E_1^*,\ldots,E_s^*$ be its representative equation indices in the
strict order specified by $\mathfrak P$, and let $G_1,\ldots,G_s$ be
the associated triangular remote chain.  For the chosen $\mathbf r$,
put
\[
 r_i:=r_{E_i^*}(\mathbf r)\qquad(1\leq i\leq s).
\]
Then $r_1<\cdots<r_s$ are the chain cut points.  Recall that, for a
representative equation of type $\mathsf{RRC}$, $u_{E_i^*}$ is its local
left cut point, whereas, for one of type $\mathsf{LRC}$, $v_{E_i^*}$ is
its local right cut point.  Moreover, whenever a partial choice
$r_1,\ldots,r_{i-1},r$ extends to an element of
$\mathcal R(\mathfrak P,\tau)$, one has
\[
 r\equiv
 \begin{cases}
 u_{E_i^*}\pmod d,
   &\text{if }\nu_J(E_i^*)=\mathsf{RRC},\\
 v_{E_i^*}\pmod d,
   &\text{if }\nu_J(E_i^*)=\mathsf{LRC},\\
 \tau(E_i^*)\pmod d,
   &\text{if }\nu_J(E_i^*)=\mathsf{RRT}.
 \end{cases}
\]
\end{lemma}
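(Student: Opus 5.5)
The transformation is built directly from the profile data and then checked row by row. We fix the profile row order: first the $\mathsf{AT}$ representative $R_{\mathsf{AT}}$ (if present), then the remote representatives grouped by type $\mathsf{LRC},\mathsf{RRC},\mathsf{RRT}$ in the class order recorded by $\mathfrak P$, and finally the rows $E\in\mathcal L$ in pivot order. The transformation has three kinds of operations, each depending only on $\mathfrak P$.
\begin{enumerate}
\item The $\mathsf{AT}$ representative is kept unchanged.
\item In each remote block $E_1^*,\ldots,E_s^*$, the row $F_{E_i^*}$ is replaced by $F_{E_i^*}-F_{E_{i-1}^*}$ for $i\geq2$. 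Here $E_{i-1}^*$ is an earlier row.
\item Each $E\in\mathcal L$ is replaced by $G_E$. If $\nu_J(E)=\mathsf{LC}$, then $G_E=F_E$. Otherwise $G_E=F_E-F_{R_{[E]_J}}$, where the representative lies in an earlier block of the profile row order.
\end{enumerate}
The row $R_{[E]_J}$ is an original row, but it remains available in the triangular form \eqref{eq:triangular-row-transformation}: either it appears unchanged earlier, or, in a remote chain, it equals the sum of the first $i$ chain rows. Its coefficient is therefore $\sum_{j<i}a_{ij}F_j$ with integer $a_{ij}$. The diagonal entries are all $+1$, so the matrix is lower unitriangular and hence solution-preserving. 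None of these coefficients involve $\mathbf r$ or $\mathbf y$; they depend only on the reduction classes, the representatives, the class orders and the pivot order. Counting rows gives $h=\nu_{\mathsf{AT}}+h_{\mathrm{rem}}+k$: each non-$\mathsf{LC}$ class contributes its representative, and all other rows lie in $\mathcal L$. Here $\nu_{\mathsf{AT}}\leq1$ because all $\mathsf{AT}$ rows form one class.

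Next we verify (a)--(c). For (c), Lemma~\ref{lem:reduction-class-locality} shows that each $G_E$ is $J$-local; this must hold for the profile forms \eqref{eq:profile-affine-form} and for every $\mathbf r$. The reason is that within a class, $B_E^{\mathbf r}-B_{R}^{\mathbf r}$ cancels $P_{r_\Gamma}$ because the class shares the same component $r_\Gamma$, while $H^{\mathfrak P}$ and the local cut-point prefixes are supported on $J\cup J^-$. We also have to check the constant terms. For remote types, $\kappa_E^{\mathbf r,\mathbf y}-\kappa_R^{\mathbf r,\mathbf y}$ is independent of $r_\Gamma$, since $(v_E-r_\Gamma)/d-(v_R-r_\Gamma)/d$ cancels $r_\Gamma$, and it is an integer multiple of $c$ by Remark~\ref{rem:profile-forms-well-defined}. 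The target terms $y_E$ are left alone, because they only translate constants. The $J$-coefficients of $G_E$ are determined by $\varepsilon_E|_J$ and the local cut points recorded in $\mathfrak P$. Hence the marking $\ell$ of the right-to-left system is a right-to-left local-pivot family \eqref{eq:local-pivot-family} for every admissible $(\mathbf r,\mathbf y)$.

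For (b), the difference $R'_i-R'_{i-1}$ has prefix part $P_{r_i}-P_{r_{i-1}}$ plus $J$-supported terms, for types $\mathsf{RRC}$ and $\mathsf{RRT}$. For type $\mathsf{LRC}$ it has $-(P_{r_i}-P_{r_{i-1}})$ plus $J$-supported terms. So after $J$ is exposed, each chain row is a consecutive increment. There is at most one chain per remote type, so at most three chains. The congruence claim follows from condition (iii) in the definition of $\mathcal R(\mathfrak P,\tau)$, applied to the representative $E_i^*\in\Gamma_i$. The order $r_1<\cdots<r_s$ is condition (ii).

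I expect the main obstacle to be uniformity of the transformation over the entire parameter set. The specific concern is whether the pivot property and the integrality of the $c$-coefficients could depend on $\mathbf r$ through $\varepsilon_E$ versus $B_E^{\mathbf r}$. I would settle this by expressing everything through $H^{\mathfrak P}_E$, which Remark~\ref{rem:profile-forms-well-defined} shows is independent of $\mathbf r$. The $J$-coefficients of $G_E$ are then a fixed function of the profile data alone, so the marking chosen for the original system remains valid for every admissible $(\mathbf r,\mathbf y)$.
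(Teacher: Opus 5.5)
Your proposal is correct and follows essentially the paper's proof. The paper likewise subtracts each class representative from the rows of $\mathcal L$ (its $Z=U-BR$) and replaces each ordered remote representative block by successive differences (the lower-bidiagonal block $S$). It then observes that the combined matrix $\begin{pmatrix}S&0\\-B&I_k\end{pmatrix}$ is lower unitriangular and depends only on $\mathfrak P$, and reads off the row count and the congruences from the definitions of $\mathcal L$ and $\mathcal R(\mathfrak P,\tau)$.

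Your extra check goes slightly further than the paper, which only asserts the point: for every $(\mathbf r,\mathbf y)$, the $J$-coefficients of each $G_E$ are $\varepsilon_E|_J-\varepsilon_{R_{[E]_J}}|_J$, so the recorded marking remains a local-pivot family. One minor correction: the prefixes $P_{u_E}$ themselves are not supported in $J$; only their differences within a reduction class are.
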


\begin{proof}
Fix $\mathbf r\in\mathcal R(\mathfrak P,\tau)$ and
$\mathbf y\in\mathcal Y(\mathfrak P,\tau)$, and identify each equation
with its affine left-hand side.  Let $R$ be the
column vector of the chosen representatives, in profile row order, and
let $U$ be the column vector of the remaining rows, in pivot order.  For
each row of $U$ subtract the representative of its reduction class, and
subtract nothing from a row of type $\mathsf{LC}$.  Thus, for a
$0$--$1$ matrix $B$ with at most one $1$ in each row,
\[
 Z:=U-BR
\]
is precisely the right-to-left local-pivot family determined by the
profile.  The rows of $U$ are indexed by $\mathcal L$, so $U$ has $k$
rows. {
The vector $R$ has one row for the $\mathsf{AT}$ class, when present, and one for each remote reduction class, hence $\nu_{\mathsf{AT}}+h_{\mathrm{rem}}$ rows. Therefore
\[
 \nu_{\mathsf{AT}}+k+h_{\mathrm{rem}}=h.
\]
}

Fix one nonempty remote type, and let
$E_1^*,\ldots,E_s^*$ be its representative equation indices in profile
order.  For the fixed $\mathbf r$ and $\mathbf y$, write
\[
 R'_i:=F_{E_i^*}^{\mathbf r,\mathbf y},\qquad
 r_i:=r_{E_i^*}(\mathbf r),\qquad
 H_i:=H_{E_i^*}^{\mathfrak P},\qquad
 \kappa_i:=\kappa_{E_i^*}^{\mathbf r,\mathbf y}.
\]
Then $r_1<\cdots<r_s$.  Replace this block of representatives by
\begin{equation}
\label{eq:successive-representative-differences}
 G_1:=R'_1,\qquad G_i:=R'_i-R'_{i-1}\quad(2\leq i\leq s).
\end{equation}
All representatives in the block have the form
\[
 R'_i=\epsilon P_{r_i}+L_i+\kappa_i,
 \qquad \epsilon\in\{\pm1\},
\]
with the same sign $\epsilon$ throughout the block.  More explicitly,
\[
 L_i=
 \begin{cases}
  -P_{u_{E_i^*}}+H_i,
    &\text{if }\nu_J(E_i^*)=\mathsf{RRC},\\
  P_{v_{E_i^*}}+H_i,
    &\text{if }\nu_J(E_i^*)=\mathsf{LRC},\\
  H_i,
    &\text{if }\nu_J(E_i^*)=\mathsf{RRT}.
 \end{cases}
\]
For $i\geq2$, the difference $L_i-L_{i-1}$ is $J$-local: the
difference of the local cut-point terms is supported on $J$, and so is
$H_i-H_{i-1}$.  Consequently,
\[
 G_i=\epsilon(P_{r_i}-P_{r_{i-1}})
      +(L_i-L_{i-1})+(\kappa_i-\kappa_{i-1})
 \quad(i\geq2).
\]
Together with $G_1=R'_1$, these are precisely the rows in
\eqref{eq:triangular-remote-chain} for the ordered representative block
$R'_1,\ldots,R'_s$.  There is at most one such block for each of
$\mathsf{LRC},\mathsf{RRC},\mathsf{RRT}$.

Let $\widetilde R$ be the column vector consisting, in profile row
order, of the unchanged representative of the $\mathsf{AT}$ class,
when present, and of all the difference rows just constructed.  Let $S$
be the direct sum
of the lower-bidiagonal matrices implementing
\eqref{eq:successive-representative-differences}, together with a
$1\times1$ identity block for the $\mathsf{AT}$ representative, when
present.  The complete row operation is
\begin{equation}
\label{eq:full-triangularization-matrix}
 \begin{pmatrix}\widetilde R\\ Z\end{pmatrix}
 =
 \begin{pmatrix}S&0\\-B&I_k\end{pmatrix}
 \begin{pmatrix}R\\U\end{pmatrix}.
\end{equation}
Its matrix is lower triangular with diagonal $1$, hence unimodular.  The
matrices $B$ and $S$ depend only on the reduction classes, their
representatives, and their profile orders; thus the transformation
depends only on $\mathfrak P$. {Its rows are the representative of the $\mathsf{AT}$ class, when present, the $h_{\mathrm{rem}}$ rows belonging to the triangular remote chains, and the $k$ local pivots.}

Finally, any partial choice of the chain cut points that extends to
$\mathbf r\in\mathcal R(\mathfrak P,\tau)$ satisfies the congruence
conditions in the definition of $\mathcal R(\mathfrak P,\tau)$.  These
are exactly the three congruences stated in the lemma.  This proves the
final assertion.
\end{proof}

At any stage of the argument, to \emph{expose} a set of positions means
to condition on the values of the corresponding coordinates.  A
coordinate is exposed at that stage if its value is measurable with
respect to the sigma-algebra generated by the initial conditioning and
the coordinate values revealed in the preceding steps.

A complete $J$-local profile is called a \emph{single-chain profile}
if it has no equation of type $\mathsf{AT}$ and at most one of the
types $\mathsf{LRC}$, $\mathsf{RRC}$, and $\mathsf{RRT}$ occurs.  By
Lemma~\ref{lem:clock-triangularization}, the representatives of its remote
reduction classes are transformed into either no remote chain or a single
triangular remote chain.

For $S_0\subseteq[m]$ and an injective map $\xi:S_0\to X$, write
\[
 {\Pr}_\xi(\,\cdot\,)
 :=
 \Pr\bigl[\,\cdot\mid x_j=\xi(j)
                 \text{ for every }j\in S_0\bigr].
\]
The conditioning event has positive probability, and under ${\Pr}_\xi$
the coordinates outside $S_0$ form a uniform bijection from
$[m]\setminus S_0$ to $X\setminus\xi(S_0)$.

{
The next lemma gives the quantitative consequence of this procedure for
single-chain profiles.  If the transformed system has $k$ local-pivot
rows and $h_{\rm rem}$ rows in its triangular remote chain, then
$h=k+h_{\rm rem}$, and its proof yields
\[
        C_Dm^{-k}\vartheta_{d,a}(m,N)^{h_{\rm rem}}.
\]
Since $\vartheta_{d,a}(m,N)\geq1/m$, this is at most
$C_D\vartheta_{d,a}(m,N)^h$.  This sharper formula explains why both
triangularizations are needed: equations sharing one remote prefix
cannot be charged as independent remote equations, while their
differences still contribute through distinct local pivots.
}

\begin{lemma}[Clock-restricted interval chains]
\label{lem:clock-chain}
Fix $D\geq1$ and $0<a<1$.  There exist constants
\[
        C_D=C_D(a,L)>0
        \qquad\text{and}\qquad
        m_0=m_0(D,a,L)
\]
such that the following holds.  Let $m\geq m_0$, and assume
\eqref{eq:clock-chain-nonperiodicity}.
Fix a single-chain complete $J$-local profile $\mathfrak P$ of size
$h\leq D$, an $\mathsf{RRT}$-layer assignment $\tau$, and
$\mathbf y\in\mathcal Y(\mathfrak P,\tau)$. {
Let $E^{\rm loc}_1,\ldots,E^{\rm loc}_k$ be the equations in $\mathcal L$, listed in the pivot order recorded by $\mathfrak P$, and put
\[
        p_i:=\ell(E^{\rm loc}_i)\quad(1\leq i\leq k),
        \qquad
        p_1>\cdots>p_k,
        \qquad
        h_{\mathrm{rem}}:=h-k.
\]
If $h_{\mathrm{rem}}>0$, write $J=[j_-,j_+]$ and assume that
}
\[
        1<j_-\leq j_+<m.
\]
Let
\[
        S_0:=J\setminus\{p_1,\ldots,p_k\},
\]
and let $\xi:S_0\to X$ be any injective map.  Then
\begin{equation}
\label{eq:clock-chain-bound}
 \sum_{\substack{\mathbf r\in\mathcal R(\mathfrak P,\tau)\\
                   r_E(\mathbf r)<m\ \text{for every equation $E$ of type $\mathsf{RRT}$}}}
 {\Pr}_{\xi}\!\left[
   \mathsf A(\mathfrak P,\tau,\mathbf r,\mathbf y)
 \right]
 \leq C_D\vartheta_{d,a}(m,N)^h.
\end{equation}
The additional restriction in the sum is vacuous unless the profile has
type $\mathsf{RRT}$.  The constants are independent of
$m,N,d,X,g$, of the profile and its window, and of the tuple $\mathbf y$.
\end{lemma}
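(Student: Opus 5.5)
The plan is to apply Lemma~\ref{lem:clock-triangularization} to $\mathfrak P$ and then bound the transformed event by sequential exposure, in the order local pivots first and remote chain second. The transformation is unimodular and depends only on $\mathfrak P$, so for every admissible $\mathbf r$ the event $\mathsf A(\mathfrak P,\tau,\mathbf r,\mathbf y)$ coincides with the event that the transformed rows vanish. These rows are a right-to-left local-pivot family $(Z_i,p_i)_{i=1}^k$, together with (since the profile is single-chain) at most one triangular remote chain $G_1,\ldots,G_{h_{\rm rem}}$ with cut points $r_1<\cdots<r_{h_{\rm rem}}$. The goal is to prove
\[
\sum_{\mathbf r}{\Pr}_\xi[\mathsf A]\leq C_D\,m^{-k}\,\vartheta_{d,a}(m,N)^{h_{\rm rem}},
\]
which suffices because $\vartheta_{d,a}(m,N)\geq 1/m$.

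\emph{Local step.} Under ${\Pr}_\xi$ only the pivot coordinates of $J$ are unexposed. We expose $x_{p_k},x_{p_{k-1}},\ldots,x_{p_1}$ in this order. Row $Z_i$ is $J$-local, has coefficient $\pm1$ on $x_{p_i}$, and vanishes on $x_{p_j}$ for $j<i$. Hence, once the pivots $p_k,\ldots,p_{i+1}$ and $S_0$ are exposed, $Z_i=0$ forces a single value of $x_{p_i}$. That value is uniform over at least $m-|J|\geq m/2$ remaining elements, so this step costs at most $2/m$. The local rows do not involve $\mathbf r$, so the bound $(2/m)^k$ is uniform in $\mathbf r$. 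The constant $c$ is fixed and the coefficients of $c$ are integers by Remark~\ref{rem:profile-forms-well-defined}, so the constant terms of these rows cause no difficulty.

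\emph{Remote step.} After all of $J$ is exposed, the coordinates outside $J$ form a uniform ordering of a residual set $X'$ with $|X'|=m-|J|$. This set still satisfies the non-periodicity hypothesis \eqref{eq:clock-chain-nonperiodicity} with $m^a$ replaced by $m^a/2$, at the cost of a bounded loss, since $|J|=O_D(1)$. Each chain row has the form $G_i=\pm(P_{r_i}-P_{r_{i-1}})+(\text{exposed }J\text{-local terms})+\text{const}$, with $r_0$ equal to $j_+$ (for types $\mathsf{RRC}$, $\mathsf{RRT}$) or $0$ (for $\mathsf{LRC}$, reversing orientation). So the chain is a strict nested chain in the residual ordering, and the targets are measurable with respect to the exposed data. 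This is exactly the setting of Theorem~\ref{thm:nonperiodic-chain} (with $T=1$, $q=N$, and $\eta\asymp m^{a-1}$, so $\eta^{-1/2}\sqrt{\log m}/\sqrt m=m^{-a/2}\sqrt{\log m}$), in its conditioned form. Its stopping-time argument gives, for each increment, a factor $1/N+Cm^{-a/2}\sqrt{\log(2+m)}\,/\,(m\sqrt{\Delta})$ for all rows except one unestimated increment, plus an error $C\exp(-cm^a)$.

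The main obstacle is the summation over cut points. Here, unlike in Theorem~\ref{thm:nonperiodic-chain}, each $r_i$ ranges only over one prescribed residue class modulo $d$, given by Lemma~\ref{lem:clock-triangularization}; this residue restriction must produce the factor $1/d$ in $\vartheta_{d,a}$. The first attempt will be to sum the one-step factor over the progression: there are about $m/d$ admissible values, giving $\frac{m}{dN}+\frac{Cm^{-a/2}\sqrt{\log}}{m}\sum_{\Delta\equiv\ast\ (d)}\Delta^{-1/2}\lesssim \frac{m}{dN}+\frac{\sqrt{\log(2+m)}}{d\,m^{a/2}}+\frac{1}{m}$, where the $1/m$ absorbs the term $\Delta<d$. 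The main difficulty is the unestimated increment. The plan is to choose it to be the terminal complementary block (from $r_{h_{\rm rem}}$ to the end, or to the start for $\mathsf{LRC}$), so that every $r_i$ is summed against an estimated factor. If that block is short, the pool argument breaks, and I would instead estimate the last increment directly, re-randomising from the left; the hypotheses $1<j_-$, $j_+<m$ and $r_E<m$ for $\mathsf{RRT}$ are exactly what keep the terminal block nonempty. Finally, the exponential error summed over $O(m^{D})$ tuples is at most $m^{-D}\leq\vartheta^{h}$ for $m\geq m_0$. Multiplying the local and remote bounds gives \eqref{eq:clock-chain-bound}.
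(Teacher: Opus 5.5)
Your architecture is the paper's: triangularize by Lemma~\ref{lem:clock-triangularization}, pay $(C_D/m)^k$ for the local pivots uniformly in $\mathbf r$, run the conditioned non-periodic chain argument on $F=[m]\setminus J$ with each cut point confined to one residue class modulo $d$, and absorb $m^D\exp(-c_Dm^a)$. The gap is in the choice of the unestimated increment.

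Your reason for fixing it to be the terminal complementary block is a misdiagnosis. Whichever single increment is left unexposed, the other $h_{\rm rem}$ increments determine the $h_{\rm rem}$ cut points, so every cut point is summed against an estimated factor anyway (sum them in reverse exposure order, as in Proposition~\ref{prop:ordinary-chain}). What actually has to be guaranteed is that every pool from which an increment is drawn has size $\gg_D m$. Only then does it inherit non-periodicity with $\rho\gg m^{a-1}$, and only then does the one-slice bound carry $1/M$ with $M\asymp m$.

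With the terminal block unexposed, $C_i$ is drawn from $C_i\cup\cdots\cup C_{h_{\rm rem}}$. This pool is tiny whenever the mass of $F$ lies in earlier increments, not only when the terminal block is short. Your patch (\emph{estimate the last increment directly, re-randomising from the left}) gives no rule avoiding this, and in fact any one-ended exposure order fails when the bulk sits in a middle increment flanked by short ones. For instance, take an $\mathsf{RRT}$ chain with $h_{\rm rem}=4$, $j_-=2$, $d\geq m$, and $\tau$ chosen so that every increment other than $C_2$ has size at most $2$. Exposing from either end, some increment is drawn from a pool of size at most $3$, so this single tuple is charged at least of order $m^{-3}$, whereas $\vartheta_{d,a}(m,N)^4\asymp m^{-4}$ here.

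The missing idea is the paper's. Split the sum according to the index $j_*$ of a maximal increment (at most $h_{\rm rem}+1$ cases), leave $C_{j_*}$ unexposed, and expose the other increments from both ends towards it. Every pool then contains $C_{j_*}$, so it has size at least $|F|/(h_{\rm rem}+1)$ and at least twice the current increment. The maximality constraint only shrinks each residue-class progression, and Lemma~\ref{lem:residue-restricted-one-step} yields one factor $C_D\vartheta_{d,a}(m,N)$ per remote row.

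A smaller point concerns the per-increment factor. With $\eta=m^{a-1}$, the factor from Theorem~\ref{thm:nonperiodic-chain} is $1/N+Cm^{(1-a)/2}\sqrt{\log(2+m)}/(m\sqrt{\Delta})$, not $Cm^{-a/2}\sqrt{\log(2+m)}/(m\sqrt{\Delta})$. With the correct factor, the single admissible value $\Delta<d$ in a residue class contributes up to $m^{-(1+a)/2}\sqrt{\log m}$. This is neither $O(1/m)$ nor $O(\sqrt{\log m}/(dm^{a/2}))$ once $d\gg\sqrt m$. So you cannot simply sum the theorem's product bound. You must rerun its stopping-time exposure and bound that one value trivially by $2/M$ (expose all but one element of the slice), as in the proof of Lemma~\ref{lem:residue-restricted-one-step}.

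Finally, a bookkeeping slip: for $\mathsf{RRT}$ the first remote set is $F\cap[1,r_1]$, which contains the unexposed block $[1,j_--1]$, so $r_0=j_+$ is not the right description there. The chain is still nested, so this does not affect the argument.
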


{
Before proving Lemma~\ref{lem:clock-chain}, we isolate the
residue-restricted estimate for one remote increment.  Lemma~\ref{lem:clock-chain}
will apply this estimate conditionally and iterate it along the remote
chain.

\begin{lemma}[Residue-restricted one-step summation]
\label{lem:residue-restricted-one-step}
Fix $D\geq1$ and $0<a<1$.  There exist constants
$C_D=C_D(a,L)>0$, $c_D=c_D(a,L)>0$, and
$m_{\rm step}=m_{\rm step}(D,a,L)$ such that the following holds for
$m\geq m_{\rm step}$.  Let $X_{\rm res}\subseteq K$ have cardinality
$M$ satisfying
\[
        c_Dm\leq M\leq m,
\]
and suppose that, for some $\rho_*\geq c_Dm^{a-1}$,
\[
 |X_{\rm res}\cap(z+H)|\leq(1-\rho_*)M
\]
for every non-trivial proper subgroup $H<K$ and every coset $z+H$.
Let $\sigma_{\rm res}$ be a uniformly random ordering of $X_{\rm res}$,
and, for $1\leq u\leq M$, put
\[
 S_u^{\rm res}:=\sum_{i=1}^u\sigma_{\rm res}(i).
\]
Let $\mathcal U\subseteq[1,M/2]$ be contained in one residue class
modulo $d$, and choose an arbitrary target $t_u\in K$ for every
$u\in\mathcal U$.  Then
\begin{equation}
\label{eq:residue-restricted-one-step}
 \sum_{u\in\mathcal U}
 \Pr\bigl[S_u^{\rm res}=t_u\bigr]
 \leq C_D\vartheta_{d,a}(m,N).
\end{equation}
The constants are independent of $m,N,d,X_{\rm res},\mathcal U$, and
of the target family $(t_u)_{u\in\mathcal U}$.
\end{lemma}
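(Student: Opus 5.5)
We bound each term $\Pr[S_u^{\rm res}=t_u]$ by the quantitative non-periodic one-slice estimate and then sum over the residue progression $\mathcal U$. Fix $u\in\mathcal U$. The set $\{\sigma_{\rm res}(1),\ldots,\sigma_{\rm res}(u)\}$ is a uniformly random $u$-subset of $X_{\rm res}$, and $u\leq M/2$.

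We apply Lemma~\ref{lem:parameter-dependent-nonperiodic-slice} with the following data: $T=1$, $N=|K|$, $q=|K|$ (so every non-trivial proper subgroup is $q$-large), $P=X_{\rm res}$, $u$ replaced by $M$, and $\rho=\min\{\rho_*,1/4\}$. Its size hypothesis $\rho M\geq C\log^2(2+M)$ holds for $m\geq m_{\rm step}$. Indeed, $\rho M\geq c_D^2 m^{a}$, and $m^a$ dominates $\log^2 m$ because $a>0$ is fixed. The lemma then gives
\[
 \Pr[S_u^{\rm res}=t_u]\leq \frac1N+C\rho^{-1/2}\frac{\sqrt{\log(2+M)}}{M\sqrt u}
 \leq \frac1N+C_D\, m^{(1-a)/2}\frac{\sqrt{\log(2+m)}}{m\sqrt u},
\]
using $\rho^{-1/2}\leq C m^{(1-a)/2}$ and $M\asymp m$. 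The bound is uniform in $t_u$, so arbitrary target families are allowed.

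Next we sum over $\mathcal U$. Since $\mathcal U$ lies in one residue class modulo $d$ inside $[1,M/2]$, we have $|\mathcal U|\leq m/d+1$. Hence the $1/N$ terms contribute at most $m/(dN)+1/N\leq m/(dN)+1/m$, using $N\geq M\geq c_Dm$. For the second term, write the elements of $\mathcal U$ as $u_i=u_0+id$ with $1\leq u_0\leq d$. Then
\[
\sum_{u\in\mathcal U}u^{-1/2}\leq u_0^{-1/2}+\sum_{i\geq1,\ id\leq m}(id)^{-1/2}\leq 1+2\sqrt{m}/d .
\]
Multiplying by $m^{(1-a)/2}\sqrt{\log(2+m)}/m$, the main part $2\sqrt m/d$ gives $\sqrt{\log(2+m)}\,/\,(d\,m^{a/2})$, which is the middle term of $\vartheta_{d,a}(m,N)$.

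The main obstacle is the leftover term $u_0^{-1/2}\leq 1$, which contributes $m^{-(1+a)/2}\sqrt{\log(2+m)}$. This does not carry a factor $1/d$, and it is not obviously at most $1/m$. The term controls the smallest element $u_0$ of the progression, and we expect to absorb it as follows. The first element is at most $d$ and is handled separately: for any single $u$, exposing $u-1$ entries leaves the last entry uniform on at least $M/2$ values, so $\Pr[S_u^{\rm res}=t_u]\leq 2/M\leq C/m$. Applying this crude bound to the smallest element of $\mathcal U$, and the one-slice bound to the remaining elements, removes the problematic term. For the remaining elements $u\geq d+1$, the sum $\sum_{i\geq1}(id)^{-1/2}$ is at most $2\sqrt{m}/d$, as above. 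Collecting terms gives $\sum_{u\in\mathcal U}\Pr[S_u^{\rm res}=t_u]\leq C_D\bigl(m/(dN)+\sqrt{\log(2+m)}/(d\,m^{a/2})+1/m\bigr)=C_D\vartheta_{d,a}(m,N)$. All constants depend only on $a$, $L$ (through Lemma~\ref{lem:parameter-dependent-nonperiodic-slice} with $\Omega(K)\leq L$) and $c_D$, as required.
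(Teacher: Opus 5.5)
Your proposal is correct and matches the paper's proof essentially step for step. You apply Lemma~\ref{lem:parameter-dependent-nonperiodic-slice} with $\min\{\rho_*,1/4\}$ and sum the $1/N$ terms over the at most $M/d+1$ elements of $\mathcal U$. You handle the single small element of the progression separately by exposing all but one entry of its prefix, at cost $O(1/m)$, and you group the remaining $u$ into blocks of length $d$ to get $\sum u^{-1/2}\leq 2\sqrt m/d$; the leftover $u_0^{-1/2}$ term you flagged is exactly why the paper isolates the (at most one) value $u<d$.
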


\begin{proof}
Put
\[
\widetilde\rho:=\min\left\{\rho_*,\frac14\right\}.
\]
After increasing $m_{\rm step}$, we may assume that
\[
c_Dm^{a-1}\leq\frac14.
\]
Since $\rho_*\geq c_Dm^{a-1}$, it follows that
\[
c_Dm^{a-1}\leq\widetilde\rho\leq\frac14.
\]
Moreover,
\[
|X_{\rm res}\cap(z+H)|
\leq
(1-\rho_*)M
\leq
(1-\widetilde\rho)M
\]
for every non-trivial proper subgroup $H<K$ and every coset $z+H$, and
\[
\widetilde\rho M
\geq
c_Dm^{a-1}M
\geq
c'_Dm^a
\geq
C_D\log^2(2+m),
\]
where we used $M\geq c_Dm$ and changed the constants if necessary.
Lemma~\ref{lem:parameter-dependent-nonperiodic-slice}, applied to
$X_{\rm res}$ with parameter $\widetilde\rho$, gives, uniformly in
$t_u$, for every $u\in\mathcal U$,
\[
\Pr\bigl[S_u^{\rm res}=t_u\bigr]
\leq
\frac1N+
C_D\widetilde\rho^{-1/2}
\frac{\sqrt{\log(2+M)}}{M\sqrt u}.
\]
Since
\[
M\asymp_Dm
\qquad\text{and}\qquad
\widetilde\rho^{-1/2}\leq C_Dm^{(1-a)/2},
\]
this is at most
\[
\frac1N+
C_D\frac{m^{(1-a)/2}\sqrt{\log(2+m)}}{m\sqrt u}.
\]

Because $\mathcal U$ is contained in one residue class modulo $d$, it
has at most $M/d+1$ elements.  Hence the uniform terms contribute at
most
\[
\left(\frac Md+1\right)\frac1N
\leq
\frac{m}{dN}+\frac1N
\leq
\frac{m}{dN}+\frac1m,
\]
where the last inequality follows from $m\leq N$.

There is at most one value $u\in\mathcal U$ with $u<d$.  For this
possible value, expose all but one entry of the prefix of length $u$ in
$\sigma_{\rm res}$.  The final entry is uniform on a set of cardinality
at least
\[
M-u+1\geq\frac M2,
\]
so this contribution is $O_D(1/m)$.

For the remaining values, every interval
\[
[jd,(j+1)d),\qquad j\geq1,
\]
contains at most one element of $\mathcal U$.  Therefore
\[
\sum_{\substack{u\in\mathcal U\\u\geq d}}
\frac1{\sqrt u}
\leq
\sum_{1\leq j\leq m/d}\frac1{\sqrt{jd}}
\leq
\frac{2\sqrt m}{d}.
\]
It follows that the non-uniform terms contribute at most
\[
C_D
\frac{m^{(1-a)/2}\sqrt{\log(2+m)}}{m}
\frac{2\sqrt m}{d}
\leq
C_D\frac{\sqrt{\log(2+m)}}{dm^{a/2}}.
\]
Combining the three contributions proves
\eqref{eq:residue-restricted-one-step}.
\end{proof}
}

\begin{proof}[Proof of Lemma~\ref{lem:clock-chain}]
{Increase $m_0$, if necessary, so that
$m_0\geq m_{\rm step}$.}
Lemma~\ref{lem:clock-triangularization} gives a lower-triangular
unimodular row transformation, depending only on $\mathfrak P$, under
which the event $\mathsf A(\mathfrak P,\tau,\mathbf r,\mathbf y)$ becomes the
simultaneous vanishing of a right-to-left local-pivot family
\[
        Z_1=0,\ldots,Z_k=0,
        \qquad p_1>\cdots>p_k,
\]
{and, when $h_{\mathrm{rem}}>0$, one triangular remote chain. The local-pivot rows are independent of the numerical remote cut-point tuple $\mathbf r$. Indeed, rows of type $\mathsf{LC}$ contain no remote parameter, while for every non-representative equation $E$, the row
\[
F_E^{\mathbf r,\mathbf y}-F_{R_{[E]_J}}^{\mathbf r,\mathbf y}
\]
is formed from two equations in the same reduction class. Hence their common remote prefix cancels, and the $\mathbf r$-dependent part of their affine constants cancels as well. Consequently, the local-pivot family and the factor obtained from it are uniform in $\mathbf r$. Fix $\mathbf r$ in the sum; all estimates below are uniform in this choice.} Under ${\Pr}_\xi$,
first estimate the local-pivot equations in the order
$Z_k,Z_{k-1},\ldots,Z_1$. 
{
For $1\leq i\leq k$, let $\mathcal H_i$ be the sigma-algebra generated by the coordinates in $S_0$ and by the pivots $x_{p_k},\ldots,x_{p_{i+1}}$, and put
\[
\mathcal E_i:=\bigcap_{j=i}^k\{Z_j=0\},
\qquad
\mathcal E_{k+1}:=\operatorname{Ord}(X).
\]
}
On $\mathcal E_{i+1}$, every term of $Z_i$ other than $x_{p_i}$ is
$\mathcal H_i$-measurable.  Conditional on $\mathcal H_i$, the coordinate $x_{p_i}$ is uniform on
the set of elements of $X$ not yet assigned to an exposed coordinate.
This set has cardinality at least $m-\Lambda_D\gg_Dm$, and the
coefficient of $x_{p_i}$ in $Z_i$ is $\pm1$.
Thus
\[
 {\Pr}_\xi[Z_i=0\mid\mathcal H_i]\leq \frac{C_D}{m}
 \qquad\text{on }\mathcal E_{i+1}.
\]
Since $\mathcal E_{i+1}\in\mathcal H_i$, the tower property gives
\[
 {\Pr}_\xi[\mathcal E_i]
 =\mathbb E_\xi\!\left[
    \mathbf 1_{\mathcal E_{i+1}}
    {\Pr}_\xi[Z_i=0\mid\mathcal H_i]
  \right]
 \leq \frac{C_D}{m}{\Pr}_\xi[\mathcal E_{i+1}].
\]
Iterating over $i=k,k-1,\ldots,1$ yields the factor
\begin{equation}
\label{eq:clock-local-pivot-factor}
        (C_D/m)^k,
\end{equation}
uniformly in $\mathbf r$ and $\mathbf y$.

Whenever $Z_i=0$, its coefficient $\pm1$ determines $x_{p_i}$ from the
coordinates already exposed.  Hence, on $\mathcal E_1$, all pivot coordinates
are exposed; since $S_0=J\setminus\{p_1,\ldots,p_k\}$, all coordinates
in $J$ are then exposed.  Moreover, conditioning a uniform bijection on
the value of one coordinate leaves a uniform bijection on the remaining
coordinates.  Applying this observation successively to the pivots
shows that, for every realization of the exposed values on which $\mathcal E_1$
occurs, the coordinates in
\[
        F:=[m]\setminus J
\]
form a uniform ordering of the elements of $X$ not assigned to $J$.

{
The proof is complete when $h_{\mathrm{rem}}=0$, because \eqref{eq:clock-local-pivot-factor} and $\vartheta_{d,a}(m,N)\geq1/m$ give the required bound. We henceforth assume that $h_{\mathrm{rem}}>0$. Let $E_1^*,\ldots,E_{h_{\mathrm{rem}}}^*$ be the representative equation indices of the remote chain, in the order recorded by $\mathfrak P$. For the fixed $\mathbf r$, put
\[
        r_i:=r_{E_i^*}(\mathbf r),
        \qquad
        R'_i:=F_{E_i^*}^{\mathbf r,\mathbf y}
        \qquad(1\leq i\leq h_{\mathrm{rem}}).
\]
Thus $r_1<\cdots<r_{h_{\mathrm{rem}}}$ are the chain cut points, and $R'_1,\ldots,R'_{h_{\mathrm{rem}}}$ are the corresponding representative affine forms before taking successive differences.}
The successive-difference operation is invertible, so the remote chain
vanishes if and only if all the representative equations vanish.  After
the entries of $J$ have been exposed, these equations are equivalent to
prescribed-sum conditions on one of the following families of
subsets of $F$:
{
\begin{equation}
\label{eq:clock-three-families}
 \begin{array}{c|c}
 \text{remote type}&\text{subsets of $F$}\ \\ \hline
 \mathsf{RRC}&(j_+,r_i]\quad(1\leq i\leq h_{\mathrm{rem}}),\\[1mm]
 \mathsf{LRC}&(r_{h_{\mathrm{rem}}+1-i},j_--1]\quad(1\leq i\leq h_{\mathrm{rem}}),\\[1mm]
 \mathsf{RRT}&F\cap[1,r_i]\quad(1\leq i\leq h_{\mathrm{rem}}).
 \end{array}
\end{equation}
}
Indeed, in the $\mathsf{RRC}$ case
\[
 P_{r_i}-P_{u_{E_i^*}}
 =(P_{r_i}-P_{j_+})+(P_{j_+}-P_{u_{E_i^*}}),
\]
and the second summand is supported on $J$.  In the $\mathsf{LRC}$ case
\[
 P_{v_{E_i^*}}-P_{r_i}
 =(P_{j_--1}-P_{r_i})+(P_{v_{E_i^*}}-P_{j_--1}),
\]
and again the second summand is supported on $J$.  These local terms,
together with the local corrections, are measurable under the current
conditioning and can be absorbed into the targets.  The
$\mathsf{LRC}$ representatives are listed in reverse order in
\eqref{eq:clock-three-families} because the corresponding
intervals outside $J$ shrink as the remote cut point increases.  The $\mathsf{RRT}$ statement
follows directly from its prefix form.

{Let $A_1\subset\cdots\subset A_{h_{\mathrm{rem}}}\subset F$ denote the family}
in \eqref{eq:clock-three-families} corresponding to the remote
type of $\mathfrak P$.  Since
$1<j_-\leq j_+<m$, all these inclusions are strict and $A_1$ is
nonempty in the
$\mathsf{LRC}$ and $\mathsf{RRC}$ cases. In the $\mathsf{RRT}$ case
$A_1$ contains $[1,j_--1]$, and {
the restriction $r_{h_{\mathrm{rem}}}<m$ ensures that $A_{h_{\mathrm{rem}}}\neq F$. Thus
\[
        1\leq |A_1|<\cdots<|A_{h_{\mathrm{rem}}}|<|F|.
\]}
Only the $|J|\leq\Lambda_D$ coordinates in $J$ have been prescribed,
and the coordinates indexed by $F$ form a uniform ordering of the
remaining set, whose cardinality is $|F|=m-O_D(1)$.  Hence the
conditioned form of
Proposition~\ref{prop:polynomial-nonperiodic-chain}, with $T=1$ and
$\kappa=1-a$, {applies to the chain $A_1\subset\cdots\subset A_{h_{\mathrm{rem}}}\subset F$.} Its estimates are uniform in
the targets, which may depend on the exposed entries and on the chosen
cut points.  Replacing $|F|$ by $m$ in those estimates changes only the
constant $C_D$.

It remains to sum the cut points subject to the congruence conditions
in the definition of $\mathcal R(\mathfrak P,\tau)$.
{For a fixed cut-point tuple, let
\[
 C_0:=A_1,\qquad C_i:=A_{i+1}\setminus A_i\ (1\leq i<h_{\mathrm{rem}}),
 \qquad C_{h_{\mathrm{rem}}}:=F\setminus A_{h_{\mathrm{rem}}}.
\]}
These sets form the ordered partition used in the proof of
Proposition~\ref{prop:polynomial-nonperiodic-chain}.  To incorporate the
congruence restrictions on the cut points, repeat the exposure argument
from that proof and split the sum
according to an index $j_*$ for which $|C_{j_*}|$ is maximal, breaking
ties by the least index; {there are at most $h_{\mathrm{rem}}+1\leq D+1$ possibilities.}
For each fixed $j_*$, keep $C_{j_*}$ unexposed and expose the remaining
increments in the order used in that proof, from the two ends towards
$C_{j_*}$.

At each step the cut point being summed lies in one prescribed residue
class modulo $d$; the order and extendability conditions in
$\mathcal R(\mathfrak P,\tau)$ can only reduce the set of candidates.
With all adjacent cut points fixed, the cardinality
$u$ of the increment currently being estimated therefore varies in
steps of $d$.  Let ${X_{\rm res}}$ be the set of elements occupying the
positions not yet exposed at that step, and put $M:=|{X_{\rm res}}|$.
Since ${X_{\rm res}}$ contains both the increment currently being estimated
and $C_{j_*}$, maximality of $C_{j_*}$ gives
\[
        1\leq u\leq M/2,
        \qquad
        {M\geq |C_{j_*}|\geq\frac{|F|}{h_{\mathrm{rem}}+1}\gg_Dm,}
        \qquad M\leq m.
\]
{Let $\mathcal U\subseteq[1,M/2]$ be the set of admissible values of $u$ at this stage; it is contained in one residue class modulo $d$.}

{
For each admissible cut-point tuple $\mathbf r$, let
$\PoolBad(\mathbf r)$ be the union of the stopping-time bad-pool events
arising in the remote exposure order determined by $\mathbf r$ and its
chosen maximal increment.  The preceding local-pivot exposure
conditions on only the coordinates in $J$, and
\[
|J|\leq\Lambda_D=O_D(1).
\]
The conditioned stopping-time argument in the proof of
Proposition~\ref{prop:polynomial-nonperiodic-chain} is uniform in the
realized values on these coordinates, in the measurable targets, and
in the prescribed cut points.  It may therefore be applied separately
to every fixed admissible tuple $\mathbf r$, before summing over the at
most $m^{h_{\mathrm{rem}}}\leq m^D$ possible tuples.  Hence}
\[
{\Pr}_\xi[\PoolBad(\mathbf r)]\leq C_D\exp(-c_Dm^a)
\]
for every admissible $\mathbf r$.

We split
\[
\sum_{\substack{\mathbf r\in\mathcal R(\mathfrak P,\tau)\\r_E(\mathbf r)<m\ \text{for every equation $E$ of type $\mathsf{RRT}$}}}{\Pr}_\xi[\mathsf A(\mathfrak P,\tau,\mathbf r,\mathbf y)]
\]
into the contribution on $\PoolBad(\mathbf r)^c$ and the contribution on $\PoolBad(\mathbf r)$.  On $\PoolBad(\mathbf r)^c$, every residual pool encountered in the remote exposure has size comparable to $m$ and satisfies the non-periodicity hypothesis of Lemma~\ref{lem:residue-restricted-one-step} with some $\rho_*\geq c_Dm^{a-1}$.  {Fix a value of $j_*$ and perform the summation over the cut points in the reverse of the corresponding exposure order.  At a given stage, condition on the preceding remote exposures and fix all cut points except the one determining the current increment.  Let $\mathcal H$ be the resulting sigma-algebra.  The admissible choices of the current cut point determine a set $\mathcal U\subseteq[1,M/2]$ of increment sizes contained in one residue class modulo $d$; the order, extendability, and maximal-increment conditions can only restrict this set.  For every admissible realization of $\mathcal H$ for which the current residual pool is good, the residual set $X_{\rm res}$ and the targets $(t_u)_{u\in\mathcal U}$ are fixed, while the still unexposed coordinates, in their induced order, form a uniformly random ordering $\sigma_{\rm res}$ of $X_{\rm res}$.  With
\[
 S_u^{\rm res}:=\sum_{i=1}^u\sigma_{\rm res}(i),
\]
Lemma~\ref{lem:residue-restricted-one-step} therefore gives, for every such realization,
\[
 \sum_{u\in\mathcal U}
 {\Pr}_\xi\!\left[S_u^{\rm res}=t_u\,\middle|\,\mathcal H\right]
 \leq C_D\vartheta_{d,a}(m,N).
\]
When estimating the contribution on which all residual pools are good, any indicator imposing goodness at later stages may be discarded inside this conditional probability.  Apply the displayed estimate at the last remote exposure and then proceed backwards through the preceding exposures.} Since the estimate is uniform in every admissible history, repeated use of the tower property gives
\[
\sum_{\mathbf r}{\Pr}_\xi[\mathsf A(\mathfrak P,\tau,\mathbf r,\mathbf y)\cap\PoolBad(\mathbf r)^c]\leq C_Dm^{-k}\vartheta_{d,a}(m,N)^{h_{\mathrm{rem}}},
\]
where the sum is over the admissible tuples appearing above.

Therefore
\[
\sum_{\mathbf r}{\Pr}_\xi[\PoolBad(\mathbf r)]\leq C_Dm^D\exp(-c_Dm^a),
\]
and consequently
\[
\sum_{\substack{\mathbf r\in\mathcal R(\mathfrak P,\tau)\\r_E(\mathbf r)<m\ \text{for every equation $E$ of type $\mathsf{RRT}$}}}{\Pr}_\xi[\mathsf A(\mathfrak P,\tau,\mathbf r,\mathbf y)]\leq C_Dm^{-k}\vartheta_{d,a}(m,N)^{h_{\mathrm{rem}}}+C_Dm^D\exp(-c_Dm^a).
\]

Since \(\vartheta_{d,a}(m,N)\geq1/m\) and \(h=k+h_{\mathrm{rem}}\leq D\),
\[
m^{-k}\vartheta_{d,a}(m,N)^{h_{\mathrm{rem}}}\geq m^{-h}\geq m^{-D}.
\]
Increase \(m_0=m_0(D,a,L)\) so that
\[
C_Dm^{2D}\exp(-c_Dm^a)\leq1
\]
for every \(m\geq m_0\). Then
\[
C_Dm^D\exp(-c_Dm^a)\leq m^{-D}\leq m^{-k}\vartheta_{d,a}(m,N)^{h_{\mathrm{rem}}},
\]
so the exceptional contribution is absorbed into the principal term after enlarging \(C_D\).

Finally, since \(\vartheta_{d,a}(m,N)\geq1/m\),
\[
m^{-k}\vartheta_{d,a}(m,N)^{h_{\mathrm{rem}}}\leq\vartheta_{d,a}(m,N)^{k+h_{\mathrm{rem}}}=\vartheta_{d,a}(m,N)^h.
\]
This proves \eqref{eq:clock-chain-bound}.

\end{proof}

In the remainder of {this section}, $C_D$ may denote different
positive constants at different occurrences, all depending only on
$D,a$, and $L$ and independent of the remaining parameters.

A complete $J$-local profile is called a \emph{local-repair profile}
if it belongs to one of the following four disjoint classes:
\begin{enumerate}[label=(\alph*),leftmargin=2.5em]
\item a \emph{left-clock profile}, in which every equation has type
      $\mathsf{LC}$ or $\mathsf{LRC}$;
\item an \emph{anchored-target profile}, in which every equation has type
      $\mathsf{AT}$;
\item a \emph{right-clock profile}, in which every equation has type
      $\mathsf{RRC}$;
\item a \emph{remote-target singleton profile}, of size one, whose
      unique equation has type $\mathsf{RRT}$.
\end{enumerate}
The first, third, and fourth classes are single-chain profiles.
{The anchored-target class is not a single-chain class and is treated
separately in part~\textup{(ii)} of Lemma~\ref{lem:layered-targets}.}
These four classes are precisely the profile classes used in the
local-repair estimates below.

For a $D$-local window $J$ and $1\leq h\leq D$, let
$\operatorname{Prof}^{\rm rep}_h(J)$ be the finite set of local-repair
profiles associated with right-to-left $D$-bounded obstruction systems
of size $h$; the superscript $\mathrm{rep}$ is mnemonic for
\emph{repair}.  For any complete $J$-local profile $\mathfrak P$, set
\begin{equation}
\label{eq:profile-total-contribution}
 \operatorname{Cont}(\mathfrak P):=
 \sum_{\tau}
 \sum_{\mathbf y\in\mathcal Y(\mathfrak P,\tau)}
 \sum_{\mathbf r\in\mathcal R(\mathfrak P,\tau)}
 \Pr\!\left[\mathsf A(\mathfrak P,\tau,\mathbf r,\mathbf y)\right],
\end{equation}
where the first sum is over all $\mathsf{RRT}$-layer assignments for
$\mathfrak P$.  Empty index sets contribute one empty assignment.  Thus
$\operatorname{Cont}(\mathfrak P)$ sums all admissible numerical remote
cut points, all $\mathsf{RRT}$-layer assignments, and all tuples
$\mathbf y\in\mathcal Y(\mathfrak P,\tau)$, while keeping the window and
its complete profile fixed.

{
The lemma below organizes the target bookkeeping at three levels.  Part~\textup{(i)}
estimates a single target equation after summing its endpoint along the
appropriate clock layer.  Part~\textup{(ii)} treats several local target
equations by keeping one representative and paying $\Wnorm/m$ for
each local difference pivot.  Part~\textup{(iii)} combines these fixed
profile estimates and sums over the $O(m)$ possible local windows.  No
new triangularization is introduced here; the lemma applies the same
local-pivot/remote-chain decomposition with the target values included
in the final summation.
}

\begin{lemma}[Layered target systems]
\label{lem:layered-targets}
Fix $D\geq1$ and $0<a<1$.  There exist constants
$C_D=C_D(a,L)>0$ and $m_{\rm tar}=m_{\rm tar}(D,a,L)$ such that the
following holds.  Assume that $m\geq m_{\rm tar}$, where
$m_{\rm tar}$ is chosen at least as large as the threshold in
Lemma~\ref{lem:clock-chain}, that
\eqref{eq:clock-chain-nonperiodicity} holds, and that
$\mathbf W=(W_0,\ldots,W_{d-1})$ satisfies
\eqref{eq:terminal-compatibility}.  Then:
\begin{enumerate}[label=(\roman*)]
\item
\begin{equation}
\label{eq:single-layered-target-bound}
        \sum_{b=1}^m
        \Pr[Q_b\in W_{\operatorname{res}_d(b)}]
        \leq C_D\Theta_{d,a}(m,N,\mathbf W).
\end{equation}
\item Let $J$ be a $D$-local window, and let $\mathfrak P$ be a
      complete $J$-local profile of size $h+1$, where
      $0\leq h\leq D-1$, such that all equations have type $\mathsf{AT}$
      and hence form the unique $J$-reduction class of that type.  Let
      $E_0$ be the representative
      specified by $\mathfrak P$, and let $E_1,\ldots,E_h$ be the
      remaining equations.  Since there are no remote cut points, let
      $\tau_\emptyset$ be the unique empty $\mathsf{RRT}$-layer assignment
      and let $\mathbf r_\emptyset$ be the unique empty tuple.  Order
      $E_1,\ldots,E_h$ so that the rows
      \[
        F_{E_i}^{\mathbf r_\emptyset,\mathbf y}
        -F_{E_0}^{\mathbf r_\emptyset,\mathbf y}=0
        \qquad(1\leq i\leq h)
      \]
      have the right-to-left pivot order specified by $\mathfrak P$.
      Put $p_i:=\ell(E_i)$ for $1\leq i\leq h$, and fix
      $y_0\in W_{\operatorname{res}_d(v_{E_0})}$.  Let
      \[
       S_0:=J\setminus\{p_1,\ldots,p_h\},
      \]
      and let $\xi:S_0\to X$ be an injective map.  Then
      \begin{equation}
      \label{eq:local-target-pivots}
       \sum_{\substack{\mathbf y\in
                       \mathcal Y(\mathfrak P,\tau_\emptyset)\\
                       y_{E_0}=y_0}}
       {\Pr}_\xi\!\left[
          F_{E_i}^{\mathbf r_\emptyset,\mathbf y}
          -F_{E_0}^{\mathbf r_\emptyset,\mathbf y}=0
          \text{ for every }1\leq i\leq h
       \right]
       \leq C_D\left(\frac{\Wnorm}{m}\right)^h.
      \end{equation}
      Since $\Wnorm/m\leq\Theta_{d,a}(m,N,\mathbf W)$, the right-hand
      side is also at most
      $C_D\Theta_{d,a}(m,N,\mathbf W)^h$.
\item For every $1\leq h\leq D$,
      \begin{equation}
      \label{eq:mixed-obstruction-bound}
       \sum_{\substack{J=[j_-,j_+]\subseteq[m]\ \text{$D$-local window}\\
                       1<j_-,\ j_+<m}}
       \ \sum_{\mathfrak P\in\operatorname{Prof}^{\rm rep}_h(J)}
       \operatorname{Cont}(\mathfrak P)
       \leq C_Dm\Theta_{d,a}(m,N,\mathbf W)^h.
      \end{equation}
\end{enumerate}
\end{lemma}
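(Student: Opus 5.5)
For part~(i), I would fix $b$ and split according to whether $b<d$ or $b\geq d$, exactly as in the proof of Lemma~\ref{lem:residue-restricted-one-step}, but now summing also over the targets. Since $Q_b=P_b+\lfloor b/d\rfloor c$, the event $Q_b\in W_{\operatorname{res}_d(b)}$ is a prefix equation $P_b=y-\lfloor b/d\rfloor c$ for some $y\in W_{\operatorname{res}_d(b)}$. For $b\le m/2$ I apply Lemma~\ref{lem:parameter-dependent-nonperiodic-slice} to the whole set $X$, with $\rho=m^{a-1}$, which is legitimate by \eqref{eq:clock-chain-nonperiodicity}. For $b>m/2$ I pass to the complementary suffix, using that $P_b=\Sigma(X)-\sum_{i>b}x_i$ with the suffix a uniform $(m-b)$-subset. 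The case $b=m$ is excluded by the first condition of \eqref{eq:terminal-compatibility}, since $Q_m$ is deterministic.

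Next I exchange the sums. I write the left side as $\sum_{t=0}^{d-1}\sum_{y\in W_t}\sum_{b\equiv t}\Pr[P_b=y-\lfloor b/d\rfloor c]$. For fixed $(t,y)$, the inner sum runs over one residue class modulo $d$. The one-step argument of Lemma~\ref{lem:residue-restricted-one-step}, with targets $t_b=y-\lfloor b/d\rfloor c$, bounds it by $C_D\vartheta_{d,a}(m,N)$. The $b<d$ term costs $O(1/m)$ and the sum over the progression gives $\sqrt{\log}/(dm^{a/2})$, with the analogous bound for suffixes. Summing over the $\Wnorm$ pairs gives $C_D\Wnorm\vartheta_{d,a}\le C_D\Theta_{d,a}$.

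For part~(ii), each difference row $F_{E_i}-F_{E_0}$ equals $P_{v_{E_i}}-P_{v_{E_0}}+H_{E_i}-H_{E_0}+(\text{clock constants})-y_{E_i}+y_0$. This is $J$-local apart from the free target $y_{E_i}$. I expose the pivots in the order $p_h,\dots,p_1$ as in the proof of Lemma~\ref{lem:clock-chain}. Conditional on $\mathcal H_i$ and on the choice of $y_{E_i}$, the row fixes $x_{p_i}$, so its probability is $O_D(1/m)$. Summing over $y_{E_i}\in W_{\operatorname{res}_d(v_{E_i})}$ gives $C_D|W_{\operatorname{res}_d(v_{E_i})}|/m\le C_D\Wnorm/m$. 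Iterating with the tower property yields $(C_D\Wnorm/m)^h$. The final remark holds because $\Theta_{d,a}\ge(1+\Wnorm)/m$.

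For part~(iii), I sum over the $O(m)$ windows and, by Remark~\ref{rem:number-complete-local-profiles}, over the $O_D(1)$ profiles per window. It then suffices to show $\operatorname{Cont}(\mathfrak P)\le C_D\Theta^h$ for each profile class.
\begin{itemize}
\item Left-clock and right-clock profiles have no targets, so Lemma~\ref{lem:clock-chain}, averaged over $\xi$, gives $C_D\vartheta^h\le C_D\Theta^h$.
\item For the remote-target singleton, summing over $\tau$ and $y\in W_\tau$ is the same as summing over $r>j_+$ and $y\in W_{\operatorname{res}_d(r)}$. This is bounded by part~(i), with $r=m$ excluded by \eqref{eq:terminal-compatibility}. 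This gives $C_D\Theta$.
\item For anchored-target profiles, I condition on the window. The representative $E_0$ is a single target equation at the fixed local endpoint $v_{E_0}$, with $y_0$ summed; the differences are handled by part~(ii). Exposing $S_0$ and the pivots first, and then bounding the representative's equation $Q_{v_{E_0}}=y_0$, I obtain $C_D(\Wnorm/m)^{h-1}\cdot\sum_{y_0}\Pr[Q_{v_{E_0}}=y_0]$.
\end{itemize}

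The main obstacle is this last anchored case, and in particular the order of the two exposures. If I condition on the pivots first, the representative's equation is no longer a clean slice event. I would first try instead to bound $\Pr[Q_{v}=y_0\mid\text{pivots}]$ by exposing all coordinates of $[1,v]$ outside $J$ except one, which gives $O(1/m)$. The resulting factor $\Wnorm/m$ per window is only a single-window bound. Summed over the $O(m)$ windows it gives $m\Theta^h$, which is what \eqref{eq:mixed-obstruction-bound} permits. Hence the crude $1/m$ suffices, and the extra factor $m$ in (iii) absorbs the loss.
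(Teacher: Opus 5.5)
Parts (i), (ii) and the left- and right-clock cases of (iii) follow the paper's argument. The one step that fails as written is your bound for the anchored representative. Once $J$ is exposed, fixing all coordinates of $[1,v_{E_0}]\setminus J=[1,j_--1]$ except one leaves that coordinate uniform on only the $m-j_++1$ values not yet placed. This is $O_D(1)$ for windows near the right end (for instance $j_+=m-1$), so there is no $O(1/m)$ saving there.

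The fix is the smaller-side device from your part (i). Write $P_v=\Sigma(X)-\sum_{i>v}x_i$ and instead expose all but one coordinate of $[j_++1,m]$, which leaves a pool of size $j_-$. Both blocks are nonempty because $1<j_-$ and $j_+<m$. Since $j_-+(m-j_++1)=m-|J|+2$, one of the two choices leaves at least $(m-\Lambda_D)/2$ values. The resulting bound is uniform in the shifted target, so the tower property gives $\operatorname{Cont}(\mathfrak P)\leq C_D(\Wnorm/m)^{h-1}\cdot C_D\Wnorm/m\leq C_D\Theta_{d,a}(m,N,\mathbf W)^h$ for every anchored-target profile of size $h$.

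With this repair, your treatment of the last two profile classes is a different and simpler route than the paper's.

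For the remote-target singleton, the paper conditions on $J$ and applies Lemma~\ref{lem:clock-chain} with $\tau(E)=t$, whereas you reduce directly to (i). This is valid because an $\mathsf{RRT}$ prefix contains all of $J$. Hence $H^{\mathfrak P}_E=0$ and the event is exactly $\{Q_r\in W_{\operatorname{res}_d(r)}\}$; you should state this explicitly.

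For the anchored class, the paper bounds the representative by the conditioned anticoncentration estimate \eqref{eq:conditioned-single-layered-target-bound}. It dominates the single fixed-endpoint term by the sum over all endpoints and obtains $C_D\Theta_{d,a}(m,N,\mathbf W)(\Wnorm/m)^{h-1}$. Your counting bound $C_D(\Wnorm/m)^h$ is no weaker per profile, and the factor $m$ in \eqref{eq:mixed-obstruction-bound} is used only for the window count. What the paper's route buys is an estimate that keeps its anticoncentration saving when the representative's endpoint is summed over all windows. This matters in the refinement \eqref{eq:anchored-target-clock-pivots}, where your bound would give $m^{-k}\Wnorm$ instead of $m^{-k}\Theta_{d,a}(m,N,\mathbf W)$.

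Your stated worry is also unfounded. Conditional on $J$, the representative's equation is still a prescribed-sum condition on a uniform slice of the residual set, which is exactly what \eqref{eq:conditioned-single-layered-target-bound} exploits.
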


\begin{proof}
{
For~(i), fix $t\in\{0,\ldots,d-1\}$ and $y\in W_t$. For $2\leq b<m$, put $u_b:=\min\{b,m-b\}$. If $b\leq m/2$, the equation $Q_b=y$ is a prescribed-sum equation for the uniformly random $b$-subset occupying the prefix $[1,b]$. If $b>m/2$, rewrite the same equation as a prescribed-sum equation for the complementary uniformly random $(m-b)$-subset. As $b$ ranges over one residue class modulo $d$, the relevant slice sizes range over at most two residue classes modulo $d$, one for $b$ and one for $m-b$.

The case $D=1$ of Proposition~\ref{prop:polynomial-nonperiodic-chain}, together with its conditioned form, gives uniformly in the target
\[
\Pr[Q_b=y]\leq \frac{C_D}{N}+C_D\frac{m^{(1-a)/2}\sqrt{\log(2+m)}}{m\sqrt{u_b}}+C_D\exp(-c_Dm^a)
\]
whenever $u_b\geq d$. Summing the uniform terms over one residue class contributes at most $C_D(m/(dN)+1/m)$, while grouping the remaining principal terms according to $jd\leq u_b<(j+1)d$ gives at most
\[
C_D\frac{m^{(1-a)/2}\sqrt{\log(2+m)}}{m}\sum_{1\leq j\leq m/d}\frac1{\sqrt{jd}}\leq C_D\frac{\sqrt{\log(2+m)}}{dm^{a/2}}.
\]
There are only $O(1)$ relevant values with $u_b<d$ in each residue class. Exposing all but one entry of the smaller of the prefix and its complement bounds their total contribution by $O_D(1/m)$. Finally, the sum of the exceptional terms is at most $C_Dm\exp(-c_Dm^a)\leq C_D/m$ after increasing $m_{\rm tar}$. Hence
\[
\sum_{\substack{2\leq b<m\\b\equiv t\ (\mathrm{mod}\ d)}}\Pr[Q_b=y]\leq C_D\vartheta_{d,a}(m,N).
\]
The endpoint $b=1$ contributes at most $1/m$, while the endpoint $b=m$ makes no contribution by \eqref{eq:terminal-compatibility}. Summing over $y\in W_t$ and then over $t$ proves \eqref{eq:single-layered-target-bound}.

We shall also use the following conditioned form. For each $2\leq b<m$, let $S_b\subseteq[m]$ have cardinality at most $\Lambda_D$, let $\xi_b:S_b\to X$ be injective, and assume that both $[1,b]\setminus S_b$ and $[b+1,m]\setminus S_b$ are nonempty. Let $\delta_b$ be determined by the values prescribed by $\xi_b$. Then
\begin{equation}
\label{eq:conditioned-single-layered-target-bound}
\sum_{t=0}^{d-1}\ \sum_{y\in W_t}\ \sum_{\substack{2\leq b<m\\b\equiv t\ (\mathrm{mod}\ d)}}{\Pr}_{\xi_b}\!\left[Q_b=y+\delta_b\right]\leq C_D\Theta_{d,a}(m,N,\mathbf W).
\end{equation}

For fixed $b$, conditional on $\xi_b$, the unexposed coordinates form a uniformly random ordering of
\[
X_b:=X\setminus\xi_b(S_b),
\]
where $|X_b|=m-O_D(1)$. Put
\[
u_b:=b-|S_b\cap[1,b]|,\qquad v_b:=|X_b|-u_b.
\]
The non-emptiness assumptions give $u_b,v_b\geq1$. Use the unexposed part of the prefix when $u_b\leq v_b$, and use its complement when $v_b<u_b$. The conditioned estimate from Proposition~\ref{prop:polynomial-nonperiodic-chain} is uniform in $S_b$, $\xi_b$, and the resulting translated target.

There are only $O_D(1)$ possible pairs $\bigl(|S_b\cap[1,b]|,|S_b|\bigr)$. After fixing such a pair and a residue class for $b$, both $u_b$ and $v_b$ range in fixed residue classes modulo $d$. Repeating the preceding residue-restricted summation, and absorbing the total exceptional contribution $C_Dm\exp(-c_Dm^a)$ into the $1/m$ term, gives \eqref{eq:conditioned-single-layered-target-bound}.
}

For~(ii), work under ${\Pr}_\xi$ and estimate the difference rows
successively in reverse pivot order.  When estimating the row indexed by
$E_i$, all its pivot variables other than $x_{p_i}$ have already been
exposed, and the coefficient of $x_{p_i}$ is $\pm1$.  Sum at this stage
over
$y_{E_i}\in W_{\operatorname{res}_d(v_{E_i})}$.  For each realization of
the preceding exposure, the row can hold for at most
$|W_{\operatorname{res}_d(v_{E_i})}|\leq \Wnorm$ values of
$x_{p_i}$ among the elements of $X$ not yet assigned to exposed
coordinates.  There are $\gg_Dm$ such elements, so the combined
conditional contribution of this sum and this row is at most
$C_D\Wnorm/m$.  Iterating over the distinct pivots and
using the tower property gives the first inequality in
\eqref{eq:local-target-pivots}.  The second follows from
$\vartheta_{d,a}(m,N)\geq1/m$ and
$\Theta_{d,a}(m,N,\mathbf W)=(1+\Wnorm)\vartheta_{d,a}(m,N)$.

For~(iii), split the sum according to the four classes of local-repair
profiles.

If $\mathfrak P$ is a left-clock or right-clock profile, then it is a
single-chain profile and has no target equations.  Every window in
\eqref{eq:mixed-obstruction-bound} satisfies $1<j_-$ and $j_+<m$, so we
may apply Lemma~\ref{lem:clock-chain} conditionally on the non-pivot
entries of $J$ and average over their values.  This gives
\[
        \operatorname{Cont}(\mathfrak P)
        \leq C_D\vartheta_{d,a}(m,N)^h
        \leq C_D\Theta_{d,a}(m,N,\mathbf W)^h.
\]

If $\mathfrak P$ is a remote-target singleton profile, condition first on
the entries in $J$.  For each $t\in\{0,\ldots,d-1\}$ and $y\in W_t$,
the event with terminal endpoint $m$ is empty by
\eqref{eq:terminal-compatibility}.  Lemma~\ref{lem:clock-chain}, applied
under this conditioning with the $\mathsf{RRT}$-layer assignment
$\tau(E)=t$, gives one factor $C_D\vartheta_{d,a}(m,N)$ after summing the
remaining remote endpoints congruent to $t$ modulo $d$.  Averaging over
the entries in $J$ and then summing $t$ and $y$ gives
\[
        \operatorname{Cont}(\mathfrak P)
        \leq C_D\Wnorm\vartheta_{d,a}(m,N)
        \leq C_D\Theta_{d,a}(m,N,\mathbf W).
\]

Suppose finally that $\mathfrak P$ is an anchored-target profile, and let
$E_0$ be the representative of its unique $\mathsf{AT}$ reduction class.
For a fixed value of $y_{E_0}$, apply part~\textup{(ii)} to the $h-1$
difference rows.  For every realization of the exposed coordinates for which these rows vanish,
all entries in $J$ are exposed, and their combined conditional contribution,
after summing the other coordinates of $\mathbf y$, is at most
\[
        C_D\left(\frac{\Wnorm}{m}\right)^{h-1}.
\]
The remaining representative equation has the form
\[
        Q_{v_{E_0}}=y_{E_0}+\delta_{v_{E_0}},
\]
where $\delta_{v_{E_0}}$ is determined by the exposed entries in $J$.
Since $1<j_-$ and $j_+<m$, its endpoint satisfies
$2\leq v_{E_0}<m$.  The sum over $y_{E_0}$ for this fixed endpoint is
bounded by the larger sum over all endpoints in
\eqref{eq:conditioned-single-layered-target-bound}.  The tower property
therefore gives
\[
        \operatorname{Cont}(\mathfrak P)
        \leq
        C_D\Theta_{d,a}(m,N,\mathbf W)
        \left(\frac{\Wnorm}{m}\right)^{h-1}
        \leq C_D\Theta_{d,a}(m,N,\mathbf W)^h.
\]

For each fixed window, Remark~\ref{rem:number-complete-local-profiles}
gives only $O_D(1)$ complete profiles of size $h$, and there are
$O_D(m)$ $D$-local windows.  Summing the preceding fixed-profile bounds
over these choices proves
\eqref{eq:mixed-obstruction-bound}.
\end{proof}

For every $1\leq h\leq D$, the proof also gives the fixed-profile
estimate
\begin{equation}
\label{eq:fixed-profile-contribution}
        \operatorname{Cont}(\mathfrak P)
        \leq C_D\Theta_{d,a}(m,N,\mathbf W)^h
        \qquad
        (\mathfrak P\in\operatorname{Prof}^{\rm rep}_h(J),\ 
         J=[j_-,j_+]\text{ with }1<j_-,\ j_+<m),
\end{equation}
with the same parameter dependence.  In the anchored-target case this is
the combination of part~\textup{(ii)} with
\eqref{eq:conditioned-single-layered-target-bound}; the other three cases
are the corresponding fixed-profile estimates in the proof of
part~\textup{(iii)}.

\begin{unnumberedremark}[Local-pivot refinements]
{We record two sharper forms of the preceding estimates which are already contained in their proofs. First, under the hypotheses of Lemma~\ref{lem:clock-chain}, if a single-chain profile has $k$ local pivot rows and $h_{\mathrm{rem}}$ rows in its triangular remote chain, then $h=k+h_{\mathrm{rem}}$ and
\begin{equation}
\label{eq:pivot-refined-clock-chain}
 \sum_{\substack{\mathbf r\in\mathcal R(\mathfrak P,\tau)\\
                   r_E(\mathbf r)<m\ \text{for every equation $E$ of type $\mathsf{RRT}$}}}
 {\Pr}_\xi[\mathsf A(\mathfrak P,\tau,\mathbf r,\mathbf y)]
 \leq C_Dm^{-k}\vartheta_{d,a}(m,N)^{h_{\mathrm{rem}}}.
\end{equation}
The exponentially small additive error term is absorbed into the right-hand side after increasing the same threshold, since $m^{-k}\vartheta_{d,a}^{h_{\mathrm{rem}}}\geq m^{-h}\geq m^{-D}$.}

Second, let $1\leq k\leq D-1$ and consider the complete
$J$-local profiles of size $k+1$ having exactly one equation of type
$\mathsf{AT}$ and $k$ equations of type $\mathsf{LC}$.  The latter are
the $k$ local-pivot rows, while the unique target equation is the
representative of the $\mathsf{AT}$ class.  Then
\begin{equation}
\label{eq:anchored-target-clock-pivots}
 \sum_{\substack{J=[j_-,j_+]\subseteq[m]\ \text{$D$-local window}\\
                 1<j_-,\ j_+<m}}
 \ \sum_{\substack{\mathfrak P\ \text{complete $J$-local profile of size $k+1$}\\
                    \mathfrak P\ \text{has one $\mathsf{AT}$ equation and $k$ $\mathsf{LC}$ equations}}}
       \operatorname{Cont}(\mathfrak P)
 \leq C_Dm^{-k}\Theta_{d,a}(m,N,\mathbf W).
\end{equation}
Condition on the non-pivot entries of $J$ and estimate the $k$
local-pivot equations first.  Their successive conditional probabilities
give a factor $C_Dm^{-k}$.  On every realization for which all $k$
local-pivot equations hold, all entries in $J$ are exposed.  The remaining
$\mathsf{AT}$ equation is then
of the form
\[
        Q_b=y_E+\delta_b,
\]
where $b\in J$, $2\leq b<m$, and $\delta_b$ is determined by the exposed
entries.  Equation~\eqref{eq:conditioned-single-layered-target-bound}
sums $b$ and $y_E$ with cost
$C_D\Theta_{d,a}(m,N,\mathbf W)$.  For each fixed endpoint there are only
$O_D(1)$ windows satisfying the displayed restrictions, and
Remark~\ref{rem:number-complete-local-profiles} gives only $O_D(1)$
complete profiles for each window.  Averaging over the initial
conditioning and using the tower property proves
\eqref{eq:anchored-target-clock-pivots}.
\end{unnumberedremark}

{

The preceding lemmas estimate every local-repair profile.  It remains to
show that a bounded family of failed candidates canonically produces one
of them.
}

We next record the permutation data used in the repair argument.  For
$u<v$, let $\pi_{u,v}$ denote the transposition of the positions $u$ and
$v$.  Fix $\Lambda\in\mathbb N$ with
$1\leq\Lambda\leq\Lambda_D$.  For $b\in[m]$, an
\emph{admissible permutation at $b$} is a permutation
\[
        \pi=\prod_{i=1}^t\pi_{b_i,z_i},
        \qquad t\geq0,
\]
whose transpositions are pairwise disjoint and satisfy
\[
        b<b_i<z_i\leq\min\{m,b_i+\Lambda\}
        \qquad(1\leq i\leq t).
\]
The empty product is the identity.  The support of $\pi$ is
$\supp(\pi)=\{b_i,z_i:1\leq i\leq t\}$, and $\pi$ fixes every position
in $[b]$.  A position
\[
        z\in(b,b+\Lambda]\setminus\supp(\pi)
\]
is a \emph{candidate for $(b,\pi)$}.

\begin{unnumberedremark}[Restriction to fixed supports]
Let $\mathcal I$ be a family of at most $D+1$ intervals or prefixes of
$[m]$, and let $\pi$ be admissible at $b$.  Define
\begin{equation}
\label{eq:restricted-repair-permutation}
        \pi_{\mathcal I}:=
        \prod_{\substack{(u,v)\text{ a transposition in }\pi\\
                    |\{u,v\}\cap I|=1\text{ for some }I\in\mathcal I}}
        (u,v).
\end{equation}
Then, for every $I\in\mathcal I$,
\begin{equation}
\label{eq:support-sum-restriction}
        \sum_{j\in I}x_{\pi(j)}
        =\sum_{j\in I}x_{\pi_{\mathcal I}(j)}.
\end{equation}
Once the supports in $\mathcal I$ are fixed, $\pi_{\mathcal I}$ has
$O_D(1)$ transposition factors, and there are only $O_D(1)$ possible
choices for $\pi_{\mathcal I}$.  Consequently, any event determined by obstruction equations
whose supports belong to $\mathcal I$ depends on $\pi$ only through
$\pi_{\mathcal I}$; the union over all admissible $\pi$ is contained in
a union over at most $C_D$ possible choices of $\pi_{\mathcal I}$.

To verify these assertions, observe first that the transpositions in $\pi$ are disjoint and therefore commute.  Consider
a transposition $(u,v)$ not included in
\eqref{eq:restricted-repair-permutation}.  For each $I\in\mathcal I$,
either both $u,v$ belong to $I$ or neither belongs to $I$.  In the first
case $(u,v)$ merely interchanges two summands of the sum over $I$; in the
second case it changes no coordinate indexed by $I$.  Deleting all such
transpositions therefore gives \eqref{eq:support-sum-restriction}.

Let $(u,v)$, with $u<v$, be a transposition included in
$\pi_{\mathcal I}$.  For some $I\in\mathcal I$, exactly one of $u,v$
belongs to $I$.  If ${I=[s,t]}$ is an interval, then either
\[
        {u<s\leq v
        \qquad\text{or}\qquad
        u\leq t<v;}
\]
for a prefix ${I=[1,t]}$, only the second alternative can occur.  Since
$v-u\leq\Lambda\leq\Lambda_D$, both endpoints of $(u,v)$ lie in
\[
        [r-\Lambda_D,r+\Lambda_D]\cap[m]
\]
for one endpoint $r$ of one of the supports in $\mathcal I$.  Let $\operatorname{End}(\mathcal I)$ consist of both endpoints of each
interval in $\mathcal I$ and the right endpoint of each prefix in
$\mathcal I$.  Hence all positions moved by $\pi_{\mathcal I}$ belong
to the set
\[
        S(\mathcal I):=
        \bigcup_{r\in\operatorname{End}(\mathcal I)}
        [r-\Lambda_D,r+\Lambda_D]\cap[m].
\]
Since $\mathcal I$ has at most $D+1$ members,
\[
        |S(\mathcal I)|
        \leq2(D+1)(2\Lambda_D+1)=O_D(1).
\]
The transpositions in $\pi_{\mathcal I}$ are pairwise disjoint, so they
form a matching on $S(\mathcal I)$.  They are therefore $O_D(1)$ in
number, and the number of possible such matchings is also $O_D(1)$.
Finally, the constant terms of the obstruction equations do not depend
on $\pi$, so \eqref{eq:support-sum-restriction} gives the asserted
consequence for events determined by the fixed supports.
\end{unnumberedremark}

Let $z$ be a candidate for $(b,\pi)$, and let $E'$ be an obstruction
equation in the ordering obtained after applying
$\pi\circ\pi_{b,z}$.  Suppose that $\supp(E')$ contains exactly one of
$b,z$.  We call $E'$, together with its affine form pulled back to the
original variables, a \emph{blocker equation for $z$ relative to
$(b,\pi)$}; the same symbol will be used for the pulled-back affine form
when no confusion can arise.  The blocker is \emph{$b$-oriented} when
$b\in\supp(E')$ and $z\notin\supp(E')$, and \emph{$z$-oriented} when
$z\in\supp(E')$ and $b\notin\supp(E')$.

These two cases are precisely the ones in which the transposition
$\pi_{b,z}$ can change the sum over the support: if the support contains
both $b,z$, the transposition merely interchanges two summands, whereas
if it contains neither position, it changes no summand.  In particular,
an obstruction equation that holds for exactly one of the orderings
before and after applying $\pi_{b,z}$ must have one of the two
orientations above.  A
$z$-oriented blocker cannot be a target equation.  Indeed, the support of
a target equation is a prefix $[1,t]$, and a prefix containing $z>b$
also contains $b$.  Thus every $z$-oriented blocker is a clock-collision
equation.

\begin{unnumbereddefinition}[Blocker data and canonical profiles]
Fix $b\in[m]$, an admissible permutation $\pi$ at $b$, distinct
candidates $\mathbf z=(z_1,\ldots,z_h)$, and a $D$-local window
$J$ containing $b,z_1,\ldots,z_h$.  For each $i\in[h]$, let $E_i$ be a
blocker equation for $z_i$ relative to $(b,\pi)$, and put
$\mathbf E=(E_1,\ldots,E_h)$.  
{Set $\widetilde\sigma:=\sigma^\pi$ and write
$\widetilde x_i:=\widetilde\sigma(i)$.  Since
\[
        \sigma^{\pi\circ\pi_{b,z_i}}
        =\widetilde\sigma^{\pi_{b,z_i}}
        \qquad(1\leq i\leq h),
\]
pull the obstruction equation indexed by $E_i$ back from the ordering
$\widetilde\sigma^{\pi_{b,z_i}}$ to the coordinate variables
$\widetilde x_1,\ldots,\widetilde x_m$.  Assume that every equation
obtained in this way is $J$-admissible.  We then rename
$\widetilde\sigma$ as $\sigma$ and its coordinates $\widetilde x_i$ as
$x_i$; all types, reduction classes, and coefficient restrictions
$\varepsilon_{E_i}|_J$ below refer to these pulled-back equations.}

We call
\[
        \BlockDatum=(J;b,\pi,\mathbf z,\mathbf E)
\]
a \emph{blocker datum} when exactly one of the following conditions holds.
Its \emph{size} is $h$.
\begin{enumerate}[label=(B\arabic*),leftmargin=2.8em]
\item Every $E_i$ is $b$-oriented, and either all $E_i$ are clock
      equations or all $E_i$ are target equations.
\item Every $E_i$ is $z_i$-oriented, and, writing the interval
      support of $E_i$ before pullback through $\pi_{b,z_i}$ as
      $[s_i,t_i]$, the left cut point $s_i-1$ is $J$-local and the right
      cut point $t_i$ is $J$-remote.
\end{enumerate}
For a blocker datum $\BlockDatum$, choose the representatives, marking, and
orders as follows.
\begin{enumerate}[label=(C\arabic*),leftmargin=2.8em]
\item Under \textup{(B1)}, for each $J$-reduction class other than the
      $\mathsf{LC}$ class, choose as representative the unique label
      \[
            R_\Gamma=E_{i_\Gamma},
            \qquad
            z_{i_\Gamma}
            =\min\{z_i:E_i\in\Gamma\}.
      \]
      Mark every label of type $\mathsf{LC}$ and every
      non-representative label $E_i$ by $\ell(E_i):=z_i$.
\item Under \textup{(B2)}, for each $J$-reduction class $\Gamma$, choose
      \[
            R_\Gamma=E_{i_\Gamma},
            \qquad
            (s_{i_\Gamma},i_\Gamma)
            =\min_{\rm lex}\{(s_i,i):E_i\in\Gamma\}.
      \]
      Mark every non-representative label $E_i$ by
      $\ell(E_i):=z_i$.
\item In either case, order the marked labels by strictly decreasing
      marked position $z_i$, and order the remote reduction classes by
      strictly increasing numerical remote cut point.
\end{enumerate}
\end{unnumbereddefinition}

The choices in \textup{(C1)} and \textup{(C2)} are unambiguous.  In
\textup{(C1)}, the candidates $z_i$ are pairwise distinct, so each nonempty
reduction class has a unique label with minimum candidate.  In
\textup{(C2)}, the representative has minimum left endpoint $s_i$ within
its class, while the second coordinate in the lexicographic order resolves
ties; hence the pair $(s_i,i)$ has a unique minimum.  Consequently,
\textup{(C1)}--\textup{(C3)} determine at most one complete $J$-local
profile.

If the choices in \textup{(C1)}--\textup{(C3)} satisfy the right-to-left
conditions, denote the resulting complete $J$-local profile by
\[
        \operatorname{Can}(\BlockDatum)
\]
and call it the \emph{canonical profile} of $\BlockDatum$.  Thus
$\operatorname{Can}$ is a partial map on blocker data; denote its domain by
$\operatorname{Dom}(\operatorname{Can})$.

\begin{example}[A canonical blocker profile]
\label{ex:canonical-blocker-profile}
Assume that $\Lambda\geq3$.  Let $d=2$, $J=[7,10]$, $b=7$,
$\pi=\mathrm{id}$, and take the distinct candidates $z_1=8$ and
$z_2=10$.  Consider two clock-collision equations whose
interval supports, before pullback through $\pi_{7,8}$ and $\pi_{7,10}$,
are respectively $[4,7]$ and $[4,9]$.  Their pulled-back affine forms are
\[
 \begin{aligned}
  F_{E_1}&=x_4+x_5+x_6+x_8+2c,\\
  F_{E_2}&=x_4+x_5+x_6+x_8+x_9+x_{10}+3c.
 \end{aligned}
\]
Both equations are $b$-oriented, have type $\mathsf{LRC}$, and have the
same remote left cut point $3$.  Hence they form one $J$-reduction class.
Since $z_1<z_2$, rule \textup{(C1)} chooses $E_1$ as its representative
and marks $E_2$ by $\ell(E_2)=10$.  The corresponding local row is
\[
        F_{E_2}-F_{E_1}=x_9+x_{10}+c,
\]
which has coefficient $1$ at its pivot $x_{10}$.  There is only one remote
class, so its order is automatic.  Thus \textup{(C1)}--\textup{(C3)}
determine a complete $J$-local profile consisting of one remote
representative and one local-pivot row.
\end{example}

For fixed $J,b,\pi$, and $\mathbf z$, define
\begin{equation}
\label{eq:blocker-profile-set}
 \operatorname{Prof}^{\rm blk}_h(J;b,\pi,\mathbf z)
 :=\{\operatorname{Can}(\BlockDatum):
       \BlockDatum=(J;b,\pi,\mathbf z,\mathbf E)
       \in\operatorname{Dom}(\operatorname{Can})\}.
\end{equation}
Whenever $\operatorname{Can}(\BlockDatum)$ is defined, condition
\textup{(B1)} makes it a left-clock profile or an anchored-target profile,
according as the equations are clock-collision equations or target equations, while
condition \textup{(B2)} makes it a right-clock profile.  Since it arises
from a right-to-left $D$-bounded obstruction system of size $h$, it follows
from the definition of $\operatorname{Prof}^{\rm rep}_h(J)$ that
\[
 \operatorname{Prof}^{\rm blk}_h(J;b,\pi,\mathbf z)
 \subseteq \operatorname{Prof}^{\rm rep}_h(J).
\]

\begin{lemma}[Blocker systems]
\label{lem:blocker-systems}
Fix $D\geq1$, $0<a<1$, and $\Lambda\in\mathbb N$ with
$1\leq\Lambda\leq\Lambda_D$.  There is a constant
$C_D=C_D(a,L)>0$ such that the following holds under the hypotheses of
Lemma~\ref{lem:layered-targets}.  Let
$\BlockDatum=(J;b,\pi,\mathbf z,\mathbf E)$ be a blocker datum of size $1\leq h\leq D$, and assume that
$J=[j_-,j_+]$ satisfies $1<j_-$ and $j_+<m$.  Then $\operatorname{Can}(\BlockDatum)$ is defined and is a right-to-left
complete $J$-local profile in
$\operatorname{Prof}^{\rm rep}_h(J)$.  Moreover, with $\operatorname{Cont}$ as in
\eqref{eq:profile-total-contribution},
\begin{equation}
\label{eq:blocker-system-bound}
 \sum_{\mathfrak P\in
       \operatorname{Prof}^{\rm blk}_h(J;b,\pi,\mathbf z)}
       \operatorname{Cont}(\mathfrak P)
 \leq C_D\Theta_{d,a}(m,N,\mathbf W)^h.
\end{equation}
\end{lemma}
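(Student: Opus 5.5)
The proof has two parts. First we show that the choices \textup{(C1)}--\textup{(C3)} really produce a right-to-left marking, so that $\operatorname{Can}(\BlockDatum)$ is defined and lies in $\operatorname{Prof}^{\rm rep}_h(J)$. Then \eqref{eq:blocker-system-bound} follows from the fixed-profile estimate \eqref{eq:fixed-profile-contribution}, because the number of relevant profiles is bounded.

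\emph{Type analysis.} Under \textup{(B1)}, every $E_i$ contains $b$ but not $z_i$ in its support. For a clock equation with support $[s_i,t_i]$, this forces $s_i\le b\le t_i<z_i$, so $t_i\in J$ is a $J$-local right cut point. The type is therefore $\mathsf{LC}$ or $\mathsf{LRC}$, and the profile is a left-clock profile. For a target equation, $t_i\in[b,z_i)\subseteq J$ gives type $\mathsf{AT}$, and the profile is an anchored-target profile. Under \textup{(B2)}, the hypothesis that $s_i-1$ is local and $t_i$ is remote gives type $\mathsf{RRC}$, so the profile is a right-clock profile. In each case the datum falls into exactly one of the four local-repair classes.

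\emph{Pivot verification.} This is the main step. Under \textup{(B1)}, the pulled-back form of $E_i$ equals the form before pullback plus the correction $\pm(x_{z_i}-x_b)$, because $\pi_{b,z_i}$ moves $x_{z_i}$ into the support in place of $x_b$. Hence $[x_{z_i}]F_{E_i}=\pm1$. For $j\ne i$, the position $z_i$ does not appear in $F_{E_j}$ through the correction, since $z_j\ne z_i$. It can appear only through the unpermuted support $[s_j,t_j]$, and only if $z_i\le t_j<z_j$.

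Now take $E_i$ marked and $E_j$ with a larger marked position, $z_j>z_i$; this is the situation forced by ordering the pivots by decreasing $z$. We need $[x_{z_j}]G_{E_i}=0$. Since $t_i<z_i<z_j$, the position $z_j$ lies outside the support of $E_i$, and it is not moved by $\pi_{b,z_i}$. For the representative $R$ of the class of $E_i$, its candidate $z_R$ is minimal in the class, so again $t_R<z_R\le z_i<z_j$. The coefficient of $x_{z_j}$ therefore vanishes in both $F_{E_i}$ and $F_R$, hence in $G_{E_i}$. The same inequality $t<z$ shows that the pivot coefficient of $G_{E_i}=F_{E_i}-F_R$ at $z_i$ is still $\pm1$: the representative does not contain $z_i$ unless $z_R=z_i$, which happens only if $E_i$ is the representative itself. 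For $\mathsf{LC}$ rows, which are not differenced, the same argument gives the pivot at $z_i$ directly.

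Under \textup{(B2)}, the roles are mirrored. Each $z_i$ enters $E_i$ through the swap, and positions larger than $z_i$ are either inside both supports (and cancel in the difference with the representative of the same remote cut point) or outside both. The lexicographic choice of the representative with minimal $s$ keeps the difference's coefficient at $z_i$ equal to $\pm1$ and its coefficients at larger marked positions equal to $0$. The representatives in \textup{(C1)} and \textup{(C2)} are unique, and the marking is injective because the $z_i$ are distinct. Together with the pivot property, this makes \textup{(C1)}--\textup{(C3)} a right-to-left marking, so $\operatorname{Can}(\BlockDatum)$ is defined.

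\emph{Summation.} By Remark~\ref{rem:number-complete-local-profiles}, the set $\operatorname{Prof}^{\rm blk}_h(J;b,\pi,\mathbf z)$ has $O_D(1)$ elements. By the unnumbered remark on restriction to fixed supports, the permutation $\pi$ enters only through $O_D(1)$ restricted permutations. By \eqref{eq:permutation-absorption}, passing to $\sigma^\pi$ does not change any probability. Applying \eqref{eq:fixed-profile-contribution} to each profile, which is valid since $1<j_-$ and $j_+<m$, and summing over the $O_D(1)$ profiles gives $C_D\Theta_{d,a}(m,N,\mathbf W)^h$.

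The main obstacle is the pivot verification in the \textup{(B2)} and $\mathsf{LRC}$ cases. There one has to check carefully that the coefficient cancellations at larger marked positions survive the difference with the representative, because the unpermuted supports may contain those positions.
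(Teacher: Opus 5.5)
Your proposal follows the paper's proof step for step. It has the same type analysis and the same pivot verification for the (C1)/(C2) representatives; your (B1) argument via $t<z$ and the minimality of $z_R$ is exactly the paper's. It ends with the same count of $O_D(1)$ profiles combined with \eqref{eq:fixed-profile-contribution}. The appeal to restricted permutations is harmless but not needed here, since $\operatorname{Cont}(\mathfrak P)$ depends only on $\mathfrak P$.

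The (B2) sketch has the direction of the swap reversed and should be corrected. The pullback through $\pi_{b,z_i}$ removes $x_{z_i}$ and inserts $x_b$. The pivot coefficient $-1$ at $z_i$ therefore comes from $F_R$, which keeps $x_{z_i}$ because $s_R\le s_i\le z_i\le t$ and $z_R\neq z_i$. There is also a missing observation at larger positions. The (C2) representative may have $z_R>z_i$, and at $z_R$ the difference $F_{E_i}-F_R$ does not vanish. So ``inside both supports or outside both'' is not enough by itself. The vanishing at larger marked $z_j$ also needs $z_j\neq z_R$, which holds because marked labels are never representatives and the candidates are distinct.
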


\begin{proof}

{Put $\widetilde\sigma:=\sigma^\pi$ and write
$\widetilde x_i:=\widetilde\sigma(i)$.  By
\eqref{eq:permutation-absorption}, $\widetilde\sigma$ is a uniformly
random ordering of $X$.  Moreover,
\[
        \sigma^{\pi\circ\pi_{b,z_i}}
        =\widetilde\sigma^{\pi_{b,z_i}}
        \qquad(1\leq i\leq h).
\]
Hence, after replacing $\sigma$ by $\widetilde\sigma$, the equation
indexed by $E_i$ is obtained by pulling its obstruction equation back
through $\pi_{b,z_i}$.  Since $b,z_i\in J$, this transposition is
supported on $J$.  We rename $\widetilde\sigma$ as $\sigma$, rename
its coordinates $\widetilde x_i$ as $x_i$, and write $F_{E_i}$ for the
resulting affine left-hand side.}  We now verify that the choices in
\textup{(C1)}--\textup{(C3)} satisfy the right-to-left conditions.

Assume first condition~\textup{(B1)}.  Before pullback through
$\pi_{b,z_i}$, the support of $E_i$ contains $b$ and does not contain
$z_i$.  Since $b<z_i$ and the support is an interval or a prefix, its
right endpoint is smaller than $z_i$.  Pulling back through
$\pi_{b,z_i}$ therefore gives coefficient $\pm1$ at $x_{z_i}$, and the
resulting row contains no candidate larger than $z_i$.

In every $J$-reduction class other than the $\mathsf{LC}$ class, the
representative selected in \textup{(C1)} has the smallest attached
candidate.  Mark every equation of type $\mathsf{LC}$ and every
non-representative equation $E_i$ by $\ell(E_i):=z_i$.  If $E_i$ is
non-representative and $R$ is its class representative, then the support
of $R$ before pullback has right endpoint smaller than the candidate
attached to $R$, which is itself smaller than $z_i$.  Hence $F_R$ has
coefficient zero at $x_{z_i}$, whereas $F_{E_i}$ has coefficient $\pm1$;
thus
\[
        [x_{z_i}](F_{E_i}-F_R)=\pm1.
\]
Moreover, neither $F_{E_i}$ nor $F_R$ contains a candidate larger than
$z_i$.  Ordering the marked rows by decreasing candidate position
therefore gives \eqref{eq:local-pivot-family}.

Assume next condition~\textup{(B2)}.  Every equation is of type
$\mathsf{RRC}$.  Fix one reduction class.  Its equations have a common
remote right cut point $t$; write the interval support of $E_i$ before
pullback through $\pi_{b,z_i}$ as $[s_i,t]$.  Then
\[
        b<s_i\leq z_i\leq t,
\]
and the pullback through $\pi_{b,z_i}$ replaces the term $x_{z_i}$ by
$x_b$.

Let $R=E_r$ be the representative selected in \textup{(C2)}.  Thus
$s_r\leq s_i$ for every equation $E_i$ in the class; the second coordinate
in the lexicographic order only resolves ties among equal values of
$s_i$.  Mark every non-representative equation $E_i$ by
$\ell(E_i):=z_i$.  If $E_i\neq R$, then
\[
        s_r\leq s_i\leq z_i\leq t,
\]
so the interval $[s_r,t]$ contains $z_i$.  The pullback defining
$F_{E_i}$ removes $x_{z_i}$, whereas the pullback defining $F_R$ removes
$x_{z_r}$; since the candidates are distinct, $z_r\neq z_i$, and $F_R$
still contains $x_{z_i}$.  Consequently,
\[
        [x_{z_i}](F_{E_i}-F_R)=\pm1.
\]

Now let $z_j>z_i$ be a larger marked candidate.  If $z_j>t$, neither
interval $[s_i,t]$ nor $[s_r,t]$ contains $z_j$.  If $z_j\leq t$, then
both intervals contain $z_j$ because $s_r\leq s_i\leq z_i<z_j$.  Since
$z_j$ is marked, its equation is not the representative $R$, and hence
neither of the two pullbacks removes $x_{z_j}$.  Thus
$[x_{z_j}](F_{E_i}-F_R)=0$ in either case.  The decreasing order of the
marked candidates is therefore a right-to-left local-pivot order.
Distinct reduction classes retain distinct right remote cut points, and
their representatives form the $\mathsf{RRC}$ triangular remote chain of
Lemma~\ref{lem:clock-triangularization}.

Thus $\operatorname{Can}(\BlockDatum)$ is defined and is a right-to-left complete
$J$-local profile.  Under~\textup{(B1)}, blocker equations that are clock-collision equations
give a left-clock profile, while blocker equations that are target equations
give an anchored-target profile;
under~\textup{(B2)} the profile is right-clock.  Hence
$\operatorname{Can}(\BlockDatum)\in
\operatorname{Prof}^{\rm rep}_h(J)$.

By Remark~\ref{rem:number-complete-local-profiles}, a fixed window $J$
admits only $O_D(1)$ complete $J$-local profiles of size $h$.  Since
$\operatorname{Prof}^{\rm blk}_h(J;b,\pi,\mathbf z)$ is a subfamily of
these profiles,
$|\operatorname{Prof}^{\rm blk}_h(J;b,\pi,\mathbf z)|=O_D(1)$.
For each fixed profile, the fixed-profile estimate
\eqref{eq:fixed-profile-contribution} gives at most
$C_D\Theta_{d,a}(m,N,\mathbf W)^h$ after carrying out the sums in
\eqref{eq:profile-total-contribution}.  Summing over these $O_D(1)$ profiles proves
\eqref{eq:blocker-system-bound}.
\end{proof}

{

We now isolate the combinatorial part of the argument.  This is the point
at which the proof becomes algorithmic; no probability estimate is used in
Lemma~\ref{lem:deterministic-layered-repair}.

The tail label at position $i$ is $g+x_i$, with $x_i\in K$, and therefore
its image in $G/K$ is $g+K$.  Consequently, the image in $G/K$ of every
prefix of length $j$ is $j(g+K)$, independently of the ordering of the
coordinates $x_1,\ldots,x_m$.   {Interchanging two coordinates $x_b$ and $x_z$} therefore does not change $\operatorname{res}_d(j)$ or the set
$W_{\operatorname{res}_d(j)}$ attached to an endpoint $j$.

For a bijection $\sigma:[m]\to X$, write $x_i:=\sigma(i)$ and call $b$ a
\emph{bad endpoint} if at least one of the following occurs:
\begin{enumerate}[label=(\alph*),leftmargin=2.6em]
\item $\mathcal C([s,b])$ holds for some $1\leq s\leq b$ with
      $d\mid(b-s+1)$;
\item $\mathcal T(b,y)$ holds for some
      $y\in W_{\operatorname{res}_d(b)}$.
\end{enumerate}
Denote the set of bad endpoints of the initial ordering by $B(\sigma)$.
The first alternative records a collision between two tail vertices, and
the second is equivalent to
$Q_b\in W_{\operatorname{res}_d(b)}$.

Fix $D\geq1$, put $\Lambda:=10D$, and define
\[
 \partial:=
 \bigl([1,4\Lambda+1]\cup[m-4\Lambda,m]\bigr)\cap[m].
\]
Let $b\in B(\sigma)$ and let $\pi$ be admissible at $b$.  A position
\[
 \RepairPos\in(b,b+\Lambda]\setminus
 \bigl(\supp(\pi)\cup B(\sigma)\bigr)
\]
is called \emph{available}.  It is \emph{blocked} for
$(\sigma,b,\pi)$ if some obstruction equation with right endpoint at
least $b$ is false for $\sigma\circ\pi$ and true for
$\sigma\circ\pi\circ\pi_{b,\RepairPos}$.

An obstruction can change under $\pi_{b,\RepairPos}$ only if its support contains
exactly one of $b$ and $\RepairPos$.  Thus every blocker is one of the blocker
equations studied above.  Conversely, every obstruction ending exactly at
$b$ contains $b$ and not $\RepairPos$; since $x_b\neq x_{\RepairPos}$, the transposition
changes its left-hand side by the non-zero element $x_{\RepairPos}-x_b$.  Hence every
obstruction ending at $b$ that is true before the transposition is false
afterwards.  The notion of blocked candidate therefore records precisely
the only possible failure of a repair step: the creation of a new
obstruction at an endpoint already processed or currently being processed.

\begin{lemma}[Deterministic right-to-left repair]
\label{lem:deterministic-layered-repair}
Let $\sigma:[m]\to X$ be an ordering.  Assume that
\begin{enumerate}[label=(R\arabic*),leftmargin=2.8em]
\item $B(\sigma)\cap\partial=\varnothing$;
\item every interval of $2\Lambda+1$ consecutive positions contains at
      most $2D$ elements of $B(\sigma)$;
\item for every $b\in B(\sigma)$ and every admissible permutation $\pi$
      at $b$, at most $4D$ available positions in $(b,b+\Lambda]$ are
      blocked for $(\sigma,b,\pi)$.
\end{enumerate}
Then there is a product $\pi_*$ of pairwise disjoint transpositions
$\pi_{b_i,\RepairPos_i}$, with
\[
        b_i<\RepairPos_i\leq b_i+\Lambda,
\]
such that no obstruction equation holds for $\sigma\circ\pi_*$.
\end{lemma}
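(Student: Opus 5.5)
The plan is to mimic the deterministic part of the proof of Lemma~\ref{lem:cyclic-local-repair}, processing the bad endpoints from right to left. Write $B(\sigma)=\{b_1>\cdots>b_n\}$. We construct distinct partners $\RepairPos_1,\ldots,\RepairPos_n$, with
\[
b_i<\RepairPos_i\leq b_i+\Lambda,\qquad \RepairPos_i\notin B(\sigma)\cup\{\RepairPos_1,\ldots,\RepairPos_{i-1}\}\cup\{b_1,\ldots,b_{i-1}\}.
\]
The construction maintains the following invariant. Put $\pi^{(i)}:=\pi_{b_1,\RepairPos_1}\cdots\pi_{b_i,\RepairPos_i}$. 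Then every obstruction equation that holds for $\sigma\circ\pi^{(i)}$ has right endpoint in $\{b_{i+1},\ldots,b_n\}$. For $i=0$ this is the definition of $B(\sigma)$. The final permutation $\pi_*:=\pi^{(n)}$ then satisfies the conclusion.

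For the inductive step, suppose $\pi:=\pi^{(i-1)}$ has been built. Every earlier $b_j$ and $\RepairPos_j$ exceeds $b_i$, and each earlier transposition satisfies $\RepairPos_j-b_j\leq\Lambda$. Hence $\pi$ fixes $[b_i]$ and is admissible at $b_i$. By (R1), $b_i\leq m-4\Lambda-1$, so the window $(b_i,b_i+\Lambda]$ lies inside $[m]$ and has $\Lambda=10D$ positions. We count the positions that must be excluded:
\begin{itemize}
\item positions of $B(\sigma)$: these lie in an interval of $2\Lambda+1$ consecutive positions, so (R2) allows at most $2D$ of them;
\item positions in $\supp(\pi)$: any $\RepairPos_j$ in the window satisfies $b_j\leq\RepairPos_j\leq b_i+\Lambda$ and $b_j>b_i$, so it belongs to $B(\sigma)\cap(b_i,b_i+\Lambda]$. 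Together with the $b_j$ themselves, this is already absorbed in the count by (R2), giving at most $2D$ extra partners;
\item blocked positions: by (R3) there are at most $4D$ of them.
\end{itemize}
Altogether at most $8D<10D$ positions are excluded, so an available, unblocked $\RepairPos_i$ exists. One must check that a position of $\supp(\pi)$ of the form $b_j$ is already in $B(\sigma)$; this holds.

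Next we verify the invariant for $\sigma':=\sigma\circ\pi\circ\pi_{b_i,\RepairPos_i}$. The quotient clock is order-independent, so residues and the sets $W_{\operatorname{res}_d(j)}$ are unchanged, as noted before the lemma. Let $E$ be an obstruction true for $\sigma'$ with right endpoint $t$. There are two cases:
\begin{itemize}
\item If $E$ was already true for $\sigma\circ\pi$, then $t\in\{b_i,\ldots,b_n\}$ by induction. The value $t=b_i$ is excluded, because every obstruction ending at $b_i$ contains $b_i$ but not $\RepairPos_i$, and is therefore falsified: its left-hand side changes by $x_{\RepairPos_i}-x_{b_i}\neq0$ (here we use that $\pi$ fixes $b_i$).
\item If $E$ was false before and is true after, then $t\geq b_i$ would make $\RepairPos_i$ blocked for $(\sigma,b_i,\pi)$, contrary to the choice. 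Hence $t<b_i$. But an obstruction with $t<b_i<\RepairPos_i$ has support disjoint from $\{b_i,\RepairPos_i\}$, so it cannot change under the transposition. This is a contradiction, so this case cannot occur.
\end{itemize}
Thus $t\in\{b_{i+1},\ldots,b_n\}$, as required.

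The transpositions are pairwise disjoint: the $b_i$ are distinct, the $\RepairPos_i$ are distinct, and $\RepairPos_i\notin B(\sigma)$.

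The main care point is the definition of \emph{blocked}. It only concerns obstructions with right endpoint $\geq b$ that are false for $\sigma\circ\pi$, and these must be matched exactly with the two cases above. I expect the counting in the first step (the interplay of (R2) with earlier partners inside the window) to be the only delicate bookkeeping.
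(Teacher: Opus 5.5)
Your proposal is correct and follows essentially the same route as the paper: a right-to-left sweep, the count of at most $2D$ bad endpoints, $2D$ earlier partners and $4D$ blocked positions inside a window of $\Lambda=10D$ positions, and the same two-case check (obstructions ending at $b_i$ are destroyed because the entry at $b_i$ changes, while a newly created obstruction would either make $z_i$ blocked or be supported left of $b_i$). The only cosmetic difference is that the paper sweeps over all positions $b=m,m-1,\ldots,1$ with the invariant that no obstruction with right endpoint greater than $b$ holds, whereas you sweep only over $B(\sigma)$ with the equivalent invariant that surviving obstructions end at not-yet-processed bad endpoints.
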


\begin{proof}
Examine the positions $b=m,m-1,\ldots,1$ in decreasing order.  Starting
with $\sigma_m:=\sigma$, construct
$\sigma_m,\sigma_{m-1},\ldots,\sigma_0$, where $\sigma_{b-1}$ is the
ordering after the stage indexed by $b$.  Before that stage we maintain:
\begin{enumerate}[label=(I\arabic*),leftmargin=2.8em]
\item no obstruction equation with right endpoint greater than $b$ holds
      for $\sigma_b$;
\item $\sigma_b=\sigma\circ\pi$, where $\pi$ is a product of pairwise
      disjoint transpositions $\pi_{b_i,\RepairPos_i}$ chosen at earlier stages,
      and every such pair satisfies
      $b<b_i<\RepairPos_i\leq b_i+\Lambda$.
\end{enumerate}
For $b=m$, condition~\textup{(I1)} is vacuous and~\textup{(I2)} holds
with $\pi=\mathrm{id}$.

Suppose first that $b\notin B(\sigma)$.  By~\textup{(I2)}, every
transposition in $\pi$ uses positions greater than $b$, so $\pi$ fixes
$[b]$.  Every obstruction equation with right endpoint $b$ is therefore
unchanged from the initial ordering and none holds.  Set
$\sigma_{b-1}:=\sigma_b$.

Now suppose that $b\in B(\sigma)$.  The permutation $\pi$ in
\textup{(I2)} is admissible at $b$.  By~\textup{(R1)},
$(b,b+\Lambda]\subseteq[m]$.  By~\textup{(R2)}, this interval contains
at most $2D$ positions of $B(\sigma)$.  If a previously chosen partner
$\RepairPos_i$ lies in $(b,b+\Lambda]$, then
\[
        b<b_i<\RepairPos_i\leq b+\Lambda,
\]
so its bad endpoint $b_i$ lies in the same interval.  There are therefore
at most $2D$ previously chosen partners in $(b,b+\Lambda]$.  The left
endpoints $b_i$ of the earlier transpositions already belong to
$B(\sigma)$, and hence
$B(\sigma)\cup\supp(\pi)$ excludes at most $4D$ positions from this
interval.  At least
\[
        \Lambda-4D=6D
\]
positions are available.  By~\textup{(R3)}, at most $4D$ of them are
blocked, so choose an available unblocked position $\RepairPos$ and set
\[
        \sigma_{b-1}:=\sigma_b\circ\pi_{b,\RepairPos}.
\]
Availability makes this transposition disjoint from all previous ones,
so~\textup{(I2)} remains valid at the next stage.

It remains to check~\textup{(I1)}.  An obstruction with right endpoint
smaller than $b$ is supported in $[b-1]$ and is unchanged by the exchange
of positions $b$ and $\RepairPos$.  Every obstruction ending at $b$ that holds
before the exchange is destroyed, as observed above.  Finally, because
$\RepairPos$ is not blocked, no obstruction with right endpoint at least $b$ that
was false before the exchange becomes true afterwards.  Together with the
induction hypothesis, this proves that no obstruction with right endpoint
at least $b$ holds for $\sigma_{b-1}$.  Thus~\textup{(I1)} also propagates.

After the stage $b=1$, no obstruction equation holds for $\sigma_0$.
Taking $\pi_*$ to be the product of all chosen transpositions completes
the proof.
\end{proof}

It remains only to show that a uniformly random ordering avoids the three
failure modes in Lemma~\ref{lem:deterministic-layered-repair}.  The
auxiliary event~\textup{(E0)} below is not a fourth deterministic
hypothesis: it is introduced solely to ensure that a family of
$\RepairPos$-oriented blockers has one genuinely remote cut point, so that
Lemma~\ref{lem:blocker-systems} applies.
}

{
For reference, the final first-moment argument has the following
structure:
\[
\begin{array}{c|c|c}
\text{event}&\text{deterministic condition}&\text{probabilistic input}\\ \hline
\textup{(E1)}&\textup{(R1): boundary}&\text{one-equation clock/target sums}\\
\textup{(E2)}&\textup{(R2): clustering}&\text{local-repair profiles of size }D\\
\textup{(E3)}&\textup{(R3): blocking}&\text{canonical blocker profiles of size }D\\
\textup{(E0)}&\text{auxiliary only}&\text{localization of a right-remote cut point}
\end{array}
\]
The first and auxiliary events cost $O_D(\Theta)$, while the two
high-multiplicity failures cost $O_D(\Theta+m\Theta^D)$.  Choosing
$D>3/\rho$ and using $\Theta\leq m^{-\rho}$ makes their union have
probability less than one.
}

\begin{proof}[Proof of Lemma~\ref{lem:layered-local-repair}]
{
Write
\[
        \Theta:=\Theta_{d,a}(m,N,\mathbf W)
\]
and choose an integer $D>3/\rho$. All constants below are now fixed. Set $\Lambda:=10D$. By the definition of $\Lambda_D$, one has $4\Lambda+1=\Lambda_D$. Keep the boundary set $\partial$ from Lemma~\ref{lem:deterministic-layered-repair}. Thus $|\partial|\leq8\Lambda+2=O_D(1)$, and if $b\notin\partial$, then
$1<b$ and $b+4\Lambda<m$.

The events below have transparent roles.  Event~\textup{(E1)} is the
boundary failure in~\textup{(R1)}, event~\textup{(E2)} is the clustering
failure in~\textup{(R2)}, and event~\textup{(E3)} is the blocking failure
in~\textup{(R3)}.  Event~\textup{(E0)} is only an auxiliary localization
event used in the estimate of~\textup{(E3)}.
}

Let \emph{(E0)} be the event that there exist
$b\in B(\sigma)\setminus\partial$, an admissible permutation $\pi$ at
$b$, and an available $\RepairPos$ for which a $\RepairPos$-oriented blocker equation with
right cut point at most $b+3\Lambda$ holds.
We also exclude the following events:
\begin{enumerate}[label=(E\arabic*)]
\item $B(\sigma)\cap\partial\neq\emptyset$;
\item some interval of $2\Lambda+1$ consecutive positions contains more
      than $2D$ elements of $B(\sigma)$;
\item for some $b\in B(\sigma)$ and some admissible permutation at $b$,
      more than $4D$ available positions in $(b,b+\Lambda]$ are blocked.
\end{enumerate}

We first estimate~(E0).  Put $J_b=[b,b+3\Lambda]$.  Since
$|J_b|=3\Lambda+1\leq\Lambda_D$, this is a $D$-local window.  By the
definition of $\partial$, the condition $b\notin\partial$ gives
$1<b$ and $b+3\Lambda<m$.  A $\RepairPos$-oriented blocker equation counted by
(E0) is a clock-collision equation.  Before pullback through
$\pi_{b,\RepairPos}$, write its interval support as $[s,t]$.  Since this interval
contains $\RepairPos$ but not $b$, one has
\[
        b<s\leq \RepairPos\leq t\leq b+3\Lambda.
\]
Consequently both cut points are $J_b$-local.  The interval cannot be a
singleton: otherwise its clock-collision relation would assert that the
corresponding tail label is zero, contrary to $0\notin g+X$.  After
pullback through $\pi_{b,\RepairPos}$, its affine form therefore has coefficient
$\pm1$ at some position $j\in J_b\setminus\{\RepairPos\}$; mark the row by the
largest such position.  For fixed $b$ and $\RepairPos$, there are only $O_D(1)$
possible interval supports, markings, and choices of $\pi_{\mathcal I}$.

Choose one obstruction equation with right endpoint $b$ that holds for the
initial ordering $\sigma$; such an equation exists because
$b\in B(\sigma)$.  Since every admissible permutation at $b$ fixes
$[b]$, the same equation holds for $\sigma\circ\pi$.  Pair its affine
row with the marked row above.  If the equation belongs to a
$J_b$-reduction class other than the $\mathsf{LC}$ class, choose it as the
representative of that class.  If it has type $\mathsf{LC}$, mark it by
$b$.  Its support is contained in $[b]$, so its affine form has coefficient
$\pm1$ at $x_b$ and coefficient zero at $x_j$.  Since $j>b$, the marked
rows have the right-to-left pivot order $j>b$ in the latter case; in the
former case the blocker equation supplies the only local-pivot row.  Thus
each such pair determines a right-to-left two-row system with a complete
$J_b$-local profile.

We estimate these systems using the local-pivot refinements
\eqref{eq:pivot-refined-clock-chain} and
\eqref{eq:anchored-target-clock-pivots}.  If the obstruction equation with
right endpoint $b$ is a clock-collision equation of type $\mathsf{LRC}$,
the system has one local-pivot row and one remote representative row;
after summing over the $O(m)$ possible values of $b$, its contribution is
at most $C_D\vartheta_{d,a}(m,N)$.  If it is a clock-collision equation of
type $\mathsf{LC}$, the system has two local-pivot rows and no remote row;
summing over $b$ gives $C_D/m$.  If it is a target equation, it has type
$\mathsf{AT}$ and the system has one $\mathsf{AT}$ representative and one
$\mathsf{LC}$ local-pivot row.  In this case
\eqref{eq:anchored-target-clock-pivots} gives $C_D\Theta/m$ after summing
its endpoint and target.  Since
$1/m\leq\vartheta_{d,a}(m,N)\leq\Theta$, all three contributions are at
most $C_D\Theta$.  The bounded number of candidates, complete local
profiles, and choices of $\pi_{\mathcal I}$ is absorbed into $C_D$.
Therefore
\begin{equation}
\label{eq:E0-clock-bound}
        \Pr[(\mathrm{E0})]\leq C_D\Theta.
\end{equation}

{
For~(E1), fix $b\in\partial$ and consider a clock-collision equation $\mathcal C([s,b])$. Put $\ell:=b-s+1$, so that the admissibility condition is $d\mid\ell$. The possible full interval $\ell=m$ is excluded by \eqref{eq:terminal-compatibility}. For $\ell<m$, put $r_\ell:=\min\{\ell,m-\ell\}$.

If $\ell\leq m/2$, the entries occupying $[s,b]$ form a uniformly random $\ell$-subset of $X$. If $\ell>m/2$, use the complementary set of positions instead. Since $\mathcal C([s,b])$ is the equation
\[
\sum_{i=s}^{b}x_i+\frac{\ell}{d}c=0,
\]
it is equivalently the prescribed-sum equation
\[
\sum_{i\notin[s,b]}x_i=P_m+\frac{\ell}{d}c
\]
for the uniformly random complementary subset of size $m-\ell=r_\ell\leq m/2$.

After increasing $m_1$, Lemma~\ref{lem:parameter-dependent-nonperiodic-slice}, applied with $\rho=m^{a-1}$, gives uniformly in the resulting target
\[
\Pr[\mathcal C([s,b])]\leq\frac1N+C_D\frac{m^{(1-a)/2}\sqrt{\log(2+m)}}{m\sqrt{r_\ell}}.
\]
As $\ell$ ranges over multiples of $d$, the values with $\ell\leq m/2$ lie in the residue class $0$ modulo $d$, while the complementary sizes $m-\ell$ arising from $\ell>m/2$ lie in the residue class $m$ modulo $d$. Thus the values of $r_\ell$ lie in at most two residue classes modulo $d$. The residue-restricted summation used in the proof of Lemma~\ref{lem:layered-targets}(i) therefore gives
\[
\sum_{\substack{1\leq s\leq b\\ d\mid b-s+1}}\Pr[\mathcal C([s,b])]\leq C_D\vartheta_{d,a}(m,N).
\]
Here the full interval contributes zero, and the $O(1)$ values with $r_\ell<d$ are bounded by exposing all but one entry of the smaller set, giving a total contribution $O_D(1/m)$.

The target obstruction equations with $b\in\partial$ are bounded by the larger sum in Lemma~\ref{lem:layered-targets}(i). Since $|\partial|\leq8\Lambda+2=O_D(1)$, summing the clock-collision and target estimates over $b\in\partial$ changes only the constant. Hence
\begin{equation}
\label{eq:E1-clock-bound}
        \Pr[(\mathrm{E1})]\leq C_D\Theta.
\end{equation}
}

Suppose that~(E2) occurs outside~(E1).  Choose one obstruction equation
ending at each bad endpoint in the interval appearing in the definition
of~(E2).  Each selected obstruction equation is either a clock-collision equation
or a target equation, and the interval contains more than $2D$ bad
endpoints.  Hence one may retain $D$ distinct bad endpoints whose
selected obstruction equations are all clock-collision equations or all
target equations.  Since~(E1) does not occur, the selected endpoints
are at distance more than $4\Lambda$ from both ends of $[m]$.  Their
right endpoints lie in an interval of length $2\Lambda+1\leq\Lambda_D$;
hence they are contained in a $D$-local window $J=[j_-,j_+]$ satisfying
$1<j_-$ and $j_+<m$.
Each selected obstruction equation is then $J$-admissible: its right cut
point is local, while its left cut point, when present, is either local or
left-remote.  For every $J$-reduction class other than the $\mathsf{LC}$ class, choose
the equation with the smallest right endpoint as representative; mark each equation of type $\mathsf{LC}$ and
each non-representative equation by its own right endpoint.  The marked
row has coefficient $\pm1$ at that endpoint and coefficient zero at every
larger marked endpoint.  Thus decreasing right endpoints give a
right-to-left marking, and the selected equations form a right-to-left
$D$-bounded obstruction system of size $D$.  If the selected equations
are clock-collision equations, their profile is left-clock; if they are
target equations, their profile is anchored-target.  In either case it
is a local-repair profile.  After summing over the $O_D(1)$ possible
complete $J$-local profiles,
Lemma~\ref{lem:layered-targets}(iii) gives
\[
        \Pr[(\mathrm{E2})\setminus(\mathrm{E1})]
        \leq C_Dm\Theta^D.
\]
Together with \eqref{eq:E1-clock-bound},
\begin{equation}
\label{eq:E2-clock-bound}
        \Pr[(\mathrm{E2})]
        \leq C_D\bigl(\Theta+m\Theta^D\bigr).
\end{equation}

Suppose that~(E3) occurs while neither~(E0) nor~(E1) occurs.  Then
$b\notin\partial$.  For each blocked available candidate, choose one
blocker equation.  Separate the chosen equations into the following three
classes: $b$-oriented clock-collision equations, $b$-oriented target
equations, and $\RepairPos$-oriented clock-collision equations.  These classes are
exhaustive because every $\RepairPos$-oriented blocker is a clock-collision
equation.  Since there are more than $4D$ blocked available candidates,
one class contains at least $D$ of them.  Retain $D$ candidates from that
class.

Put $J=[b,b+3\Lambda]$.  Since
$|J|=3\Lambda+1\leq\Lambda_D$, this is a $D$-local window.  If the
retained equations are
$\RepairPos$-oriented, then their interval supports contain the corresponding
candidates in $(b,b+\Lambda]$ but do not contain $b$.  Their left cut
points therefore lie in
$[b,b+\Lambda-1]\subseteq J^-$.  Since (E0) does not occur,
their right cut points are greater than $b+3\Lambda$ and hence are
$J$-remote.  Thus they satisfy~\textup{(B2)} and are $J$-admissible.

If the retained equations are $b$-oriented, their supports contain $b$
but not their corresponding candidates.  Hence each right endpoint is
smaller than its candidate and therefore belongs to
$[b,b+\Lambda-1]\subseteq J$.  Any left cut point is either $J$-local
or lies to the left of $J$.  Thus these equations are $J$-admissible,
and condition~\textup{(B1)} holds by their selection.  Finally, since
(E1) does not occur, the definition of $\partial$ gives
$1<b$ and $b+3\Lambda<m$.  The retained equations therefore form blocker
data to which Lemma~\ref{lem:blocker-systems} applies.

{
Fix $b$, an ordered candidate tuple $\mathbf z$, an orientation class, and the supports of the retained obstruction equations. By the restriction argument following \eqref{eq:support-sum-restriction}, the corresponding event depends on the admissible permutation $\pi$ only through its restricted permutation $\pi_{\mathcal I}$, and there are at most $C_D$ possible such restrictions. For each fixed restriction, the map
\[
{\sigma\longmapsto\sigma^{\pi_{\mathcal I}}}
\]
is a bijection of $\operatorname{Ord}(X)$. Hence \eqref{eq:permutation-absorption} shows that replacing ${\sigma}$ by ${\sigma^{\pi_{\mathcal I}}}$ does not change the probability of the associated profile event. Therefore the union over all admissible restrictions costs only a factor $C_D$. Lemma~\ref{lem:blocker-systems}, which already sums over all admissible numerical remote cut points $\mathbf r$ and target tuples $\mathbf y$, consequently bounds the total contribution for the fixed $b$, candidate tuple, and orientation by
\[
C_D\Theta^D.
\]
There are only $O_D(1)$ ordered candidate tuples and orientation classes for each fixed $b$. Summing over $b\in[m]$ and adding}
\eqref{eq:E0-clock-bound}--\eqref{eq:E1-clock-bound} gives
\begin{equation}
\label{eq:E3-clock-bound}
        \Pr[(\mathrm{E3})]
        \leq C_D\bigl(\Theta+m\Theta^D\bigr).
\end{equation}

By \eqref{eq:clock-repair-smallness} and $D>3/\rho$,
\[
        \Theta+m\Theta^D
        \leq m^{-\rho}+m^{1-\rho D}=o(1).
\]
{
After increasing $m_1(a,\rho,L)$ if necessary, the union of
(E0)--(E3) has probability less than one.  Fix an ordering $\sigma$
for which none of these events occurs.  In particular,
\textup{(E1)}--\textup{(E3)} do not occur, so the hypotheses
\textup{(R1)}--\textup{(R3)} of
Lemma~\ref{lem:deterministic-layered-repair} hold.  The lemma therefore
produces an ordering $\sigma_0$ for which no obstruction equation holds.

At this point all probabilistic and repair bookkeeping is finished.  It
remains only to translate the absence of the two obstruction types back
into the two conclusions of Lemma~\ref{lem:layered-local-repair}.
}

Finally, if two tail vertices with indices $0\leq i<j\leq m$ coincide,
then $i\equiv j\pmod d$ and the obstruction equation
$\mathcal C([i+1,j])$ holds; the case $i=0$ is included.  Conversely,
if $\mathcal C(I)$ holds for some interval ${I=[s,t]}$, then the two tail
vertices with endpoints ${s-1}$ and ${t}$ coincide.  Hence no
clock-collision obstruction equation holds if and only if the
coordinates $(Q_j,\operatorname{res}_d(j))$, including the initial coordinate at
$j=0$, are pairwise distinct.  Likewise, no target obstruction equation
$\mathcal T(b,y)$ holds if and only if
$Q_b\notin W_{\operatorname{res}_d(b)}$ for every $b$.  This proves the lemma.
\end{proof}

\section{\texorpdfstring{{Concluding remarks}}{Concluding remarks}}
\label{sec:concluding-remarks}

{
The layered local-repair and reverse-absorption framework developed here
appears flexible enough to extend beyond cyclic groups.  A natural next
case is the dihedral group $D_{2p}$ of order $2p$.  The small-set range needed for
such a program is already available from the semidirect-product methods
of Costa, Della Fiore, and Engel
\cite{CostaDellaFioreEngelSemidirect}, while the large-set theorem
applies to arbitrary finite groups \cite{BBKMM}.  This leaves an
intermediate-range problem in which one would need to adapt the
Kneserized anticoncentration and layered local-repair mechanism to the
two cosets and to non-commutative partial products.  The deterministic
clock and local/remote bookkeeping of Section~7 suggest a plausible
route, although carrying it out will require a separate argument.
}

\subsection*{Statement on the use of generative AI}

During the preparation of this manuscript, the authors used ChatGPT
(OpenAI) for language editing and organization.  It also played a
substantial role as an interactive tool in developing and formalizing
the proof of the layered local-repair lemma, in particular in organizing
a version compatible with the bounded conditioning and local
permutations arising during the recursive descent.  The authors
independently checked, revised, and verified every mathematical statement
and proof, and retain full responsibility for the content.

\end{document}